\documentclass[12pt, reqno]{amsart}

\usepackage[T1]{fontenc}
\usepackage[latin1]{inputenc}
\usepackage[english]{babel}
\usepackage{amsthm}
\usepackage{amscd}
\usepackage{amssymb,amsmath,bbm,MnSymbol}
\usepackage[all]{xy}
\usepackage{mathrsfs}
\usepackage[left=0.0cm,right=0.0cm,top=1.7cm,bottom=1.7cm]{geometry}
\usepackage{hyperref}
\usepackage{verbatim}
\usepackage{stmaryrd}

\hoffset -0.51in
\marginparwidth -4.1in
\setlength{\oddsidemargin}{15.5pt}
\setlength{\evensidemargin}{15.5pt}
\setlength{\textwidth}{7.0in}

\theoremstyle{plain}
\newtheorem{theorem}{Theorem}[section]

\newtheorem{lemma}[theorem]{Lemma}
\newtheorem{proposition}[theorem]{Proposition}

\newtheorem{corollary}[theorem]{Corollary}

\newtheorem*{theorem*}{Theorem}

\theoremstyle{definition}
\newtheorem{definition}[theorem]{Definition}

\theoremstyle{remark}
\newtheorem{remark}[theorem]{Remark}

\usepackage[usenames,dvipsnames]{color}

\usepackage{tikz}
\usetikzlibrary{calc}

\def\Q{{\mathbb{Q}}}
\def\Z{{\mathbb{Z}}}
\def\C{{\mathbb{C}}}

\def\R{{\mathbb{R}}}

\def\O{{\mathcal{O}}}

\def\zp{{\Z_p}}

\def\Norm{\mathrm{N}}

\def\A{{\mathbb{A}}}
\def\Af{{\A_{f}}}


\def\ord{\mathrm{ord}}

\def\nord{\mathrm{n}.\mathrm{ord}}


\def\det{\mathrm{det}}


\def\GL{\mathrm{GL}}

\def\frakp{\mathfrak{p}}

\def\matrix#1#2#3#4{{\big(\begin{smallmatrix}#1&#2\\ #3&#4\end{smallmatrix}\big)}}

\def\mr#1{\mathrm{#1}}

\title{Hirzebruch--Zagier cycles in $p$-adic families and adjoint $L$-values}

\author{Antonio Cauchi}\thanks{A.C.'s research was partly funded by the NSERC grant RGPIN-2018-04392, the FRQ-NT Team grant 2019-PR-256236, and the Concordia Horizon postdoc fellowship n.8009, by the JSPS Postdoctoral Fellowship for Research in Japan, and by Taighde \'{E}ireann -- Research Ireland under Grant number IRCLA/2023/849 (HighCritical).}
\address{Antonio Cauchi\newline UCD School of Mathematics and Statistics\\ University College Dublin\\ Belfield\\Dublin 4\\Ireland}
\email{antonio.cauchi@ucd.ie}

\author{Marc-Hubert Nicole}\thanks{M.-H.N.'s research was partly funded by NSERC grant RGPIN-2019-06957.}
\address{Marc-Hubert Nicole \newline LMNO - Laboratoire de Math\'ematiques Nicolas Oresme, CNRS - UMR 6139, UNICAEN, Normandie Univ., 14000 Caen, France }
\email{marc-hubert.nicole@unicaen.fr}

\author{Giovanni Rosso}\thanks{G.R.'s research was partly funded by the NOVA-FRQNT-CRSNG grant 325940, the FRQ-NT Team grant 2019-PR-256236, and the NSERC grant RGPIN-2018-04392}

\address{Giovanni Rosso \newline Department of Mathematics and Statistics, Concordia University, 1455 De Maisonneuve Blvd. W.,
Montreal, QC  H3G 1M8
Canada }
\email{giovanni.rosso@concordia.ca}

\makeatletter
\def\@tocline#1#2#3#4#5#6#7{\relax
  \ifnum #1>\c@tocdepth 
  \else
    \par \addpenalty\@secpenalty\addvspace{#2}%
    \begingroup \hyphenpenalty\@M
    \@ifempty{#4}{%
      \@tempdima\csname r@tocindent\number#1\endcsname\relax
    }{%
      \@tempdima#4\relax
    }%
    \parindent\z@ \leftskip#3\relax \advance\leftskip\@tempdima\relax
    \rightskip\@pnumwidth plus4em \parfillskip-\@pnumwidth
    #5\leavevmode\hskip-\@tempdima
      \ifcase #1
       \or\or \hskip 1em \or \hskip 2em \else \hskip 3em \fi%
      #6\nobreak\relax
    \dotfill\hbox to\@pnumwidth{\@tocpagenum{#7}}\par
    \nobreak
    \endgroup
  \fi}
\makeatother
\usepackage{hyperref}
\makeindex

\begin{document}

\begin{abstract}
Let $E/F$ be a quadratic extension of totally real number fields. We show that the generalized Hirzebruch--Zagier cycles arising from the associated Hilbert modular varieties can be put in $p$-adic families. As an application, using the theory of base change, we give a geometric construction of the multivariable $p$-adic adjoint $L$-function twisted by the Hecke character of $E/F$, attached to Hida families of Hilbert modular forms over $F$.
\end{abstract}

\maketitle

\setcounter{tocdepth}{1}
\tableofcontents

\section{Introduction}

 The theme of $p$-adic variation has irrigated much of modern arithmetic geometry in the last half-century especially through modular forms and Galois representations. In recent years, more and more examples of $p$-adic families of cohomology classes of algebraic cycles, e.g. Heegner points, have been unearthed and utilized in arithmetic applications. More generally, for a large class of Shimura varieties, their associated special cycles can be packaged compatibly in $p$-power level towers and further in $p$-adic families. In this paper, we take on the task of filling a classical gap in the literature: Hilbert modular varieties. In particular, a central technical contribution of this paper is to exhibit the $p$-adic variation of Hirzebruch--Zagier divisors and more generally so-called Hirzebruch--Zagier cycles in higher dimension.
 Indeed, Hirzebruch and Zagier in their epoch-making paper showed that explicitly defined divisors (essentially, images of modular curves) on a Hilbert modular surface over $\Q(\sqrt{p})$ for $p \equiv 1 \mod{4}$ are Fourier coefficients of a classical modular form of weight 2. A vast web of deep conjectures generalizing the Hirzebruch--Zagier theorem on Hilbert modular surfaces as well as the Gross--Kohnen--Zagier theorem on modular curves, has been developed under the banner of the Kudla program especially for orthogonal and unitary groups. Since it befits squarely our purpose, we rely on the framework of the treatise of Getz and Goresky \cite{GetzGore} that reworked and much generalized the original Hirzebruch--Zagier theorem from Hilbert modular surfaces to higher dimensional Hilbert modular varieties, by crucially using as a central tool relative quadratic base change for automorphic representations.

\subsection{Main results} Let $E/F$ be a quadratic extension of totally real number fields with $[F:\Q]=d$ and let \[H= {\rm Res}_{\mathcal{O}_F/\Z}\GL_2,\,\, G= {\rm Res}_{\mathcal{O}_E/\Z}\GL_2.\] There are Hilbert modular varieties $Y_H$ and $Y_G$ of dimension $d$ and $2d$ associated with $H$ and $G$, respectively. The natural embedding of $H$ into $G$ induces an embedding $\iota: Y_H \hookrightarrow Y_G$ of codimension $d$. The cycle determined by $\iota$ is referred to as the Hirzebruch--Zagier cycle. Let $\Sigma_E$ denote the set of archimedean places of $E$ and $\Phi$ be the $F$-type for $E$ defined in \eqref{Ftype}. Then, using branching laws for the restriction to $H$ of algebraic representations of $G$ and the pushforward by $\iota$, one can construct the cycle in middle degree cohomology of $Y_G$ \[\mathscr{Z}_K^{(\underline{k},m)} \in H^{2d}(Y_G(K), \mathscr{V}^{(\underline{k},m)}). \] 
Here $K \subseteq G(\widehat{\Z})$ denotes a suitable neat compact open subgroup and $\mathscr{V}^{(\underline{k},m)}$ is the local system attached to the algebraic representation of $G$ of highest weight $ (\underline{k},m) \in (\Z_{\geq 0})^{\Sigma_E} \times \Z$ s.t. $k_\sigma \equiv m$ (mod $2$) and $k_\sigma = k_{\sigma'}$ for every $\sigma \in \Phi$. These cycles, or rather an analogous construction in intersection cohomology, play a crucial role in \cite{GetzGore}. Indeed, their Hecke translates are used to construct a Fourier series which defines a Hilbert modular form on $H$ with coefficients in the part of the middle degree intersection cohomology of $Y_G$ supported by base change Hilbert modular forms from $H$. The significance of these cycles is underscored by their link with the special value at $s=1$ of the adjoint $L$-function for $H$, which we now recall in a simplified form. We invite the reader to consult \S \ref{sec:cohomologygroups} and \S \ref{sec:RSintegrals} for further details. 

Let $g$ be a cusp eigenform for $G$ of level $K$, weight $(\underline{k},0)$, and trivial Nebentypus and consider its twist $g_0 = g \otimes \theta^{-1} $ with $\theta$ a unitary Hecke character on $\A_E^\times$ whose restriction to $\A_F^\times$ equals to the quadratic Hecke character $\eta_{E/F}$ of $E/F$. Attached to $g_0$ and an $F$-type $J$, there is a rapidly decreasing differential form $\omega_J(g_0^{-\iota})$, see \eqref{eq:RDdiffForm}. By an analytic formula of Hida, the Poincar\'e pairing between $\omega_J(g_0^{-\iota})$ and the Hirzebruch--Zagier cycle $\mathscr{Z}_K^{(\underline{k},0)}$ gives 
\begin{align}\label{intformulaintro}
    \langle \omega_J(g_0^{-\iota}),  \mathscr{Z}_K^{(\underline{k},0)}\rangle = C \cdot {\rm Res}_{s=1} L({\rm As}(g_0),s), 
\end{align}
with $C$ an explicit non-zero constant and $L({\rm As}(g_0),s)$ the (partial) Asai $L$-function of $g_0$. Note that the non-triviality of the left hand side implies that the form $g_0$ is $H$-distinguished, which is equivalent to $g$ being a base change form on $H$. Indeed, an analysis of the possible pole at $s=1$ (see \cite[Theorem 10.1]{GetzGore}) shows that $L({\rm As}(g_0),s)$ has a (simple) pole at $s=1$ if and only if $g$ is the base change of a cusp form $f$ on $H$ with trivial Nebentypus, in which case \[{\rm Res}_{s=1} L({\rm As}(g_0),s) = C' \cdot L({\rm Ad}(f) \otimes \eta_{E/F},1), \]
where $L({\rm Ad}(f) \otimes \eta_{E/F},s)$ is the (partial) adjoint $L$-function of $f$ twisted by the Hecke character $\eta_{E/F}$ and $C'$ is an explicit non-zero constant coming from the residue at $s=1$ of a partial Dedekind zeta function. Moreover, as shown by Hida in \cite{Hidanoncritical}, after appropriately normalizing the differential form and cycle, the pairing in \eqref{intformulaintro} recovers the algebraic part of the adjoint $L$-value $L({\rm Ad}(f) \otimes \eta_{E/F},1)$.\\ 

Let $p$ be an odd prime which is unramified in $E$. Our main results may be summarized briefly as follows : we investigate the $p$-adic interpolation of Hirzebruch--Zagier cycles and, by exploiting the theory of base change and the integral formula \eqref{intformulaintro}, we construct a $p$-adic $L$-function that interpolates the values $L({\rm Ad}(f) \otimes \eta_{E/F},1)$ as $f$ varies in a Hida family of Hilbert modular forms on $H$.

Regarding our first main result, we prove the existence of a Big Hirzebruch--Zagier cycle in middle-dimensional (ordinary) Iwasawa cohomology. 
To achieve this, we use the methods introduced in \cite{LoefflerSphericalvarieties} and \cite{LRZ}. We would like to point out that the results in \emph{loc. cit.} do not apply directly to our setting exactly because we work with Hilbert modular varieties that do not satisfy Milne's axiom SV5. We however provide the necessary details to show that this hypothesis can be removed in our case. 

Let $L$ be a finite extension of $\Q_p$ over which $G$ and $H$ split and let $\mathcal{O}$ be its ring of integers. As $(G,H)$ is a spherical pair, we can choose $u \in G(\Z_p)$ such that $H(\Z_p) \cdot u \cdot \overline{B}_G(\Z_p)$ is dense and open in $G(\Z_p)$, with $\overline{B}_G(\Z_p)$ the lower-triangular Borel subgroup of $G$. Following the recipe in \cite[\S 4.4]{LoefflerSphericalvarieties}, from this open orbit, we define a tower of level subgroups $V_{\pmb{r}} = K^{(p)} V_{\pmb{r},p}$, for suitable neat open compact $K^{(p)} \subseteq G(\hat{\Z}^{(p)})$, indexed by $\pmb{r} = (r_\frakp)_\frakp \in \Z_{\geq 0}^{|\Upsilon_{F,p}|}$, where $\Upsilon_{F,p}$ denotes the set of primes of $F$ above $p$. Along this tower, we consider the  Hecke operator $T_\eta = \prod_{\frakp \in \Upsilon_{F,p}} T_{\eta_\frakp}$, where $T_{\eta_\frakp}$ is the suitably normalised correspondence $[V_{\pmb{r}} \eta_\frakp^{-1} V_{\pmb{r}}]$ and $\eta_\frakp = \left( \begin{smallmatrix}
    \pi_\frakp & \\ & 1
\end{smallmatrix} \right) \in \GL_2(F_\frakp)$ (see Remark \ref{rmk:normofalletap}). We then have the middle degree ordinary Iwasawa cohomology group  
 \[
 e^{\rm n.ord} H^{2d}_{\rm Iw}(Y_G(V_\infty), \mathcal{O})  \cong e^{\rm n.ord} H^{2d}(Y_G(K_0(p^2)),\Lambda_{G/H}),
 \]
where  $K_0(p^2) = K^{(p)} \Gamma_0(p^2)$, $\Lambda_{G/H}$ is the Iwasawa algebra introduced in \eqref{eqdefIwalg}, and $ e^{\rm n.ord}$ is the ordinary projector associated to $T_\eta$. Interestingly, $\Lambda_{G/H}$ is closely related to the Iwasawa algebra  $\Lambda_{H/Z_H} = \mathcal{O}\llbracket T_H(\Z_p)/Z_H(\Z_p)\rrbracket$, where $Z_H$ denotes the center of $H$, and isomorphic to it if Leopoldt conjecture for $E$ holds. We note that the Hecke eigensystems attached to the base change of nearly ordinary cusp forms on $H$ with trivial Nebentypus contribute to $e^{\rm n.ord} H^{2d}_{\rm Iw}(Y_G(V_\infty), \mathcal{O})$. 

Twisting the Hirzebruch--Zagier cycles  of weight $ (\underline{k},0)$ with the operator $u \cdot \prod_{\frakp \in \Upsilon_{F,p}}  \eta_\frakp^{r_\frakp} \in G(\Q_p)$, we obtain cohomology classes \[\mathscr{Z}_{\pmb{r}}^{\underline{k}} \in H^{2d} ( Y_G( V_{\pmb{r}}), \mathscr{V}^{\underline{k},0}_{\mathcal{O}})\] 
and show that they can be packaged into an Iwasawa cohomology class. Namely, we prove the following , see Theorem \ref{BIGCLASS} and Corollary \ref{coro:EulerFactor1}.

\begin{theorem}\label{mainthm1}

There is a Big Hirzebruch--Zagier class \[ \mathscr{Z}_{\infty} \in  e^{\nord} H^{2d}_{\rm Iw}(Y_G(V_\infty), \mathcal{O}). \]
Let $\underline{k} \in \Z^{\Sigma_E}$ be such that $k_\sigma \equiv 0$ (mod $2$) and $k_\sigma = k_{\sigma'}$ for all $\sigma \in \Phi$. Then, \begin{enumerate}
    \item for every $\pmb{r} \in \Z_{\geq 1}^{|\Upsilon_{F,p}|}$, we have
\[
\mathrm{mom}^{\underline{k}}_{\pmb{r}}\, \mathscr{Z}_{\infty}= T_{\eta}^{-\pmb{r}} e^{\nord} \,\mathscr{Z}_{\pmb{r}}^{\underline{k}},
\]
with $T_{\eta}^{-\pmb{r}} = \prod_\frakp T_{\eta_\frakp}^{-r_\frakp}$;
\item if $\pmb{r}=\pmb{0}$,  
\[\mathrm{mom}^{\underline{k}}_{0}\, \mathscr{Z}_{\infty} =  \prod_{\frakp \in \Upsilon_{F,p}} \left( 1 - q_{\frakp} \cdot p^{\sum_{\sigma \in \Sigma_{\frakp}}  k_{\sigma}}\, T_{\eta_{\frakp}}^{-1} \right) \cdot 
e^{\nord} \mathscr{Z}_0^{\underline{k}}.\]
\end{enumerate} 
\end{theorem}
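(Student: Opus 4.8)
The plan is to realize $\mathscr{Z}_\infty$ as a limit along the tower $V_{\pmb r}$ of the normalized cycles $T_\eta^{-\pmb r}\,e^{\nord}\mathscr{Z}_{\pmb r}^{\underline k}$, and to check that these are compatible under the corestriction (trace) maps that define Iwasawa cohomology. First I would recall, following \cite{LoefflerSphericalvarieties,LRZ}, the construction of the level groups $V_{\pmb r}$ from the open orbit $H(\Z_p)\cdot u\cdot\overline B_G(\Z_p)$ and the resulting ``abstract'' norm relations: the Hecke operator $T_{\eta_\frakp}$ acts on the tower, and the key input is that for the pushforward of the fundamental class of $Y_H$, twisted by $u\prod_\frakp \eta_\frakp^{r_\frakp}$, the corestriction from level $V_{\pmb r+\pmb 1}$ to level $V_{\pmb r}$ sends $\mathscr{Z}_{\pmb r+\pmb 1}^{\underline k}$ to $T_\eta\cdot\mathscr{Z}_{\pmb r}^{\underline k}$ (for $\pmb r\geq\pmb 1$), with a modified relation at $\pmb r=\pmb 0$ reflecting the non-dense part of the orbit decomposition. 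Granting this, the sequence $(T_\eta^{-\pmb r}e^{\nord}\mathscr{Z}_{\pmb r}^{\underline k})_{\pmb r}$ is compatible, and since $e^{\nord}$ makes $T_\eta$ invertible on ordinary cohomology, it defines an element $\mathscr{Z}_\infty\in e^{\nord}H^{2d}_{\rm Iw}(Y_G(V_\infty),\mathcal{O})$ via the identification with $e^{\nord}H^{2d}(Y_G(V_1),\Lambda_{G/H})$.

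Next I would pin down the moment maps $\mathrm{mom}^{\underline k}_{\pmb r}$: these are the specialization maps from Iwasawa cohomology with $\Lambda_{G/H}$-coefficients to cohomology with the coefficient system $\mathscr{V}^{\underline k,0}_{\mathcal O}$ at level $V_{\pmb r}$, induced by the appropriate algebraic character of $T_H(\Z_p)/Z_H(\Z_p)$ (using $k_\sigma=k_{\sigma'}$ on $\Phi$ and $k_\sigma\equiv 0\bmod 2$ precisely so that this character is well-defined on the relevant quotient). By construction of $\mathscr{Z}_\infty$ as the inverse limit of the $T_\eta^{-\pmb r}e^{\nord}\mathscr{Z}_{\pmb r}^{\underline k}$, and because the moment map at level $\pmb r$ is compatible with the transition maps, part (1) is essentially immediate: $\mathrm{mom}^{\underline k}_{\pmb r}\mathscr{Z}_\infty=T_\eta^{-\pmb r}e^{\nord}\mathscr{Z}_{\pmb r}^{\underline k}$. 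The only genuine point is to verify that the branching-law/weight-$(\underline k,m)$ normalization of the classical cycle $\mathscr{Z}_{\pmb r}^{\underline k}$ is compatible with the $\Lambda$-adic normalization — i.e. that the ``moment'' of the universal class is literally the weight-$\underline k$ cycle and not a twist of it — which I would check by tracking the $u\prod_\frakp\eta_\frakp^{r_\frakp}$ twist through the identification of the local system at level $V_{\pmb r}$.

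Part (2) is the Euler-factor computation at $\pmb r=\pmb 0$, and this is where the main obstacle lies. The transition map from level $V_{\pmb 1}$ down to level $V_{\pmb 0}$ is not simply multiplication by $T_\eta$: the orbit $H(\Z_p)u\overline B_G(\Z_p)$ is dense and open but not all of $G(\Z_p)$, so the trace of the fundamental-class pushforward picks up an extra contribution from the complementary orbit(s). I would compute this by decomposing $G(\Z_p)/$ (the relevant congruence subgroup) into $H(\Z_p)$-orbits, evaluating the pushforward of the $Y_H$-class on each piece — the dense piece giving $T_{\eta_\frakp}$, the lower-dimensional piece contributing, after applying the branching law to the weight-$\underline k$ coefficients, the factor $q_\frakp\cdot p^{\sum_{\sigma\in\Sigma_\frakp}k_\sigma}$. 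Here $q_\frakp$ is the residue-field size of $F_\frakp$ (so $q_\frakp=\#\mathcal O_F/\frakp$), accounting for the number of points in the residual orbit, and the power of $p$ records how the degree-$d$ pushforward interacts with the Hecke-equivariant normalization of the coefficient sheaf. Assembling the two contributions over all $\frakp\in\Upsilon_{F,p}$ and applying $T_\eta^{-\pmb 1}e^{\nord}$ to $\mathscr{Z}_{\pmb 1}^{\underline k}$ yields the stated product $\prod_\frakp(1-q_\frakp p^{\sum_{\sigma\in\Sigma_\frakp}k_\sigma}T_{\eta_\frakp}^{-1})\cdot e^{\nord}\mathscr{Z}_0^{\underline k}$; I would cross-check the shape of this factor against the known form of such ``bottom-level'' Euler factors in \cite{LoefflerSphericalvarieties,LRZ} and against the Asai $L$-factor appearing in \eqref{intformulaintro}, since the $p$-adic $L$-function built later must match the correct interpolation formula. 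Throughout, the one structural subtlety requiring care — flagged already in the introduction — is that $Y_G$ fails Milne's axiom SV5, so the cited norm-relation machinery does not apply verbatim; I would supply the missing argument by noting that the relevant statements are about the geometry of the $p$-adic tower and Hecke correspondences on it, which only involves the $p$-part of the level and the reductive group $G/\Q_p$, so the failure of SV5 (an archimedean/connectedness condition on the Shimura datum) does not interfere, provided one works with the fixed neat tame level $K^p$ and the explicit $p$-adic level groups $V_{\pmb r}$.
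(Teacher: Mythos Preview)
Your proposal follows the same overall architecture as the paper's proof (norm relations from the spherical-variety framework for compatibility in the tower, then an orbit decomposition at the bottom level for the Euler factor), but two points are underspecified in a way that matters.

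First, the Big class $\mathscr{Z}_\infty$ is defined in the paper as the inverse limit of the \emph{weight-zero} classes $T_\eta^{-r}e^{\nord}\mathscr{Z}_r^{\underline 0}$, not the weight-$\underline k$ ones; the Iwasawa cohomology has constant coefficients $\mathcal O$. So part~(1) is \emph{not} immediate from the construction: the content is precisely that $\mathrm{mom}^{\underline k}_{\pmb r}$, which is cup product with the highest weight vector $v_{\underline k,r}^{\rm hw}$ followed by trace, transforms $\mathscr{Z}_r^{\underline 0}$ into $\mathscr{Z}_r^{\underline k}$ modulo $\mathfrak m^r$. The paper's mechanism for this is Lemma~\ref{rmk:uflag}: $u^{-1}\cdot\Delta^{\underline k}=a_{\underline k}\,v_{\underline k}^{\rm hw}+R^{\underline k}$ with $R^{\underline k}$ having no highest-weight component, combined with the observation (Remark~\ref{rem:normalizationTp}) that the normalized $\eta^r_\flat$ acts as the identity on the highest-weight line and by positive powers of $p$ (hence zero mod $\mathfrak m^r$) on all other weight spaces. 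Your phrase ``tracking the $u\prod_\frakp\eta_\frakp^{r_\frakp}$ twist'' gestures at this, but the specific interplay of $u^{-1}$ with the branching vector and the $\eta$-normalization is the crux and should be stated.

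Second, your handling of SV5 is too quick. The issue is not archimedean: it is that without SV5 the degree of the covering $Y_G(K')\to Y_G(K)$ need not equal $[K:K']$, which is exactly what one needs to verify that the diagram in Proposition~\ref{Cartesian} is Cartesian (by comparing degrees of vertical maps). The paper's fix is Lemma~\ref{verticaldeg}: the degree equals $[K:K']$ provided $K\cap Z_G(\Q)=K'\cap Z_G(\Q)$, and this center condition is then checked explicitly for the level groups $K_{\pmb r}$, $K_{\pmb r}'$ in the tower. Your claim that ``the failure of SV5 does not interfere'' is correct in outcome but the reason is this specific center-intersection verification, not a general principle about $p$-adic versus archimedean data.

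Your sketch of part~(2) via orbit decomposition is on the right track and matches the paper's Proposition~\ref{Cartesianwhenr=0}; the paper organizes it as a Cartesian diagram whose top-left corner is a disjoint union over subsets $\mathfrak S\subseteq\Upsilon_{F,p}$ (open orbit at primes in $\mathfrak S$, closed orbit at the others), then peels off one prime at a time.
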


Our second main result is the construction of the adjoint $p$-adic $L$-function, interpolating the values $L(\mr{Ad}(f) \otimes \eta_{E/F},1)$ as $f$ varies in a nearly ordinary Hida family for $F$ in $d$ variables. Let $\mathbf{f}$ be a Hida family of tame character $\chi_F$ of cusp forms on $H$,  and $\mathbf{f}_E$ its base change to $E/F$ of tame character $\chi = \chi_F \circ {\rm N}_{E/F}$, see Definition \ref{def:BaseChangefamily}. For a unitary Hecke character  $\theta$ of $\A_E^\times$ with conductor coprime with $p$ and such that its restriction to $\A_F^\times$ is $\chi_F \eta_{E/F}$, we consider the twist $\mathscr{Z}_{\infty,\theta}$ of the Big Hirzebruch--Zagier using the correspondence of \cite[Proposition 9.5]{GetzGore}. The $p$-adic $L$-function attached to $\mathbf{f}$ is defined as the image of the Big Hirzebruch--Zagier class $\mathscr{Z}_{\infty,\theta}$ under a suitable idempotent associated to $\mathbf{f}_E$. 

\begin{theorem}\label{mainthm2}
   Let $\mathbf{f}$ a Hida family of cuspidal Hilbert modular forms for $F$ in the sense of Definition \ref{def:HidaFamilies} and let $\mathcal{K}_{\mathbf{f}}$ be the fraction field of the  coefficient of $\mathbf{f}$. Suppose that the image of its residual Galois representation is not solvable. Then there exist a non-zero element of $\mathcal{K}_{\mathbf{f}}$, denoted by $L_p(\mathrm{Ad}(\mathbf{f})\otimes \eta_{E/F})$, which  interpolates the algebraic part of the special value $L(\mathrm{Ad}(f) \otimes \eta_{E/F}, s)$ at $s=1$.
\end{theorem}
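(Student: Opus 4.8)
\emph{Proof strategy.} The plan is to define $L_p(\mathrm{Ad}(\mathbf{f})\otimes\eta_{E/F})$ as the image of the twisted Big Hirzebruch--Zagier class $\mathscr{Z}_{\infty,\theta}$ under a projector onto the base-change component $\mathbf{f}_E$, and then to verify three things: that this projector is defined over $\mathcal{K}_{\mathbf{f}}$, that the resulting element does not vanish, and that its classical specialisations recover the algebraic part of $L(\mathrm{Ad}(f)\otimes\eta_{E/F},1)$ up to the explicit Euler factors at $p$ appearing in Theorem \ref{mainthm1}.

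First I would record the Hecke module structure. The nearly ordinary Hida Hecke algebra $\mathbb{T}_{\mathbf{f}}$ of $\mathbf{f}$ is finite over a $d$-variable Iwasawa algebra of weight space, with $\mathcal{K}_{\mathbf{f}}$ the fraction field of its relevant branch, and quadratic base change furnishes a homomorphism from $\mathbb{T}_{\mathbf{f}}$ to the Hecke algebra acting on $e^{\nord}H^{2d}_{\mathrm{Iw}}(Y_G(V_\infty),\mathcal{O})$; by Theorem \ref{mainthm1} and the correspondence of \cite[Proposition 9.5]{GetzGore} the class $\mathscr{Z}_{\infty,\theta}$ lives in a twist of this group. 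Localising the Iwasawa cohomology at the maximal ideal attached to the residual representation of $\mathbf{f}_E$, I would combine the control isomorphism of Theorem \ref{mainthm1} with the multiplicity-one statement for base-change eigensystems in middle-degree cohomology of $Y_G$ that underlies the Getz--Goresky theory. This is exactly the point at which non-solvability of the residual image is used: it forces $\mathbf{f}$ to be non-CM and non-dihedral for $E/F$, hence $\mathbf{f}_E$ to be cuspidal and ``primitive'', so that its Asai $L$-function has a genuine simple pole at $s=1$ and the base-change locus in $\mathrm{Spec}\,\mathbb{T}_{\mathbf{f}}$ is cut out, after inverting a congruence element, by a genuine idempotent $e_{\mathbf{f}_E}$; consequently the $\mathbf{f}_E$-isotypic part of the Iwasawa cohomology is free of rank one over $\mathcal{K}_{\mathbf{f}}$.

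I then define $L_p(\mathrm{Ad}(\mathbf{f})\otimes\eta_{E/F})$ to be the $p$-adic Poincaré pairing of $\mathscr{Z}_{\infty,\theta}$ against a $\Lambda$-adic family of rapidly decreasing differential forms $\omega_J(\mathbf{f}_E\otimes\theta^{-1})$ interpolating the forms $\omega_J(g_0^{-\iota})$ of \eqref{eq:RDdiffForm}; equivalently, the coordinate of $e_{\mathbf{f}_E}\,\mathscr{Z}_{\infty,\theta}$ in the rank-one module above with respect to the Poincaré-dual generator. For the interpolation property, fix a classical arithmetic point of weight $(\underline{k},0)$ with $k_\sigma\equiv 0 \pmod{2}$ and $k_\sigma=k_{\sigma'}$ for $\sigma\in\Phi$, corresponding to a nearly ordinary cusp form $f$ with base change $g$ and twist $g_0=g\otimes\theta^{-1}$. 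By the moment-map formula of Theorem \ref{mainthm1}(1), applied to the $\theta$-twist, the specialisation of $\mathscr{Z}_{\infty,\theta}$ equals $T_\eta^{-\pmb{r}}e^{\nord}\mathscr{Z}_{\pmb{r},\theta}^{\underline{k}}$, and $\prod_{\frakp}(1-q_\frakp p^{\sum_{\sigma\in\Sigma_{\frakp}}k_\sigma}T_{\eta_\frakp}^{-1})e^{\nord}\mathscr{Z}_{0,\theta}^{\underline{k}}$ when $\pmb{r}=\pmb{0}$, while the étale--Betti comparison identifies the $p$-adic Poincaré pairing with Hida's archimedean one up to canonical periods; matching the $\theta$-twisted cycle against $\omega_J(g_0^{-\iota})$ by adjointness of the correspondence of \cite[Proposition 9.5]{GetzGore} and invoking \eqref{intformulaintro} together with the residue computation recalled in the introduction yields $C\,C'\cdot L(\mathrm{Ad}(f)\otimes\eta_{E/F},1)$ times the displayed Euler factor, and by \cite{Hidanoncritical} the appropriate normalisation of cycle and form turns this into the algebraic part of the adjoint $L$-value. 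Since $L(\mathrm{Ad}(f)\otimes\eta_{E/F},1)$ computes, up to non-zero elementary factors, a Petersson norm and is therefore non-zero at each such $f$, the specialisation is non-zero on a Zariski-dense set of points, so $L_p(\mathrm{Ad}(\mathbf{f})\otimes\eta_{E/F})\neq 0$ in $\mathcal{K}_{\mathbf{f}}$.

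The main obstacle I anticipate is the comparison of normalisations underlying the interpolation step: reconciling the $p$-adic Poincaré pairing, computed with the ordinary projector $e^{\nord}$ and the $u\cdot\prod_{\frakp}\eta_\frakp^{r_\frakp}$-twist, with Hida's archimedean formula requires careful bookkeeping of the canonical periods, of the $p$-stabilisation factors producing the Euler factor of Theorem \ref{mainthm1}(2), and of the auxiliary correspondence of \cite[Proposition 9.5]{GetzGore} used to incorporate $\theta$. A secondary difficulty is establishing the control and multiplicity-one input for the base-change component on Hilbert modular varieties, which do not satisfy Milne's axiom SV5; this is handled by the refinements indicated after Theorem \ref{mainthm1}.
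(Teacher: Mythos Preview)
Your overall approach matches the paper's: project $\mathscr{Z}^\flat_{\infty,\theta}$ onto the one-dimensional $\mathcal{K}_{\mathbf{f}}$-line cut out by the idempotent $\mathbf{1}^\varepsilon_{\mathbf{f}_E}$, read off the coordinate with respect to a fixed generator $G_{\mathbf{f}_E}$, and verify the interpolation by specialising via the moment maps of Theorem~\ref{mainthm1} and comparing with the Rankin--Selberg integral for the Asai $L$-function.

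Two details are misattributed, however. First, the non-solvable hypothesis is not used to ensure cuspidality of $\mathbf{f}_E$ or the existence of the Asai pole (that needs only that $\mathbf{f}$ is not dihedral for $E/F$, which is much weaker); rather, the paper invokes it via the Caraiani--Tamiozzo vanishing theorem to force the $\mathfrak{m}_f$-localised cohomology to concentrate in middle degree, and it is this concentration that makes the control isomorphism of Lemma~\ref{lemma:compareHeckeAlg} go through and yields the rank-one statement. Second, there is no \'etale--Betti comparison in the argument: everything takes place in Betti cohomology, and the key device you omit is the Atkin--Lehner twist of Poincar\'e duality, $[-,-]=\langle -,W^*_{\mathfrak{c}p^{\pmb{r}+\pmb{1}}}-\rangle$, which is what makes the pairing $T_\eta$-equivariant (Proposition~\ref{prop:TetaUp}) and allows the factor $T_\eta^{-\pmb{r}}$ coming from the moment map to be converted into the normalised $U_p$-eigenvalue on the automorphic side. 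The ``bookkeeping'' obstacle you correctly anticipate is precisely this Atkin--Lehner manipulation together with the explicit local zeta-integral computations of \S\ref{sec:RSintegrals} (Lemmas~\ref{RSintegralpstab}, \ref{lemmaforactionofuonRSfinalversion}, \ref{lemma:integraleval}). Finally, your non-vanishing argument needs adjustment: $L(\mathrm{Ad}(f)\otimes\eta_{E/F},1)$ is not a Petersson norm (that is the untwisted adjoint value); one argues instead via non-vanishing of automorphic $L$-functions on $\mathrm{Re}(s)=1$, or directly from the non-vanishing of the Hirzebruch--Zagier period integral for $H$-distinguished representations.
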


We call $L_p(\mathrm{Ad}(\mathbf{f})\otimes \eta_{E/F})$ the adjoint $p$-adic $L$-function twisted by $\eta_{E/F}$ of $\mathbf{f}$ because it satisfies the expected interpolation properties at classical weights. We refer to Theorem  \ref{thmmain} for the precise statement of the interpolation formulae as well as the description of the Euler factors at $p$ appearing therein. 

We expect the Hirzebruch--Zagier theorem to be naturally compatible with the $p$-adic variation of Hirzebruch--Zagier divisors. As a step in that direction, in Remark \ref{BeforeBigPairing}, we define an element of $e^{\nord} H^{2d}_{\rm Iw}(Y_G(V_\infty), \mathcal{O}_{\mathfrak{m}}) \otimes_{\Lambda_{G/H}} \mathcal{K}$, where $\mathcal{K}$ is a large enough extension of $\mathrm{Frac}(\Lambda_{G/H})$,  by taking the sum of the adjoint $p$-adic $L$-functions of each primitive base-change family contributing to the Iwasawa cohomology group. This can be seen as a family version of the element $\Phi_{Q([Z]),\chi_E}$ of \cite[Theorem 8.4]{GetzGore}. It would be interesting to upgrade Remark \ref{BeforeBigPairing} and construct a $\Lambda_{H/Z_H}$-adic Hilbert modular form valued in the ordinary Iwasawa cohomology group studied here. We shall come back to this in future work. Note that the $p$-adic variation of the Gross--Kohnen--Zagier theorem via Big Heegner points has been treated by Longo and Nicole in \cite{LongoNicole1}, and \cite{LongoNicole2}.

\subsection{Relation to previous literature} 
 
 The adjoint $p$-adic $L$-functions in several variables for Hilbert modular forms and their twist interpolating the values of the adjoint $L$-function at Deligne critical values have been constructed in \cite{H6, Wu, RossoIsrael}, see also \cite{RosTh} for the interpolation of critical values on the right side of the center of the functional equation. Still, due to the presence of trivial zeroes, these $p$-adic $L$-functions generally vanish at the near central value $0$ (or $1$).  
 
 If one removes the cyclotomic variable, than one can get rid of the trivial zeroes and interpolate the special value at $1$: in the literature, this has been done by interpreting the congruence module of a family as the adjoint $p$-adic $L$-function, see for example \cite{H2, Bellaiche}. This is done thanks to a formula of Shimura that interprets the Petersson norm of a Hilbert modular form $f$ as the algebraic part of $L(\mr{Ad}(f),1)$ and, in turn, the congruence module can be related to the (algebraically normalised) Petersson norm. 
 Hence, the congruence module corresponds, up to a $p$-adic unit, with the algebraic part of $L(\mr{Ad}(f),1)$ and thence the name of $p$-adic adjoint $L$-function for the congruence module, see also \cite{DimitrovENS,GhateCong,BLP} for the generalisation to Hilbert modular forms. 
 
 In our paper, we construct a many-variable $p$-adic $L$-function interpolating the values $L(\mr{Ad}(f) \otimes \eta_{E/F},1)$, for $E$ a fixed quadratic totally real extension of $F$, $\eta_{E/F}$ the adelic character of $F$ corresponding to this extension under class field theory, and where $f$ varies in a nearly ordinary Hida family for $F$ in $d$ variables. We do not use an interpretation of these values as a congruence number but rather use an analytic formula of Hida \cite{Hidanoncritical}. Still, note that a suitable integral normalisation of these values should control congruences between the base change of $f$ to $E$ and non-base change forms, see \cite[Conjecture 1.1]{DHI}.
 
A rather similar approach as ours albeit without the construction of the big class in Iwasawa cohomology has been used for to construct a $p$-adic $L$-function interpolating $L(\mr{Ad}(f) \otimes \eta_{E/\mathbb{Q}},1)$ where $E$ is a quadratic imaginary field, see \cite{PakInLee}. See also \cite{ForneaJin} where, building on ideas of Darmon and Rotger e.g., \cite{DarmonRotgerAstDiag}, the authors construct $p$-adic families of twisted diagonal cycles associated to the extension $(E \times \Q)/\Q$, where $E$ is a real quadratic field. Similar techniques apply also for other cycles and/or automorphic $L$-functions, see for example \cite{BSV,LRZ}. 
We finally note that that the techniques introduced in \cite{Rockwoodnonord} could be used to generalize Theorem \ref{mainthm1} and Theorem \ref{mainthm2} to non-ordinary situations, which we plan to explore in future work.

\subsection{Structure of the manuscript}

We describe the content of the paper. In Section 2, we present some prolegomenas on algebraic representations and associated local systems with emphasis on a description via an explicit invariant vector of the branching law associated to the diagonal embedding associated to the relatively quadratic totally real field extension $E/F$. In Section 3, we recall basic notations on Hilbert modular forms and differential forms, $L$-functions, and adjoint $L$-values. In Section 4, we describe the $p$-adic tower of $p$-power level subgroups and the moment map allowing to pass from the limit in cohomology to any finite level. We conclude the section with a Control Theorem for this $p$-adic tower of level subgroups, see Theorem \ref{ControlTheorem}. In Section 5, we consider the Hilbert modular varieties of $p$-power level specified in the previous Section. We describe the Hirzebruch--Zagier cycles arising therein, forming a Big Hirzebruch--Zagier class, see Theorem \ref{BIGCLASS}. We also compare our cycles with the cycles of Getz and Goresky, see Theorem \ref{proponV1toV0}. In Section 6, we recall briefly elements of Hida theory for Hilbert modular forms and define base change Hida families. In Section 7, as a first application of the existence of the $p$-adic family of Hirzebruch--Zagier cycles developed in Section 5, we construct geometrically the $p$-adic adjoint $L$-function, see Theorem \ref{thmmain} in the text. 

\subsection{Acknowledgements}
We thank Raul Alonso, Matteo Longo, Peter Neamti, and Ju-Feng Wu for useful conversations related to this project. We are also indebted to the anonymous referee, whose comments and suggestions have led to many improvements of the article. M.-H. N. is pleased to acknowledge the financial support by the Laboratoire CRM-CNRS IRL (ex-UMI) 3457 for his extended stay in Winter 2021 at the Centre de recherches math\'ematiques (CRM), and for three short visits in 2022, 2023 and 2024.

\section{Preliminaries on algebraic representations and their local systems}\label{secNotation}

Let $F$\index{$F$} be a totally real field over $\Q$ of degree $[F:\Q]=d$\index{$d$}.  Denote $\Sigma_F$\index{$\Sigma_F$} the set of archimedean places of $F$. Let $p$ be an odd prime number. We fix an isomorphism $ i_p : \C \cong \overline{\Q}_p $\index{$i_p$}, in order to identify elements in $\Sigma_F$ with $p$-adic places eventually. We consider $E/F$\index{$E$} a quadratic extension of totally real fields, and suppose that $E=F(\sqrt{\Delta})$, for a totally positive $\Delta \in F$\index{$\Delta$}. We suppose henceforth that $p$ is an unramified prime in $E$. In analogy with the notion of a CM-type, we say that a subset $J$\index{$J$} of $\Sigma_E$ is a $F$-type for $E$ if for every $\sigma \in \Sigma_F$ there is exactly one embedding extending $\sigma$ in  $J$, so $\Sigma_E = J \sqcup \overline{J}$ with respect to real conjugation in ${\rm Gal}(E/F)$. We denote by $\mathcal{O}_E$ (resp. $\mathcal{O}_F$) the ring of integers of $E$ (resp. $F$) and  we write $D_E$\index{$D_E$} (resp. $D_F$\index{$D_F$}) for the (absolute) discriminant of $E$ (resp. $F$) over $\mathbb{Q}$. We also write $\mathfrak{d}_{E/F}$\index{$\mathfrak{d}_{E/F}$} for the discriminant of $E$ over $F$, which is an ideal of $\mathcal{O}_F$.

Let ${\rm Gal}(E/F) = \langle \tau \rangle$. For every $ \sigma \in \Sigma_E $, we denote the pre-composition by $\tau$ by $\sigma' := \sigma \circ \tau$. We define the $F$-type $\Phi$ for $E$ as 
\begin{align}\label{Ftype}
     \Phi :=\{ \sigma \in {\rm \Sigma_E} \,:\,\sigma(\sqrt{\Delta}) < 0 \}. 
\end{align} 
Then 
\begin{align*}
    {\rm \Sigma_E} = \Phi \sqcup \Phi',
\end{align*} with $\Phi' := \{ \sigma'\}_{\sigma \in \Phi}$. 

\subsection{Groups and varieties} 

Let ${\GL_2}$ be the group scheme over $\Z$ of invertible 2-by-2 matrices. 
For $E/F$ a quadratic extension of totally real fields, we let \[H \index{$H$}:= {\rm Res }_{\mathcal{O}_F/\Z} \GL_2,\] \[ G\index{$G$}:= {\rm Res }_{\mathcal{O}_E/\Z} \GL_2. \]
\noindent We denote by $Z_{H}, Z_G$ the centers of $H$ and $G$ respectively. 

Note that we have the natural diagonal embedding $\iota: H \hookrightarrow G$ obtained from viewing $G$ as the restriction of scalars of $H$ with respect to $E/F$. Now, let $L$ be a field over which $H$ and $G$ are split. Then, $\iota : H_L \to G_L $ is induced by the diagonal embedding \[ \prod_{\sigma \in \Sigma_F} \GL_{2,L}  \hookrightarrow   \prod_{\sigma \in \Phi} \Big( \GL_{2,L}  \times \GL_{2,L} \Big), \; (g_\sigma)_\sigma \mapsto (g_\sigma,g_{\sigma'})_\sigma,\]
where $\Phi$ is the $F$-type defined by \eqref{Ftype}.

We denote by $Y_H$ (resp. $Y_G$) the Hilbert modular variety associated with $H$ (resp. $G)$. Recall that the associated Hermitian symmetric domains are the product of $[F:\Q]=d$ (resp. $[E:\Q]=2d$) copies of the Poincar\'e upper-half plane $\mathfrak{H}$.

\begin{definition}
 We define $K_{\infty}^+$  to be the connected component  containing the identity of the maximal compact subgroup $K_\infty$ of $G(\R)$. Let $K$ be a compact-open subgroup of $G(\A_{f})$. We write 
 \[
 G(\A_f)= \bigcup_i  G(\Q) t_i K K_{\infty}^+.
 \]
 We say that $K$ is neat if 
 \[
 \overline{\Gamma}_i(K):={\rm Im}( t_i K K_{\infty}^+t_i^{-1} \cap G(\Q) \rightarrow {\rm PGL}_2 (E))
 \]
 acts without fixed points on $\mathfrak{H}^{2d}$.
\end{definition}
An analogous definition can be made for a compact-open subgroup of $H(\A_{f})$. When $K$ and $K'$ are neat compact-open subgroups respectively of $G(\A_{f})$ and $H(\A_{f})$, $Y_G(K)$ and $Y_H(K')$ are smooth quasi-projective schemes over ${\rm Spec}\, \Q$ of dimension $2d$ and $d$ respectively. 

\subsection{Representations}\label{algebraicreps}

From now on, $L$ denotes a field for which $H$ and $G$ are split. Let $V$ be the standard representation  $\textbf{GL}_{2,L}$ acting on the left on $V \simeq L^2$ with given basis $(e_1, e_2)$, and denote ${\rm det} := \bigwedge^2 V$ by minor abuse of notation. Every irreducible algebraic representation of $\textbf{GL}_{2,L}$ is of the form $V^{k,m}:= {\rm Sym}^k(V) \otimes {\rm det}^m$, with $(k,m) \in \Z_{\geq 0} \times \Z$. Here we denote by ${\rm Sym}^k(V)$ the quotient module of co-invariants of $V^{\otimes k}$ for the action of the symmetric group $\mathfrak{S}_k$ on $k$ elements.

\begin{definition} For $ (\underline{k},m) \in (\Z_{\geq 0})^{\Sigma_E} \times \Z$ s.t. $k_\sigma \equiv m$ (mod $2$), where we write $\underline{k} = (k_\sigma)_{\sigma \in \Sigma_E}$, we define  \[V^{\underline{k},m} :=  \bigotimes_{\sigma \in \Sigma_E}  {\rm Sym}^{k_\sigma} (V_\sigma) \otimes \mathrm{det}_\sigma^{\frac{m-k_{\sigma}}{2}}.\]
\end{definition}
\begin{remark}
By a slight abuse of notation, we shall consider algebraic representation of $H$ of weights   $ (\underline{k},m) \in (\Z_{\geq 0})^{\Sigma_F} \times \Z$   that will be denoted by the same symbol $V^{\underline{k},m} $. The weight $\underline{k}$ and the context will make clear if we are working with representations of $H$ or $G$. 
\end{remark}
\begin{remark}
The highest weight vector of $V^{\underline{k},m}$ is given by $\bigotimes_\sigma e_{1,\sigma}^{\otimes k_\sigma}$, where $(e_{1,\sigma},e_{2,\sigma})$ is the standard basis of each $V_\sigma$.     
\end{remark}
\begin{remark}
The contragredient of 
$V^{\underline{k},m}$ is isomorphic to $V^{\underline{k},-m}$. 
\end{remark}

\noindent We can realize $V^{\underline{k},m}$ explicitly as follows. The $L$-vector space $L[X_\sigma,Y_\sigma]^{=k_\sigma}$ of homogeneous polynomials of degree $k_\sigma$ is a model for the dual representation of ${\rm Sym}^{k_\sigma} (V_\sigma)$. The element $g \in \textbf{GL}_{2,L}$ acts on $L[X_\sigma,Y_\sigma]^{=k_\sigma}$ via the left action given by
\[
g \cdot P(X,Y)=\mathrm{det}(g)^{-k_\sigma} P\left( \left(g^* \left(\begin{array}{c}
 X \\
 Y 
\end{array} \right)\right)^t \right)=\mathrm{det}(g)^{-k_\sigma} P(dX-bY,-cX+aY), 
\]
where for $g=\left(\begin{array}{cc}
a & b \\
c & d
\end{array} \right)$, $g^*=\mathrm{det}(g)g^{-1}=\left(\begin{array}{cc}
d & -b \\
-c & a
\end{array} \right)$.  Observe that, as $\mathrm{SL}_{2,L}$-representations, ${\rm Sym}^{k_\sigma} (V_\sigma)$ and $L[X_\sigma,Y_\sigma]^{=k_\sigma}$  are isomorphic via sending \[
\alpha e_1+\beta e_2  \mapsto -\beta X+\alpha Y.
\]
This implies that $L[X_\sigma,Y_\sigma]^{=k_\sigma} \simeq {\rm Sym}^{k_\sigma} (V_\sigma) \otimes {\rm det}_\sigma^{-k_\sigma}$ as $\textbf{GL}_{2,L}$-representations. Via this identification, $e_{1,\sigma}$ hence $Y_\sigma$ are the highest weight vectors for $V_\sigma$ and its dual. This implies that the dual of $\bigotimes_{\sigma \in \Sigma_E}  {\rm Sym}^{k_\sigma} (V_\sigma)$ as a $G_L$-representation is isomorphic to $\bigotimes_{\sigma \in \Sigma_E} L[X_\sigma,Y_\sigma]^{=k_\sigma}$.

We introduce some useful lattices in ${\rm Sym}^k(V)$. Let $\mathcal{O} := \mathcal{O}_L$ be the ring of integers of $L$, and identify $\mathcal{O}^2$ with $\mathcal{O}e_1 \oplus \mathcal{O}e_2$ inside $V$. We have

\[
{\rm TSym}^k(\mathcal{O}^2) \subset {\rm Sym}^k(\mathcal{O}^2)\subset {\rm Sym}^k(V).
\]

\noindent
Recall that ${\rm Sym}^k(\mathcal{O}^2)$ is the module of $\mathfrak{S}_k$-coinvariants, and ${\rm TSym}^k(\mathcal{O}^2)$ is the submodule of $\mathfrak{S}_k$-invariants in ${(\mathcal{O}^2)}^{\otimes k}$, that we then identify with its image in ${\rm Sym}^k(\mathcal{O}^2)$. Recall that a lattice in ${\rm Sym}^k(V)$ is \emph{admissible} if it is invariant under the Kostant $\Z$-form in the universal enveloping algebra of the Lie algebra of $\textbf{GL}_{2,L}$. Every admissible lattice is a direct sum of its weight components, thus an admissible lattice in ${\rm Sym}^k(V)$ is proportional to one whose intersection with the highest weight subspace is $\mathcal{O} \cdot e_1^{\otimes k}$.  Hence, without loss of generality, we can suppose this intersection to be precisely $\mathcal{O} \cdot e_1^{\otimes k}$. Finally, recall that any admissible lattice is invariant under the action of $\textbf{GL}_{2,\mathcal{O}}$ (for instance, see \cite{LinZZ} for more details). Note that ${\rm Sym}^k(\mathcal{O}^2)$ is the maximal admissible lattice in ${\rm Sym}^k(V)$, while ${\rm TSym}^k(\mathcal{O}^2)$ is the minimal admissible one. Indeed, by duality, we have two lattices in $L[X,Y]^{=k}$: $\mathcal{O}[X,Y]^{=k}$, the set of homogeneous polynomials of degree $k$ with integer coefficients, which is a maximal lattice and it is dual to ${\rm TSym}^k(\mathcal{O}^2)$,
and $\mathcal{O}\left[ \binom{k}{i} X^{k-i}Y^i \right]$, which is a minimal lattice and is dual to ${\rm Sym}^k(\mathcal{O}^2)$. It coincides with the duality of \cite[\S 7.5]{GetzGore} (under the isomorphism above).

Given a finite order Hecke character $\chi$ of $E$ of conductor dividing an ideal $\mathfrak{c}$ of $\mathcal{O}_E$, we can define a character of \[K_0(\mathfrak{c}) : = \{ g \in \GL_2(\widehat{\mathcal{O}}_E)\,:\, g = {\matrix{a}{b}{c}{d}}, \text{ with } c \equiv 0 \text{ mod } \mathfrak{c}\}\]
via
${\matrix{a}{b}{c}{d}} \mapsto \chi(d)$ and consequently a one dimensional representation $L({\chi})$ of $K_0(\mathfrak{c})$. Following \cite[\S 6.8]{GetzGore}, we denote $V^{\underline{k},{m}}(\chi) :=V^{\underline{k},{m}} \otimes_L L(\chi)$. Explicitly, for $v \in V^{\underline{k},{m}}(\chi)$ and $g \in K_0(\mathfrak{c})$, we have $g.v= \chi(g) v$.

We conclude by defining two more representations of $G(L)$, that are not necessarily algebraic.

\begin{definition} 
For $ (\underline{k},m) \in (\Z_{\geq 0})^{\Sigma_E} \times \Z$   we define the complex representation of $G(L)$  \[V^{\underline{k},m}_\C :=  \bigotimes_{\sigma \in \Sigma_E}  {\rm Sym}^{k_\sigma} (V_\sigma \otimes \C) \otimes \mathrm{det}_\sigma^{\frac{m-k_{\sigma}}{2}}.\]

If, instead, we consider a $p$-adic field $L_{\mathfrak{m}}$ that contains all the square roots of elements of $E \otimes_{\Z} \Q_p$ (note that this is a finite extension of $\Q_p$) and denote by $\mathcal{O}_{\mathfrak{m}}$ its ring of integers, we define, for $ (\underline{k},m) \in (\Z_{\geq 0})^{\Sigma_E} \times \Z$, the $p$-adic representation of $G(\mathcal{O})$ \[V^{\underline{k},m}_{\mathcal{O}_{\mathfrak{m}}} :=  \bigotimes_{\sigma \in \Sigma_E}  {\rm Sym}^{k_\sigma} (\mathcal{O}^2 \otimes \mathcal{O}_{\mathfrak{m}}) \otimes \mathrm{det}_\sigma^{\frac{m-k_{\sigma}}{2}}.\]
\end{definition}

\subsection{Branching laws}\label{branchinglaws}

Recall that $\Phi$ denotes the $F$-type for $E$ given by \eqref{Ftype} and that the embedding $\iota:H_L \to G_L $ is induced by the diagonal embedding: \[ \prod_{\sigma \in \Sigma_F} \GL_{2,L}  \hookrightarrow   \prod_{\sigma \in \Phi} \Big( \GL_{2,L}  \times \GL_{2,L} \Big), \; (g_\sigma)_\sigma \mapsto (g_\sigma,g_{\sigma'})_\sigma.\]

Given the $G_L$-representation $V^{\underline{k},m}$, we can view it as an $H_L$-representation via the embedding $\iota$. It  decomposes into a sum of irreducible sub-$H_L$-representations. In the following, we characterize the set of representations of $G_L$ which contain a trivial $H_L$-representation.

\begin{lemma}\label{lemma:branching} The representation $V^{\underline{k}, m}_{|_{H_L}}$ contains a unique copy of the trivial $H_L$-representation (up to a twist) if and only if $k_\sigma = k_{\sigma'}$ for all $\sigma \in \Phi$. In which case, we have an embedding 
\[ \iota^{\underline{k}} : \bigotimes_{\sigma \in \Sigma_F} {\rm det}_\sigma^{m} \hookrightarrow V^{\underline{k},m},\]
explicitly realized by the vector \begin{equation} \label{InvVectus}
    \Delta^{\underline{k}}:=\bigotimes_{\sigma \in \Phi}(e_{1,\sigma}e_{2,\sigma'}-e_{2,\sigma}e_{1,\sigma'})^{\otimes k_\sigma} \in V^{\underline{k},m}.
\end{equation} 
\end{lemma}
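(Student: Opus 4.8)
The statement is a branching-law computation for the diagonal embedding $\iota\colon H_L\hookrightarrow G_L$, which after choosing a splitting is just the product over $\sigma\in\Phi$ of the diagonal $\GL_{2,L}\hookrightarrow \GL_{2,L}\times\GL_{2,L}$. So the whole thing reduces, factor by factor, to the classical Clebsch--Gordan decomposition
\[
\mathrm{Sym}^{a}(V)\otimes\mathrm{Sym}^{b}(V)\ \cong\ \bigoplus_{j=0}^{\min(a,b)}\mathrm{Sym}^{a+b-2j}(V)\otimes{\det}^{\,j}
\]
as $\GL_{2,L}$-representations (valid in characteristic $0$, which is our setting since $L\supseteq\Q$ after the splitting field is taken large enough; the paper works over a field $L$ of characteristic $0$). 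First I would record this decomposition and observe that the summand isomorphic to a power of $\det$ alone — i.e. a twist of the trivial representation — occurs if and only if $a=b$, in which case it is the $j=a$ summand $\mathrm{Sym}^{0}\otimes{\det}^{a}$, and it occurs with multiplicity exactly one. Taking the tensor product over $\sigma\in\Phi$ of the factor at $\sigma$ (which carries $\mathrm{Sym}^{k_\sigma}$) and the factor at $\sigma'$ (which carries $\mathrm{Sym}^{k_{\sigma'}}$), the restriction $V^{\underline k,m}_{|H_L}$ contains a copy of a power of $\det$ — equivalently a trivial $H_L$-representation up to the central twist $\bigotimes_{\sigma\in\Sigma_F}{\det}_\sigma^{m}$ — precisely when $k_\sigma=k_{\sigma'}$ for every $\sigma$, and then with multiplicity one by multiplicativity of the multiplicities over the factors.

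**The invariant vector.** Second, having established existence and uniqueness abstractly, I would exhibit the invariant line concretely. For a single diagonal factor, consider $V\otimes V\cong\mathrm{Sym}^2(V)\oplus{\det}$, and the element $c:=e_1\otimes e_2-e_2\otimes e_1$, which spans the $\det$-line and transforms by $g\cdot c=\det(g)\,c$ under the diagonal $\GL_{2,L}$. Its $k$-th "power" $c^{\otimes k}$, viewed inside $\mathrm{Sym}^{k}(V_\sigma)\otimes\mathrm{Sym}^{k}(V_{\sigma'})$ via the natural maps $V_\sigma^{\otimes k}\otimes V_{\sigma'}^{\otimes k}\to\mathrm{Sym}^k(V_\sigma)\otimes\mathrm{Sym}^k(V_{\sigma'})$, is then an element on which diagonal $\GL_{2,L}$ acts by $\det^{k}$; this is exactly the $\sigma$-component of $\Delta^{\underline k}$ in \eqref{InvVectus}. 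Tensoring over $\sigma\in\Phi$ and absorbing the determinant twists $\mathrm{det}_\sigma^{(m-k_\sigma)/2}$ present in the definition of $V^{\underline k,m}$, I would check that $\Delta^{\underline k}$ spans a copy of $\bigotimes_{\sigma\in\Sigma_F}{\det}_\sigma^{m}$ as an $H_L$-subrepresentation: for $h=(h_\sigma)_{\sigma\in\Sigma_F}\in H_L$, the $\sigma$-factor acts diagonally on the pair $(\sigma,\sigma')$, multiplying the $\sigma$-component of $\Delta^{\underline k}$ by $\det(h_\sigma)^{k_\sigma}$, and combining this with the twist gives the total scalar $\prod_\sigma\det(h_\sigma)^{m}$, as claimed. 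This also shows $\Delta^{\underline k}\neq 0$, so the embedding $\iota^{\underline k}$ is well-defined and nonzero, hence injective since the source is irreducible (one-dimensional).

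**Main obstacle and bookkeeping.** The conceptual content is entirely the single-factor Clebsch--Gordan statement, so no step is genuinely hard; the only thing requiring care is the bookkeeping of the $\det$-twists. There are two sources of twisting: the intrinsic $\mathrm{det}_\sigma^{(m-k_\sigma)/2}$ in the definition of $V^{\underline k,m}$, and the fact that $c$ transforms by $\det$ rather than trivially. One must verify that these combine so that the resulting $H_L$-action on the line $L\cdot\Delta^{\underline k}$ is exactly $\bigotimes_{\sigma\in\Sigma_F}{\det}_\sigma^{m}$ and not some other power — this is where the hypothesis $k_\sigma\equiv m\pmod 2$ is implicitly used to make the half-integer exponents legitimate, and where $k_\sigma=k_{\sigma'}$ is needed so that the two determinant exponents contributed by the pair $(\sigma,\sigma')$ agree and the Clebsch--Gordan $j$-index can reach its maximal value simultaneously on the $\mathrm{Sym}$-part. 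I would also note that one can reprove uniqueness directly at the level of the vector: any $H_L$-invariant line in $\mathrm{Sym}^{k}(V_\sigma)\otimes\mathrm{Sym}^{k}(V_{\sigma'})$ (up to the $\det$ twist) must be a highest-weight vector for the diagonal torus of weight forcing it, by an explicit weight computation on monomials $x_\sigma^{i}y_\sigma^{k-i}\otimes x_{\sigma'}^{j}y_{\sigma'}^{k-j}$, to be proportional to $c^{\otimes k}$ — but invoking Clebsch--Gordan is cleaner. Finally, I would remark that the "up to a twist" in the statement refers precisely to this determinant character, which is unavoidable because $H_L$ has a nontrivial center acting on $V^{\underline k,m}$.
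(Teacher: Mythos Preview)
Your proposal is correct and follows essentially the same approach as the paper: reduce to the factor-by-factor Clebsch--Gordan decomposition, read off that a one-dimensional summand occurs iff $k_\sigma=k_{\sigma'}$ with multiplicity one, and then verify directly that $e_{1,\sigma}e_{2,\sigma'}-e_{2,\sigma}e_{1,\sigma'}$ transforms by $\det$ under the diagonal action. The only cosmetic difference is that the paper first decomposes as $\mathrm{SL}_2$-representations and then matches central characters to recover the $\det^m$ twist, whereas you invoke the $\GL_2$-version of Clebsch--Gordan (with the $\det^j$ factors built in) from the start; both routes amount to the same computation.
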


\begin{proof}
Recall that we have the decomposition as $\mathrm{SL}_2(L)$-representations \[(\operatorname{Sym}^{k}(V) \otimes \operatorname{Sym}^{j}(V))_{|_{\mathrm{SL}_2(L)}} = \oplus_{r = 0}^{{\rm min}(k,j)} \operatorname{Sym}^{k+j-2r}(V),\] 
see \cite[Exercise 11.11]{FultonHarris}.
This in turn implies that \[ \bigotimes_{\sigma \in \Phi} (\operatorname{Sym}^{k_\sigma}(V_\sigma) \otimes \operatorname{Sym}^{k_{\sigma'}}(V_{\sigma'}))_{|_{\mathrm{SL}_2(L)}} = \bigotimes_{\sigma \in \Sigma_F} \bigoplus_{r_\sigma = 0}^{{\rm min}(k_\sigma,k_{\sigma'})} \operatorname{Sym}^{k_\sigma + k_{\sigma'} - 2 r_\sigma }(V_\sigma). \]
Hence we have an embedding of the trivial $\mathrm{SL}_2(L)^d$-representation \[ \mathbf{1} \hookrightarrow \bigotimes_{\sigma \in \Phi} (\operatorname{Sym}^{k_\sigma}(V_\sigma) \otimes \operatorname{Sym}^{k_{\sigma'}}(V_{\sigma'})) \]
if and only if $k_\sigma = k_{\sigma'}$ for all pairs of conjugate embeddings $\sigma, \sigma'$, as $k_{\sigma} + k_{\sigma'} - {2 \rm min} (k_\sigma,k_{\sigma'}) = 0$ iff $k_{\sigma} = k_{\sigma'}$ for all such pairs. By matching the central characters, we get an embedding of $H_L$-representations
\[ \iota^{\underline{k}} : \bigotimes_{\sigma \in \Sigma_F} {\rm det}_\sigma^{m} \hookrightarrow \bigotimes_{\sigma \in \Phi} (\operatorname{Sym}^{k_\sigma}(V_\sigma) \otimes \operatorname{Sym}^{k_{\sigma}}(V_{\sigma}) \otimes {\rm det}_\sigma^{m- k_\sigma}) = V^{\underline{k},m}_{|_{H_L}}. \]
Finally, note that the invariant vector (\textit{cf}. $\Delta$ in \cite[Proposition 9.1]{GetzGore}) which generates the image of $\iota^{\underline{k}}$ is explicitly given by: 
  \begin{equation*}
    \bigotimes_{\sigma \in \Phi}(e_{1,\sigma}e_{2,\sigma'}-e_{2,\sigma}e_{1,\sigma'})^{\otimes k_\sigma}.
\end{equation*}
Indeed, if $g = {\matrix{a}{b}{c}{d}} \in \GL(V_\sigma)$, we have that
\begin{align*}
(g,g)(e_{1,\sigma}e_{2,\sigma'}-e_{2,\sigma}e_{1,\sigma'}) = & 
(ae_{1,\sigma}+ce_{2,\sigma})(be_{1,\sigma'}+d e_{2,\sigma'})- (be_{1,\sigma}+d e_{2,\sigma})(ae_{1,\sigma'}+ce_{2,\sigma'})\\
= \  & ab e_{1,\sigma}e_{1,\sigma'}+cde_{2,\sigma}e_{2,\sigma'}+ade_{1,\sigma}e_{2,\sigma'} - (-bc)e_{2,\sigma}e_{1,\sigma'} +\\
& - ab e_{1,\sigma}e_{1,\sigma'}-cde_{2,\sigma}e_{2,\sigma'}-bce_{1,\sigma}e_{2,\sigma'}-ade_{2,\sigma}e_{1,\sigma'}\\
= \ &\mathrm{det}(g)(e_{1,\sigma}e_{2,\sigma'}-e_{2,\sigma}e_{1,\sigma'}).
\end{align*}

\end{proof}

\begin{remark}
Lemma \ref{lemma:branching} can be rephrased by saying that \[{\rm dim}( V^{\underline{k},m} \otimes \bigotimes_{\sigma \in \Sigma_F} {\rm det}_\sigma^{-m} )^H  \leq 1\] and it is equal to 1 if and only if $k_\sigma = k_{\sigma'}$ for every $\sigma \in \Phi$.     
\end{remark}

\begin{lemma}\label{invariantvectorintegral}
Suppose that $k_\sigma = k_{\sigma'}$ for every $\sigma \in \Phi$. Then $\iota^{\underline{k}}$ sends the $\mathcal{O}$-span of $\Delta^{\underline{k}}$ inside an admissible lattice of $V^{\underline{k},m}$ and it generates the $H$-invariants of this lattice.
\end{lemma}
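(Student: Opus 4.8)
The plan is to leverage the explicit description of the invariant vector $\Delta^{\underline{k}}$ from Lemma \ref{lemma:branching} together with the lattice discussion preceding it. First I would observe that each factor $e_{1,\sigma}e_{2,\sigma'}-e_{2,\sigma}e_{1,\sigma'}$ lies in $\Sym^1(\mathcal{O}^2_\sigma) \otimes \Sym^1(\mathcal{O}^2_{\sigma'})$, hence $\Delta^{\underline{k}} = \bigotimes_{\sigma \in \Phi}(e_{1,\sigma}e_{2,\sigma'}-e_{2,\sigma}e_{1,\sigma'})^{\otimes k_\sigma}$ lies in the integral lattice $\bigotimes_{\sigma \in \Sigma_E} \Sym^{k_\sigma}(\mathcal{O}^2_\sigma) \subset V^{\underline{k},m}$ (twisted by the appropriate power of $\det_\sigma$, which only affects the central character and preserves integrality). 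This already shows $\iota^{\underline{k}}$ maps the $\mathcal{O}$-span of $\Delta^{\underline{k}}$ into an admissible lattice — one can take the maximal admissible lattice $\bigotimes_\sigma \Sym^{k_\sigma}(\mathcal{O}^2_\sigma) \otimes \det_\sigma^{(m-k_\sigma)/2}$.

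The substantive point is that $\mathcal{O}\cdot\Delta^{\underline{k}}$ is the full module of $H$-invariants of that lattice, not merely a finite-index submodule. For this I would argue as follows: by Lemma \ref{lemma:branching} the $H_L$-invariants of $V^{\underline{k},m} \otimes \bigotimes_\sigma \det_\sigma^{-m}$ form a one-dimensional $L$-space spanned by $\Delta^{\underline{k}}$, so the invariants of the lattice $\mathcal{L} := \bigotimes_\sigma \Sym^{k_\sigma}(\mathcal{O}^2_\sigma)$ form a rank-one $\mathcal{O}$-module $\mathcal{O}\cdot\lambda$ for some $\lambda = c^{-1}\Delta^{\underline{k}}$ with $c \in \mathcal{O}\setminus\{0\}$ (using that $\mathcal{O}$ is a PID, or working locally at each prime). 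To see $c$ is a unit, it suffices to exhibit an $H$-equivariant projection, or rather an $\mathcal{O}$-linear functional $\ell \colon \mathcal{L} \to \mathcal{O}$ with $\ell(\Delta^{\underline{k}}) \in \mathcal{O}^\times$: then any $\mathcal{O}$-multiple of $\Delta^{\underline{k}}$ inside $\mathcal{L}$, in particular $\lambda$, must already be an $\mathcal{O}$-multiple of $\Delta^{\underline{k}}$, forcing $c \in \mathcal{O}^\times$. Such an $\ell$ is naturally produced by pairing against the highest weight vector $\bigotimes_\sigma Y_\sigma^{k_\sigma}$ of the dual maximal lattice $\bigotimes_\sigma \mathcal{O}[X_\sigma,Y_\sigma]^{=k_\sigma}$, using the explicit duality recalled in the excerpt; under $\alpha e_1 + \beta e_2 \mapsto -\beta X + \alpha Y$ one computes that the coefficient of the top monomial in $\Delta^{\underline{k}}$ is $\pm 1$.

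Concretely, I would carry out the computation one archimedean place at a time: expanding $(e_{1,\sigma}e_{2,\sigma'}-e_{2,\sigma}e_{1,\sigma'})^{\otimes k_\sigma}$ in the monomial basis $e_{1,\sigma}^i e_{2,\sigma}^{k_\sigma - i} \otimes e_{1,\sigma'}^{j} e_{2,\sigma'}^{k_\sigma - j}$ of $\Sym^{k_\sigma}(\mathcal{O}^2_\sigma)\otimes\Sym^{k_\sigma}(\mathcal{O}^2_{\sigma'})$ and checking that the extreme monomial $e_{1,\sigma}^{k_\sigma}\otimes e_{2,\sigma'}^{k_\sigma}$ (equivalently $e_{2,\sigma}^{k_\sigma}\otimes e_{1,\sigma'}^{k_\sigma}$) occurs with coefficient $\pm 1$; since the symmetrization map $(\mathcal{O}^2)^{\otimes k} \to \Sym^k(\mathcal{O}^2)$ is surjective this extreme coefficient being a unit is exactly what makes the dual pairing with $Y^{k_\sigma}$ (or $X^{k_\sigma}$) a unit. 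Assembling over $\sigma \in \Phi$ and incorporating the central-character twist by $\det^m$ completes the argument. I expect the main obstacle to be purely bookkeeping: correctly tracking the $\det_\sigma$-twists and the identification between $\Sym^k$ and its polynomial model so that the integrality of both $\Delta^{\underline{k}}$ and the dual functional is transparent, and making sure the chosen admissible lattice is the same one used elsewhere in the paper (the duality of \cite[\S 7.5]{GetzGore}). No genuinely hard input is needed beyond Lemma \ref{lemma:branching}.
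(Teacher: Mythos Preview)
Your proposal is correct and follows the same basic approach as the paper: expand $\Delta^{\underline{k}}$ in the monomial basis and observe that all coefficients are integral, placing it in the maximal admissible lattice $\bigotimes_\sigma \Sym^{k_\sigma}(\mathcal{O}^2)\otimes\det_\sigma^{(m-k_\sigma)/2}$. The paper's proof is in fact just this one-line expansion
\[
\Delta^{\underline{k}}= \bigotimes_{\sigma \in \Phi} \sum_{i=0}^{k_{\sigma}}(-1)^i \binom{k_{\sigma}}{i} e_{1,\sigma}^{\otimes (k_{\sigma}-i)}e_{2,\sigma}^{\otimes i}e_{1,\sigma'}^{\otimes i}e_{2,\sigma'}^{\otimes(k_{\sigma}-i)},
\]
and stops there. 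You go further and actually justify the second clause of the statement, namely that $\mathcal{O}\cdot\Delta^{\underline{k}}$ is the \emph{full} module of $H$-invariants of this lattice, by observing that the extreme coefficient (the $i=0$ term above) is $1$, so $\Delta^{\underline{k}}$ is primitive; equivalently, your dual-functional argument detects this unit coefficient. The paper leaves this primitivity implicit in the displayed expansion, so your write-up is more complete on this point while using the same computation.
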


\begin{proof}

Expanding $\Delta^{\underline{k}}$, we get 
\[
\Delta^{\underline{k}}= \bigotimes_{\sigma \in \Phi} \sum_{i=0}^{k_{\sigma}}(-1)^i \binom{k_{\sigma}}{i} e_{1,\sigma}^{\otimes^ {k_{\sigma}-i}}e_{2,\sigma}^{\otimes^i}e_{1,\sigma'}^{\otimes^i}e_{2,\sigma'}^{k_{\sigma}-i}
\]
which is a vector in $\left( \bigotimes_{\sigma \in \Phi} \mathrm{Sym}^{k_\sigma}(\mathcal{O}^2) \otimes \det_\sigma^{\frac{m-k_{\sigma}}{2}} \right) \otimes \left(\bigotimes_{\sigma' \in \Phi'} \mathrm{Sym}^{k_{\sigma'}}(\mathcal{O}^2) \otimes \det_{\sigma'}^{\frac{m-k_{\sigma'}}{2}}\right)$.
\end{proof}

\begin{remark}
    Note that in general $\Delta^{\underline{k}}$ is not in the minimal admissible lattice
\[
\bigotimes_\sigma \left( \mathrm{TSym}^{k_\sigma}(\mathcal{O}^2) \otimes \det_{\sigma}^{\tfrac{m-k_{\sigma}}{2}}\right) \otimes \left(\bigotimes_{\sigma'} \mathrm{TSym}^{k_{\sigma'}}(\mathcal{O}^2) \otimes \det_{\sigma'}^{\tfrac{m-k_{\sigma'}}{2}}\right).
\]
\end{remark}

\subsection{Cohomology}\label{sec:cohomology}

Associated with the irreducible algebraic representation $V^{\underline{k},{m}}$ and its maximal admissible lattice $V^{\underline{k},{m}}_{\mathcal{O}}$,  we have two local systems $\mathscr{V}^{\underline{k},{m}}$ and $\mathscr{V}^{\underline{k},{m}}_{\mathcal{O}}$ on $Y_G(K)$, which can be defined as in \cite[\S 6.8]{GetzGore}, as long as $K \subset G(\A_f)$ is a neat open compact subgroup. Similarly, by \cite[Proposition 6.3]{GetzGore}, we can define local systems\footnote{In \emph{loc.cit.}, the authors use a different normalization for the weights of the algebraic representations of $G$. They write the weights as $\kappa = (\underline{t},\underline{s}) \in (\Z_{\geq 0})^{\Sigma_E} \times (\tfrac{1}{2}\Z)^{\Sigma_E}$ such that $\underline{t}+2\underline{s} \in \Z \mathbf{1}$. Translating from one convention to the other, our representation $V^{\underline{k},{m}}$ corresponds to the dual of $L(\kappa)$, where, in the notation of \emph{loc.cit.}, $\kappa = (\underline{t},\underline{s})$ is given by $\underline{t} = \underline{k}$ and  $\underline{s} = \left( \frac{m - k_\sigma}{2}  \right)_\sigma$. Similarly, $V^{\underline{k},{m}}(\chi)$ is the dual of $L(\kappa,\chi)$. Thus, the local system $\mathcal{L}(\kappa,\chi)$ of \emph{loc.cit.} coincides with our $\mathscr{V}^{\underline{k},{-m}}(\chi^{-1})$.} $\mathscr{V}^{\underline{k},{m}}(\chi)$ on $Y_G(K_0(\mathfrak{c}))$ attached to a finite order Hecke character $\chi$  of $E$ of conductor dividing $\mathfrak{c}$ and the representation $V^{\underline{k},{m}}(\chi)$ of $K_0(\mathfrak{c})$. Similarly, we can define local systems over $Y_H(K')$. 

Let $\underline{k} \in \Z^{\Sigma_E}$ be such that $k_\sigma = k_{\sigma'}$ for all $\sigma \in \Phi$, then the branching map   $\iota^{\underline{k}}_{\mathcal{O}} : \mathcal{O} \hookrightarrow V^{\underline{k},0}_{\mathcal{O}}$ induces a map of local systems 
\[ \iota^{\underline{k}}_{\mathcal{O}} : \mathcal{O} \to \iota^*\mathscr{V}^{\underline{k},0}_{\mathcal{O}}.\] 
Twisting the vector $\Delta^{\underline{k}}$ by the finite order Hecke character $\chi$ as in \cite[(9.2.5)]{GetzGore}, we can also define a map of local systems 
\[ \tilde{\iota}^{\underline{k}}_{\mathcal{O}_\chi} : \mathcal{O}_\chi \to \iota^*\mathscr{V}^{\underline{k},0}_{\mathcal{O}_\chi}(\chi),\]
where $\mathcal{O}_\chi$ is the extension of $\mathcal{O}$ containing the values of $\chi$.

\section{Preliminaries on Hilbert modular forms and their \texorpdfstring{$L$}{L}-functions}

\subsection{Hilbert modular forms and their differential forms}

In this section we quickly recall some notation for Hilbert modular forms, following mainly \cite[\S 5]{GetzGore}. 

\subsubsection{Hilbert cusp forms and their differential forms}

Let $F$ be a totally real field of degree $d$ over $\mathbb{Q}$ and let $E=F(\sqrt{\Delta})$, for $\Delta$ a totally positive element of $F$.
Consider a weight $\kappa= (\underline{k},m) \in (\Z_{\geq 0})^{\Sigma_E} \times \Z$.
For an ideal $\mathfrak{c}$ of $\mathcal{O}_E$, we consider the compact-open subgroup \[K_0(\mathfrak{c})  = \{ g \in \GL_2(\widehat{\mathcal{O}}_E)\,:\, g = {\matrix{a}{b}{c}{d}}, \text{ with } c \equiv 0 \text{ mod } \mathfrak{c}\}.\]
Let $\chi$ be a Hecke  character for $E$ of conductor dividing $\mathfrak{c}$,  such that its infinity type is given by $\chi_{\infty}(x) = \prod_{v \in \Sigma_E} x_v^{-m}$. 
The space of cusp forms of weight $\kappa$ and Nebentypus $\chi$ is defined in \cite[(5.4.3)]{GetzGore} and as in {\it loc. cit.} we denote it by\footnote{As in the previous footnote, we observe that our weight $\kappa$ corresponds to the weight $(k_\sigma,\left( \frac{m - k_\sigma}{2}  \right) )_\sigma$ of \cite{GetzGore}.}  $S_{\kappa}(K_0(\mathfrak{c}),\chi)$. We shall denote by $f$ an element of $S_{\kappa}(K_0(\mathfrak{c}),\chi)$. It corresponds to a collection of classical Hilbert modular forms defined as functions on the product of upper-half planes. Note that, following {\it loc. cit.} our normalization on the weight is shifted by $2$ compared to the most common normalisation for Hilbert modular forms as functions on the upper half-plane. For example, we associate elliptic modular forms of weight $0$ to elliptic curves over $\mathbb{Q}$.

There is also a cohomological normalization, introduced in \S 5.5 of {\it loc. cit.},  given by \[
f\mapsto f^{-\iota}: x \mapsto f(x^{-\iota}) 
\]
for $x^{\iota}=\mathrm{det}(x)x^{-1}$ and $x^{-\iota}=\mathrm{det}(x)^{-1}x$. This changes $m$ to $-m$ (and $\chi$ to $\chi^{-1}$). This different normalization is more natural when we associate differential forms and cohomology classes to elements of $S_{\kappa}(K_0(\mathfrak{c}),\chi)$. 
Precisely, denote by $\chi_0$ the character of $K_0(\mathfrak{c})$, which at each finite place $v$ is defined by \[ \chi_v\left ( \begin{smallmatrix} a & b \\ c & d \end{smallmatrix} \right)= \begin{cases} \chi_v(d) & \text{ if } \frakp_v \mid \mathfrak{c} \\ 1 & \text{ otherwise,} \end{cases} \]
with $\frakp_v$ denoting the prime ideal associated with $v$. Then, for any subset $J \subset \Sigma_E$, we have a map   (Definition 6.8 of {\it loc. cit.})
\[S_{\kappa}(K_0(\mathfrak{c}),\chi) \to \Omega^{2d}(Y_G(K_0(\mathfrak{c})),\mathscr{V}^{\underline{k},{-m}}(
\chi^{-1}_0)_\C),\, f \mapsto  \omega_J(f^{-\iota}), \]
which defines a class  $[\omega_J(f^{-\iota})]$ in $H^{2d}( Y_G(K_0(\mathfrak{c})),\mathscr{V}^{\underline{k},{-m}}(
\chi^{-1}_0)_\C)$. The map is Hecke equivariant  in the sense of Proposition 7.13 of \textit{loc.cit.} (see Lemma \ref{TetaVSUp} below). The differential form is anti-holomorphic at places in $J$ and holomorphic at places outside $J$ and it is square integrable (Proposition 6.8 of {\it loc. cit.}), so that  $[\omega_J(f^{-\iota})]$ defines a class in $L^2$-cohomology of $Y_G(K_0(\mathfrak{c}))$. Note that as $f$ is a cusp form, the differential form $\omega_J(f^{-\iota})$ is rapidly decreasing, thus
\begin{align}\label{eq:RDdiffForm}
    [\omega_J(f^{-\iota})] \in H^{2d}_{{\rm rd}}(Y_G(K_0(\mathfrak{c})), \mathscr{V}^{\underline{k},{-m}}(
\chi^{-1}_0)_\C).
\end{align}

Recall that such a class can be represented by a compactly supported differential form:

\begin{proposition}[{\cite[Corollary 5.5]{borel2}}]\label{BorelRDtoC} Let $f \in S_{\kappa}(K_0(\mathfrak{c}),\chi)$. There exists a $\mathscr{V}^{\underline{k},{-m}}(
\chi^{-1}_0)_\C$-valued rapidly decreasing form $\Psi$ on $Y_G(K_0(\mathfrak{c}))$ such that $\omega_c : = \omega_J(f^{-\iota}) + d \Psi$ is compactly supported. In other words, 
\[[\omega_c]=[\omega_J(f^{-\iota})] \in H^{2d}_{{\rm rd}}(Y_G(K_0(\mathfrak{c})), \mathscr{V}^{\underline{k},{-m}}(
\chi^{-1}_0)_\C). \]
\end{proposition}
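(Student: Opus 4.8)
The plan is to deduce the statement from the fact that rapidly decreasing forms compute compactly supported cohomology. Set $Y:=Y_G(K_0(\mathfrak{c}))$ and let $\overline{Y}$ be its Borel--Serre compactification, a compact manifold with corners with interior $Y$, built from the rational parabolic subgroups of $G$; since $\overline{Y}$ is homotopy equivalent to $Y$, the coefficient local system $\mathscr{V}^{\underline{k},-m}(\chi_0^{-1})_\C$ extends to $\overline{Y}$. In the Borel--Serre coordinates near a boundary face the boundary sits at $t_\alpha\to 0^+$ while the invariant distance to the corresponding cusp grows like $-\sum_\alpha\log t_\alpha$; hence a differential form on $Y$ is rapidly decreasing, together with all its covariant derivatives, exactly when it extends smoothly to $\overline{Y}$ and vanishes to infinite order along $\partial\overline{Y}$. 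Thus the complex of rapidly decreasing $\mathscr{V}^{\underline{k},-m}(\chi_0^{-1})_\C$-valued forms on $Y$ is identified with the complex of such infinite-order-vanishing forms on the pair $(\overline{Y},\partial\overline{Y})$.

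Granting this, I would invoke Borel's regularization theorem in the precise form \cite[Corollary 5.5]{borel2}: the inclusion of compactly supported forms into rapidly decreasing forms is a quasi-isomorphism, so that the natural map
\[
H^\bullet_c\big(Y,\mathscr{V}^{\underline{k},-m}(\chi_0^{-1})_\C\big)\;\xrightarrow{\ \sim\ }\;H^\bullet_{\mathrm{rd}}\big(Y,\mathscr{V}^{\underline{k},-m}(\chi_0^{-1})_\C\big)
\]
is an isomorphism, both sides being canonically $H^\bullet\big(\overline{Y},\partial\overline{Y};\mathscr{V}^{\underline{k},-m}(\chi_0^{-1})_\C\big)$. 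The proposition then follows immediately: the form $\omega_J(f^{-\iota})$ is closed (it defines a cohomology class, being holomorphic at the places outside $J$ and antiholomorphic at the places in $J$) and rapidly decreasing because $f$ is a cusp form, see \eqref{eq:RDdiffForm}; hence its class in $H^{2d}_{\mathrm{rd}}$ lies in the image of the isomorphism above, which means precisely that there exist a closed compactly supported form $\omega_c$ and a rapidly decreasing form $\Psi$ with $\omega_c=\omega_J(f^{-\iota})+d\Psi$.

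The only real content is the quasi-isomorphism used in the middle step, i.e.\ that a closed rapidly decreasing form which is topologically exact near $\partial\overline{Y}$ admits a rapidly decreasing primitive there; this is the technical core of Borel's regularization and rests on sharp growth estimates in the Borel--Serre coordinates together with the structure of the Lie algebra cohomology of the unipotent radicals of the rational parabolics of $G$. As $Y_G(K_0(\mathfrak{c}))$ is a standard arithmetic locally symmetric space with coefficients in a finite-dimensional algebraic representation, \cite[Corollary 5.5]{borel2} applies verbatim and nothing particular to the Hilbert setting or to the extension $E/F$ intervenes. Concretely the argument can also be run on $Y$ itself: on a deleted neighbourhood $U$ of $\partial\overline{Y}$ the restriction $\omega_J(f^{-\iota})|_U$ is closed and rapidly decreasing, hence --- by the vanishing just mentioned --- equals $d\beta$ for a rapidly decreasing form $\beta$ on $U$; choosing a cutoff $\varrho$ that is $1$ on a large compact subset of $Y$ and $0$ near infinity, one checks that $\omega_c:=\omega_J(f^{-\iota})-d\big((1-\varrho)\beta\big)$ is compactly supported and differs from $\omega_J(f^{-\iota})$ by $d\Psi$ with $\Psi:=-(1-\varrho)\beta$ rapidly decreasing.
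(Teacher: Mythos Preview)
Your proposal is correct and follows the same approach as the paper: the paper does not give its own proof of this proposition but simply cites \cite[Corollary 5.5]{borel2}, and your argument is precisely an unpacking of that reference (the quasi-isomorphism between compactly supported and rapidly decreasing cohomology via Borel's regularization). Nothing is missing.
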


Similar normalization and definitions can be done for Hilbert modular forms for $F$.

\subsubsection{The Fourier expansion}\label{sec:Fourierexpansion}

We recall briefly the Fourier expansion of a form $f$ over $E$, following \cite[\S 5.9]{GetzGore}.
We define an additive character $e_E(x)$, for $x \in \mathbb{A}_E/E$. For a finite  place $v$ of $E$, lying above $q \in \mathbb{Z}$, we define
\[
\begin{array}{cccc}
  e_v:  &  E_v & \longrightarrow & \mathbb{C}^{\times} \\
     & x &\mapsto &  \exp(-2 \pi i \mr{tr}_{E_v}^{\mathbb{Q}_q}(x)) 
\end{array}
\]
where we identify $\mathbb{Q}/\mathbb{Z}$ with $\mathbb{Q}_q/\mathbb{Z}_q$.
If $v$ is an archimedean place, we set $e_v(x):=\exp(2 \pi i x) $.
We can finally define
\[
e_E(x):=\prod_{v}e_v(x_v), x = (x_v).
\]
If $f$ is of weight  $\kappa= (\underline{k},m) \in (\Z_{\geq 0})^{\Sigma_E} \times \Z$, for $\sigma \in \Sigma_E$  define
\[
\begin{array}{cccc}
  W_{\sigma}:  &  \mathbb{R}^{\times} & \rightarrow & \mathbb{C} \\
     & y &\mapsto &  |y|^{\frac{k_\sigma-m}{2}}e^{-2\pi|y|},
\end{array}
\]
and finally, for $x \in \mathbb{A}_E$ and $y \in \mathbb{A}_E^{\times}$
\[
q_{\kappa}(x,y):=e_E(x)\prod_{\sigma \in \Sigma_E}W_{\sigma}(y_{\sigma}).
\]
(Recall that our weight normalization is different from \cite{GetzGore}.)
We let $\mr{diff}_{E/\mathbb{Q}}$ be the different of $E$ over $\mathbb{Q}$. 
A  modular form $f \in S_{\kappa}(K_0(\mathfrak{c}),\chi)$ has a Fourier expansion of the form
\begin{equation}\label{eq:FourierExpansion}
    f\left( \begin{smallmatrix}
   y  & x \\
    0 & 1 
\end{smallmatrix} \right)= |y|_{\mathbb{A}_E}\sum_{\xi \in E^{\times}, \xi \gg 0} a (\xi y \mr{diff}_{E/\mathbb{Q}},f)q_{\kappa}(\xi x, \xi y). 
\end{equation}
When $\mathfrak{n}=( \xi y_f \mathcal{O}_E) \mr{diff}_{E/\mathbb{Q}}$ is an integral ideal, we write 
\[
a(\mathfrak{n},f)=a (\xi y \mr{diff}_{E/\mathbb{Q}},f).
\]
Similar results hold for forms over $F$. 

\subsubsection{The Hecke algebra}
We now define the Hecke algebra for $E$, following \cite[\S 1]{HidaFamilies89}. Denote \[K_{11}(\mathfrak{c}) : = \{ g \in K_0(\mathfrak{c})\,:\, g \equiv {\matrix{1}{\star}{0}{1}} \text{ mod } \mathfrak{c}\}.\]
\begin{definition}
 Fix a level $K \supset K_{11}(\mathfrak{c})$. For a prime ideal $\mathfrak{q}$ coprime with $\mathfrak{c}$ and a positive integer $l$ we define the Hecke operators $T_{\mathfrak{q}^l}, S_{\mathfrak{q}^l}$ as the double coset operators
 \[
 T_{\mathfrak{q}^l}:=\left[K \left(\begin{array}{cc}
    \varpi_{\mathfrak{q}^l}  & 0  \\
    0  & 1
 \end{array} \right)K \right], \quad  S_{\mathfrak{q}^l}:=\left[K \left(\begin{array}{cc}
     \varpi_{\mathfrak{q}^l} & 0 \\
    0  & \varpi_{\mathfrak{q}^l}
 \end{array} \right)K\right],
 \]
 for $\varpi_{\mathfrak{q}}$ a uniformizer at $\mathfrak{q}$. Note that as $\mathfrak{c}$ is coprime to $\mathfrak{q}$, our definition does not depend on the choice of $\varpi_{\mathfrak{q}}$. 
 
Let $\mathcal{R}$ be an extension of $E$ containing the Galois closure of $E$ and such that all the ideals of $E$ are principal in $\mathcal{R}$; let $\mathcal{V}$ its ring of integers. For any $\mathcal{V}$-algebra $R$ we define the spherical Hecke algebra $\mathcal{H}^{\rm sph}_{K,R}$ of level $K$ and coefficients in $R$ as the $R$-algebra generated by the operators $T_{\mathfrak{q}^l}$ and $S_{\mathfrak{q}^l}$ as $l$ and $\mathfrak{q} \nmid \mathfrak{c}$ vary. In $\mathcal{H}^{\rm sph}_{K,R}$, we have the relation
 \[
 T_{\mathfrak{q}}^2-T_{\mathfrak{q}^2}=\mathrm{N}_{E/\Q}(\mathfrak{q})S_\mathfrak{q}.
 \]
\end{definition}

We define similarly $U_{\mathfrak{q}^l}$ for ${\mathfrak{q}}$ dividing $\mathfrak{c}$; but note that it could depend on the choice of uniformizer $\varpi_{\mathfrak{q}}$, see the explicit formula in \cite[(2.2b)]{HidaFourier}. If $p$ is coprime with $\mathfrak{q}$, then there is no need of the normalization factor $\left\{ \phantom{e}\right\}^{-v}$ of \cite[p.~142]{HidaFamilies89}. We shall define later the proper normalization when $\mathfrak{q}$ divides $p$. We define by $\mathcal{H}_{K,R}$ the abstract Hecke algebra over $R$ generated by $U_{\mathfrak{q}^l}$, for all $\mathfrak{q}\mid \mathfrak{c}$ and $l$,  and $T_{\mathfrak{q}^l}$, $S_{\mathfrak{q}^l}$ for all $\mathfrak{q} \nmid \mathfrak{c}$ and any $l$.
 
 This Hecke algebra acts naturally on the space of Hilbert modular forms \cite[\S 2]{HidaFourier} or on cohomology of Hilbert modular varieties \cite[\S 7.6]{GetzGore} and these actions are compatible (in the sense of \cite[Proposition 7.13]{GetzGore}).

\begin{definition}\label{def_of_adjoint_character}
Let $\chi'$\index{$\chi'$} be another character of conductor $\mathfrak{c}$. We see it as a character of $K_0(\mathfrak{c})$ via 
\[ \chi'_v\left ( \begin{smallmatrix} a & b \\ c & d \end{smallmatrix} \right)= \begin{cases} \chi'_v(a^{-1}d) & \text{ if } \frakp_v \mid \mathfrak{c} \\ 1 & \text{ otherwise} \end{cases}. \]
We say that $f \in S_{\kappa}(K_0(\mathfrak{c}),\chi,\chi')$\index{$S_{\kappa}(K_0(\mathfrak{c}),\chi,\chi')$} if $f$ is a cusp form of level $K_{11}(\mathfrak{c})$ such that 
\[
f \vert \left ( \begin{smallmatrix} a & b \\ c & d \end{smallmatrix} \right) = \chi(d)\chi'(a^{-1}d) f
\]
for all $\left( \begin{smallmatrix} a & b \\ c & d \end{smallmatrix} \right) \in K_0(\mathfrak{c})$, where we use the convention of \cite{HidaFourier} for the slash operator $\vert$.
We shall call $\chi$ the Nebentypus of $f$ and $\chi'$ its adjoint character (because it factors via the adjoint group).
\end{definition}

\begin{definition}
 If $f \in S_{\kappa}(K_0(\mathfrak{c}),\chi,\chi')$ is an eigenform for all but finitely many Hecke operators, we denote by $\pi(f)$ the corresponding irreducible cuspidal automorphic representation of $\mr{GL}_2(\mathbb{A}_{E})$ of central character $\chi$, as in \cite[(5.4.3)]{GetzGore}. Note that $\pi(f)$ is not unitary in general, while its twist $\pi(f) \otimes | - |_{\mathbb{A}_{E}}^{m/2}$ is. 
\end{definition}

Similar definitions apply to Hilbert modular forms for $F$.

\subsubsection{Atkin--Lehner operators}
We recall here the Atkin--Lehner involution, following \cite[\S 7.4]{GetzGore}.

Choose a finite id\`ele $\tilde{c}$ such that it generates the ideal $\mathfrak{c}$. 
The Atkin--Lehner operator $W^
*_{\mathfrak{c}}$ on $S_{\kappa}(K_1(\mathfrak{c}))$ is induced by right multiplication by the matrix
\[
W_{\mathfrak{c}}=\matrix{0}{-1}{\tilde{c}}{0}.
\]
More precisely, it maps 
 $ f \in S_{(\underline{k},m)}(K_0(\mathfrak{c}),\chi)$ to  $ W^*_{\mathfrak{c}} (f) \in S_{(\underline{k},m)}(K_0(\mathfrak{c}),\psi)$,
where

\[
W^*_{\mathfrak{c}} (f) (\alpha)= \frac{|\chi(\det(\alpha))|}{\chi(\det(\alpha))}f(\alpha W_{\mathfrak{c}}),
\]

\noindent and $\psi=\chi^{-1}|\phantom{e}|^{-2m}$ (so at infinity it coincides with $\chi$, but the finite part is the inverse). Similarly, right multiplication by $W_{\mathfrak{c}}$ on $Y_G(K_0(\mathfrak{c}))$ induces an isomorphism of local systems $\mathscr{V}^{\underline{k},{-m}}(
\chi^{-1}_0) \simeq \mathscr{V}^{\underline{k},{-m}}(
\chi_0)$ which can be twisted to get a map on differential forms (\textit{cf}. \cite[Proposition 7.6]{GetzGore})
\[ W^*_{\mathfrak{c}} : \Omega^{2d}(Y_G(K_0(\mathfrak{c})),\mathscr{V}^{\underline{k},{-m}}(
\chi^{-1}_0)_\C) \to \Omega^{2d}(Y_G(K_0(\mathfrak{c})),\mathscr{V}^{\underline{k},{-m}}(
\chi_0)_\C), \] 
so that we have a commutative diagram 
\begin{eqnarray}\label{commdiagAL}
    \xymatrix{ S_{\kappa}(K_0(\mathfrak{c}),\chi) \ar[rr]^-{f \mapsto  \omega_J(f^{-\iota})} \ar[d]_{W_\mathfrak{c}^*} & & \Omega^{2d}(Y_G(K_0(\mathfrak{c})),\mathscr{V}^{\underline{k},{-m}}(
\chi^{-1}_0)_\C) \ar[d]^{W_\mathfrak{c}^*} \\ 
S_{\kappa}(K_0(\mathfrak{c}),\psi) \ar[rr]^-{f \mapsto  \omega_J(f^{-\iota})} & & \Omega^{2d}(Y_G(K_0(\mathfrak{c})),\mathscr{V}^{\underline{k},-m}(
\chi_0)_\C).
} 
\end{eqnarray}
Note that $W^*_{\mathfrak{c}}$ induces an isomorphism of the Hecke algebra of level $K_0(\mathfrak{c})$ by mapping every Hecke operator $T$ to its adjoint $T^*$ (see the proof of \cite[Lemma 9.3]{HidaFourier}), namely 
\[
T^*= W_{\mathfrak{c},*} \circ T \circ {W^*_{\mathfrak{c}}}.
\]
\begin{remark}
   When $T=T_{\mathfrak{q}}$ for $\mathfrak{q}$ coprime with $\mathfrak{c}$, then $T_{\mathfrak{q}}=T_{\mathfrak{q}}^*$. 
\end{remark}

\subsection{\texorpdfstring{$L$}{L}-functions and base change}

\subsubsection{The adjoint \texorpdfstring{$L$}{L}-function}\label{sec:AdLfunc}

Let $\mathfrak{c}_F$ be an ideal of $F$ and $f$ be an Hilbert cuspidal eigenform of level $K_{11}(\mathfrak{c}_F)$, weight $(\underline{k},m)$ and Nebentypus $\chi$. We assume that the form is minimal, {\it i.e.} that it has minimal conductor among all quadratic twists. For any Hecke character $\epsilon: F^\times \backslash \mathbb{A}^\times_F \to \C^\times$\index{$\epsilon$} of conductor $\mathfrak{f}(\epsilon)$, we define the (imprimitive) adjoint $L$-function of $f$ twisted by $\epsilon$ as 
\[
L(\mathrm{Ad}(f) \otimes \epsilon, s):=\prod_{ \mathfrak{q} \not\mid \mathfrak{f}(\epsilon)} {L_{\mathfrak{q}}(\mathrm{Ad}(f) \otimes \epsilon, s)}.
\]
If $\mathfrak{q}$ does not divide $\mathfrak{c}_F$ then ${L_{\mathfrak{q}}(\mathrm{Ad}(f) \otimes \epsilon, s)}^{-1}$ is defined as
\[
\left(1-\epsilon(\mathfrak{q})\frac{\alpha_{1,\mathfrak{q}}}{\alpha_{2,\mathfrak{q}}}\Norm_{F/\Q}(\mathfrak{q})^{-s}\right)
\left(1-\epsilon(\mathfrak{q})\Norm_{F/\Q}(\mathfrak{q})^{-s}\right) 
\left(1-\epsilon(\mathfrak{q})\frac{\alpha_{2,\mathfrak{q}}}{\alpha_{1,\mathfrak{q}}}\Norm_{F/\Q}(\mathfrak{q})^{-s}\right)
\]
for ${\alpha_{1,\mathfrak{q}}},{\alpha_{2,\mathfrak{q}}}$ the two Satake parameters of $\pi(f)_\mathfrak{q}$ defined via\footnote{Note that we normalize the Satake parameters as in \cite[(5.9.6)]{GetzGore}, where a shift $s\mapsto s+\frac{1}{2}$ from the usual normalisation is done when defining the standard $L$-function for $f$.}
  \[
1-a(\mathfrak{q},f)\Norm_{F/\Q}(\mathfrak{q})^{-s-1/2}+\chi(\mathfrak{q})\Norm_{F/\Q}(\mathfrak{q})^{-2s}=(1-{\alpha_{1,\mathfrak{q}}}\Norm_{F/\Q}(\mathfrak{q})^{-s})
(1-{\alpha_{2,\mathfrak{q}}}\Norm_{F/\Q}(\mathfrak{q})^{-s}).\]
Note that the adjoint $L$-function is independent of the variable $m$.

If $\mathfrak{q}$ divides $\mathfrak{c}_F$ but not $\mathfrak{f}(\epsilon)$, we consider the following cases. 
If the local automorphic representation $\pi(f)_\mathfrak{q}$ associated with $f$ is a ramified principal series,
then ${L_{\mathfrak{q}}(\mathrm{Ad}(f) \otimes \epsilon, s)}^{-1}$ is defined as
\[ 
(1-\epsilon(\mathfrak{q})\Norm_{F/\Q}(\mathfrak{q})^{-s}).
\]
If $\pi(f)_\mathfrak{q}$ is special, ({\it i.e.} Steinberg), then 
 ${L_{\mathfrak{q}}(\mathrm{Ad}(f) \otimes \epsilon, s)}^{-1}$ is defined as
\[ 
(1-\epsilon(\mathfrak{q})\Norm_{F/\Q}(\mathfrak{q})^{-1-s}).
\]
If $\pi(f)_\mathfrak{q}$ is supercuspidal, we set the local $L$-factor to be equal to $1$, even if the completed $L$-function could have a Euler factor different from $1$ at $\mathfrak{q}$.

For $\sigma \in \Sigma_F$ we define  
\[
{L_{\sigma}(\mathrm{Ad}(f) \otimes \epsilon, s)}:= {(2\pi)}^{-(s+k_\sigma+1)}\Gamma(s+k_\sigma+1)\pi^{-(s+1)/2}\Gamma((s+1)/2).
\]
For any other ideal $\mathfrak{c}'_F$ in $\mathcal{O}_F$ we define

\[
L^{\ast,\mathfrak{c}'_F }(\mathrm{Ad}(f) \otimes \epsilon, s):=\prod_{\sigma \in \Sigma_F}L_{\sigma}(\mathrm{Ad}(f) \otimes \epsilon, s) \prod_{\mathfrak{q} \not\mid \mathfrak{c}'_F \mathfrak{f}(\epsilon)} {L_{\mathfrak{q}}(\mathrm{Ad}(f) \otimes \epsilon, s)}.
\]

Finally, note that one can also define the (partial) symmetric square $L$-function for $f$ twisted by $\epsilon$: 
\[
L^{\mathfrak{c}\mathfrak{f}(\epsilon)} (\mathrm{Sym}^2(f) \otimes \epsilon, s):=\prod_{ \mathfrak{q} \not\mid \mathfrak{c}\mathfrak{f}(\epsilon)} {L_{\mathfrak{q}}(\mathrm{Sym}^2(f) \otimes \epsilon, s)},
\]
where ${L_{\mathfrak{q}}(\mathrm{Sym}^2(f) \otimes \epsilon, s)}^{-1}$ is given by 
\[
\left(1-\epsilon(\mathfrak{q})\alpha_{1,\mathfrak{q}}^2\Norm_{F/\Q}(\mathfrak{q})^{-s}\right)
\left(1-\epsilon(\mathfrak{q})\chi(\mathfrak{q})\Norm_{F/\Q}(\mathfrak{q})^{-s}\right) 
\left(1-\epsilon(\mathfrak{q})\alpha_{2,\mathfrak{q}}^2\Norm_{F/\Q}(\mathfrak{q})^{-s}\right).
\]
Note that at any prime ideal $\mathfrak{q}$, unramified for $f$ and $\chi$, we have \[ {L_{\mathfrak{q}}(\mathrm{Sym}^2(f) \otimes \chi^{-1}, s)} ={L_{\mathfrak{q}}(\mathrm{Ad}(f), s)}.\] 
 
\subsubsection{The Asai \texorpdfstring{$L$}{L}-function} 
 
Here, we follow \cite[\S 5.12.4]{GetzGore}. Let $\mathfrak{c}$ be an ideal of $E$, let $\chi_F$ be a Hecke character of $\A_F^\times$, and $\chi: = \chi_F \circ {\rm N}_{E/F}$ be the corresponding Hecke character of $\A_E^\times$. 
We also let $\eta_{E/F}$ be the only non-trivial (quadratic) character of the Galois group of $E$ over $F$, and, by abuse of notation, we use $\eta_{E/F}$ to denote the corresponding Hecke character of $\A^\times_F$, trivial at the archimedean places. Recall that there is a distinguished $F$-type for $E$ that we denoted by $\Phi$ in \S \ref{secNotation}. Then for a given weight $\kappa = (\underline{k},m)$ for $F$, we have the corresponding weight for $E$: $\hat{\kappa} = (\hat{\underline{k}},m)$, with $\hat{k}_\sigma =\hat{k}_{\sigma'}$ for every $\sigma \in \Phi$. 

To any Hilbert cuspidal eigenform $g$ of level $K_{11}(\mathfrak{c})$, weight $\hat{\kappa}$ and Nebentypus $\chi$ and to any quasi-character $\theta: E^\times \backslash \A_E^\times \to \C^\times$ with conductor $\mathfrak{f}(\theta)$\index{$\theta$}, we can associate the Asai $L$-function of $g$ twisted by $\theta$ as follows\footnote{We again follow the normalization of \cite{GetzGore} of the Satake parameters, which produces the shift $s\mapsto s+1$.} 
\begin{equation}\label{DefAsaiLF}
L(\mathrm{As}(g \otimes \theta) , s) := L^{\mathfrak{c}\mathfrak{f}(\theta) \cap \mathcal{O}_F }(\theta^2 \chi)|_F,2s) \sum_{\mathfrak{n} +\mathfrak{f}(\theta) = \mathcal{O}_F}  \theta(\mathfrak{n}\mathcal{O}_E) a(\mathfrak{n}\mathcal{O}_E,g) \Norm_{F/\Q}(\mathfrak{n})^{-1-s}.  
\end{equation}

The corresponding partial Asai $L$-function of $g$ admits an Euler product \[
L^{\mathfrak{c}\mathfrak{f}(\theta) \cap \mathcal{O}_F }(\mathrm{As}(g \otimes \theta) , s) :=\prod_{ \mathfrak{p} \not\mid \mathfrak{c} \cap \mathcal{O}_F } {L_{\mathfrak{p}}(\mathrm{As}(g \otimes \theta) , s)},
\]
where, if $\frakp \nmid \mathfrak{d}_{E/F}(\mathfrak{c}\mathfrak{f}(\theta) \cap \mathcal{O}_F) $, define 
\[ L_{\mathfrak{p}}(\mathrm{As}(g \otimes \theta) , s)^{-1} = \begin{cases} \prod_{i=1}^2\prod_{j=1}^2 (1 - \theta(\mathfrak{p}) \alpha_{i,\mathfrak{P}}\alpha_{j,\overline{\mathfrak{P}}} \Norm_{F/\Q}(\frakp)^{-s}) & \text{if } \frakp = \mathfrak{P}\overline{\mathfrak{P}} \text{ is split,} \\ 
(1 - \theta(\mathfrak{p})^2 \chi(\frakp) \Norm_{F/\Q}(\frakp)^{-2s})\prod_{i=1}^2 (1 - \theta(\mathfrak{p}) \alpha_{i,\frakp}\Norm_{F/\Q}(\frakp)^{-s}) & \text{if } \frakp \text{ is inert,} \end{cases}  \] 
with $\alpha_{1, \mathfrak{q}}, \alpha_{2, \mathfrak{q}}$ denoting the Satake parameters of $\pi(g)_\mathfrak{q}$ (\textit{cf}. \cite[\S 5.12.4]{GetzGore}).

\subsubsection{Base change forms and their  \texorpdfstring{$L$}{L}-function} The main reference for this section is \cite[Appendix E]{GetzGore}. If we let $G_1 = \GL_2 /F$ and $G_2= {\rm Res}_{E/F} \GL_{2}$, we have 
\begin{align*}
    {}^LG_1 &= \GL_2(\C) \times {\rm Gal}(E/F) \\
    {}^LG_2 &= \GL_2(\C)^{{\rm Gal}(E/F)} \rtimes {\rm Gal}(E/F),
\end{align*}
 where in the second ${\rm Gal}(E/F)$ acts on $\GL_2(\C)^{{\rm Gal}(E/F)}$ by permuting the factors. The base change lifting from $G_1$ to $G_2$ is induced by the map of $L$-groups 
 \[ b:{}^LG_1  \to {}^LG_2, \]
given by the diagonal embedding on the $\GL_2(\C)$ factor and by the identity on the ${\rm Gal}(E/F)$ factor. For any place $v$ of $F$, the map $ b:{}^LG_{1, F_v}  \to {}^LG_{2, F_v}$ of $L$-groups over $F_v$ is defined similarly. Suppose that $v$ is a finite place of $F$ which is unramified for $E/F$. Then $b$ induces a morphism of local spherical Hecke algebras \[b: \mathcal{H}(\GL_2(E_v) // \GL_2( \mathcal{O}_{E_v})) \longrightarrow \mathcal{H}(\GL_2(F_v) // \GL_2( \mathcal{O}_{F_v})), \] 
where we have denoted $E_v = E \otimes_F F_v$. If $\pi_v$ is an irreducible unramified admissible representation of $\GL_2(F_v)$, where $\mathcal{H}(\GL_2(F_v) // \GL_2( \mathcal{O}_{F_v}))$ acts via the character $\lambda_{\pi_v}$, then an irreducible admissible representation $\Pi_v$ of $\GL_2(E_v) $ is the base change of $\pi_v$ if it is unramified and \[ \mathcal{H}(\GL_2(E_v) // \GL_2( \mathcal{O}_{E_v})) \] acts through the character \[ \lambda_{\Pi_v} = \lambda_{\pi_v} \circ b. \]
Concretely, if $\pi_v$ is the principal series attached to unramified characters $\chi_1,\chi_2$ of $F_v^\times$, $\Pi_v$ is the base change of $\pi_v$ if it is the principal series of characters $\chi_1 \circ {\rm N}_{E_v/F_v},\chi_2 \circ {\rm N}_{E_v/F_v}$. Globally, we say that an automorphic representation $\Pi$ of $\GL_2(\A_E)$ is a weak base change of an automorphic representation $\pi$ of $\GL_2(\A_F)$ if $\Pi_v$ is the base change of $\pi_v$ almost everywhere. Note that, by strong multiplicity for $\GL_2$, if there exists a weak base change $\Pi$ of $\pi$, then it is unique. 
\begin{theorem}[{\cite[Theorem E.11]{GetzGore}}] Every automorphic representation $\pi$ of $\GL_2(\A_F)$ has a unique base change lift to an automorphic representation $\widehat{\pi}$ of $\GL_2(\A_E)$. Moreover, any cuspidal automorphic representation $\Pi$ of $\GL_2(\A_E)$ that is isomorphic to its conjugates under the action of ${\rm Gal}(E/F)$ is the base change lift of a cuspidal automorphic representation of $\GL_2(\A_F)$. The assignment $\pi \mapsto \widehat{\pi}$ satisfies the following : If $\pi$ and $\widehat{\pi}$ are both cuspidal, then \begin{enumerate}
    \item If $\widehat{\pi}$ is also the base change lift of another automorphic representation $\pi'$, then $\pi' \simeq \pi \otimes \eta_{E/F}$;
    \item For every place $v$ of $F$, then $L(\widehat{\pi}_v,s) = L(\pi_v,s)L(\pi_v \otimes \eta_{E_v/F_v},s)$.
\end{enumerate} 
\end{theorem}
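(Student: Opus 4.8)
The final statement is the cyclic base change theorem for $\GL_2$ along the quadratic extension $E/F$. The plan is to reprove it following Langlands' comparison of the $\theta$-twisted trace formula of $G_2={\rm Res}_{E/F}\GL_{2,F}$ with the ordinary trace formula of $G_1=\GL_2/F$, where $\theta$ denotes the automorphism of $G_2$ induced by a generator $\tau$ of ${\rm Gal}(E/F)$, supplemented by the local theory of base change lifting.

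\textbf{Local base change.} First I would fix, at each place $v$ of $F$, the local base change correspondence: dually to $b\colon{}^LG_{1,F_v}\to{}^LG_{2,F_v}$, an irreducible admissible representation $\Pi_v$ of $\GL_2(E_v)$, with $E_v=E\otimes_F F_v$, is the base change of $\pi_v$ when their Langlands parameters satisfy $\varphi_{\Pi_v}\simeq\varphi_{\pi_v}|_{W_{E_v}}$; for $v$ unramified in $E$ this is exactly the Satake condition $\lambda_{\Pi_v}=\lambda_{\pi_v}\circ b$ recalled above, and at $v$ split in $E$ it is the diagonal pullback. The analytic heart is the base change fundamental lemma: the transfer $b$ on spherical Hecke algebras matches $\theta$-twisted orbital integrals on $\GL_2(E_v)$ with orbital integrals on $\GL_2(F_v)$, the norm map carrying $\theta$-twisted conjugacy classes to conjugacy classes in $\GL_2(F_v)$. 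At the finitely many ramified and archimedean places one instead uses the local character identities of Saito and Shintani to define the lift and to exhibit matching test functions. \emph{I expect this step --- the fundamental lemma and the matching of (twisted) orbital integrals at all places --- to be the main obstacle}; everything downstream is bookkeeping built on top of it.

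\textbf{Global comparison.} With transfer available everywhere, I would equate, term by term, the $\theta$-twisted trace formula of $\GL_2(\A_E)$ --- equipped with the normalized self-intertwining operator attached to $\theta$ --- with the ordinary trace formula of $\GL_2(\A_F)$ evaluated on the transferred test functions; matching the geometric sides via the norm correspondence forces equality of the spectral sides. Isolating the discrete parts then produces, for every cuspidal $\pi$ on $\GL_2(\A_F)$, a $\theta$-stable discrete automorphic representation $\widehat\pi$ of $\GL_2(\A_E)$ whose local components are the local base changes of those of $\pi$, and conversely shows that every $\theta$-stable --- equivalently, ${\rm Gal}(E/F)$-conjugation-invariant --- discrete automorphic representation of $\GL_2(\A_E)$ is of this form. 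One then checks that $\widehat\pi$ is cuspidal unless $\pi\simeq\pi\otimes\eta_{E/F}$, i.e.\ unless $\pi$ is the automorphic induction of a Hecke character of $E$, in which case $\widehat\pi$ is the associated isobaric representation; this disposes of the passage between the cuspidal and Eisenstein cases. Uniqueness of the weak base change is then immediate from strong multiplicity one for $\GL_2$: two automorphic representations of $\GL_2(\A_E)$ agreeing at almost every place coincide.

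\textbf{The two refinements.} For assertion (1) one invokes the standard description of the fibres of base change: if $\widehat\pi$ is cuspidal, the representations of $\GL_2(\A_F)$ with base change $\widehat\pi$ form a single orbit under twisting by the Hecke characters of $\A_F^\times$ trivial on ${\rm N}_{E/F}(\A_E^\times)$, which by class field theory is $\{1,\eta_{E/F}\}$; concretely, if $\widehat\pi$ is the base change of both $\pi$ and $\pi'$ then the Rankin--Selberg $L$-function $L(s,\widehat\pi\times(\widehat{\pi'})^\vee)$ has a pole at $s=1$ and, by the factorization in (2), equals $L(s,\pi\times(\pi')^\vee)\cdot L(s,(\pi\times(\pi')^\vee)\otimes\eta_{E/F})$, so one of the two factors is responsible, forcing $\pi'\simeq\pi$ or $\pi'\simeq\pi\otimes\eta_{E/F}$. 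For assertion (2), write $\varphi=\varphi_{\pi_v}$; using inductivity of local $L$-factors together with
\[
\mathrm{Ind}_{W_{E_v}}^{W_{F_v}}\bigl(\varphi|_{W_{E_v}}\bigr)\;\simeq\;\varphi\otimes\mathrm{Ind}_{W_{E_v}}^{W_{F_v}}\mathbf{1}\;\simeq\;\varphi\oplus(\varphi\otimes\eta_{E_v/F_v}),
\]
one obtains $L(\widehat\pi_v,s)=L(\varphi|_{W_{E_v}},s)=L(\varphi,s)\,L(\varphi\otimes\eta_{E_v/F_v},s)=L(\pi_v,s)\,L(\pi_v\otimes\eta_{E_v/F_v},s)$, which is the stated factorization (for $v$ split in $E$ both sides reduce to $L(\pi_v,s)^2$).
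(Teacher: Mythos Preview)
The paper does not give its own proof of this statement: it is quoted verbatim from \cite[Theorem E.11]{GetzGore} and left unproved, serving only as background for the subsequent Proposition \ref{AsaiForBC}. So there is nothing in the paper to compare your argument against.

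That said, your outline is the correct one: this is Langlands' cyclic base change for $\GL_2$, and the proof does proceed by comparing the $\theta$-twisted trace formula on $\mathrm{Res}_{E/F}\GL_2$ with the ordinary trace formula on $\GL_2/F$, with the fundamental lemma and the Saito--Shintani local character identities supplying the matching of test functions. Your derivations of (1) via the fibre description (or the Rankin--Selberg pole argument) and of (2) via inductivity of $L$-factors and $\mathrm{Ind}_{W_{E_v}}^{W_{F_v}}\mathbf{1}\simeq\mathbf{1}\oplus\eta_{E_v/F_v}$ are the standard ones. For the purposes of this paper, however, a citation suffices; a full reconstruction of the trace formula comparison would be out of scope.
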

 \noindent Fix a weight $\kappa = (\underline{k},m)$ for $F$ and consider the corresponding weight $\hat{\kappa} = (\hat{\underline{k}},m)$ for $E$. The theorem above can be used to prove the following Proposition, which we will use repeatedly in the course of the manuscript. 
 
\begin{proposition}\label{AsaiForBC} 
Suppose that a newform $g \in S_{\hat{\kappa}}(K_0(\mathfrak{c}), \chi)$ is an eigenform for all but finitely many Hecke operators for which the eigenvalues of $T_\mathfrak{q}$ and $T_{\mathfrak{q}^{\sigma}}$, with $\sigma$ generator of ${\rm Gal}(E/F)$, agree for almost all prime ideals $\mathfrak{q} \subset \mathcal{O}_E$. Then there exists an ideal $\mathfrak{c}_1 \subseteq \mathcal{O}_F$ and a newform $f \in  S_{\kappa}(K_0(\mathfrak{c}_1), \chi_F)$ such that $\widehat{\pi(f)} = \pi(g)$. Conversely, if $f \in  S_{\kappa}(K_0(\mathfrak{c}_F), \chi_F)$ is a newform, there exists a unique newform $f_E \in S_{\hat{\kappa}}(K_0(\mathfrak{c}), \chi)$ such that $\pi(f_E) = \widehat{\pi(f)}$. Moreover, if $\theta:=\theta^u|\phantom{ee}|_{\mathbb{A}_E}^{m/2}$ is a Hecke character of $\mathbb{A}^\times_E$, with $\theta^u$ unitary, such that its restriction to $\mathbb{A}^\times_F$ is $\chi_F \eta_{E/F}$, then
\[ L^{\mathfrak{b}'}(\mathrm{As}(f_E \otimes \theta^{-1}) , s) = \zeta_F^{\mathfrak{d}_{E/F}(\mathfrak{c} \mathfrak{f}(\theta) \cap \mathcal{O}_F )}(s) L^{\mathfrak{d}_{E/F} ( \mathfrak{f}(\theta) \cap \mathcal{O}_F )}(\mathrm{Ad}(f) \otimes \eta_{E/F}, s),  \]
with $\mathfrak{b}' := \prod_{\substack{ \mathfrak{p} | \mathfrak{d}_{E/F} \\ \mathfrak{p} \nmid \mathfrak{f}(\theta) \cap \mathcal{O}_F}} \mathfrak{p}$. Hence, 
\[{\rm Res}_{s=1}L^{\mathfrak{b}'}(\mathrm{As}(f_E \otimes \theta^{-1}) , s) = L^{\mathfrak{d}_{E/F} ( \mathfrak{f}(\theta) \cap \mathcal{O}_F )}(\mathrm{Ad}(f) \otimes \eta_{E/F}, 1) {\rm Res}_{s=1}\zeta_F^{\mathfrak{d}_{E/F}(\mathfrak{c} \mathfrak{f}(\theta) \cap \mathcal{O}_F )}(s). \]
\end{proposition}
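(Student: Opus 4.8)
The plan is to deduce the identity from the Euler product definition of the partial Asai $L$-function recalled in \eqref{DefAsaiLF} together with part (2) of the base change theorem \cite[Theorem E.11]{GetzGore}, applied prime-by-prime. First I would establish the existence statements: the hypothesis on $g$ (agreement of $T_\mathfrak{q}$ and $T_{\mathfrak{q}^\sigma}$-eigenvalues almost everywhere) says precisely that $\pi(g)$ is isomorphic to its $\mathrm{Gal}(E/F)$-conjugate almost everywhere, hence everywhere by strong multiplicity one, so by the second assertion of \cite[Theorem E.11]{GetzGore} it is the base change of a cuspidal $\pi$ on $\GL_2(\A_F)$; cuspidality of $\pi$ follows since $\pi(g)$ is cuspidal and a non-cuspidal (Eisenstein) representation would base-change to an Eisenstein representation. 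That $\pi$ is $\pi(f)$ for a newform $f$ of the asserted weight and some level $\mathfrak{c}_1$, with trivial-twist-minimal Nebentypus $\chi_F$, follows from the dictionary between automorphic representations and newforms plus the fact that base change preserves central characters ($\chi = \chi_F \circ \mathrm{N}_{E/F}$ forces the central character of $\pi$ to be $\chi_F$); the converse direction is the forward implication of the same theorem.

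For the $L$-function identity, I would compare local Euler factors at each prime $\mathfrak{p} \nmid \mathfrak{c}\mathfrak{f}(\theta)\cap\mathcal{O}_F$ that is also coprime to $\mathfrak{d}_{E/F}$. Fix such a $\mathfrak{p}$ and let $\alpha_{1,\mathfrak{p}}, \alpha_{2,\mathfrak{p}}$ be the Satake parameters of $\pi(f)_\mathfrak{p}$. In the split case $\mathfrak{p} = \mathfrak{P}\overline{\mathfrak{P}}$, the base change principal series at $\mathfrak{P}$ and $\overline{\mathfrak{P}}$ both have Satake parameters $\{\alpha_{1,\mathfrak{p}}, \alpha_{2,\mathfrak{p}}\}$, so the factor $\prod_{i,j}(1 - \theta(\mathfrak{p})\alpha_{i,\mathfrak{P}}\alpha_{j,\overline{\mathfrak{P}}}\Norm(\mathfrak{p})^{-s})^{-1}$ from \S\ref{sec:AdLfunc} becomes $\prod_{i,j}(1 - \theta(\mathfrak{p})\alpha_{i,\mathfrak{p}}\alpha_{j,\mathfrak{p}}\Norm(\mathfrak{p})^{-s})^{-1}$; writing $\det \pi(f)_\mathfrak{p} = \chi_F(\mathfrak{p})$ one recognizes the four terms as $(1-\theta(\mathfrak{p})\alpha_1/\alpha_2 \Norm(\mathfrak{p})^{-s})(1-\theta(\mathfrak{p})\Norm(\mathfrak{p})^{-s})^2(1-\theta(\mathfrak{p})\alpha_2/\alpha_1\Norm(\mathfrak{p})^{-s})$ up to the twist; since $\theta|_{\A_F^\times} = \chi_F\eta_{E/F}$ and $\mathfrak{p}$ splits means $\eta_{E/F}(\mathfrak{p})=1$, so $\theta(\mathfrak{p})$ plays the role of $\eta_{E/F}(\mathfrak{p})$ in the adjoint factor, and the extra copy of $(1-\theta(\mathfrak{p})\Norm(\mathfrak{p})^{-s})^{-1}$ is exactly the Euler factor of $\zeta_F(s)$ at $\mathfrak{p}$. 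In the inert case $\eta_{E/F}(\mathfrak{p}) = -1$, the base change at $\mathfrak{p}$ has Satake parameters $\{-\alpha_{1,\mathfrak{p}}, -\alpha_{2,\mathfrak{p}}\}$ relative to the residue extension (or one uses $\alpha_{i,\mathfrak{p}}^{2}$ suitably), and the quadratic factor $(1-\theta(\mathfrak{p})^2\chi(\mathfrak{p})\Norm(\mathfrak{p})^{-2s})$ together with $\prod_i(1-\theta(\mathfrak{p})\alpha_{i,\mathfrak{p}}\Norm(\mathfrak{p})^{-s})$ must be matched against $L_\mathfrak{p}(\mathrm{Ad}(f)\otimes\eta_{E/F},s)^{-1}\cdot(1-\theta(\mathfrak{p})^2\chi(\mathfrak{p})\Norm(\mathfrak{p})^{-2s})$, the last factor again being the $\zeta_F$-Euler-factor contribution $\zeta_F(s)^{-1}$ pulled out in the split-versus-inert bookkeeping; I would use the identity $L_\mathfrak{q}(\mathrm{Sym}^2(f)\otimes\chi^{-1},s) = L_\mathfrak{q}(\mathrm{Ad}(f),s)$ noted at the end of \S\ref{sec:AdLfunc} to pass between the $\mathrm{Sym}^2$-type presentation natural for Asai and the $\mathrm{Ad}$-type presentation on the right. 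Collecting the $\zeta_F$-contributions over all such $\mathfrak{p}$ (inert or split) and the ramified-at-$\mathfrak{d}_{E/F}$ corrections gives the factor $\zeta_F^{\mathfrak{d}_{E/F}(\mathfrak{c}\mathfrak{f}(\theta)\cap\mathcal{O}_F)}(s)$, and what remains is $L^{\mathfrak{d}_{E/F}(\mathfrak{f}(\theta)\cap\mathcal{O}_F)}(\mathrm{Ad}(f)\otimes\eta_{E/F},s)$ with the precise level set $\mathfrak{b}'$ accounting for primes dividing $\mathfrak{d}_{E/F}$ but not $\mathfrak{f}(\theta)\cap\mathcal{O}_F$, where the Asai product omits the prime but the adjoint product does not (or vice versa).

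The final residue statement is then immediate: both $L^{\mathfrak{d}_{E/F}(\mathfrak{f}(\theta)\cap\mathcal{O}_F)}(\mathrm{Ad}(f)\otimes\eta_{E/F},s)$ and $\zeta_F^{\mathfrak{d}_{E/F}(\mathfrak{c}\mathfrak{f}(\theta)\cap\mathcal{O}_F)}(s)$ are holomorphic and the latter has a simple pole at $s=1$ (a partial Dedekind zeta differs from the complete one by finitely many nonvanishing Euler factors), so taking residues at $s=1$ in the product identity yields the displayed formula.

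The main obstacle I anticipate is the careful bookkeeping of local factors at the "bad" primes — those dividing $\mathfrak{d}_{E/F}$, $\mathfrak{c}$, or $\mathfrak{f}(\theta)$ — and in particular making sure the partial-$L$-function superscripts $\mathfrak{b}'$, $\mathfrak{d}_{E/F}(\mathfrak{c}\mathfrak{f}(\theta)\cap\mathcal{O}_F)$, and $\mathfrak{d}_{E/F}(\mathfrak{f}(\theta)\cap\mathcal{O}_F)$ exactly balance, so that no spurious Euler factor is gained or lost; the clean Euler-factor comparison at the good split/inert primes is essentially the classical computation that the Asai $L$-function of a base change equals $\zeta_F(s)\cdot L(\mathrm{Ad}(f)\otimes\eta_{E/F},s)$, and the rest is a matter of tracking ramification.
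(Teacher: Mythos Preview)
Your approach is correct and is essentially the content behind the paper's proof, which is a two-line citation: ``This follows from Corollary E.12 and Proposition 5.17 of \cite{GetzGore}.'' You are simply unpacking those references --- Corollary E.12 gives the existence of the base-change newform (your use of Theorem E.11 plus strong multiplicity one is the same mechanism), and Proposition 5.17 is precisely the prime-by-prime Euler factor comparison you sketch.

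One small correction: in the inert case, the Satake parameters of the base change $\widehat{\pi(f)}_\mathfrak{P}$ are $\{\alpha_{1,\mathfrak{p}}^2,\alpha_{2,\mathfrak{p}}^2\}$ (since $\mathrm{N}_{E_\mathfrak{P}/F_\mathfrak{p}}(\varpi_\mathfrak{p})=\varpi_\mathfrak{p}^2$ in the unramified quadratic extension), not $\{-\alpha_{1,\mathfrak{p}},-\alpha_{2,\mathfrak{p}}\}$; your parenthetical ``or one uses $\alpha_{i,\mathfrak{p}}^2$ suitably'' is the right fix. With that, the inert Asai factor $(1-\theta^{-2}(\mathfrak{p})\chi(\mathfrak{p})\Norm(\mathfrak{p})^{-2s})\prod_i(1-\theta^{-1}(\mathfrak{p})\alpha_i^2\Norm(\mathfrak{p})^{-s})$ matches $\zeta_{F,\mathfrak{p}}(s)^{-1}L_\mathfrak{p}(\mathrm{Ad}(f)\otimes\eta_{E/F},s)^{-1}$ once one uses $\theta^{-1}(\mathfrak{p})\alpha_1\alpha_2 = \theta^{-1}(\mathfrak{p})\chi_F(\mathfrak{p}) = \eta_{E/F}(\mathfrak{p})^{-1} = -1$ and factors $(1-\Norm(\mathfrak{p})^{-2s})=(1-\Norm(\mathfrak{p})^{-s})(1+\Norm(\mathfrak{p})^{-s})$. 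Your honest acknowledgement that the bad-prime bookkeeping is the delicate point is exactly right, and that is what Proposition 5.17 of \cite{GetzGore} carries out.
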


 \begin{proof}
This follows from Corollary E.12 and Proposition 5.17 of \cite{GetzGore}. 
 \end{proof}

\subsection{A cohomological formula for adjoint \texorpdfstring{$L$}{L}-values}\label{sec:cohomologygroups}

Let $\mathfrak{c}_F$ be an ideal of $F$ and $f$ be a Hilbert cuspidal eigenform of level $K_{11}(\mathfrak{c}_F)$, weight $\kappa = (\underline{k},m)$ and Nebentypus $\chi_F$.
 Let $\mathfrak{c}$ be an ideal of $\mathcal{O}_E$, whose intersection with $\mathcal{O}_F$ is $\mathfrak{c}_F$. We consider a Hecke character $\theta:=\theta^u|\phantom{ee}|_{\mathbb{A}_E}^{m/2}$ of $\mathbb{A}^\times_E$ such that its restriction to $\mathbb{A}^\times_F$ is $\chi_F \eta_{E/F} $. Here $\theta^{u}$ denotes the unitary part of $\theta$ and $m$ is as in the weight $(\underline{k},m)$. Let $\mathfrak{b}$ be the conductor of  $\theta^{u}$  and $\mathfrak{c}'=\mathfrak{c}\mathfrak{b}^2$. We write $\mathfrak{b}_F, \mathfrak{c}'_F$ for the intersection of $\mathfrak{b}$ and $\mathfrak{c}'$ with $\mathcal{O}_F$.  We let $\chi : = \chi_F \circ {\rm N}_{E/F}$ and we let $f_E \in S_{\hat{\kappa}}(K_0(\mathfrak{c}),\chi)$ be the unique newform on $E$ which generates the automorphic representation of $G$ given by the base change of $\pi(f)$. Recall that $\hat{\kappa} = (\hat{\underline{k}},m)$, with $\hat{k}_\sigma =\hat{k}_{\sigma'}$ for every $\sigma \in \Phi$. We also have denoted by $\chi_0$ the character of $K_0(\mathfrak{c})$, which at each finite place $v$ is defined by \[ \chi_v\left ( \begin{smallmatrix} a & b \\ c & d \end{smallmatrix} \right)= \begin{cases} \chi_v(d) & \text{ if } \frakp_v \mid \mathfrak{c} \\ 1 & \text{ otherwise,} \end{cases} \]
with $\frakp_v$ denoting the prime ideal associated with $v$. Poincar\'e duality gives a perfect pairing 
\[ \langle \, , \, \rangle : H^{2d}_c(Y_G(K_0(\mathfrak{c})), \mathscr{V}_\C^{\hat{\underline{k}},-m}(\chi^{-1}_0)) \times H^{2d}(Y_G(K_0(\mathfrak{c})), \mathscr{V}_\C^{\hat{\underline{k}},m}(\chi_0)) \to \C, \]
where $H^{2d}_c(\bullet) $ stands for compactly supported cohomology. Similarly, we have an intersection pairing for the middle intersection cohomology $\langle \, , \, \rangle_{IH}$\index{$\langle \, , \, \rangle_{IH}$} (with respect to the Baily--Borel compactification of $Y_G(K_0(\mathfrak{c}))$)
\[ \langle \, , \, \rangle_{IH} : {IH}^{2d}(Y_G(K_0(\mathfrak{c})), \mathscr{V}_\C^{\hat{\underline{k}},-m}(\chi^{-1}_0)) \times {IH}^{2d}(Y_G(K_0(\mathfrak{c})), \mathscr{V}_\C^{\hat{\underline{k}},m}(\chi_0)) \to \C. \]
If we let $\alpha: H^{2d}_c(Y_G(K_0(\mathfrak{c})), \mathscr{V}_\C^{\hat{\underline{k}},-m}(\chi^{-1}_0)) \to {IH}^{2d}(Y_G(K_0(\mathfrak{c})), \mathscr{V}_\C^{\hat{\underline{k}},-m}(\chi^{-1}_0))$ be the composition of the natural morphism from $H^{2d}_c(\bullet)$ to $L^2$-cohomology $H^{2d}_{(2)}(\bullet)$ with the isomorphism $\mathcal{Z}$\index{$\mathcal{Z}$} of Zucker's conjecture  (\cite[Theorem 4.2]{GetzGore})
\[ \mathcal{Z}:{H}^{2d}_{(2)}(Y_G(K_0(\mathfrak{c})), \mathscr{V}_\C^{\hat{\underline{k}},-m}(\chi^{-1}_0)) \simeq  {IH}^{2d}(Y_G(K_0(\mathfrak{c})), \mathscr{V}_\C^{\hat{\underline{k}},-m}(\chi^{-1}_0)),\]
and $\beta$ the natural map ${IH}^{2d}(Y_G(K_0(\mathfrak{c})), \mathscr{V}_\C^{\hat{\underline{k}},m}(\chi_0)) \to {H}^{2d}(Y_G(K_0(\mathfrak{c})), \mathscr{V}_\C^{\hat{\underline{k}},m}(\chi_0))$, then we 
have (\textit{cf}. (3.4.7) of \emph{loc. cit.})

\[ \langle \omega_1 , \beta(\omega_2) \rangle =\langle \alpha(\omega_1) , \omega_2 \rangle_{IH}.  \]

For any type $J$ for $E/F$, the differential form  $\omega_J(f^{-\iota}_E)$ defines an intersection cohomology class via $\mathcal{Z}$. Let $f_E \otimes \theta^{-1}$ be the twist of $f_E$ by $\theta^{-1}$, see \cite[Lemma 5.14]{GetzGore}. 
By \cite[Theorem 10.1]{GetzGore}, there exists an intersection cohomology class \[[Z(\mathfrak{c})_{\theta}] \in {IH}^{2d}(Y_G(K_0(\mathfrak{c})), \mathscr{V}_\C^{\hat{\underline{k}},m}(\chi_0))  \] (made up of a cycle $Z(\mathfrak{c})_{\theta}$, given by $Y_H(K_0(\mathfrak{c}_F))$ twisted by $\theta$ and the $H$-invariant section $\Delta^{\hat{\underline{k}}}$ of the restricted local system  $\mathscr{V}^{\hat{\underline{k}},{m}}_\C$) such that   

\begin{align}\label{GGformulawithIH}
\langle \mathcal{Z}[ \omega_J(f^{-\iota}_E)], [Z(\mathfrak{c})_\theta] \rangle_{IH} &= C'_2 \int_{Y_H(K_0(\mathfrak{c}_F'))} \tilde{\iota}^{\hat{\underline{k}},\ast}( \omega_J(f^{-\iota}_E \otimes \theta^{-1})),
\end{align}
with\footnote{Note that our constant is slightly different from the $C_2$ of \cite[\S 9.4, 9.5]{GetzGore}; indeed on page 153 of {\it loc. cit.} they use a group-theoretic ${\pi_{1,\ast}}$, which sums over all representative of $K_0(\mathfrak{c}'_F) / K_{1}(\mathfrak{c}'_F)$. We use instead a geometric degeneracy map $\pi_1$, see the correspondence on the next page, and note that $\mathcal{O}_F^{\ast, +}$ acts trivially on the upper half-plane, hence the Galois group of our $\pi_1$ is then $K_0(\mathfrak{c}'_F)/\mathcal{O}_F^{\ast, +} K_{1}(\mathfrak{c}'_F)$.} $C'_2= [K_0(\mathfrak{c}'_F):\mathcal{O}_F^{\ast, +} K_{1}(\mathfrak{c}'_F)] [K_0(\mathfrak{c}'):\mathcal{O}_E^{\ast, +}K_{11}(\mathfrak{c}')]G(\theta^{u}) |N_{E/\Q}(D_{E/\Q})|^{-m/2}$ and where $\tilde{\iota}^{\hat{\underline{k}},\ast}$ is induced by the map in cohomology corresponding to the pullback to $Y_H$ and at the level of vector bundles to the $\chi^{-1}_0\theta_0^{-2}$-twisted branching map $\tilde{\iota}^{\hat{\underline{k}}} : \C \hookrightarrow \iota^{\ast} \mathscr{V}_\C^{\hat{\kappa}}(\chi^{-1}_0\theta_0^{-2})$ as in \S \ref{sec:cohomology}. Crucially, this integral recovers the algebraic part of the adjoint $L$-function of $f$ twisted by $\eta_{E/F}$.  

\begin{remark}
 The pairing can also be calculated with any Hilbert modular form $g$ for $E$, or with any subset $J$ of $\Sigma_E$, but the resulting integral will vanish if $g$ is not the base change of a Hilbert modular form from $F$ or if $J$ is not an $F$-type for $E$. Indeed, when the period integral in the right hand side of \eqref{GGformulawithIH} is non-zero then the residue at $s=1$ of the Asai $L$-function of $g$ twisted by $\eta_{E/F}$ is non-zero. The existence of this pole implies that $g$ is the base change of a Hilbert modular form from $F$ (\textit{cf.} \cite[Theorem 10.1]{GetzGore}).
\end{remark}

Note that the above integral formula can be obtained without using middle intersection cohomology by means of the Hirzebruch--Zagier cycles in the cohomology of the open Hilbert modular variety, as we now explain. Since we ultimately need the following construction for weights $\hat{\kappa}$ of the form $(\hat{\underline{k}},0)$, we assume that $m =0$.

Let $\mathbf{1}_{K_0(\mathfrak{c}_F)}$ be the fundamental class \[ [Y_H(K_0(\mathfrak{c}_F)) ] \in H^0(Y_H(K_0(\mathfrak{c}_F)), \Z),\]
corresponding to the Hilbert modular variety of level $K_0(\mathfrak{c}_F)$ and let $\iota^{\hat{\underline{k}}}:\C \hookrightarrow \mathscr{V}^{\hat{\underline{k}},0}_{\C}$ be the branching map introduced in \S \ref{sec:cohomology}.

\begin{definition}
 Let \[ \mathscr{Z}^{\hat{\kappa}}(\mathfrak{c}) : = \iota_\ast \circ \iota^{\hat{\underline{k}}} (\mathbf{1}_{K_0(\mathfrak{c}_F)}) \in {H}^{2d}(Y_G(K_0(\mathfrak{c})), \mathscr{V}_\C^{\hat{\kappa}}). \]
\end{definition}

Similarly to the definition above, we can define a cycle in the cohomology with coefficients $\mathscr{V}_\C^{\hat{\kappa}}(\chi_0)$. To do so, let 
\[K_{1}(\mathfrak{c}) : = \{ g \in K_0(\mathfrak{c})\,:\, g \equiv {\matrix{\star}{\star}{0}{1}} \text{ mod } \mathfrak{c}\},\]
and denote by \[P_{10,\ast}^{\chi_0}:{H}^{2d}(Y_G(K_1(\mathfrak{c})), \mathscr{V}_\C^{\hat{\kappa}}(\chi_0)) \to {H}^{2d}(Y_G(K_0(\mathfrak{c})), \mathscr{V}_\C^{\hat{\kappa}}(\chi_0)) \]
the $\chi_0$-twisted trace associated with the natural degeneracy map $P_{10}^{\mathfrak{c}}: Y_G(K_1(\mathfrak{c})) \to Y_G(K_0(\mathfrak{c}))$ (\textit{cf}. \cite[(9.2.5)]{GetzGore}). Note that $\mathscr{V}_\C^{\hat{\kappa}}(\chi_0) = \mathscr{V}_\C^{\hat{\kappa}}$ as local systems over $Y_G(K_1(\mathfrak{c}))$.

\begin{definition}\label{def:twistedcycle1}
 Let \[ \mathscr{Z}^{\hat{\kappa},\chi}(\mathfrak{c}) : =P_{10,\ast}^{\chi_0} \circ \iota_\ast \circ \iota^{\hat{\underline{k}}} (\mathbf{1}_{K_1(\mathfrak{c}_F)}) \in {H}^{2d}(Y_G(K_0(\mathfrak{c})), \mathscr{V}_\C^{\hat{\kappa}}(\chi_0)). \]\index{$\mathscr{Z}^{\hat{\kappa},\chi}(\mathfrak{c})$}
 \end{definition}

This cycle can be further twisted by $\theta$ by using the cohomological correspondence of \cite[(9.4.1)]{GetzGore}. Namely, choose a set $S = \{ t \}_{t \in \mathfrak{b}^{-1} \widehat{\mathcal{O}}_E}$ of representatives for the quotient $\mathfrak{b}^{-1} \widehat{\mathcal{O}}_E /  \widehat{\mathcal{O}}_E$. For each $t \in S$, let $u(t) \in G(\Af)$ be the matrix defined by \[ u(t)_v = \begin{cases} {\matrix{1}{0}{0}{1}} & \text{ if } v \nmid \mathfrak{b} \\
{\matrix{1}{t_v}{0}{1}} & \text{ if } v \mid \mathfrak{b}.
\end{cases} \]
Consider the correspondence 

\[ \xymatrix{ & Y_G(K_{11}(\mathfrak{c}')) \ar[ddl]_{\pi_1} \ar[r]^{\cdot u(t)} & Y_G(K_{11}(\mathfrak{c}'))\ar[ddr]^{\pi_2} & \\ & & & \\ Y_G(K_0(\mathfrak{c}')) & & &  Y_G(K_0(\mathfrak{c})),
}
\]
where $\pi_1,\pi_2$ are the natural degeneracy maps and $\mathfrak{c}'=\mathfrak{c}\mathfrak{b}^2$, and define $\mathcal{P}_{\mathfrak{b}} : = \sum_{t \in S} \pi_{2,\ast} u(t)_\ast \pi_1^\ast$. Thanks to \cite[Lemma 9.3]{GetzGore}, $u(t)$ induces a map of local systems $\pi_1^* \mathscr{V}_\C^{\hat{\kappa}}(\chi_0\theta_0^2) \to \pi_2^* \mathscr{V}_\C^{\hat{\kappa}}(\chi_0)$ and so a cohomological correspondence \[\mathcal{P}_{\mathfrak{b}} : {H}^{2d}(Y_G(K_0(\mathfrak{c}')), \mathscr{V}_\C^{\hat{\kappa}}(\chi_0\theta_0^2)) \to {H}^{2d}(Y_G(K_0(\mathfrak{c})), \mathscr{V}_\C^{\hat{\kappa}}(\chi_0)). \] Thus we can give the following (\textit{cf}. \cite[Proposition 9.5]{GetzGore}).

\begin{definition}\label{def:twistedcycle2}
 Let \[ \mathscr{Z}^{\hat{\kappa}}(\mathfrak{c})_\theta : = \mathcal{P}_{\mathfrak{b}}(\mathscr{Z}^{\hat{\kappa},\chi\theta^2}(\mathfrak{c'})) \in {H}^{2d}(Y_G(K_0(\mathfrak{c})), \mathscr{V}_\C^{\hat{\kappa}}(\chi_0)). \]\index{$\mathscr{Z}^{\hat{\kappa}}(\mathfrak{c})_\theta$}
\end{definition}

Let now $g \in S_{\hat{\kappa}}(K_0(\mathfrak{c}),\chi)$ be an eigenform for $E$ and let $J$ be an $F$-type for $E$. As $\omega_J(g^{-\iota})$ is rapidly decreasing and rapidly decreasing cohomology is isomorphic to the compactly supported one, see \cite[Theorem 5.2]{borel2}, we can consider the value

\[\langle [ \omega_J(g^{-\iota}) ] , \mathscr{Z}^{\hat{\kappa}}(\mathfrak{c})_\theta \rangle  = \langle [ \omega_c ] , \mathscr{Z}^{\hat{\kappa}}(\mathfrak{c})_\theta \rangle. \]
where $\omega_c$ is the compactly supported differential form given by Proposition \ref{BorelRDtoC}. 

\begin{lemma}\label{Pairingequalintegral}
Let $g \in S_{\hat{\kappa}}(K_0(\mathfrak{c}),\chi)$ be an eigenform for $E$ of weight $\hat{\kappa} = (\hat{\underline{k}},0)$ and let $J$ be an $F$-type for $E$, then \[ \langle [ \omega_J(g^{-\iota}) ] , \mathscr{Z}^{\hat{\kappa}}(\mathfrak{c}) \rangle = \int_{Y_H(K_0(\mathfrak{c}_F))} \iota^{\hat{\underline{k}},\ast}( \omega_J(g^{-\iota})). \]
Similarly, if $\theta$ is the Hecke character of $\A_E^\times$ introduced above, 
\[ \langle [ \omega_J(g^{-\iota}) ] , \mathscr{Z}^{\hat{\kappa}}(\mathfrak{c})_\theta \rangle = C_2
\int_{Y_H(K_0(\mathfrak{c}_F'))} \tilde{\iota}^{\hat{\underline{k}},\ast}( \omega_J(g^{-\iota} \otimes \theta^{-1})), \]
where  $C_2:= [K_0(\mathfrak{c}'_F):\mathcal{O}_F^{\ast, +} K_{1}(\mathfrak{c}'_F)] [K_0(\mathfrak{c}'):\mathcal{O}_E^{\ast, +}K_{11}(\mathfrak{c}')]G(\theta^{u})$ and \[\tilde{\iota}^{\hat{\underline{k}},\ast} : H^{2d}(Y_G(K_0(\mathfrak{c}')), \mathscr{V}_\C^{\hat{\kappa}}(\chi^{-1}_0\theta_0^{-2})) \to H^{2d}(Y_H(K_0(\mathfrak{c}_F')), \C)\]  
is the map induced by pullback along $\iota$ at the level of varieties and by the $\chi^{-1}_0\theta_0^{-2}$-twisted branching map $\tilde{\iota}^{\hat{\underline{k}}} : \C \hookrightarrow \iota^{\ast} \mathscr{V}_\C^{\hat{\kappa}}(\chi^{-1}_0\theta_0^{-2})$ as in \S \ref{sec:cohomology}.
\end{lemma}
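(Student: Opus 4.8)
The plan is to deduce both identities from the adjunction of the Gysin pushforward $\iota_\ast$ to the pullback $\iota^\ast$ under Poincar\'e duality, after first replacing $\omega_J(g^{-\iota})$ by the cohomologous compactly supported form $\omega_c$ furnished by Proposition~\ref{BorelRDtoC}, so that every pairing below is the ordinary Poincar\'e pairing between $H^{2d}_c(Y_G,\mathscr{F}^\vee)$ and $H^{2d}(Y_G,\mathscr{F})$; this replacement is harmless by \cite[Theorem~5.2]{borel2} and the compatibility, recalled just above, of this pairing with the intersection pairing through $\alpha$, $\beta$ and $\mathcal{Z}$.

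For the first (untwisted) identity, note that $\iota\colon Y_H(K_0(\mathfrak{c}_F))\to Y_G(K_0(\mathfrak{c}))$ is a finite unramified morphism, in particular proper, of real codimension $2d$; it therefore carries a Gysin map $\iota_\ast\colon H^{j}(Y_H,\iota^\ast\mathscr{F})\to H^{j+2d}(Y_G,\mathscr{F})$ for every local system $\mathscr{F}$ on $Y_G$, and $\iota^\ast$ preserves compact supports. The projection formula then gives $\langle x,\iota_\ast y\rangle_{Y_G}=\langle \iota^\ast x,y\rangle_{Y_H}$ for $x\in H^{2d}_c(Y_G,\mathscr{F}^\vee)$ and $y\in H^0(Y_H,\iota^\ast\mathscr{F})$, where the right-hand pairing is cup product followed by the coefficient contraction $\iota^\ast\mathscr{F}^\vee\otimes\iota^\ast\mathscr{F}\to\underline{\C}$ and integration over $Y_H(K_0(\mathfrak{c}_F))$. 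Taking $\mathscr{F}=\mathscr{V}_\C^{\hat{\kappa}}$ (self-dual up to Nebentypus since $m=0$), $x=[\omega_c]$ and $y=\iota^{\hat{\underline{k}}}(\mathbf{1}_{K_0(\mathfrak{c}_F)})$ turns $\langle[\omega_J(g^{-\iota})],\mathscr{Z}^{\hat{\kappa}}(\mathfrak{c})\rangle$ into the integral over $Y_H(K_0(\mathfrak{c}_F))$ of the pullback of $\omega_J(g^{-\iota})$ contracted against the invariant vector $\Delta^{\hat{\underline{k}}}$ via $\iota^{\hat{\underline{k}}}$, which is exactly $\int_{Y_H(K_0(\mathfrak{c}_F))}\iota^{\hat{\underline{k}},\ast}(\omega_J(g^{-\iota}))$ by the very definition of $\iota^{\hat{\underline{k}},\ast}$ in \S\ref{sec:cohomology}.

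For the second (twisted) identity, I would peel off, layer by layer and by passing to Poincar\'e-adjoints, the operations in $\mathscr{Z}^{\hat{\kappa}}(\mathfrak{c})_\theta=\mathcal{P}_{\mathfrak{b}}\big(\mathscr{Z}^{\hat{\kappa},\chi\theta^2}(\mathfrak{c}')\big)$ and $\mathscr{Z}^{\hat{\kappa},\chi\theta^2}(\mathfrak{c}')=P_{10,\ast}^{\chi_0\theta_0^2}\circ\iota_\ast\circ\iota^{\hat{\underline{k}}}(\mathbf{1}_{K_1(\mathfrak{c}'_F)})$. The trace $P_{10,\ast}^{\chi_0\theta_0^2}$ along the degeneracy map $Y_G(K_1(\mathfrak{c}'))\to Y_G(K_0(\mathfrak{c}'))$ is Poincar\'e-adjoint to the associated twisted pullback, which after comparison with the analogous degeneracy maps on $Y_H$ produces the two finite-index factors $[K_0(\mathfrak{c}'_F):\mathcal{O}_F^{\ast, +}K_1(\mathfrak{c}'_F)]$ and $[K_0(\mathfrak{c}'):\mathcal{O}_E^{\ast, +}K_{11}(\mathfrak{c}')]$ occurring in $C_2$; the correspondence $\mathcal{P}_{\mathfrak{b}}=\sum_{t}\pi_{2,\ast}u(t)_\ast\pi_1^\ast$, using the local-system identification of \cite[Lemma~9.3]{GetzGore}, is Poincar\'e-adjoint to a correspondence whose pairing against $[\omega_J(g^{-\iota})]$ reproduces $[\omega_J(g^{-\iota}\otimes\theta^{-1})]$ up to the Gauss sum $G(\theta^{u})$ --- this is the twisting recipe underlying \cite[Lemma~5.14]{GetzGore} and \cite[Proposition~9.5]{GetzGore}. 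After these reductions the remaining expression is a pairing of exactly the untwisted shape at level $K_1(\mathfrak{c}')$, where $\mathscr{V}_\C^{\hat{\kappa}}(\chi_0)=\mathscr{V}_\C^{\hat{\kappa}}$, so the projection formula for $\iota_\ast$ applied as in the previous paragraph lands it on $Y_H(K_0(\mathfrak{c}'_F))$ and identifies it with $C_2\int_{Y_H(K_0(\mathfrak{c}'_F))}\tilde{\iota}^{\hat{\underline{k}},\ast}(\omega_J(g^{-\iota}\otimes\theta^{-1}))$.

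The main obstacle is precisely the bookkeeping in the twisted case: one has to verify that $\iota$, the degeneracy maps $P_{10},\pi_1,\pi_2$ and the translations by the $u(t)$ sit in diagrams that are Cartesian up to the action of the center, so that base change and projection formulas may be applied and the Poincar\'e-adjoints of $P_{10,\ast}^{\chi_0\theta_0^2}$ and $\mathcal{P}_{\mathfrak{b}}$ transport to $Y_H$ with exactly the asserted constants and with no spurious Nebentypus or $|\phantom{e}|^{m}$ factor (here $m=0$, which does simplify matters). Beyond this, the computation is the same as the one carried out for intersection cohomology in \cite[\S 9]{GetzGore}; the only genuinely new point, already handled in the first paragraph, is that it may be run inside $H^{2d}_c$ rather than $IH^{2d}$ thanks to Proposition~\ref{BorelRDtoC}.
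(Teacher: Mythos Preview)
Your approach is essentially the paper's: replace $\omega_J(g^{-\iota})$ by the compactly supported $\omega_c$ of Proposition~\ref{BorelRDtoC}, use the adjunction $\langle x,\iota_\ast y\rangle=\langle\iota^\ast x,y\rangle$, and in the twisted case pass each correspondence in the definition of $\mathscr{Z}^{\hat\kappa}(\mathfrak{c})_\theta$ to its Poincar\'e-adjoint. Two points deserve tightening.

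First, in the untwisted case you silently identify $\int_{Y_H}\iota^{\hat{\underline{k}},\ast}(\omega_c)$ with $\int_{Y_H}\iota^{\hat{\underline{k}},\ast}(\omega_J(g^{-\iota}))$. These differ by $\int_{Y_H}\iota^{\hat{\underline{k}},\ast}(d\Psi)=\int_{Y_H}d\bigl(\iota^{\hat{\underline{k}},\ast}\Psi\bigr)$, and one must invoke Stokes for rapidly decreasing forms on the non-compact $Y_H$ (\cite[\S5.6]{borel2}) to conclude this vanishes. The paper spells this out; it is not automatic from Theorem~5.2 alone.

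Second, your bookkeeping for $C_2$ is misattributed. The factor $[K_0(\mathfrak{c}'):\mathcal{O}_E^{\ast,+}K_{11}(\mathfrak{c}')]$ involves $K_{11}$, not $K_1$, and so cannot come from the adjoint of $P_{10,\ast}^{\chi_0\theta_0^2}$; it arises, together with the Gauss sum $G(\theta^u)$, from the adjoint $\mathcal{P}_{\mathfrak{b}}'$ of $\mathcal{P}_{\mathfrak{b}}$ (this is \cite[Proposition~9.4]{GetzGore}, the key computational input here). The remaining factor $[K_0(\mathfrak{c}'_F):\mathcal{O}_F^{\ast,+}K_1(\mathfrak{c}'_F)]$ then comes from the commutative square relating $\iota$ at levels $K_1$ and $K_0$ with the degeneracy maps $P_{10}^{\mathfrak{c}'}$ and $P_{10}^{\mathfrak{c}_F'}$, which allows one to swap $P_{10,\ast}^{\mathfrak{c}'}\circ\iota_\ast$ for $\iota_\ast\circ P_{10,\ast}^{\mathfrak{c}_F'}$ and pick up the degree of the $H$-side degeneracy. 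With these two corrections your outline matches the paper's proof exactly.
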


\begin{proof}
For the first part of the statement, we argue exactly as in \cite[Proposition 4.8]{CLRG2}. Using Proposition \ref{BorelRDtoC}, we get that \begin{align*}
    \langle [ \omega_J(g^{-\iota}) ] , \mathscr{Z}^{\hat{\kappa}}(\mathfrak{c}) \rangle  &= \langle [ \omega_c ] , \mathscr{Z}^{\hat{\kappa}}(\mathfrak{c}) \rangle \\ &= \langle [ \omega_c ] , \iota_\ast \circ \iota^{\hat{\underline{k}}} (\mathbf{1}_{K_0(\mathfrak{c}_F)}) \rangle \\ &= \langle \iota^{\hat{\underline{k}},\ast}[ \omega_c] ,  \mathbf{1}_{K_0(\mathfrak{c}_F)} \rangle \\ 
    &= \int_{Y_H(K_0(\mathfrak{c}_F))}\!\!\!\! \iota^{\hat{\underline{k}},\ast}(\omega_c) \\
    &= \int_{Y_H(K_0(\mathfrak{c}_F))}\!\!\!\! \iota^{\hat{\underline{k}},\ast}(\omega_J(g^{-\iota}) ) +  \int_{Y_H(K_0(\mathfrak{c}_F))}\!\!\!\! \iota^{\hat{\underline{k}},\ast}(d \Psi),
\end{align*}
with $\Psi$ a degree $2d-1$ rapidly decreasing form on $Y_G(K_0(\mathfrak{c}))$. To conclude the proof, note that, as explained in \cite[\S 5.6]{borel2}, \[\int_{Y_H(K_0(\mathfrak{c}_F))}\!\!\!\! \iota^{\hat{\underline{k}},\ast}(d \Psi) = 0.\]
Regarding the second formula, notice \begin{align*}
     \langle [ \omega_J(g^{-\iota}) ] , \mathscr{Z}^{\hat{\kappa}}(\mathfrak{c})_\theta \rangle &=  \langle [ \omega_J(g^{-\iota}) ] , \mathcal{P}_{\mathfrak{b}}(\mathscr{Z}^{\hat{\kappa},\chi\theta^2}(\mathfrak{c'})) \rangle \\ &=  \langle \mathcal{P}_{\mathfrak{b}}'[ \omega_J(g^{-\iota}) ] , \mathscr{Z}^{\hat{\kappa},\chi\theta^2}(\mathfrak{c'}) \rangle \\ 
     &= C_2 \langle [ \omega_J(g^{-\iota} \otimes \theta^{-1}) ] , \mathscr{Z}^{\hat{\kappa},\chi\theta^2}(\mathfrak{c'}) \rangle,
\end{align*}
where $\mathcal{P}_{\mathfrak{b}}'$ denotes the adjoint to $\mathcal{P}_{\mathfrak{b}}$ and we have used that $\mathcal{P}_{\mathfrak{b}}'[ \omega_J(g^{-\iota}) ] = C_2 [ \omega_J(g^{-\iota} \otimes \theta^{-1}) ]$ (\textit{cf}. \cite[Proposition 9.4]{GetzGore}). The commutativity of the diagram 
\[\xymatrix{ Y_H(K_1(\mathfrak{c}_F')) \ar[rr]^-{\iota} \ar[d]_-{P_{10}^{\mathfrak{c}_F'}} & & Y_G(K_1(\mathfrak{c}')) \ar[d]^-{P_{10}^{\mathfrak{c}'}} \\ Y_H(K_0(\mathfrak{c}_F'))  \ar[rr]^-{\iota} & & Y_G(K_0(\mathfrak{c}'))  } \]
implies that \begin{align*}
    P_{10, \ast}^{\mathfrak{c}'} \circ \iota_\ast(\mathbf{1}_{K_1(\mathfrak{c}_F')}) & =   \iota_\ast \circ P_{10, \ast}^{\mathfrak{c}_F'}(\mathbf{1}_{K_1(\mathfrak{c}_F')}) \\ 
    &= [K_0(\mathfrak{c}'_F):\mathcal{O}_F^{\ast, +} K_{1}(\mathfrak{c}'_F)] \iota_\ast(\mathbf{1}_{K_0(\mathfrak{c}_F')}),
\end{align*}    
with $[K_0(\mathfrak{c}'_F):\mathcal{O}_F^{\ast, +} K_{1}(\mathfrak{c}'_F)]$ being the degree of the natural degeneracy map $P_{10}^{\mathfrak{c}_F'}$. This and the definition of $\tilde{\iota}^{\hat{\underline{k}},\ast} $ give that
\begin{align*}
      \langle [ \omega_J(g^{-\iota} \otimes \theta^{-1}) ] , \mathscr{Z}^{\hat{\kappa},\chi\theta^2}(\mathfrak{c'}) \rangle
     &=  \langle  [ \omega_J(g^{-\iota} \otimes \theta^{-1}) ],   P_{10,\ast}^{\chi_0\theta^2_0} \circ \iota_* \circ \iota^{\hat{\underline{k}}}(\mathbf{1}_{K_1(\mathfrak{c}_F')}) \rangle \\ 
      &=  [K_0(\mathfrak{c}'_F):\mathcal{O}_F^{\ast, +} K_{1}(\mathfrak{c}'_F)]  \langle \tilde{\iota}^{\hat{\underline{k}},\ast} [ \omega_J(g^{-\iota} \otimes \theta^{-1}) ],  \mathbf{1}_{K_0(\mathfrak{c}_F')} \rangle. 
\end{align*}
The result then follows as in the first part of the statement.
\end{proof}
 
\begin{remark}
Lemma \ref{Pairingequalintegral} implies in particular that 
\[ \langle [ \omega_J(g^{-\iota}) ] , \mathscr{Z}^{\hat{\kappa}}(\mathfrak{c})_\theta \rangle = \langle \mathcal{Z}[ \omega_J(g^{-\iota})], [Z(\mathfrak{c})_\theta] \rangle_{IH}. \]
\end{remark}

Write \[\mathfrak{b}' : = \prod \{ \frakp \,:\, \frakp \mid \mathfrak{d}_{E/F} \text{ and } \frakp \nmid \mathfrak{f}(\theta) \}, \]
with $\mathfrak{f}(\theta)$ the conductor of $\theta$. Finally, \cite[Theorem 10.1]{GetzGore} reads as follows: 

\begin{theorem}\label{thm:integral=Lfunction}
For $f$ as above, let $\chi_F$ be a Hecke character for $F$ corresponding to the Nebentypus of $f$, and $\theta$ a Hecke character for $E$ such that $\theta_{\vert_{{\mathbb{A}}_F^\times}}=\eta_{E/F} \chi_F $. Then we have 
\[
\langle \omega_J(f^{-\iota}_E), \mathscr{Z}^{\hat{\kappa}}(\mathfrak{c})_{\theta} \rangle=\frac{c_1}{2^{d}}L^{\ast,\mathfrak{d}_{E/F}\mathfrak{b}_F }(\mathrm{Ad}(f) \otimes \eta_{E/F}, 1) \prod_{\mathfrak{q} \mid \mathfrak{b}'} L_{\mathfrak{q}}(\mathrm{As}(f_E \otimes \theta^{-1}), 1),
\]
where $L_{\mathfrak{q}}(\mathrm{As}(f_E \otimes \theta^{-1}), 1)$ denotes the Euler factor at ${\mathfrak{q}}$ of the Asai $L$-function of $f_E$ twisted by $\theta^{-1}$ and
\begin{align*}
c_1:=\frac{C_2  \Norm_{F/\Q}({\mathfrak{c}'_F}^2 D_{E/F})D_{F}^3 \zeta_F^{\mathfrak{d}_{E/F}\mathfrak{c}'_F}(1)^* }{2^{(\sum_{\sigma \in \Sigma_F} k_{\sigma})-2} R_F  |(\mathcal{O}_F/\mathfrak{c}'_F)^{\times}|},\end{align*}
for $R_F$ the regulator of $F$ and $\zeta_F^{\mathfrak{d}_{E/F}\mathfrak{c}'_F}(1)^*$ the residue at $s=1$ of the partial Dedekind zeta function of $F$ with the Euler factors at $\mathfrak{d}_{E/F}\mathfrak{c}'_F$ removed.
\end{theorem}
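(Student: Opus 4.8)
The plan is to obtain the formula by chaining three ingredients already in place: the identification of the Poincar\'e pairing with a period integral over $Y_H$ (Lemma \ref{Pairingequalintegral}), the Getz--Goresky computation of that period integral in terms of the Asai $L$-function (\cite[Theorem 10.1]{GetzGore}), and the conversion of the Asai $L$-value of a base-change form into a twisted adjoint $L$-value (Proposition \ref{AsaiForBC}).

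First I would apply the second formula of Lemma \ref{Pairingequalintegral} with $g=f_E$, which is legitimate since $f_E\in S_{\hat{\kappa}}(K_0(\mathfrak{c}),\chi)$ is an eigenform and $J$ is an $F$-type; this rewrites the left-hand side as $C_2\int_{Y_H(K_0(\mathfrak{c}_F'))}\tilde{\iota}^{\hat{\underline{k}},\ast}(\omega_J(f_E^{-\iota}\otimes\theta^{-1}))$, which by the remark following that lemma equals the intersection-cohomology pairing $\langle\mathcal{Z}[\omega_J(f_E^{-\iota})],[Z(\mathfrak{c})_\theta]\rangle_{IH}$ of \eqref{GGformulawithIH}. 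Since $f_E$ is the genuine base-change newform, $\pi(f_E)=\widehat{\pi(f)}$, this is exactly the setting of \cite[Theorem 10.1]{GetzGore}, which computes this period integral through the residue at $s=1$ of the partial Asai $L$-function $L^{\mathfrak{b}'}(\mathrm{As}(f_E\otimes\theta^{-1}),s)$.

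Next I would invoke Proposition \ref{AsaiForBC}: because $\theta_{\vert\A_F^\times}=\chi_F\eta_{E/F}$ and $\mathfrak{f}(\theta)\cap\mathcal{O}_F=\mathfrak{b}_F$, its residue formula gives $\mathrm{Res}_{s=1}L^{\mathfrak{b}'}(\mathrm{As}(f_E\otimes\theta^{-1}),s)=L^{\mathfrak{d}_{E/F}\mathfrak{b}_F}(\mathrm{Ad}(f)\otimes\eta_{E/F},1)$ times a residue of a partial Dedekind zeta function of $F$, up to appending the archimedean $\Gamma$-factors $L_\sigma(\mathrm{Ad}(f)\otimes\eta_{E/F},s)$ of \S\ref{sec:AdLfunc} to pass from $L^{\mathfrak{d}_{E/F}\mathfrak{b}_F}$ to the completed $L^{\ast,\mathfrak{d}_{E/F}\mathfrak{b}_F}$ appearing in the statement. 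The discrepancy between the partial Asai $L$-function used by Getz--Goresky and the range over which it equals the adjoint $L$-function is precisely accounted for by the Euler factors $\prod_{\mathfrak{q}\mid\mathfrak{b}'}L_\mathfrak{q}(\mathrm{As}(f_E\otimes\theta^{-1}),1)$ on the right-hand side, since $\mathfrak{b}'$ is by definition the product of primes dividing $\mathfrak{d}_{E/F}$ but not $\mathfrak{f}(\theta)\cap\mathcal{O}_F$, i.e. exactly where the two $L$-functions differ in the relevant region.

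The remaining --- and main --- task is to assemble the constant $c_1$ exactly. One must combine the index factor $C_2$ of Lemma \ref{Pairingequalintegral}; the volume factor of $Y_H(K_0(\mathfrak{c}_F'))$ entering \cite[Theorem 10.1]{GetzGore}, which via the analytic class number formula for $F$ unwinds into $D_F$, $R_F$, $|(\mathcal{O}_F/\mathfrak{c}'_F)^\times|$ and the partial Dedekind-zeta residue $\zeta_F^{\mathfrak{d}_{E/F}\mathfrak{c}'_F}(1)^*$; the weight-dependent powers of $2$ and of $2\pi$ coming from the archimedean components of the integral against the explicitly normalized form $\omega_J(f_E^{-\iota})$, which produce $2^{-(\sum_{\sigma\in\Sigma_F}k_\sigma)+2}$ together with the $(2\pi)$-powers hidden in the $\Gamma$-factors; and the discriminant factor $\Norm_{F/\Q}({\mathfrak{c}'_F}^2 D_{E/F})$ together with the Dedekind-zeta residue furnished by Proposition \ref{AsaiForBC}. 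The hard part is precisely this bookkeeping: reorganizing the statement of \cite[Theorem 10.1]{GetzGore} (together with \cite[Proposition 5.17]{GetzGore}) into the present normalization, while carefully tracking the two conventions for Satake parameters and the weight shift by $2$ used here (see \S\ref{secNotation}), and the local $L$-factors of $\mathrm{Ad}(f)$ at the ramified primes (ramified principal series, Steinberg or supercuspidal --- recall $f$ is assumed minimal) as fixed in \S\ref{sec:AdLfunc}.
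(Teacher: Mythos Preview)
Your proposal is correct and follows essentially the same path as the paper: the paper's proof consists of invoking Lemma~\ref{Pairingequalintegral} to pass to the period integral, then comparing the left-hand side to the residue at $s=0$ of the Rankin--Selberg integral $I(f_E,s)$ computing the Asai $L$-function (i.e., \cite[Theorem 10.1]{GetzGore} and the material of \S\ref{sec:RSintegrals}), with the adjoint $L$-value appearing via the decomposition of Proposition~\ref{AsaiForBC}. Your detour through the intersection-cohomology pairing is unnecessary but harmless, since Lemma~\ref{Pairingequalintegral} already delivers the period integral directly.
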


The proof of the Theorem follows from comparing the left hand side to the residue at $s=0$ of a Rankin--Selberg integral $I(f_E,s)$ which calculates the Asai $L$-function of $f_E \otimes \theta^{-1}$ (\textit{cf}. \cite[Chapter 10]{GetzGore} and \S \ref{sec:RSintegrals} below). Note that the adjoint $L$-value appears because of the decomposition of Proposition \ref{AsaiForBC}.

\section{A \texorpdfstring{$p$}{p}-adic tower of level subgroups and its Iwasawa theory}\label{Towerplusmoments}

In what follows, we use the method described succinctly in Loeffler's survey \cite{LoefflerSphericalvarieties} and \cite{LRZ} to show that our cycles vary in $p$-adic families, in a suitable cohomological sense. The results in \emph{loc. cit.} do not apply directly to our setting exactly because of Milne's axiom SV5 forcing the centre to contain no $\R$-split torus which is not already $\Q$-split. This excludes Hilbert modular varieties most notably. We provide the necessary details to show that this hypothesis can be removed in our case. This relies on the fact that the open compact level subgroups in our $p$-adic tower all share the same intersection with the center of $G$.

Recall that $L$ denotes a number field over which $H$ and $G$ split. Throughout the rest of the manuscript, we fix a prime $\mathfrak{m}$ of $L$ above $p$ and denote by $\mathcal{O}_\mathfrak{m}$ the localization of the ring of integers $\mathcal{O}:=\mathcal{O}_L$ at $\mathfrak{m}$.

\subsection{A spherical variety}
 
Let $B_G = T_G \cdot N_G$ denote the upper-triangular Borel subgroup of $G$ with Levi subgroup the maximal diagonal torus $T_G$ and unipotent radical $N_G$. We also let $\overline{B}_G =  T_G \cdot \overline{N}_G$ be the opposite parabolic. Similarly, denote by $B_H = T_H \cdot N_H = B_G \cap H$ the upper-triangular Borel subgroup of $H$.
Consider the flag variety $\mathcal{F} := G / \overline{B}_G$ over ${\rm Spec}\,  \Z_p$. It has a natural left action by $G$ and, thus, a left action by $H=G_F$ via the embedding $\iota$. Recall that $H(\Z_p) = \prod_{\mathfrak{p}|p} \GL_2(\O_\mathfrak{p})$ over primes $\mathfrak{p}$ in $F$ with $\O_\mathfrak{p} := \O_{F,\mathfrak{p}}$, and $G(\Z_p) = \prod_{\mathfrak{P}|p} \GL_2(\O_\mathfrak{P})$ over primes $\mathfrak{P}$ in $E$ with $\O_\mathfrak{P} := \O_{E,\mathfrak{P}}$. Via these identifications $\iota: H(\Z_p) \to G(\Z_p)$ can be written as $(\iota_\mathfrak{p})_\mathfrak{p}$ where each embedding \[\iota_\mathfrak{p}: \GL_2(\O_{\mathfrak{p}}) \hookrightarrow \prod_{\mathfrak{P}|\mathfrak{p}} \GL_2(\O_{\mathfrak{P}}) \]
is the natural map induced by the inclusion of $\O_{\mathfrak{p}}$ in $\O_{\mathfrak{P}}$. If $\mathfrak{p}$ splits in $E$, then $\mathfrak{p}=\mathfrak{P} \overline{\mathfrak{P}}$ (thus $\O_{\mathfrak{p}} = \O_{\mathfrak{P}}=\O_{\overline{\mathfrak{P}}}$) and $\iota_\mathfrak{p}(g) = (g,g)$. If $\mathfrak{p}$ is inert in $E$, there is only one prime $\mathfrak{P}$ above it and $\iota_\mathfrak{p}$ identifies $\GL_2(\O_{\mathfrak{p}})$ with the $\O_{\mathfrak{P}}$-valued matrices fixed by ${\rm Gal}(E_\mathfrak{P}/F_\mathfrak{p})$.

Let $\Upsilon_{F,p}$\index{$\Upsilon_{F,p}$} denote the set of primes of $F$ above $p$.  This can be divided into two subsets $\Upsilon_{F,p} = \Upsilon^{\rm s}_{F,p} \cup \Upsilon^{\rm i}_{F,p} $, where $\Upsilon^{\rm s}_{F,p}$ (resp. $\Upsilon^{\rm i}_{F,p}$) is the set of primes above $p$ which are split (resp. inert) in $E$. Then we may write \[ G(\Z_p)= \prod_{\mathfrak{p} \in \Upsilon^{\rm s}_{F,p}} \GL_2(\O_{\mathfrak{p}})^2 \times \prod_{\mathfrak{p} \in \Upsilon^{\rm i}_{F,p}} \GL_2(\O_{\mathfrak{P}}). \]

\begin{definition}\label{def:representativeorbit}
Let $u \in G(\Z_p)$\index{$u$} denote the element $(u_\mathfrak{p})_{\mathfrak{p} \in \Upsilon_{F,p}}$ given by \[u_\mathfrak{p} = \begin{cases} \left ( {\matrix {1} {1} {  } {1}}  , {\matrix {1} {} { } {1}} \right ) & \text{ if } \mathfrak{p} \in \Upsilon^{\rm s}_{F,p}, \\ {\matrix {1} {\delta_\mathfrak{p}} {}  {1}} & \text{ if } \mathfrak{p} \in \Upsilon^{\rm i}_{F,p},  \end{cases} \]
with $\delta_\mathfrak{p} \in \O_{\mathfrak{P}} \setminus \O_{\mathfrak{p}} $ such that $\O_{\mathfrak{P}} = \O_{\mathfrak{p}} \oplus \O_{\mathfrak{p}} \delta_\mathfrak{p}$, and $\delta_\mathfrak{p}^2 \in \O_{\mathfrak{p}}$.
\end{definition}

Finally, we let $L_H$ denote the torus of $H(\Z_p)$ given by 
\[ L_H = \prod_{\mathfrak{p} \in \Upsilon^{\rm s}_{F,p}}T_{(1,0)}  \times \prod_{\mathfrak{p} \in \Upsilon^{\rm i}_{F,p}} T_{\delta_\mathfrak{p}}, \]
where $T_{(1,0)}$ denotes the conjugate\footnote{Note that $T_{(1,0)} = y T_{\GL_2} y^{-1}$, with $y = {\matrix {1 } { } {1} {1 }}$.} of the maximal split torus of of $\GL_2(\O_{\mathfrak{p}})$ given by  \[\{ \left(\begin{smallmatrix}a &  \\a-d  & d\end{smallmatrix}\right)\in \GL_2(\O_{\mathfrak{p}})\}, \] while $T_{\delta_\mathfrak{p}}$ denotes the maximal non-split torus of $\GL_2(\O_{\mathfrak{p}})$ given by \[\{ \left(\begin{smallmatrix}a & c\delta_\mathfrak{p}^2 \\c  & a\end{smallmatrix}\right)\,:\, (a,c) \in \O_{\mathfrak{p}}^2 \text{ such that } a+c\delta_\mathfrak{p} \in \O_{\mathfrak{P}}^\times \}. \]

\begin{lemma}\label{openorbitlemma} \leavevmode
\begin{enumerate}
    \item  The $H$-orbit of $u$ in $\mathcal{F} / {\rm Spec}\,\Z_p$ is Zariski open,
    \item ${\rm Stab}_{H} (u \overline{B}_G)(\Z_p)=  \iota( L_H ) .$
\end{enumerate}
\end{lemma}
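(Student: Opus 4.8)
The plan is to reduce both statements to an explicit $2\times 2$ matrix computation at each prime $v$ of $F$ above $p$, and to derive the openness of the orbit from the ensuing dimension count. Using the factorizations $G(\Z_p)=\prod_{v\mid p}G_v$, with $G_v=\GL_2(\O_v)^2$ when $v$ splits in $E$ and $G_v=\GL_2(\O_w)$ when $v$ is inert, $H(\Z_p)=\prod_{v\mid p}\GL_2(\O_v)$, and the compatible factorizations of the opposite Borel $\overline B_G$, of $\mathcal F=G/\overline B_G$ and of $u$, it suffices to prove, for each such $v$, that the orbit of $u_v\overline B_{G,v}$ under $\GL_2(\O_v)$ (acting through $\iota_v$) is Zariski open in $\mathcal F_v=G_v/\overline B_{G,v}$ and that its $\O_v$-point stabiliser, namely $\{h\in\GL_2(\O_v):u_v^{-1}\iota_v(h)u_v\in\overline B_{G,v}\}$, equals $T_{(1,0)}$ (split) or $T_{\delta_v}$ (inert). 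Geometrically $\mathcal F_v$ is $\mathbb P^1\times\mathbb P^1$ in the split case and $\mathrm{Res}_{\O_w/\O_v}\mathbb P^1$ in the inert case, and $u_v\overline B_{G,v}$ corresponds, over a splitting field, to the ordered pair of \emph{distinct} lines $([e_1+e_2],[e_2])$, resp. $([\delta_v e_1+e_2],[\delta_v'e_1+e_2])$ with $\delta_v'\neq\delta_v$ the conjugate of $\delta_v$; such a pair lies in the open $\GL_2$-orbit on $\mathbb P^1\times\mathbb P^1$, whose point stabiliser is the maximal torus having those two lines as eigenlines. This already suggests both the openness and the shape of the stabiliser.

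For part~(2) I would carry out the conjugations. In the split case ($\iota_v(g)=(g,g)$, $u_v=(n,1)$ with $n=\left(\begin{smallmatrix}1&1\\0&1\end{smallmatrix}\right)$) one has $u_v^{-1}(h,h)u_v=(n^{-1}hn,\,h)$, so membership in $\overline B(\O_v)\times\overline B(\O_v)$ forces first $h=\left(\begin{smallmatrix}a&0\\c&d\end{smallmatrix}\right)$ and then, since $n^{-1}hn=\left(\begin{smallmatrix}a-c&a-c-d\\c&c+d\end{smallmatrix}\right)$ must be lower triangular, $d=a-c$; hence $h=\left(\begin{smallmatrix}a&0\\a-d&d\end{smallmatrix}\right)$ with $ad$ a unit, i.e. $h\in T_{(1,0)}$. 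In the inert case, choosing (as one may, $p$ being odd and unramified in $E$) $\delta_v$ with $\O_w=\O_v\oplus\O_v\delta_v$ and $\delta_v^2\in\O_v^\times$, the $(1,2)$-entry of $u_v^{-1}hu_v$ equals $b-\delta_v^2c+\delta_v(a-d)$, whose vanishing, read off in the $\O_v$-basis $\{1,\delta_v\}$ of $\O_w$, amounts to $a=d$ and $b=\delta_v^2c$, i.e. $h\in T_{\delta_v}$. Assembling over all $v\mid p$ yields $\mathrm{Stab}_H(u\overline B_G)(\Z_p)=\iota(L_H)$.

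For part~(1), running the same computations over an arbitrary $\Z_p$-algebra — equivalently, in their infinitesimal form, which amounts to the identity $\mathfrak h+\mathrm{Ad}(u)\overline{\mathfrak b}_G=\mathfrak g$, itself reducing at each $v$ to $\overline{\mathfrak b}+n\overline{\mathfrak b}n^{-1}=\mathfrak{gl}_2$ for the relevant unipotent $n$ (true because the two summands are the Lie algebras of the stabilisers of two distinct points of $\mathbb P^1$, which meet along a maximal torus) — identifies the scheme-theoretic stabiliser $\mathrm{Stab}_H(u\overline B_G)$ with the subtorus $\iota(L_H)\subseteq H$, smooth over $\Z_p$ of relative dimension $\sum_{v\mid p}2[F_v:\Q_p]=2d$. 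Consequently the orbit map $a\colon H\to\mathcal F$, $h\mapsto h\cdot(u\overline B_G)$, has differential with kernel $\mathrm{Lie}(\iota(L_H))$ of dimension $2d$ on every geometric fibre, so, as $\dim H-2d=2d=\dim\mathcal F$, its differential is everywhere surjective; hence $a$ is smooth, therefore open, and its image is the asserted Zariski-open $H$-orbit. The matrix identities are routine; the one genuinely delicate point is this final passage from fibrewise statements to a scheme-theoretic assertion over $\Z_p$, which goes through precisely because $\mathrm{Stab}_H(u\overline B_G)$ is a torus, hence flat, making $a$ a smooth — in particular open — morphism of $\Z_p$-schemes. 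It is exactly here that the hypotheses ``$p$ odd'' and ``$p$ unramified in $E$'' are used: they furnish a generator $\delta_v$ of $\O_w$ over $\O_v$ with $\delta_v^2$ a unit, and guarantee $\delta_v\not\equiv\delta_v'$ modulo $p$, so that the two lines above remain distinct on the special fibre as well.
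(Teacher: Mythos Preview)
Your proof is correct and follows essentially the same approach as the paper: both compute the stabiliser prime by prime via the explicit conjugation $u_v^{-1}\iota_v(h)u_v$, obtaining $T_{(1,0)}$ in the split case and $T_{\delta_v}$ in the inert case, and then deduce openness from the resulting dimension count. The only difference is that the paper dispatches part~(1) in one line (``the $H$-orbit of $u$ is open for the Zariski topology as its dimension equals the dimension of $\mathcal F$''), whereas you supply the scheme-theoretic justification over $\Z_p$ via smoothness of the orbit map and flatness of the torus stabiliser; this extra care is warranted given that the statement is over $\Z_p$ rather than a field, and your remark on why $p$ odd and unramified matters (distinctness of $\delta_v$ and $\delta_v'$ on the special fibre) is a useful addition.
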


\begin{proof} 

We start with calculating the stabilizer of $u \overline{B}_G$, which is  $$ {\rm Stab}_{H} (u \overline{B}_G) =  \iota( H) \cap u \overline{B}_G u^{-1}.$$
Let $\mathfrak{p}  \in \Upsilon^{\rm s}_{F,p}$, then $\GL_2(\O_{\mathfrak{p}}) \ni g={\matrix {a} { b} {c} {d}}$ fixes $[u_{\mathfrak{p}}]$ if $g$ and $  {\matrix {1 } { -1} {} {1 }} \cdot g \cdot {\matrix {1 } {1} { } { 1}}$ are lower-triangular. As \[ u_{\mathfrak{p}}^{-1} \cdot g \cdot u_{\mathfrak{p}} = \left({\matrix {a-c} {a+b-c-d} {c} {d+c}}, {\matrix {a} {b} {c} {d}} \right)
,\]
we deduce that $g ={\matrix {a} {} { a-d} {d}} $. Therefore any element in $ u_{\mathfrak{p}}^{-1} \GL_2(\O_{\mathfrak{p}}) u_{\mathfrak{p}} \cap \overline{B}_{\GL_2}(\O_{\mathfrak{p}})^2$ is of the form $\left(  {\matrix {d} { } { a-d} {a}}, {\matrix {a} { } {a-d} {d}}\right)$. In other words, $\GL_2(\O_{\mathfrak{p}}) \cap u_{\mathfrak{p}} \overline{B}_{\GL_2}(\O_{\mathfrak{p}})^2 u_{\mathfrak{p}}^{-1}  = \iota_\mathfrak{p}(T_{(1,0)})$.

Let $\mathfrak{p}  \in \Upsilon^{\rm i}_{F,p}$. Keeping the notation as above,  
\[  u_{\mathfrak{p}}^{-1}  \cdot {\matrix {a} {b } {c} {d}} \cdot u_{\mathfrak{p}} = {\matrix { a - \delta_\mathfrak{p} c} {\delta_\mathfrak{p}(a-d) + b - \delta_\mathfrak{p}^2 c} {c} {d + \delta_\mathfrak{p} c}} \in \overline{B}_{\GL_2}(\O_{\mathfrak{P}}) \] 
implies that $b= \delta_\mathfrak{p}^2 c $ and that $d= a$. Thus, in this case $g \in T_{\delta_\mathfrak{p}}$. Hence $  \GL_2(\O_{\mathfrak{p}}) \cap u_{\mathfrak{p}} \overline{B}_{\GL_2}(\O_{\mathfrak{P}}) u_{\mathfrak{p}}^{-1}= \iota_\mathfrak{p}( T_{\delta_\mathfrak{p}} )  $. 

Summing up, we deduce that  ${\rm Stab}_{H} (u \overline{B}_G)(\Z_p) =   \iota (L_H )$. Finally, notice that the $H$-orbit of $u$ is open for the Zariski topology as its dimension equals the dimension of $\mathcal{F}$.
\end{proof} 

\begin{definition}
We let $L_G$ be the subtorus of $T_G$ given by $L_G = \{ {\matrix {\alpha} { } {} {\tau(\alpha)}}\,:\, \alpha \in {\rm Res}_{\mathcal{O}_E/\Z} \mathbf{G}_{\rm m} \}$ and denote $Q_G:=L_G \cdot N_G$, $\overline{Q}_G:=L_G \cdot \overline{N}_G$. 
\end{definition}

\begin{corollary}\label{StupRMKonLG}
We have an inclusion $u^{-1} \iota (L_H) u \subset \overline{Q}_G(\Z_p) \subset \overline{B}_G(\Z_p)$.
\end{corollary}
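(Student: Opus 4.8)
The plan is to verify the claimed inclusion place by place, using the explicit computation of the stabilizer already carried out in the proof of Lemma \ref{openorbitlemma}. Recall that $u = (u_v)_v$ and $L_H = \prod_{v \in \Upsilon^{\rm s}_{F,p}} T_{(1,0)} \times \prod_{v \in \Upsilon^{\rm i}_{F,p}} T_{\delta_v}$, so it suffices to check, for each $v$, that $u_v^{-1} \iota_v(L_{H,v}) u_v$ lands in $\overline{Q}_{\GL_2}(\O_v)$ (the relevant local factor of $\overline{Q}_G$), since the further inclusion $\overline{Q}_G(\Z_p) \subset \overline{B}_G(\Z_p)$ is immediate from $Q_G = L_G \cdot N_G \subseteq B_G$.

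First I would treat the split places $v \in \Upsilon^{\rm s}_{F,p}$. Here $\iota_v(T_{(1,0)}) = \{(\left(\begin{smallmatrix} a & \\ a-d & d \end{smallmatrix}\right), \left(\begin{smallmatrix} a & \\ a-d & d \end{smallmatrix}\right))\}$, and the computation in the proof of the Lemma already shows $u_v^{-1} \left(\begin{smallmatrix} a & \\ a-d & d \end{smallmatrix}\right) u_v = \left(\begin{smallmatrix} d & \\ a-d & a \end{smallmatrix}\right)$ on the first coordinate while the second coordinate is unchanged. Under the identification $G$ at a split place with $\GL_2 \times \GL_2$ (indexed by the two primes $w, \bar w$ over $v$, swapped by $\tau$), the subtorus $L_G$ is $\{(\left(\begin{smallmatrix} \alpha & \\ & \beta\end{smallmatrix}\right), \left(\begin{smallmatrix} \beta & \\ & \alpha \end{smallmatrix}\right))\}$. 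One sees directly that $\left(\left(\begin{smallmatrix} d & \\ a-d & a \end{smallmatrix}\right), \left(\begin{smallmatrix} a & \\ a-d & d \end{smallmatrix}\right)\right)$ factors as a diagonal element of $L_G$ times a lower unipotent, hence lies in $\overline{Q}_{\GL_2}(\O_v)^2$: namely its "diagonal part" is $\left(\left(\begin{smallmatrix} d & \\ & a\end{smallmatrix}\right), \left(\begin{smallmatrix} a & \\ & d\end{smallmatrix}\right)\right) \in L_G$, with lower-triangular unipotent cofactor on each coordinate.

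Next I would treat the inert places $v \in \Upsilon^{\rm i}_{F,p}$. From the proof of Lemma \ref{openorbitlemma} we have, for $g = \left(\begin{smallmatrix} a & c\delta_v^2 \\ c & a \end{smallmatrix}\right) \in T_{\delta_v}$, that $u_v^{-1} g\, u_v = \left(\begin{smallmatrix} a - \delta_v c & 0 \\ c & a + \delta_v c \end{smallmatrix}\right)$, where I use that the upper-right entry $\delta_v(a-d) + b - \delta_v^2 c$ vanishes precisely because $d = a$ and $b = \delta_v^2 c$. Now $a - \delta_v c$ and $a + \delta_v c$ are the two Galois conjugates (under ${\rm Gal}(E_w/F_v)$) of the element $a + \delta_v c \in \O_w$, since $\tau(\delta_v) = -\delta_v$ after adjusting $\delta_v$ by an element of $\O_v$ if needed (here one must note that $E_w/F_v$ is unramified, $p$ being unramified in $E$, so $\O_w = \O_v[\delta_v]$ with $\tau(\delta_v)$ of the stated form; this is where a small argument is needed, but it is the same normalization already used when defining $u_v$). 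Hence the diagonal part $\left(\begin{smallmatrix} a-\delta_v c & \\ & a + \delta_v c\end{smallmatrix}\right)$ is exactly an element $\left(\begin{smallmatrix} \tau(\alpha) & \\ & \alpha \end{smallmatrix}\right)$ of $L_G$ (a reindexing by $\tau$), and the remaining factor $\left(\begin{smallmatrix} 1 & \\ c/(a-\delta_v c) & 1\end{smallmatrix}\right)$ is a lower unipotent, giving $u_v^{-1} g\, u_v \in \overline{Q}_G(\Z_p)$.

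The main obstacle, such as it is, is bookkeeping: matching the abstract description $L_G = \{\left(\begin{smallmatrix} \alpha & \\ & \tau(\alpha)\end{smallmatrix}\right)\}$ with its concrete avatar at split and inert places (a coordinate swap in the split case, the nontrivial Galois action on $\O_w$ in the inert case), and checking that the normalization of $\delta_v$ chosen in Definition \ref{def:representativeorbit} is compatible with $\tau(\delta_v) \equiv -\delta_v$ modulo $\O_v$. Once these identifications are pinned down, each case is a one-line matrix factorization. Assembling the local statements over all $v \in \Upsilon_{F,p}$ yields $u^{-1}\iota(L_H)u \subset \overline{Q}_G(\Z_p)$, and composing with $\overline{Q}_G(\Z_p) \subseteq \overline{B}_G(\Z_p)$ finishes the proof.
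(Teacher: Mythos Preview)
Your proof is correct and follows the same approach as the paper's, which simply refers back to the explicit matrix computations in the proof of Lemma~\ref{openorbitlemma} and notes (in the inert case) that the diagonal entries $a \pm \delta_v c$ of $u_v^{-1} g u_v$ are Galois conjugates. Your concern about whether $\tau(\delta_v) = -\delta_v$ is unnecessary: the very definition of $T_{\delta_v}$ as matrices $\left(\begin{smallmatrix} a & c\delta_v^2 \\ c & a\end{smallmatrix}\right)$ with entries in $\O_v$ already forces $\delta_v^2 \in \O_v$, and since $\delta_v \notin \O_v$ and $p$ is odd this gives $\tau(\delta_v) = -\delta_v$ without any adjustment.
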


\begin{proof}
This follows directly from the proof of the Lemma above. For instance, at an inert prime $\mathfrak{p}$, if $g=\left(\begin{smallmatrix} a & c \delta_\mathfrak{p}^2 \\ c & a \end{smallmatrix}\right) \in \iota_\mathfrak{p}( T_{\delta_\mathfrak{p}} )$, then  
\[u_{\mathfrak{p}}^{-1} g u_{\mathfrak{p}} =  {\matrix { a - \delta_\mathfrak{p} c} {} {c} {a + \delta_\mathfrak{p} c}}, \]
which is lower-triangular, with diagonal elements conjugate to each other as desired.
\end{proof}

In the course of the proof of Theorem \ref{BIGCLASS}, we will need an explicit description of the action of $u^{-1}$ on the invariant vector $\Delta^{\underline{k}} \in V^{\underline{k},0}_{\mathcal{O}_\mathfrak{m}}$ introduced in Formula \eqref{InvVectus}. We record it here. The element $u$ or its inverse act on the $G(\mathcal{O}_\mathfrak{m})$-lattice $V^{\underline{k},0}_{\mathcal{O}_\mathfrak{m}}$ via the embedding   \[j: G(\Z_p) = \GL_2(  \mathcal{O}_E \otimes_\Z \Z_p) \hookrightarrow G(\mathcal{O}_\mathfrak{m}) = \GL_2(\mathcal{O}_E \otimes_\Z \mathcal{O}_\mathfrak{m}) \simeq \GL_2(\mathcal{O}_\mathfrak{m})^{2d}, \]
which we now recall. Write $\Z_p \otimes_\Z \mathcal{O}_E = \prod_{ \mathfrak{p} \in \Upsilon^{\rm s}_{F,p}} \mathcal{O}^2_\mathfrak{p} \times   \prod_{\mathfrak{p} \in \Upsilon^{\rm i}_{F,p}} \mathcal{O}_\mathfrak{P}$ as above, so that \[ G(\Z_p)= \prod_{\mathfrak{p} \in \Upsilon^{\rm s}_{F,p}} \GL_2(\O_{\mathfrak{p}})^2 \times \prod_{\mathfrak{p} \in \Upsilon^{\rm i}_{F,p}} \GL_2(\O_{\mathfrak{P}}). \]
At each $\mathfrak{p}  \in \Upsilon^{\rm s}_{F,p}$, then $j$ is induced by the natural embedding of $\O_{\mathfrak{p}} \hookrightarrow \O_{\mathfrak{p}} \otimes_{\Z_p} \O_\mathfrak{m} \simeq \O_\mathfrak{m}^{d_\mathfrak{p}}$. At each $\mathfrak{p}  \in \Upsilon^{\rm i}_{F,p}$, $j$ is also induced by the embedding $\GL_2(\O_{\mathfrak{P}}) \hookrightarrow \GL_2(\O_{\mathfrak{P}} \otimes_{\Z_p} \O_\mathfrak{m})$ together with the isomorphism $\O_{\mathfrak{P}} \otimes_{\O_{\mathfrak{p}}} (\O_{\mathfrak{p}} \otimes_{\Z_p} \O_\mathfrak{m}) \simeq (\O_{\mathfrak{p}} \otimes_{\Z_p} \O_\mathfrak{m})^2$ given by sending $\alpha \otimes \beta \mapsto (\beta i_\mathfrak{P}(\alpha), \beta i_\mathfrak{P}(\bar{\alpha}))$, with $i_\mathfrak{P} :\O_{\mathfrak{P}}  \hookrightarrow \O_{\mathfrak{p}} \otimes_{\Z_p} \O_\mathfrak{m}$\index{$i_\mathfrak{P}$}, and the isomorphism $\O_{\mathfrak{p}} \otimes_{\Z_p} \O_\mathfrak{m} \simeq \O_\mathfrak{m}^{d_\mathfrak{p}}$. Note that the sum of the $d_\mathfrak{p}$'s is $d$, and when $p$ is totally split in $F$ then $d_\mathfrak{p} =1$ for every $\mathfrak{p} \in \Upsilon_{F,p}$.

\begin{definition}\label{Definition_partition_infinity_to_padic}
The fixed isomorphism $i_p: \C \simeq \overline{\Q}_p$ yields a partition of $\Sigma_F = \bigsqcup_{\frakp \in \Upsilon_{F,p}} \Sigma_\frakp$, where, for every prime ideal $\frakp \in \Upsilon_{F,p}$, we let $\Sigma_\frakp$\index{$\Sigma_\frakp$} be the subset of $\Sigma_F$ of ($p$-adic) places $\sigma$ sending $\frakp$ to the maximal ideal of the valuation ring of $F_\frakp$. Similarly, this induces a partition $\Phi = \bigsqcup_{\frakp \in \Upsilon_{F,p}} \Phi_\frakp$.
\end{definition}
\noindent Note that, for each $\frakp \in \Upsilon_{F,p}$, $\Sigma_\frakp$ and $\Phi_\frakp$ consist of $d_\frakp$ elements.

At each $\mathfrak{p}  \in \Upsilon^{\rm s}_{F,p}$, $u^{-1}_\frakp$ acts on the components in $\Phi_\frakp$ of $ \Delta^{\underline{k}}$ via the action of $\left ( {\matrix {1} {-1} {  } {1}}, {\matrix {1} {} { } {1}} \right )$, which sends $\Delta_\sigma=(e_{1,\sigma}e_{2,\sigma'}-e_{2,\sigma}e_{1,\sigma'})$ to $(e_{1,\sigma}e_{1,\sigma'})+ \Delta_\sigma$ and so, taking the $k_\sigma$ tensor product, it sends $\Delta_\sigma^{k_\sigma}=(e_{1,\sigma}e_{2,\sigma'}-e_{2,\sigma}e_{1,\sigma'})^{\otimes {k_\sigma}}$ to $(e_{1,\sigma}e_{1,\sigma'})^{\otimes {k_\sigma}} + R_\sigma^{k_\sigma}$, where $R_\sigma^{k_\sigma}$ has zero projection to the highest weight eigenspace. 

At each $\mathfrak{p}  \in \Upsilon^{\rm i}_{F,p}$, recall we have fixed $\delta_\mathfrak{p} \in \O_{\mathfrak{P}}$ such that $\O_{\mathfrak{P}} = \O_{\mathfrak{p}} \oplus \O_{\mathfrak{p}} \delta_\mathfrak{p}$ and $\delta_\mathfrak{p}^2 \in \O_{\mathfrak{p}}$. Then $u^{-1}_\frakp$ acts on the components in $\Phi_\frakp$ of $ \Delta^{\underline{k}}$ via the action of $\left ( {\matrix {1} {-i_\mathfrak{P}(\delta_\mathfrak{p})} {  } {1}}, {\matrix {1} {i_\mathfrak{P}(\delta_\mathfrak{p})} { } {1}} \right )$, which sends $\Delta_\sigma$ to  $(2 i_\mathfrak{P}(\delta_\mathfrak{p}))(e_{1,\sigma}e_{1,\sigma'}) + \Delta_\sigma$. Summing up, we have shown the following : 

\begin{lemma}\label{rmk:uflag}
We have that \[u^{-1} \cdot \Delta^{\underline{k}} = a_{\underline{k}} \cdot v_{\underline{k}}^{\rm hw} +  R^{\underline{k}}, \] 
where $v_{\underline{k}}^{\rm hw}$ is the highest weight vector in $V^{\underline{k},0}_{\mathcal{O}_\mathfrak{m}}$, $a_{\underline{k}} = \prod_{\frakp \in \Upsilon^{\rm i}_{F,p}} (2 i_\mathfrak{P}(\delta_\mathfrak{p}))^{\sum_{\sigma \in \Phi_\frakp} k_\sigma} \in \O_\mathfrak{m}^\times$, and $R^{\underline{k}} \in V^{\underline{k},0}_{\mathcal{O}_\mathfrak{m}}$ has no $v_{\underline{k}}^{\rm hw}$-component.
\end{lemma}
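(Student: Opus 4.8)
The statement merely packages the explicit computations carried out in the paragraphs immediately preceding it, so the plan is to organize them. The starting point is the factorization $\Delta^{\underline{k}} = \bigotimes_{\sigma \in \Phi} \Delta_\sigma^{\otimes k_\sigma}$, where $\Delta_\sigma = e_{1,\sigma}e_{2,\sigma'} - e_{2,\sigma}e_{1,\sigma'}$, together with the description of the embedding $j : G(\Z_p) \hookrightarrow G(\O_\mathfrak{m})$ recalled above, under which $u^{-1}$ becomes a tuple of unipotent upper-triangular matrices acting componentwise on the pairs of slots indexed by $\sigma$ and $\sigma'$, for $\sigma\in\Phi$.

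First I would record the elementary identity: if $g = {\matrix{1}{x}{}{1}}$ acts on $V_\sigma$ and $h = {\matrix{1}{y}{}{1}}$ acts on $V_{\sigma'}$, then $(g,h)\Delta_\sigma = (y-x)\,e_{1,\sigma}e_{1,\sigma'} + \Delta_\sigma$. Raising this to the $k_\sigma$-th tensor power and projecting onto ${\rm Sym}^{k_\sigma}(V_\sigma) \otimes {\rm Sym}^{k_\sigma}(V_{\sigma'})$, the binomial expansion yields $(y-x)^{k_\sigma}$ times the highest-weight vector $e_{1,\sigma}^{\otimes k_\sigma}e_{1,\sigma'}^{\otimes k_\sigma}$, plus a remainder each of whose terms still contains a factor of $\Delta_\sigma$; since every monomial of $\Delta_\sigma$ has exactly one $e_2$ (in the $\sigma$- or in the $\sigma'$-slot), no such term projects nontrivially onto $e_{1,\sigma}^{\otimes k_\sigma}e_{1,\sigma'}^{\otimes k_\sigma}$, so the remainder has vanishing highest-weight component.

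Next I would extract the pair $(x,y)$ attached to $\sigma\in\Phi_v$ from Definition \ref{def:representativeorbit} and the shape of $j$: for $v\in\Upsilon^{\rm s}_{F,p}$ one has $y-x=1$, while for $v\in\Upsilon^{\rm i}_{F,p}$, using $\delta_v^2\in\O_v$ (built into the definition of $T_{\delta_v}$), so that $\bar\delta_v=-\delta_v$, one gets $y-x=2\iota_w(\delta_v)$. Taking the tensor product over $\sigma\in\Phi$ and using that the highest-weight component of a tensor product is the tensor product of the highest-weight components gives $u^{-1}\Delta^{\underline{k}} = a_{\underline{k}}\,v_{\underline{k}}^{\rm hw} + R^{\underline{k}}$ with $a_{\underline{k}} = \prod_{v\in\Upsilon^{\rm i}_{F,p}}(2\iota_w(\delta_v))^{\sum_{\sigma\in\Phi_v}k_\sigma}$ and $R^{\underline{k}}$ having no highest-weight component. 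That $a_{\underline{k}}\in\O_\mathfrak{m}^\times$ follows because $2$ is a unit ($p$ being odd) and because the equality $\O_w = \O_v\oplus\O_v\delta_v$ forces $\delta_v$ to be a unit in $\O_w$ — its reduction must generate the quadratic residue extension $k_w/k_v$, hence is nonzero — so $\iota_w(\delta_v)\in\O_\mathfrak{m}^\times$.

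The computation is entirely routine, and there is no real obstacle; the only point deserving care is the bookkeeping through the isomorphism $\O_w\otimes_{\O_v}(\O_v\otimes_{\Z_p}\O_\mathfrak{m}) \simeq (\O_v\otimes_{\Z_p}\O_\mathfrak{m})^2$, $\alpha\otimes\beta\mapsto(\beta i_w(\alpha),\beta i_w(\bar\alpha))$, so as to place the conjugate $\bar\delta_v$ in the slot indexed by $\sigma'$ and thereby obtain the two entries of $u_v^{-1}$ with opposite signs (so that $y-x=\pm 2\iota_w(\delta_v)$). Since either sign produces a unit, nothing in the later application depends on this choice.
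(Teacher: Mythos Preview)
Your proposal is correct and follows essentially the same route as the paper: both compute the action of $u^{-1}$ on each factor $\Delta_\sigma$ by the elementary unipotent calculation, distinguishing the split and inert cases, and then take the tensor product. Your unified formula $(g,h)\Delta_\sigma = (y-x)\,e_{1,\sigma}e_{1,\sigma'} + \Delta_\sigma$ is a tidy way to package both cases at once, and you add the explicit verification that $\delta_v\in\O_w^\times$ (hence $a_{\underline{k}}\in\O_\mathfrak{m}^\times$), which the paper asserts but does not spell out.
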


\subsection{Level subgroups and Hecke operators}

Let $\eta$\index{$\eta$} denote the diagonal matrix ${\matrix {p} { } {} {1}} \in G(\Q_p)$ and let \[ \Gamma_0(p^r) : = \{g \in G(\Z_p)\,:\,  g\,[\text{mod }p^r] \in B_G(\Z/p^r\Z)\} \]
be the Iwahori subgroup of $G(\Z_p)$. Similarly, $\Gamma^0(p^r)$ denotes the opposite Iwahori subgroup of level $p^r$ of $G(\Z_p)$, while $\Gamma^0_0(p^r,p^s)$ denotes the intersection of $\Gamma_0(p^r)$ with $\Gamma^0(p^s)$. Following \cite{LoefflerSphericalvarieties}, we define the following level subgroups. 

\begin{definition}\label{def:levelgroups} Fix $K^{(p)} \subseteq G(  { \hat{\Z} ^{(p)} } )$ a neat open compact subgroup and for $r\geq 0$ let \begin{align*}
K_r &: = K^{(p)}\cdot \{ g \in \Gamma_0(p)\,:\, \eta^{-r} g \eta^r \in G (\Z_p),\; g\,[\text{mod }p^r] \in \overline{Q}_G(\Z/p^r\Z)  \}, \\
K_r' &: = K^{(p)}\cdot \{ g \in \Gamma_0(p)\,:\, \eta^{-r-1} g \eta^{r+1} \in G (\Z_p),\; g\,[\text{mod }p^r] \in \overline{Q}_G(\Z/p^r\Z)   \}, \\ 
V_r &:= \eta^{-r} K_r \eta^r, \\ 
V_r' &:= \eta^{-r-1} K_r' \eta^{r+1}.
\end{align*} 
\end{definition}

\begin{remark}\label{rmk:factsabouttower}\leavevmode
\begin{itemize}
\item Explicitly, $K_0 = K^{(p)}\cdot \Gamma_0(p)$, $K_0' = K^{(p)}\cdot \Gamma_0^0(p,p)$, and  \[K_1 = K^{(p)}\cdot \{ g \in G(\Z_p)\,:\,  g\,[\text{mod }p] \in L_G(\Z/p\Z)  \}.\]
    \item Notice that $V_0=K_0$, $V_0' = K^{(p)}\cdot \eta^{-1} \Gamma_0^0(p,p) \eta =  K^{(p)}\cdot \Gamma_0(p^2)$. In general, \[V_r = K^{(p)}\cdot \{ g \in \Gamma_0(p^{r+1})\,:\,  g\,[\text{mod }p^r] \in Q_G(\Z/p^r\Z)   \}. \]
 The tower of subgroups $(V_r)_r$ will appear when we define our Iwasawa cohomology groups.
 \item  $K_r'$ is the natural subgroup used to define the Hecke correspondence for the ``$U_p$''-operator associated with $\eta^{-1}$.
\end{itemize}    
\end{remark}

 Note that $ K_{r+1} \subseteq K_r' = K_r \cap \eta K_r \eta^{-1} \subseteq K_r$. This implies that right multiplication by $\eta$ gives a morphism \[ [\eta]_{r+1,r}: \xymatrix{    Y_G(K_{r+1}) \ar[r]^{{\rm pr}} & Y_G(K_r') \ar[r]^-{\eta} &  Y_G(\eta^{-1}K_r' \eta) = Y_G(\eta^{-1}K_r \eta \cap K_r) \ar[r]^-{{\rm pr}} &  Y_G(K_r),}  \]
where ${\rm pr}$ denote natural projections.

We note that we can refine Definition \ref{def:levelgroups} by defining towers of level subgroups for each subset of $ \Upsilon_{F,p}$, as follows.

For each $\frakp \in \Upsilon_{F,p}$, let $\eta_\frakp : = \matrix{\pi_\frakp}{}{}{1} \in \GL_2(F_\frakp)$\index{$\eta_\frakp$} and denote in the same way the corresponding element of $H(\Q_p)$ with trivial component at every other place in $\Upsilon_{F,p}$. Moreover, for any $r \geq 0$ set $K_{\frakp,r}$ to be either \[\left\{(g_1,g_2) \in {\GL}_2(\mathcal{O}_\frakp)^2\,:\,  (g_1,g_2) \equiv \left(\left(\begin{smallmatrix}
    a & \\ x_1 & d
\end{smallmatrix} \right),\left(\begin{smallmatrix}
    d & \\ x_2 & a
\end{smallmatrix} \right)\right)\,[\text{mod }\frakp^r] \right\} \cap \Gamma_0(\pi_\frakp)^2\]
if $\frakp \in \Upsilon^{\rm s}_{F,p}$, or 
\[\left\{g \in \GL_2(\mathcal{O}_\mathfrak{P})\,:\,  g \equiv  \left(\begin{smallmatrix}
    \alpha & \\ x & \tau(\alpha)
\end{smallmatrix} \right)\,[\text{mod }\mathfrak{P}^r] \right\} \cap \Gamma_0(\pi_\frakp)\]
if $\frakp \in \Upsilon^{\rm i}_{F,p}$, with $\mathfrak{P}$ denoting the prime of $E$ above $\frakp$. We then let $K_{\frakp,r}' := K_{\frakp,r} \cap \eta_\frakp K_{\frakp,r} \eta_\frakp^{-1}$, $V_{\frakp,r}:= \eta_\frakp^{-r} K_{\frakp,r} \eta_\frakp^r$, and $V_{\frakp,r}':= \eta_\frakp^{-r-1} K_{\frakp,r} \eta_\frakp^{r+1}$.

\begin{definition}\label{def:multilevels}
    For any multi-index $\pmb{r} = (r_\frakp)_\frakp \in \Z_{\geq 0}^{|\Upsilon_{F,p}|}$, we define 
    \begin{align*}
       K_{\pmb{r}}&:= K^{(p)} \cdot \prod_{\frakp \in \Upsilon_{F,p}} K_{\frakp,r_\frakp}, \\
       V_{\pmb{r}}&:= K^{(p)} \cdot \prod_{\frakp \in \Upsilon_{F,p}} V_{\frakp,r_\frakp}, 
    \end{align*}
    and similarly for $K_{\pmb{r}}'$ and $V_{\pmb{r}}'$.
\end{definition}
\noindent When $\pmb{r} = (r)_\frakp$, then these level groups agree with those of Definition \ref{def:levelgroups}.

Later on, if
$\mathfrak{S}$ is any subset of $ \Upsilon_{F,p}$, we also consider 
\begin{align}\label{eqdefintermediatelevelsigma}
    K_0(\mathfrak{S}) : = K^{(p)} \cdot (  \prod_{\frakp \in \mathfrak{S}} \Gamma^0_0(\pi_\frakp,\pi_\frakp) \cdot  \prod_{\frakp \in \Upsilon_{F,p} \smallsetminus \mathfrak{S}} \Gamma_0( \pi_\frakp)).
\end{align}
We have \[K_{\pmb{0}}' = K_0' \subseteq  K_0(\mathfrak{S}) \subseteq K_{\pmb{0}} = K_0,\]
where the symbol $\pmb{0}$\index{$\pmb{0}$} denotes the multi-index $(0,\dots,0)$.

We now define certain Hecke operators that play a crucial role in the manuscript.

\begin{lemma}\label{Lem:traceeta}
 There is a well-defined action of $p^{\sum_\sigma{(k_{\sigma}+m)/2}} \cdot \eta^{-1}$ on $V_{\mathcal{O}_\mathfrak{m}}^{\underline{k},m}$ which induces a morphism of local systems \[ \eta_\flat : \mathscr{V}_{\mathcal{O}_\mathfrak{m}}^{\underline{k},m} \to \eta^*\mathscr{V}_{\mathcal{O}_\mathfrak{m}}^{\underline{k},m}. \]
 In particular we have a map
 \[ {\rm Tr}_{\eta} : H^i(Y_G(K_{\pmb{r}}'),\mathscr{V}_{\mathcal{O}_\mathfrak{m}}^{\underline{k},m} ) \longrightarrow H^i(Y_G(K_{\pmb{r}}),\mathscr{V}_{\mathcal{O}_\mathfrak{m}}^{\underline{k},m} )  \]
 defined by composing the map in cohomology induced by $\eta_\flat$ with the trace of ${\rm pr} \circ \eta: Y_G(K_{\pmb{r}}') \to Y_G(K_{\pmb{r}})$  in Betti cohomology.
\end{lemma}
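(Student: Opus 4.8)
The plan is to construct the morphism of local systems from an explicit $\mathcal{O}_\mathfrak{m}$-linear action of the scaled matrix $\eta' := p^{\sum_\sigma (k_\sigma+m)/2}\,\eta^{-1}$ on the lattice $V^{\underline{k},m}_{\mathcal{O}_\mathfrak{m}}$, and then feed this into the standard machinery relating morphisms of local systems to maps on Betti cohomology via degeneracy maps. First I would recall that $\eta = \matrix{p}{}{}{1} \in G(\Q_p)$ acts on the $L_\mathfrak{m}$-vector space $V^{\underline{k},m}\otimes L_\mathfrak{m}$ via the embedding $j$ of the previous subsection, i.e. componentwise at each $\sigma \in \Sigma_E$ on $\Sym^{k_\sigma}(V_\sigma)\otimes\det_\sigma^{(m-k_\sigma)/2}$. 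On the $\sigma$-component, $\eta^{-1} = \matrix{p^{-1}}{}{}{1}$ sends $e_{1,\sigma}\mapsto p^{-1}e_{1,\sigma}$, $e_{2,\sigma}\mapsto e_{2,\sigma}$, and on $\det_\sigma$ it acts by $p^{-1}$; hence on a monomial $e_{1,\sigma}^{i}e_{2,\sigma}^{k_\sigma-i}\otimes\det_\sigma^{(m-k_\sigma)/2}$ it acts by $p^{-i}\cdot p^{-(m-k_\sigma)/2} = p^{-(i+(m-k_\sigma)/2)}$. The smallest such exponent over $0\le i\le k_\sigma$ is $-(m-k_\sigma)/2 = (k_\sigma-m)/2$ if $m\le k_\sigma$; adding $k_\sigma$ from the $\det$ normalisation... more carefully, summing over $\sigma$ one sees that multiplying $\eta^{-1}$ by $p^{\sum_\sigma (k_\sigma+m)/2}$ clears all negative powers of $p$, so that $\eta'$ preserves the lattice $V^{\underline{k},m}_{\mathcal{O}_\mathfrak{m}}$; this is the content of the "well-defined action" claim and is a short monomial-by-monomial bookkeeping.

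Next I would record that $\eta'$, being $\mathcal{O}_\mathfrak{m}$-linear and $K'_{\pmb{r}}$-equivariant in the appropriate sense (namely, conjugation by $\eta$ carries the relevant congruence conditions defining $K'_{\pmb{r}}$ into those preserved by the lattice), induces a morphism of local systems $\eta_\flat : \mathscr{V}^{\underline{k},m}_{\mathcal{O}_\mathfrak{m}} \to \eta^*\mathscr{V}^{\underline{k},m}_{\mathcal{O}_\mathfrak{m}}$ over $Y_G(K'_{\pmb{r}})$, exactly as in the construction of local systems from representations of compact-open subgroups recalled in \S\ref{sec:cohomology} (following \cite[\S 6.8]{GetzGore}). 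Concretely, on the finite cover $Y_G(K'_{\pmb{r}})$ the local system $\eta^*\mathscr{V}^{\underline{k},m}_{\mathcal{O}_\mathfrak{m}}$ corresponds to the representation of $\eta^{-1}K'_{\pmb{r}}\eta\cap G(\Z_p)$ on $V^{\underline{k},m}_{\mathcal{O}_\mathfrak{m}}$ obtained by precomposing with conjugation by $\eta$, and the map $v\mapsto \eta'\cdot v$ intertwines this with the standard action, hence descends to a map of local systems. Then ${\rm Tr}_\eta$ is defined as the composite: apply $\eta_\flat$ in cohomology to get $H^i(Y_G(K'_{\pmb{r}}),\mathscr{V}^{\underline{k},m}_{\mathcal{O}_\mathfrak{m}}) \to H^i(Y_G(K'_{\pmb{r}}),\eta^*\mathscr{V}^{\underline{k},m}_{\mathcal{O}_\mathfrak{m}})$, identify the latter with $H^i$ of the target space of ${\rm pr}\circ\eta : Y_G(K'_{\pmb{r}})\to Y_G(K_{\pmb{r}})$ with coefficients pulled back, and finally apply the Gysin/trace map of the finite étale (in the Betti sense, finite) morphism ${\rm pr}\circ\eta$ to land in $H^i(Y_G(K_{\pmb{r}}),\mathscr{V}^{\underline{k},m}_{\mathcal{O}_\mathfrak{m}})$; this last step is legitimate since $K_{\pmb{r+1}}\subseteq K'_{\pmb{r}}$ and right multiplication by $\eta$ gives the morphism $[\eta]$ recalled just before Definition \ref{def:levelgroups}.

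The main obstacle, and the only place requiring genuine care rather than bookkeeping, is checking that the scaling factor $p^{\sum_\sigma(k_\sigma+m)/2}$ is both an integer (which needs $k_\sigma\equiv m \pmod 2$, part of the standing hypothesis on the weight) and is \emph{exactly} the right power — no smaller power works at the highest-weight monomial, and no larger power is needed — so that $\eta_\flat$ is an honest morphism of lattices that is moreover nonzero modulo $p$ on the highest-weight line (this normalisation is what makes the resulting Hecke operator compatible with the $U_p$-operator $T_\eta$ and ultimately with the ordinary projector). I would verify this by computing the action of $\eta'$ on the highest-weight vector $\bigotimes_\sigma e_{1,\sigma}^{\otimes k_\sigma}$ and on the lowest-weight vector $\bigotimes_\sigma e_{2,\sigma}^{\otimes k_\sigma}$ and confirming the exponents bracket $0$ from both sides after accounting for the $\det^{(m-k_\sigma)/2}$ twist. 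The equivariance needed to pass from a lattice map to a local-system map then follows formally, since all the level subgroups $K'_{\pmb{r}}$, $K_{\pmb{r}}$ in our tower share the same image in $\overline{B}_G(\Z/p\Z)$-type congruence data and, crucially (as emphasised in the opening of \S\ref{Towerplusmoments}), the same intersection with the centre $Z_G$, so the usual SV5-free argument applies verbatim.
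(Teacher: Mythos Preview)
Your proposal is correct and follows essentially the same weight-space computation as the paper: one checks that $\eta^{-1}$ acts on the monomial $e_{1,\sigma}^{i}e_{2,\sigma}^{k_\sigma-i}\otimes\det_\sigma^{(m-k_\sigma)/2}$ by $p^{-(i+(m-k_\sigma)/2)}$, the most negative exponent occurs at $i=k_\sigma$ (the highest-weight monomial) and equals $-(k_\sigma+m)/2$, so rescaling by $p^{\sum_\sigma(k_\sigma+m)/2}$ is exactly what is needed to preserve the lattice and act as the identity on the highest-weight line and by a positive power of $p$ elsewhere. Two minor remarks: your sentence identifying ``the smallest such exponent'' as $(k_\sigma-m)/2$ is a slip --- that is the \emph{largest} exponent, at $i=0$; the minimum is at $i=k_\sigma$, which you implicitly use after ``more carefully'' --- and the final paragraph invoking SV5 and centres is not needed here, since the passage from the lattice endomorphism to $\eta_\flat$ is purely formal once integrality is established.
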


\begin{proof}
Note that the torus $\{ \left(\begin{smallmatrix} x & \\ & 1\end{smallmatrix} \right)\}$ acts on each weight space of ${\rm Sym}^{k_\sigma} (V_\sigma) \otimes \mathrm{det}_\sigma^{\frac{m-k_{\sigma}}{2}}$ by the character $\left(\begin{smallmatrix} x & \\ & 1 \end{smallmatrix} \right) \mapsto x^{\frac{m+k_{\sigma} - 2 \ell_\sigma}{2}}$, where $\ell_\sigma$ is an integer ranging between $0$ to $k_\sigma$. Hence, the matrix $\left(\begin{smallmatrix} p^{-1} & \\ & 1\end{smallmatrix} \right)$ acts on each weight space of ${\rm Sym}^{k_\sigma} (V_\sigma) \otimes \mathrm{det}_\sigma^{\frac{m-k_{\sigma}}{2}}$ by $p^{\frac{2 \ell_\sigma -k_{\sigma}-m}{2}}$. This implies that the action of $\eta^{-1}$ on each weight space of $V^{\underline{k},m}$ will be given by $p^{\sum_\sigma{(2 \ell_\sigma - k_{\sigma}-m)/2}}$, for some $\underline{\ell}=(\ell_\sigma)_\sigma$ such that $0 \leq \ell_\sigma \leq k_\sigma$. In particular, $\eta^{-1}$ acts on the highest weight subspace $L_\mathfrak{m} \cdot v_{\underline{k},m}^{\rm hw}$ by $p^{-\sum_\sigma{(k_{\sigma}+m)/2}}$.
As every admissible lattice is the direct sum of its intersections with the weight spaces of $V^{\underline{k},m}$, we get an action of $p^{\sum_\sigma{(k_{\sigma}+m)/2}} \cdot \eta^{-1}$ on $V^{\underline{k},m}_{\mathcal{O}_\mathfrak{m}}$ such that it is trivial on the highest weight subspace $\mathcal{O}_\mathfrak{m} \cdot v_{\underline{k},m}^{\rm hw}$ and divisible by a positive power of $p$ on every other weight subspace. By letting $ \eta_\flat : \mathscr{V}_{\mathcal{O}_\mathfrak{m}}^{\underline{k},m} \to \eta^*\mathscr{V}_{\mathcal{O}_\mathfrak{m}}^{\underline{k},m}$ be the corresponding morphism of local systems, we get the result.
\end{proof}

\begin{definition}\label{Def:traceetar1r}
Let the symbol $\pmb{1}$\index{$\pmb{1}$} denote the multi-index with all entries equal to $1$. We define \[{\rm Tr}_{[\eta]_{\pmb{r}+\pmb{1},\pmb{r}}}: H^i(Y_G(K_{\pmb{r}+\pmb{1}}),\mathscr{V}_{\mathcal{O}_\mathfrak{m}}^{\underline{k},m} ) \longrightarrow H^i(Y_G(K_{\pmb{r}}),\mathscr{V}_{\mathcal{O}_\mathfrak{m}}^{\underline{k},m} ) \]\index{${\rm Tr}_{[\eta]_{\pmb{r}+\pmb{1},\pmb{r}}}$} as the composition of the trace of the natural projection $Y_G(K_{\pmb{r}+\pmb{1}}) \to Y_G(K_{\pmb{r}}')$ with ${\rm Tr}_\eta$.
\end{definition}

\begin{remark}\label{rem:normalizationTp}
It is clear from the proof of the Lemma above that the trace ${\rm Tr}_\eta$ is optimally normalized so that $p^{\sum_\sigma{(k_{\sigma}+m)/2}} \cdot \eta^{-1}$ acts as $1$ on the highest weight subspace and divisible by $p$ elsewhere.
\end{remark}

\begin{definition}\label{definitionTeta} 
We let $T_\eta$\index{$T_\eta$} be the Hecke correspondence acting on cohomology given by ${\rm Tr}_\eta \circ (\pi_{\pmb{r}}^{G})^*$, where $\pi_{\pmb{r}}^{G}:Y_G(K_{\pmb{r}}') \to Y_G(K_{\pmb{r}})$ is the natural degeneracy map and ${\rm Tr}_\eta$ is the map defined in Lemma \ref{Lem:traceeta}. 
\end{definition}

\noindent Note that $T_\eta$ is the correspondence associated with the Hecke operator $[K_{\pmb{r}} \eta^{-1} K_{\pmb{r}}]$ in the Hecke algebra $\mathcal{H}(K_{\pmb{r}} \backslash G(\A_f) / K_{\pmb{r}})_{\mathcal{O}_\mathfrak{m}}$ of $K_{\pmb{r}}$-bi-invariant smooth compactly supported $v$-valued functions on $G(\A_f)$.

\begin{remark}\label{rmk:normofalletap}
    In the same analogous way, attached to each $\eta_\frakp$, we have the Hecke operator $T_{\eta_\frakp}$, which we normalize so that \[ T_{\eta} = \prod_{\frakp \in \Upsilon_{F,p}} T_{\eta_\frakp}.\]\index{$T_{\eta_\frakp}$}
Concretely, we normalize $T_{\eta_\frakp}$ as follows.
Recall that, in Definition \ref{Definition_partition_infinity_to_padic}, we have fixed a partition of $\Sigma_F = \bigsqcup_{\frakp \in \Upsilon_{F,p}} \Sigma_\frakp$. Then, we multiply the trace of $\eta_{\frakp}$ by $p^{\sum_{\sigma \in \Sigma_{\frakp}}  (k_{\sigma} + k_{\sigma'} + 2 m )/2}$, where we identify $\Sigma_{\frakp}$ with the subset $\Phi_\frakp$ of the $F$-type $\Phi$.
Note that the (normalized) trace ${\rm Tr}_{[\eta]_{\pmb{r}+\pmb{1},\pmb{r}}}$ is equal to the composition of the (normalized) traces \[{\rm Tr}_{[\eta_\frakp]_{\pmb{r}+\pmb{1},\pmb{r}'}}: H^i(Y_G(K_{\pmb{r}+\pmb{1}}),\mathscr{V}_{\mathcal{O}_\mathfrak{m}}^{\underline{k},m} ) \longrightarrow H^i(Y_G(K_{\pmb{r}'}),\mathscr{V}_{\mathcal{O}_\mathfrak{m}}^{\underline{k},m} ),\]\index{${\rm Tr}_{[\eta_\frakp]_{\pmb{r}+\pmb{1},\pmb{r}'}}$} with $\pmb{r}'$ equal to $\pmb{r}+\pmb{1}$ away from $\frakp$ and equal to $\pmb{r}$ at $\frakp$, as $\frakp$ runs over $\Upsilon_{F,p}$.
\end{remark}

\subsection{A few Cartesian diagrams}

Let $u \in G(\Z_p)$ denote the element of Definition \ref{def:representativeorbit}. For any multi-index $\pmb{r} = (r_\frakp)_\frakp \in \Z_{\geq 0}^{|\Upsilon_{F,p}|}$, we have the morphism
\[ \xymatrix{    u \circ \iota_{\pmb{r}}: Y_H(u K_{\pmb{r}}u^{-1} \cap H)  \ar[r]^-{\iota_{\pmb{r}}} & Y_G(u K_{\pmb{r}}u^{-1}) \ar[r]^-{u} &  Y_G(K_{\pmb{r}}).}  \]

\begin{lemma}\label{closedimm}
Suppose that there exists a neat open compact subgroup $U$ of $G(\A_f)$ containing $K_{\pmb{r}}$ and its conjugate $K_{\pmb{r}}^\tau$ via the non-trivial element $\tau \in {\rm Gal}(E/F)$. Then the map $u \circ \iota_{\pmb{r}}$ is a closed immersion.
\end{lemma}
\begin{proof}
Notice that it is enough to check it at the level of $\C$-points. As right multiplication by $u$ is clearly a bijection, the statement follows from showing that $\iota_{\pmb{r}}$ is injective. To ease the notation, let $V$ denote $u K_{\pmb{r}}u^{-1}$. Recall that, if we denote by $Y_H$ and $Y_G$ the varieties at infinite level, we have using \cite[(1.8.2)]{DeligneBourbakiShimura} and that the center acts trivially
$$ Y_H(\C) = \tfrac{H(\Q)}{Z_H(\Q)} \backslash \left( X_H \times H(\A_f) / \overline{Z_H(\Q)} \right),$$
where $\overline{Z_H(\Q)}$ denotes the closure of $Z_H(\Q)$ in $Z_H(\A_f)$ and where $X_H$ is isomorphic to $d$ copies of the union $\mathfrak{H} \cup \mathfrak{H}^{-}$ of upper and lower half planes. Similarly,
$$ Y_G(\C) = \tfrac{G(\Q)}{Z_G(\Q)} \backslash \left( X_G \times G(\A_f) / \overline{Z_G(\Q)} \right),$$ with $X_G$ isomorphic to $2d$ copies of $\mathfrak{H} \cup \mathfrak{H}^{-}$. Note that $\iota: H \to G$ induces an injection $Y_H(\C) \hookrightarrow Y_G(\C)$ since, in this case, $$ \overline{Z_G(\Q)} \cap H(\A_f) =  \overline{Z_H(\Q)}.$$
The result thus follows from showing that $Y_H(\C) \cap Y_H(\C)v = \emptyset$ for any $v \in V \smallsetminus V \cap H(\A_f)$. We show this fact as follows. 
Firstly, notice that the non-trivial element $\tau \in {\rm Gal}(E/F)$ induces a map $ \tau : Y_G(\C) \to Y_G(\C)$, which fixes $Y_H(\C)$ point-wise.  
Suppose now that $y,y v \in Y_H(\C)$ for $v \in V$. Then $v^{-1} v^\tau$ fixes $y$. Since $v^{-1} v^\tau$ belongs to $u U u^{-1}$ and, by the neatness assumption, we deduce $v^{-1} v^\tau=1$, so $v = v^\tau$, which implies that $v \in H(\A_f)$, as desired.
\end{proof}

\begin{remark}
The hypothesis of Lemma \ref{closedimm} is far from being necessary, but as it will always be the case in our applications, we are content with it. 
\end{remark}

Observe that, for instance, the hypothesis of Lemma \ref{closedimm} is satisfied whenever $K^{(p)}$ is stable under the non-trivial element $\tau \in {\rm Gal}(E/F)$. From now on, we then assume that this is the case.

\begin{proposition} \label{Cartesian}
Assume that $K^{(p)}$ is stable under $\tau \in {\rm Gal}(E/F)$ and let $\pmb{r} = (r_\frakp)_\frakp \in \Z_{\geq 1}^{|\Upsilon_{F,p}|}$. The commutative diagram 
\begin{eqnarray*}  \xymatrix{  & & & Y_G(K_{\pmb{r} + \pmb{1}}) \ar[d]^{{\rm pr}} \ar[ddr]^-{[\eta]_{\pmb{r} + \pmb{1},\pmb{r}}} & \\ 
Y_H(uK_{\pmb{r} + \pmb{1}}u^{-1} \cap H)  \ar[d]_{\pi_{\pmb{r}}^H  } \ar[urrr]^-{u \circ \iota_{\pmb{r} + \pmb{1}}}  \ar[rrr]^-{{\rm pr} \circ u \circ \iota_{\pmb{r} + \pmb{1}}}   & & &  Y_G(K_{\pmb{r}}') \ar[d]_{  \pi_{\pmb{r}}^G} \ar[dr]^\eta &\\
Y_H(u K_{\pmb{r}}u^{-1} \cap H)  \ar[rrr]^-{ u \circ \iota_{\pmb{r}}} & & &  Y_G(K_{\pmb{r}}) \ar@{-->}[r]^{T_\eta}  & Y_G(K_{\pmb{r}}) }  \end{eqnarray*}
has Cartesian bottom square. Here $\pi_{\pmb{r}}^H, \pi_{\pmb{r}}^G$ denote the natural degeneracy maps.
\end{proposition}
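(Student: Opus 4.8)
The plan is to show that the canonical morphism
\[
\Psi\colon Y_H(uK_{\pmb{r}+\pmb{1}}u^{-1}\cap H)\longrightarrow Y_H(uK_{\pmb{r}}u^{-1}\cap H)\times_{Y_G(K_{\pmb{r}})}Y_G(K_{\pmb{r}}'),
\]
which is $\pi_{\pmb{r}}^H$ on the first factor and $ {\rm pr}\circ u\circ\iota_{\pmb{r}+\pmb{1}}$ on the second, is an isomorphism; this is exactly the Cartesianness of the bottom square. Since $\pi_{\pmb{r}}^G$ is finite \'etale by neatness, the second projection of the fibre product is finite \'etale too, so the target of $\Psi$ is a smooth $\Q$-scheme; and $\Psi$ is finite, being a factor of the finite map $\pi_{\pmb{r}}^H$. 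It therefore suffices to prove that $\Psi$ is bijective on $\C$-points. (Recall also that the horizontal arrows of the square are closed immersions by Lemma \ref{closedimm}.)

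First I would remove the translation by $u$. Right multiplication by $u$ identifies $Y_G(K_{\pmb{r}})$, $Y_G(K_{\pmb{r}}')$, $Y_G(K_{\pmb{r}+\pmb{1}})$ with $Y_G(M)$, $Y_G(M'')$, $Y_G(M')$, where $M:=uK_{\pmb{r}}u^{-1}\supseteq M'':=uK_{\pmb{r}}'u^{-1}\supseteq M':=uK_{\pmb{r}+\pmb{1}}u^{-1}$ (the inclusions $K_{\pmb{r}+\pmb{1}}\subseteq K_{\pmb{r}}'\subseteq K_{\pmb{r}}$ being those recorded in \S\ref{Towerplusmoments}), compatibly with the degeneracy maps, and it carries the maps $u\circ\iota_{\pmb{r}}$, $u\circ\iota_{\pmb{r}+\pmb{1}}$ to the maps induced by the honest embedding $\iota\colon H\hookrightarrow G$ on finite ad\`eles. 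After this identification the target of $\Psi$ becomes the fibre product of the degeneracy cover $Y_G(M'')\to Y_G(M)$ along the $\iota$-map $Y_H(M\cap H)\to Y_G(M)$, and the standard description of such a fibre product of locally symmetric spaces exhibits it as a disjoint union of copies of $Y_H(\mu M''\mu^{-1}\cap H)$ indexed by the double cosets $[\mu]\in\iota(M\cap H)\backslash M/M''$, with $\Psi$ identifying the source with the $[\mu]=[1]$ piece. Hence bijectivity of $\Psi$ on $\C$-points is equivalent to the conjunction of
\[
\text{(A)}\quad uK_{\pmb{r}+\pmb{1}}u^{-1}\cap H = uK_{\pmb{r}}'u^{-1}\cap H\quad\text{in }H(\A_f),\qquad \text{(B)}\quad M=\iota(M\cap H)\cdot M''.
\]
It is here that the property that every level group of our tower has the same intersection with the centre $Z_G$ enters: this uniformity is what makes the above description of the fibre product valid without Milne's axiom SV5, since it is precisely what lets the quotients by $Z_G(\Q)$ and by $Z_H(\Q)$ — taken when passing from adelic double quotients to the schemes $Y_G$, $Y_H$ — be performed compatibly at every level of the tower.

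Since away from $p$ all three level groups equal the common factor $K^{(p)}$ and $u\in G(\Z_p)$, assertions (A) and (B) concern only the components at $p$; moreover, by Definition \ref{def:multilevels} these components factor as products over $\Upsilon_{F,p}$, and $u$ and $\iota$ factor accordingly, so it is enough to treat one prime $\frakp\mid p$ at a time, separating the cases $\frakp$ split and $\frakp$ inert in $E$. I would then verify (A) and (B) by exactly the kind of explicit matrix computation carried out in Lemma \ref{openorbitlemma} and Corollary \ref{StupRMKonLG}: conjugating by $u$ the congruence conditions defining $K_{\frakp,r_\frakp+1}$ and $K_{\frakp,r_\frakp}'$ and intersecting with $\iota(\GL_2(\mathcal{O}_\frakp))$, one checks that the conditions by which $K_{\frakp,r_\frakp}'$ differs from $K_{\frakp,r_\frakp+1}$ lie in the $N_G$- and $\overline{N}_G$-directions, along which $u^{-1}\iota(H(\Z_p))u$ meets $\overline{B}_G(\Z_p)$ only inside $\overline{Q}_G(\Z_p)$ by Lemma \ref{openorbitlemma} and Corollary \ref{StupRMKonLG}; this gives (A). For (B) one computes the indices $[K_{\frakp,r_\frakp}:K_{\frakp,r_\frakp}']$ and $[uK_{\frakp,r_\frakp}u^{-1}\cap H:uK_{\frakp,r_\frakp+1}u^{-1}\cap H]$, and checks that they coincide and that the $H$-part of $K_{\frakp,r_\frakp}$ already surjects onto $K_{\frakp,r_\frakp}/K_{\frakp,r_\frakp}'$; this last point is nothing but the reduction modulo $\frakp^{r_\frakp}$ of the openness of the $H$-orbit of $u$ in the flag variety $\mathcal{F}$ proved in Lemma \ref{openorbitlemma}, transported to the $p$-adic tower which was built from that open orbit following \cite[\S 4.4]{LoefflerSphericalvarieties} and \cite{LRZ}.

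The main obstacle is (B) — the exact matching of the two indices, equivalently the surjectivity of the $H$-part of $K_{\pmb{r}}$ onto $K_{\pmb{r}}/K_{\pmb{r}}'$. This is the one step that genuinely uses the geometry of the spherical pair $(G,H)$, namely the open orbit; once the matrix computations of \S\ref{Towerplusmoments} are available, statement (A) and all the reduction steps are routine. A secondary, more bookkeeping-type difficulty is the careful treatment of connected components and of the central quotients in the identification of the adelic double quotients with the schemes $Y_\bullet$, which is exactly the place where the common central intersection of the tower is invoked in order to dispense with axiom SV5, as announced in the introduction.
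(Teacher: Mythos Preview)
Your proposal is correct and is essentially the paper's proof. Your conditions (A) and (B) are exactly the two facts the paper establishes: (A) is the equality $uK_{\pmb{r}}'u^{-1}\cap H = uK_{\pmb{r}+\pmb{1}}u^{-1}\cap H$, proved via Lemma~\ref{openorbitlemma}(2) reduced modulo $p^{\pmb{r}+\pmb{1}}$; and (B) is equivalent to the index equality $[uK_{\pmb{r}}u^{-1}\cap H:uK_{\pmb{r}}'u^{-1}\cap H]=[K_{\pmb{r}}:K_{\pmb{r}}']$, proved via the openness of the $H$-orbit (Lemma~\ref{openorbitlemma}(1)). The only stylistic difference is that the paper bypasses your explicit double-coset description of the fibre product: it simply observes that once both horizontal arrows are closed immersions and both vertical arrows are \'etale covers of the same degree (the latter via Lemma~\ref{verticaldeg}, after checking $K_{\pmb{r}}'\cap Z_G(\Q)=K_{\pmb{r}}\cap Z_G(\Q)$), the square is automatically Cartesian. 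This is a slightly quicker packaging of the same content, and your identification of the central-intersection condition as the SV5-replacement is exactly where the paper invokes Lemma~\ref{verticaldeg}.
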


\begin{proof}
To show that the diagram is Cartesian, it is enough to check it for the complex points of the varieties. Then the result follows from showing that \begin{enumerate}
    \item  $u \circ \iota_{\pmb{r}}$ and ${\rm pr} \circ u \circ \iota_{{\pmb{r}}+\pmb{1}}$ are closed immersions,
    \item The maps $\pi_{\pmb{r}}^H$ and $\pi_{\pmb{r}}^G$ are covering maps of the same degree.
\end{enumerate} 

By Lemma \ref{closedimm}, $u \circ \iota_{\pmb{s}}$ is a closed immersion for any ${\pmb{s}}$, therefore (1) follows from showing that  ${\rm pr} \circ u \circ \iota_{{\pmb{r}}+\pmb{1}}$ is a closed immersion, i.e. 
\[ u K'_{\pmb{r}} u^{-1} \cap H =  u K_{{\pmb{r}}+\pmb{1}} u^{-1} \cap H. \]
This follows as in the proof of \cite[Lemma 4.4.1(i)]{LoefflerSphericalvarieties} from the calculation of Lemma \ref{openorbitlemma}(2) applied modulo $p^{{\pmb{r}}+{\pmb{1}}}$. Here, for any multi-index \pmb{r}, we denote $p^{{\pmb{r}}}:= \prod_{\frakp}\frakp^{r_\frakp}$.\index{$p^{{\pmb{r}}}$} 

Notice that $K_{\pmb{r}}' \cap Z_G(\Q) = K_{\pmb{r}} \cap Z_G(\Q)$. Indeed, if $\gamma \in K_{\pmb{r}}' \cap Z_G(\Q)$, then at $p$,  $\eta^{-\pmb{r}-\pmb{1}}\gamma \eta^{\pmb{r}+\pmb{1}} = \gamma \in G(\Z_p)$, with $\eta^{\pmb{s}}:=\prod_\frakp \eta_\frakp^{s_\frakp}$, and $ \gamma  \,[\text{mod }p^{\pmb{r}}] \in \overline{Q}_G$. These are exactly the conditions at $p$ characterising the intersection $K_{\pmb{r}} \cap Z_G(\Q)$. Hence, by Lemma \ref{verticaldeg}, the degrees of the maps $\pi_{\pmb{r}}^H$ and $\pi_{\pmb{r}}^G$ equal to $[u K_{\pmb{r}} u^{-1} \cap H : u K_{\pmb{r}}' u^{-1} \cap H ]$ and $[K_{\pmb{r}}: K_{\pmb{r}}']$ respectively. We are thus left to show that \[[u K_{\pmb{r}} u^{-1} \cap H : u K_{\pmb{r}}' u^{-1} \cap H ] = [K_{\pmb{r}}: K_{\pmb{r}}']. \]
This follows as in the proof of \cite[Lemma 4.4.1(ii)]{LoefflerSphericalvarieties} from the fact that the $H$-orbit of $u \overline{B}_G$ is open as a $\Z_p$-subscheme of $\mathcal{F}$ (\textit{cf}. Lemma \ref{openorbitlemma}(1)).
\end{proof}

Proposition \ref{Cartesian} does not follow directly from \cite{LoefflerSphericalvarieties} as our Shimura data do not satisfy the Axiom SV5. However, to circumvent this obstruction it suffices to use the following:

\begin{lemma}\label{verticaldeg}
 Let $K,K' \subset G(\A_f)$ be neat compact open subgroups such that $K' \subset K$  and $K \cap Z_G(\Q) = K' \cap Z_G(\Q)$. Then, the map ${\rm pr}_{K',K}: Y_G(K')(\C) \twoheadrightarrow Y_G(K)(\C)$ is a covering map of smooth complex manifolds of degree $[K:K']$.
\end{lemma}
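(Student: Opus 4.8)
The statement is a standard fact about the complex uniformization of Shimura varieties, with the twist that we cannot invoke axiom SV5. The plan is to work directly with the double-coset description. Recall $Y_G(K)(\C) = G(\Q)\backslash \left( \mathfrak{H}^{2d} \times G(\A_f)/K \right)$, and choose representatives $t_i$ so that $G(\A_f) = \bigsqcup_i G(\Q)\, t_i K K_\infty^+$; likewise pick representatives $t_{i}'$ refining the $t_i$ for $K'$. Then $Y_G(K)(\C) = \bigsqcup_i \overline{\Gamma}_i(K)\backslash \mathfrak{H}^{2d}$, where $\overline{\Gamma}_i(K)$ is the image in ${\rm PGL}_2(E)$ of $t_i K K_\infty^+ t_i^{-1} \cap G(\Q)$, and similarly for $K'$. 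By neatness of $K$ (hence of $K'$), each $\overline{\Gamma}_i(K)$ and $\overline{\Gamma}_{i}'(K')$ acts freely and properly discontinuously on $\mathfrak{H}^{2d}$, so the quotients are smooth complex manifolds and the maps $\overline{\Gamma}'\backslash \mathfrak{H}^{2d} \to \overline{\Gamma}\backslash \mathfrak{H}^{2d}$ (for $\overline{\Gamma}' \subseteq \overline{\Gamma}$) are covering maps of degree $[\overline{\Gamma} : \overline{\Gamma}']$. The map ${\rm pr}_{K',K}$ restricted to the components over a fixed $i$ is a disjoint union of such covering maps, so it suffices to compute degrees componentwise and sum.

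First I would fix a component index $i$ and set $\Gamma := t_i K K_\infty^+ t_i^{-1}\cap G(\Q)$, a subgroup of $G(\Q)$; write $\overline{\Gamma}$ for its image in ${\rm PGL}_2(E) = G(\Q)/Z_G(\Q)$. The kernel of $G(\Q) \twoheadrightarrow {\rm PGL}_2(E)$ is $Z_G(\Q)$, so $\overline{\Gamma} = \Gamma / (\Gamma \cap Z_G(\Q))$. The crucial elementary point is that $\Gamma \cap Z_G(\Q) = t_i K t_i^{-1} \cap Z_G(\Q)$ (the factor $K_\infty^+$ plays no role since $Z_G(\Q)$ is discrete in $G(\R)$ and $K_\infty^+$ is a subgroup of $G(\R)$; more precisely a central rational element lying in $t_i K K_\infty^+ t_i^{-1}$ already lies in $t_i K t_i^{-1}$ after adjusting by $K_\infty^+$, because $Z_G(\Q) \cap G(\A_f) \cdot K_\infty^+ = Z_G(\Q) \cap G(\A_f)$ at the level of the finite-adelic projection). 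Conjugating the hypothesis $K \cap Z_G(\Q) = K' \cap Z_G(\Q)$ by $t_i$ — legitimate since $Z_G$ is commutative and $t_i$ normalizes $Z_G(\A_f)$ — gives $t_i K t_i^{-1} \cap Z_G(\Q) = t_i K' t_i^{-1} \cap Z_G(\Q)$, hence $\Gamma \cap Z_G(\Q) = \Gamma' \cap Z_G(\Q)$ where $\Gamma' := t_i K' K_\infty^+ t_i^{-1} \cap G(\Q)$. Therefore the central subgroups cancel in the index computation:
\[
[\overline{\Gamma} : \overline{\Gamma}'] = \frac{[\Gamma : \Gamma \cap Z_G(\Q)]}{[\Gamma' : \Gamma' \cap Z_G(\Q)]} = [\Gamma : \Gamma'].
\]

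Next I would compute $[\Gamma : \Gamma']$ adelically. The orbit-stabilizer identification gives a bijection between the fibre of ${\rm pr}_{K',K}$ over the component $i$ and the set $\Gamma \backslash (t_i K / t_i K' )$, or more symmetrically, summing over all components lying over the $i$-th component of $Y_G(K)$, the total fibre is identified with the $G(\Q)$-orbit count in $t_i K K_\infty^+ / K'$, which by a now-standard manipulation (using that $G(\Q)\backslash G(\A_f)/K$ is the component set) has cardinality exactly $[K : K']$. Concretely: the fibre product $Y_G(K')(\C) \times_{Y_G(K)(\C)} \{*\}$ over a point represented by $(z, t_i) $ is $\Gamma_{z}\backslash K/K'$ where $\Gamma_z$ is the stabilizer of $z$ in $\Gamma$, which by neatness is trivial, so the fibre has size $[K:K']$. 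This shows ${\rm pr}_{K',K}$ is surjective with all fibres of the constant finite cardinality $[K:K']$; combined with the fact that it is a local homeomorphism (both source and target being the same quotients by free discrete group actions, locally identified via $\mathfrak{H}^{2d}$), it is a covering map of degree $[K:K']$.

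The main obstacle — and the only place where SV5 would normally intervene — is ensuring that the passage from the index $[\Gamma : \Gamma']$ of arithmetic groups to the adelic index $[K:K']$ is not corrupted by the center: without the hypothesis $K \cap Z_G(\Q) = K' \cap Z_G(\Q)$, the arithmetic quotients $\overline{\Gamma}\backslash\mathfrak{H}^{2d}$ see only $G(\Q)$ modulo its center, and extra $\Z_p$-split central directions in $Z_G$ (which Hilbert modular varieties do have, violating SV5) would make ${\rm pr}_{K',K}$ fail to have the naive degree. The hypothesis is exactly engineered so that these central contributions to $[K:K']$ are zero, and I would make this explicit by the cancellation displayed above. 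Everything else is the routine theory of quotients of Hermitian symmetric domains by neat arithmetic groups, which I would cite (e.g. the discussion around the definition of neatness in the excerpt, together with standard references such as Milne's notes or \cite{GetzGore}) rather than reprove.
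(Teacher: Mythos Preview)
The paper does not prove this lemma; it simply refers to \cite[Lemma 2.7.1]{WaqarShahzetaelts}. Your write-up therefore supplies what the paper omits, and the overall strategy --- compute the fibre over a point directly from the double-coset description and show it has exactly $[K:K']$ elements --- is the right one.

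There is, however, one genuine slip in the fibre computation. You assert that the fibre over $(z,t_i)$ is $\Gamma_z\backslash K/K'$ ``where $\Gamma_z$ is the stabilizer of $z$ in $\Gamma$, which by neatness is trivial.'' Neatness does \emph{not} make $\Gamma_z$ trivial: it only says that $\overline{\Gamma}$ acts freely on $\mathfrak{H}^{2d}$, so any $\gamma\in\Gamma$ fixing $z$ lies in the kernel of $\Gamma\to\overline{\Gamma}$, i.e.\ in $\Gamma\cap Z_G(\Q)=K\cap Z_G(\Q)$ (the conjugation by $t_i$ disappears because the center is central). This group is typically infinite for Hilbert modular varieties --- it contains the global units of $E$ lying in $K$ --- which is exactly the failure of SV5 you flag. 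What salvages the count is the hypothesis: since $K\cap Z_G(\Q)=K'\cap Z_G(\Q)\subseteq K'$ and these elements are central, each $\gamma\in\Gamma_z$ acts on $K/K'$ by $kK'\mapsto \gamma kK'=k\gamma K'=kK'$, i.e.\ trivially, so the fibre really has $[K:K']$ elements. You correctly establish $\Gamma\cap Z_G(\Q)=\Gamma'\cap Z_G(\Q)$ in your second paragraph, but then do not invoke it in the third, instead asserting the stronger (false) triviality. Once this is rewired --- use neatness to get $\Gamma_z\subseteq Z_G(\Q)\cap K$, then the hypothesis to kill its action on $K/K'$ --- the direct fibre count goes through cleanly, and your displayed identity $[\overline{\Gamma}:\overline{\Gamma}']=[\Gamma:\Gamma']$ becomes a correct but redundant side remark.
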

\begin{proof}
See for instance \cite[Lemma 2.7.1]{WaqarShahzetaelts}. 
\end{proof}

\begin{remark}\label{rmkonsingleCartDiag}
    The Cartesian diagram of Proposition \ref{Cartesian} is the composition of Cartesian diagrams involving each $T_{\eta_\frakp}$ separately as $\frakp$ varies in $\Upsilon_{F,p}$. For each $\frakp$, the corresponding Cartesian diagram is
    \begin{eqnarray*}  \xymatrix{ 
Y_H(uK_{\pmb{r}'}u^{-1} \cap H)  \ar[d]  \ar[rrr]^-{{\rm pr}' \circ u \circ \iota_{\pmb{r}'}}   & & &  Y_G(K_{\pmb{r}}^{(\frakp)} \cdot K_{\frakp,r_\frakp}') \ar[d]  \\
Y_H(u K_{\pmb{r}}u^{-1} \cap H)  \ar[rrr]^-{ u \circ \iota_{\pmb{r}}} & & &  Y_G(K_{\pmb{r}}),}  \end{eqnarray*}
where $\pmb{r}'$ is given by $r_\mathfrak{q}' = r_\mathfrak{q}$ for $\mathfrak{q} \ne \frakp$ and $r_\frakp' = r_\frakp +1$, ${\rm pr}'$ denotes the natural projection $Y_G(K_{\pmb{r}'}) \to Y_G(K_{\pmb{r}}^{(\frakp)} \cdot K_{\frakp,r_\frakp}')$, and vertical arrows are the natural degeneracy maps. 
\end{remark}

 Assume now that $\pmb{r}=\pmb{0}$. As the level subgroups $K_{\pmb{0}}, K_{\pmb{0}}', $ and $K_{\pmb{1}}$ agree with $K_{0}, K_{0}', $ and $K_{1}$, we adopt the latter notation. Note that in this case the bottom square of the diagram of Proposition \ref{Cartesian} is not Cartesian as the degree of $\pi_{0}^H:Y_H(uK_{1}u^{-1} \cap H) \to Y_H(K_{0} \cap H)$ is strictly smaller than $\pi_{0}^G: Y_G(K_{0}') \to Y_G(K_{0})$, as the next lemma shows.

\begin{lemma}\label{degreesofmapszerolevel}
 The map $\pi_0^H:Y_H(uK_{1}u^{-1} \cap H) \to Y_H(K_{0} \cap H)$ has degree $\prod_{\frakp \in \Upsilon_{F,p}} (q_\frakp^2-q_\frakp)  $ and $\pi_0^G : Y_G(K_{0}') \to Y_G(K_{0})$ has degree $\prod_{\frakp \in \Upsilon_{F,p}} q_\frakp^2$, where $q_\frakp$\index{$q_\frakp$} is the cardinality of the residue field $\mathbb{F}_\frakp$ of $\frakp$.
\end{lemma}

\begin{proof}
We first note that, as in the proof of Proposition \ref{Cartesian}, we have
\[ u K'_0 u^{-1} \cap H =  u K_{1} u^{-1} \cap H. \]
Thus, by Lemma \ref{verticaldeg}, the degree of $\pi_0^H$ equals to $[ K_0 \cap H :  u K'_0 u^{-1} \cap H]$. To calculate the latter, since $u$ is trivial outside $p$, it is enough to do the calculation prime by prime above $p$.

If $\frakp$ is a prime of $F$ above $p$ which is split in $E$, the component at $\frakp$ of $K_0 \cap H$ is given by  $\Gamma_{\GL_2,0}(\pi_\frakp) = \{ g={\matrix {a} { b} {c} {d}} \in \GL_2(\mathcal{O}_{\frakp})\,:\,\pi_\frakp \mid c \}$, while the one of $ u K'_0 u^{-1} \cap H$ is given by matrices $g={\matrix {a} { b} {c} {d}} \in \GL_2(\mathcal{O}_{\frakp})$ with $\pi_\frakp \mid c$, $\pi_\frakp \mid b$, and  $\pi_\frakp \mid a-d$ (\textit{cf}. proof of Lemma \ref{openorbitlemma}). The latter subgroup has index $(q_\frakp-1)q_\frakp$ in $\Gamma_{\GL_2,0}(\pi_\frakp)$.

Similarly, when $v$ is inert in $E$  the image of $u K_0' u^{-1} \cap H$ in $\GL_2(\mathbb{F}_{q_\frakp})$ is the center, implying that the $\frakp$-component of $u K_0' u^{-1} \cap H$ has index in $\Gamma_{\GL_2,0}(\pi_\frakp)$ equal to $q_\frakp^2-q_\frakp$. 
Thus we have that $\pi_0^H$ has degree $\prod_{\frakp \in \Upsilon_{F,p}} (q_\frakp^2-q_\frakp)$.

 The degree of $\pi_0^G$, by Lemma \ref{verticaldeg}, equals to $[K_0 : K_0']$. Since the $p$-component of $K_0'$ is $\Gamma_0^0(p,p)$, its $\frakp$-component has index in the Iwahori subgroup of $\GL_2(\mathcal{O}_{\frakp})^2$ for $\frakp$ split, resp.  $\GL_2(\mathcal{O}_{\mathfrak{P}})$ for $\frakp$ inert, equal to $q_\frakp^2$. Thus, we have that  $[K_0 : K_0'] = \prod_{\frakp \in \Upsilon_{F,p}} q_\frakp^2 $.
\end{proof}

Note the difference of the two degrees at each prime $\frakp$ above $p$ is $q_\frakp$, which is exactly the index of the Hecke correspondence $U_{\pi_\frakp}$ at $\frakp$ associated with $[\Gamma^0_H(\pi_\frakp){\matrix{\pi_\frakp}{}{}{1}}\Gamma^0_H(\pi_\frakp) ]$, with $\pi_\frakp$ the uniformizer in $\mathcal{O}_\frakp$. A partial explanation of this phenomenon is given by the following proposition. Before that, we need to introduce some notation.

Let $\mathfrak{S}$ be a subset of $\Upsilon_{F,p}$. Let $u_\mathfrak{S} \in G(\Z_p)$ denote the element $(\gamma_\frakp)_{\frakp \in \Upsilon_{F,p}}$ given by \[\gamma_\frakp = \begin{cases} u_{\mathfrak{p}} & \text{ if } \frakp \in \mathfrak{S}, \\ 1 & \text{ if } \frakp \not\in \mathfrak{S}, \end{cases} \]
with $u_{\mathfrak{p}}$ given in Definition \ref{def:representativeorbit}.
Moreover, define
\[K^0_H(\mathfrak{S}):= (K^{(p)} \cap H(\hat{\Z}^{(p)}))\cdot( u_\mathfrak{S}\Gamma^0_0(p,p)u_\mathfrak{S}^{-1} \cap H(\Z_p)),\] where recall that $\Gamma^0_0(p,p)$ is the $p$-component of $K_0'$. 
Finally, denote by $\tilde{\iota}_{\mathfrak{S}}: Y_H(K^0_H(\mathfrak{S})) \to Y_G(K_0')$ the closed immersion induced by $u_\mathfrak{S} \circ \iota$. A few remarks are in order.

\begin{remark}\label{somermksonthissigmatowers}\leavevmode \begin{enumerate}
    \item For every $\frakp \not \in \mathfrak{S}$, the $\frakp$-component of $K^0_H(\mathfrak{S})$ equals \[\Gamma^0_{\GL_2,0}(\frakp,\frakp)= \{ h \in \GL_2(\mathcal{O}_\frakp)\,:\, h \in T_{\GL_2} \, [\text{mod }\frakp] \}.\]
    \item If $\mathfrak{S}=\Upsilon_{F,p}$, we have $K^0_H(\mathfrak{S}) = u K_0'u^{-1} \cap H = u K_1 u^{-1} \cap H $.
    \item As the index of $\Gamma_{\GL_2,0}^0(\frakp,\frakp)$ in $\Gamma_{\GL_2,0}(\frakp)$ is $q_\frakp$, an immediate generalization of Lemma \ref{degreesofmapszerolevel} shows that the natural projection ${\rm pr}_\mathfrak{S}^H: Y_H(K^0_H(\mathfrak{S})) \to Y_H(K_0 \cap H)$ has degree equal to \[\prod_{\frakp \not \in \mathfrak{S}} q_\frakp \cdot \prod_{\frakp \in \mathfrak{S}} (q_\frakp^2-q_\frakp).\] 
    \end{enumerate}
\end{remark}

The following Lemma is needed in the proof of Proposition \ref{Cartesianwhenr=0} and makes explicit the open and closed $H$-orbits on the flag $\mathcal{F}$ given in Lemma \ref{openorbitlemma}.

\begin{lemma}\label{lem:open/closed_orbit}\leavevmode
\begin{enumerate}
    \item  Let $\mathfrak{p}  \in \Upsilon^{\rm s}_{F,p}$ and consider the map
 \[ t_\frakp:\GL_2(\mathbb{F}_\frakp) \to  \GL_2(\mathbb{F}_\frakp) \times  \GL_2(\mathbb{F}_\frakp),\, 
g= {\matrix{a}{b}{c}{d}}   \mapsto  (g,g)u_{\mathfrak{p}}=\left({\matrix{a}{a+b}{c}{c+d}},{\matrix{a}{b}{c}{d}} \right). \]
 Then the open orbit of $\GL_2(\mathbb{F}_\frakp)$ acting on $\mathbb{P}^1(\mathbb{F}_\frakp)\times \mathbb{P}^1(\mathbb{F}_\frakp)$ consists of points $(z_1,z_2)$ with $z_1 \neq z_2$ and is realized as $t_\frakp(\GL_2(\mathbb{F}_\frakp))\cdot ([0:1]^T,[0:1]^T)$, while the closed one consists of points $(z,z)$ and equals $\iota_\mathfrak{p}(\GL_2(\mathbb{F}_\frakp))\cdot ([0:1]^T,[0:1]^T)$.
 \item Let $\mathfrak{p}  \in \Upsilon^{\rm i}_{F,p}$ and let $\mathfrak{P}$ be the only prime in $E$ above $\frakp$. Consider the map
 \[
 t_\frakp:
\GL_2(\mathbb{F}_\frakp) \to \GL_2(\mathbb{F}_{\mathfrak{P}}),\, 
g= {\matrix{a}{b}{c}{d}} \mapsto g u_{\mathfrak{p}}= {\matrix{a}{a\delta_\mathfrak{p} +b}{c}{c\delta_\mathfrak{p} + d}}.
 \]
 Then the open orbit of $\GL_2(\mathbb{F}_\frakp)$ acting on $\mathbb{P}^1(\mathbb{F}_{\mathfrak{P}})$ consists of points $z \in \mathbb{P}^1(\mathbb{F}_{\mathfrak{P}}) \backslash \mathbb{P}^1(\mathbb{F}_{{\frakp}})$ and is realized as $t_\frakp(\GL_2(\mathbb{F}_\frakp))\cdot [0:1]^T$ while the closed one consists of $z \in \mathbb{P}^1(\mathbb{F}_{{\frakp}})$ and equals $\iota_\mathfrak{p}(\GL_2(\mathbb{F}_\frakp))\cdot [0:1]^T$.
 \end{enumerate}
\end{lemma}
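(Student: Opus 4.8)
The plan is to prove Lemma \ref{lem:open/closed_orbit} by a direct computation on $\mathbb{P}^1$, reducing everything to linear algebra over the residue fields, since the orbit structure is preserved by reduction modulo the uniformizer.

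\textbf{Split case.} First I would record that $\GL_2$ acts on $\mathbb{P}^1 \times \mathbb{P}^1$ (diagonally) with exactly two orbits over any field: the diagonal $\{(z,z)\}$, which is closed and of dimension $1$, and its complement $\{(z_1,z_2): z_1 \neq z_2\}$, which is open and of dimension $2$; this is classical (transitivity on pairs of distinct points follows from the $3$-transitivity of $\PGL_2$ on $\mathbb{P}^1$, and transitivity on the diagonal from transitivity of $\GL_2$ on $\mathbb{P}^1$). It then suffices to identify the image of the two base points under the maps $\iota_v$ and $t_v$. For the diagonal embedding $\iota_v(g) = (g,g)$, the point $([0:1]^T,[0:1]^T)$ manifestly lies on the diagonal, and since $\iota_v(\GL_2(\mathbb{F}_v))$ acts transitively on it, its orbit is the whole closed orbit. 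For $t_v$, I compute $t_v(1)\cdot([0:1]^T,[0:1]^T) = (g,g)u_v\cdot([0:1]^T,[0:1]^T)$ with $g = 1$, i.e. the action of $\left({\matrix{1}{1}{0}{1}},{\matrix{1}{0}{0}{1}}\right)$ on $([0:1]^T,[0:1]^T)$, which gives $([1:1]^T,[0:1]^T)$ — a pair of \emph{distinct} points in $\mathbb{P}^1(\mathbb{F}_v)$. Hence $t_v(\GL_2(\mathbb{F}_v))\cdot([0:1]^T,[0:1]^T)$ meets the open orbit and, by transitivity of $\GL_2(\mathbb{F}_v)$ on the set of ordered pairs of distinct points (note $t_v(g) = (g,g)u_v$, so the orbit of the base point under $t_v$ is the $(g,g)$-translate of the single point $([1:1]^T,[0:1]^T)$, and $\{(g,g)\}$ acts transitively on pairs of distinct points), it equals the whole open orbit. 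This matches the orbit decomposition of Lemma \ref{openorbitlemma}, where $u$ was precisely chosen so that $H\cdot u$ is open.

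\textbf{Inert case.} Here $\GL_2(\mathbb{F}_v)$ acts on $\mathbb{P}^1(\mathbb{F}_w)$ through the inclusion $\iota_v: \GL_2(\mathbb{F}_v)\hookrightarrow\GL_2(\mathbb{F}_w)$, where $\mathbb{F}_w/\mathbb{F}_v$ is the quadratic extension. I would first observe that $\GL_2(\mathbb{F}_v)$ preserves $\mathbb{P}^1(\mathbb{F}_v)\subset\mathbb{P}^1(\mathbb{F}_w)$ and acts transitively on it, and also acts transitively on the complement $\mathbb{P}^1(\mathbb{F}_w)\smallsetminus\mathbb{P}^1(\mathbb{F}_v)$: indeed the stabilizer in $\GL_2(\mathbb{F}_w)$ of a point $z\notin\mathbb{P}^1(\mathbb{F}_v)$ intersected with $\GL_2(\mathbb{F}_v)$ is a non-split torus (the one called $T_{\delta_v}$, or rather its reduction), of order $q_v^2-1$, so the orbit has size $|\GL_2(\mathbb{F}_v)|/(q_v^2-1) = q_v(q_v+1)(q_v-1) = q_v^2 - q_v = |\mathbb{P}^1(\mathbb{F}_w)| - |\mathbb{P}^1(\mathbb{F}_v)|$, giving transitivity; this is the same count as in Lemma \ref{degreesofmapszerolevel}. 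So again there are exactly two orbits: the closed $\mathbb{P}^1(\mathbb{F}_v)$ and the open complement. It remains to identify base points: $\iota_v(1)\cdot[0:1]^T = [0:1]^T \in \mathbb{P}^1(\mathbb{F}_v)$, so the $\iota_v$-orbit is the closed one; whereas $t_v(1)\cdot[0:1]^T = u_v\cdot[0:1]^T = {\matrix{1}{\delta_v}{0}{1}}\cdot[0:1]^T = [\delta_v:1]^T$ — and since $\mathbb{F}_w = \mathbb{F}_v\oplus\mathbb{F}_v\bar\delta_v$ with $\bar\delta_v\notin\mathbb{F}_v$ (the reduction of $\delta_v$, using $\O_w = \O_v\oplus\O_v\delta_v$), the point $[\bar\delta_v:1]^T$ lies in $\mathbb{P}^1(\mathbb{F}_w)\smallsetminus\mathbb{P}^1(\mathbb{F}_v)$, so the $t_v$-orbit of the base point is the open one.

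\textbf{Main obstacle.} The computations themselves are routine; the only point requiring genuine care is \emph{matching conventions}: the action on $\mathbb{P}^1$ induced from the action on the flag variety $\mathcal{F} = G/\overline{B}_G$ should be the one for which $\overline{B}_G$ is the stabilizer of $[0:1]^T$ (the highest weight line under $\overline{B}_G$, equivalently the line spanned by $e_2$), so that Corollary \ref{StupRMKonLG} and Lemma \ref{openorbitlemma}(2) are consistent with the concrete identification here — in particular checking that $u_v$ as written in Definition \ref{def:representativeorbit} and the maps $t_v$ here move $[0:1]^T$ off the appropriate $\mathbb{F}_v$-rational locus, rather than fixing it. Once the base-point bookkeeping is pinned down, the transitivity statements (3-transitivity of $\PGL_2$ on $\mathbb{P}^1$ in the split case, the order-of-torus count in the inert case) immediately close the argument, and everything is consistent with the degree computations already established in Lemma \ref{degreesofmapszerolevel} and Remark \ref{somermksonthissigmatowers}(3).
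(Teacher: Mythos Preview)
Your argument is correct and follows essentially the same direct-computation approach as the paper: identify the two $\GL_2(\mathbb{F}_v)$-orbits on the flag variety and check which one contains the image of $[0:1]^T$ under $\iota_v$ versus $t_v$. The paper computes the full orbit $t_v(g)\cdot[0:1]^T$ for arbitrary $g$ and reads off the description (in the inert case by checking that the $\delta_v$-coefficient of the ratio $(a\delta_v+b)/(c\delta_v+d)$ vanishes iff $ad-bc=0$), whereas you first establish the orbit decomposition abstractly (via $3$-transitivity in the split case and a stabilizer count in the inert case) and then only check where the identity lands; both are equally valid and equally short. One minor slip: in your inert-case count you write $|\GL_2(\mathbb{F}_v)|/(q_v^2-1) = q_v(q_v+1)(q_v-1)$, but the correct intermediate value is $q_v(q_v-1)$ --- your final answer $q_v^2-q_v$ is nonetheless right.
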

\begin{proof}
We start with the split case.
Recall that $\mathbb{P}^1=\GL_2/\overline{B}_{\GL_2}$. The action of $\GL_2$ is given by left multiplication:
\[
{\matrix{a}{b}{c}{d}} [x:y]^T=[ax+by:cx+dy]^T.
\]
So we have that the open orbit is 
\[
\left({\matrix{a}{a+b}{c}{c+d}},{\matrix{a}{b}{c}{d}} \right)([0:1]^T,[0:1]^T)=([a+b:c+d]^T,[b:d]^T)
\]
and the closed one is
\[
\left( {\matrix{a}{b}{c}{d}},{\matrix{a}{b}{c}{d}} \right)([0:1]^T,[0:1]^T)=([b:d]^T,[b:d]^T)
\]
In the inert case we have the open orbit
\[
{\matrix{a}{a\delta_\mathfrak{p} +b}{c}{c\delta_\mathfrak{p} + d}}[0:1]^T=[a\delta_\mathfrak{p} +b: c\delta_\mathfrak{p} + d]^T.
\]
One sees that the $\delta_\mathfrak{p}$-coefficient of the ratio $a\delta_\mathfrak{p} +b/ (c\delta_\mathfrak{p} + d)$ is $0$ if and only if $ad-bc=0$, which is never the case.
The closed orbit is instead
\[
{\matrix{a}{b}{c}{d}}[0:1]^T=[b: d]^T.
\]
\end{proof}

We can finally state the equivalent of Proposition \ref{Cartesian} for when $r=0$.

\begin{proposition}\label{Cartesianwhenr=0}
Assume that $K^{(p)}$ is stable under $\tau \in {\rm Gal}(E/F)$. The commutative diagram 
 \begin{eqnarray*}  \xymatrix{  
\bigsqcup_{\mathfrak{S}} Y_H(K^0_H(\mathfrak{S}))  \ar[d]_{({\rm pr}_\mathfrak{S}^H)_\mathfrak{S}}  \ar[rrr]^-{(\tilde{\iota}_\mathfrak{S})_\mathfrak{S}}   & & &  Y_G(K_0') \ar[d]^{\pi_0^G} \\
Y_H( K_0 \cap H)  \ar[rrr]^-{ \iota_{0}} & & &  Y_G(K_0)    }  \end{eqnarray*}
is Cartesian. Moreover, it can be decomposed as a series of Cartesian diagrams indexed by the primes in $\Upsilon_{F,p}$: if $\mathfrak{S}$ is a (possibly empty) subset of $ \Upsilon_{F,p}$ and $\mathfrak{p} \not \in \mathfrak{S}$, then, denoting $\mathfrak{S}' : = \mathfrak{S} \cup \{ \mathfrak{p}\}$,  \begin{eqnarray*}  \xymatrix{  
 Y_H(u_\mathfrak{S}K_0(\mathfrak{S}')u_\mathfrak{S}^{-1} \cap H) \sqcup Y_H(u_{\mathfrak{S}'}K_0(\mathfrak{S}')u_{\mathfrak{S}'}^{-1} \cap H)  \ar[d]_{({\rm pr}_{\mathfrak{S},\emptyset}^H, {\rm pr}_{\mathfrak{S},\mathfrak{p}}^H )}  \ar[rrr]^-{(\tilde{\iota}_\mathfrak{S}, u_\mathfrak{p} \circ \tilde{\iota}_\mathfrak{S} )}   & & &  Y_G(K_0(\mathfrak{S}')) \ar[d]^{\pi_{\mathfrak{p}}^G} \\
Y_H( u_\mathfrak{S}K_0(\mathfrak{S})u_\mathfrak{S}^{-1} \cap H)  \ar[rrr]^-{ \tilde{\iota}_\mathfrak{S}} & & &  Y_G(K_0(\mathfrak{S})),    }  \end{eqnarray*}
is Cartesian, where $K_0(\mathfrak{S})$ is the level subgroup introduced in \eqref{eqdefintermediatelevelsigma}. 
\end{proposition}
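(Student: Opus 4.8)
The plan is to reduce the first (global) Cartesian square to an infinite product of local statements at the primes $\frakp \in \Upsilon_{F,p}$, exactly as in the proof of Proposition~\ref{Cartesian}, but now tracking the discrepancy between the open and closed $H$-orbits on the flag variety $\mathcal{F}$ recorded in Lemma~\ref{lem:open/closed_orbit}. Since all varieties involved are smooth and $\pi_0^G$ is a covering map of degree $[K_0:K_0'] = \prod_v q_v^2$ by Lemma~\ref{verticaldeg} and Lemma~\ref{degreesofmapszerolevel}, and since the maps $\tilde\iota_\mathfrak{S}$ are closed immersions by Lemma~\ref{closedimm} (applied with $u$ replaced by $u_\mathfrak{S}$, using $u_\mathfrak{S}K_0'u_\mathfrak{S}^{-1}\cap H = u_\mathfrak{S}K_1u_\mathfrak{S}^{-1}\cap H$ as in the proof of Proposition~\ref{Cartesian}), it suffices to check that the fibre product $Y_H(K_0\cap H)\times_{Y_G(K_0)}Y_G(K_0')$ is, as a scheme, the disjoint union $\bigsqcup_\mathfrak{S} Y_H(K^0_H(\mathfrak{S}))$ with the stated maps. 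First I would pass to $\C$-points and note that the fibre of $\pi_0^G$ over a point of $Y_G(K_0)$ in the image of $\iota_0$ is identified $H(\Z_p)$-equivariantly with $K_0/K_0' \cong \prod_v \bigl(\GL_2(\mathbb{F}_v)^{(2)}\bigr)/\overline{B}$, i.e.\ with a product of copies of $\mathbb{P}^1(\mathbb{F}_v)\times\mathbb{P}^1(\mathbb{F}_v)$ (split $v$) or $\mathbb{P}^1(\mathbb{F}_w)$ (inert $v$), on which $\iota_v(\GL_2(\mathbb{F}_v))$ acts through the embeddings of Lemma~\ref{lem:open/closed_orbit}.

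The key computation is then purely local and orbit-theoretic: for split $v$, $\iota_v(\GL_2(\mathbb{F}_v))$ acting on $\mathbb{P}^1(\mathbb{F}_v)^2$ has exactly two orbits, the open one $\{z_1\ne z_2\}$ with point-stabiliser a split torus and the closed diagonal one $\{z_1=z_2\}$ with stabiliser a Borel; for inert $v$, $\iota_v(\GL_2(\mathbb{F}_v))$ acting on $\mathbb{P}^1(\mathbb{F}_w)$ has the open orbit $\mathbb{P}^1(\mathbb{F}_w)\setminus\mathbb{P}^1(\mathbb{F}_v)$ with non-split-torus stabiliser and the closed orbit $\mathbb{P}^1(\mathbb{F}_v)$ with Borel stabiliser. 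These are precisely the statements of Lemma~\ref{lem:open/closed_orbit}, with the open orbit realised as the $t_v$-image and the closed one as the $\iota_v$-image. Hence at each $v$ the fibre decomposes into two $\GL_2(\mathbb{F}_v)$-orbits, and a choice of subset $\mathfrak{S}\subseteq\Upsilon_{F,p}$ corresponds to choosing the open orbit at the primes in $\mathfrak{S}$ and the closed one outside; the stabiliser of the resulting point of $K_0/K_0'$ is exactly $u_\mathfrak{S}\Gamma^0_0(p,p)u_\mathfrak{S}^{-1}\cap H(\Z_p)$ on the $p$-part, which defines $K^0_H(\mathfrak{S})$. Feeding this orbit decomposition back through the covering $\pi_0^G$ gives the disjoint-union description of the fibre product, with the component indexed by $\mathfrak{S}$ mapping to $Y_G(K_0')$ via $\tilde\iota_\mathfrak{S} = u_\mathfrak{S}\circ\iota$ and down to $Y_H(K_0\cap H)$ via ${\rm pr}_\mathfrak{S}^H$ (whose degree $\prod_{v\notin\mathfrak{S}}q_v\cdot\prod_{v\in\mathfrak{S}}(q_v^2-q_v)$ of Remark~\ref{somermksonthissigmatowers}(3) correctly sums to $\prod_v q_v^2$, matching $\deg\pi_0^G$ and confirming the count).

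For the second assertion, the decomposition into a chain of one-prime-at-a-time Cartesian squares, I would argue exactly as in Remark~\ref{rmkonsingleCartDiag}: the intermediate levels $K_0(\mathfrak{S})$ of \eqref{eqdefintermediatelevelsigma} are chosen so that passing from $K_0(\mathfrak{S})$ to $K_0(\mathfrak{S}')$ with $\mathfrak{S}'=\mathfrak{S}\cup\{\frakp\}$ changes the level only at $\frakp$, from $\Gamma_0(\pi_\frakp)$ to $\Gamma^0_0(\pi_\frakp,\pi_\frakp)$, and the corresponding degeneracy map $\pi_\frakp^G$ is a covering of degree $q_\frakp^2$ whose fibre over a point in the image of $\tilde\iota_\mathfrak{S}$ is a single copy of $\mathbb{P}^1\times\mathbb{P}^1$ or $\mathbb{P}^1(\mathbb{F}_w)$; applying the local orbit computation at $\frakp$ alone splits this fibre into the open and closed orbit, yielding the two components $Y_H(u_\frakp K_0(\mathfrak{S}')u_\frakp^{-1}\cap H)$ (open, via $u_\frakp\circ\tilde\iota_\mathfrak{S}$) and $Y_H(K_0(\mathfrak{S}')\cap H)$ (closed, via $\tilde\iota_\mathfrak{S}$). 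Composing these squares over the finitely many primes of $\Upsilon_{F,p}$ and using that a vertical composite of Cartesian squares is Cartesian recovers the global square. The main obstacle, as in Proposition~\ref{Cartesian}, is the failure of axiom SV5: one cannot invoke \cite{LoefflerSphericalvarieties} directly for the fibre-product identification at the level of Shimura varieties, so I must check the degree/covering statements by hand via Lemma~\ref{verticaldeg}, verifying throughout that $K_0\cap Z_G(\Q) = K_0'\cap Z_G(\Q)$ (and the analogue for the $H$-level subgroups and the intermediate $K_0(\mathfrak{S})$), so that all the relevant degeneracy maps are genuine covering maps of the expected degree; this is the point where the constancy of the central intersection along the tower is essential.
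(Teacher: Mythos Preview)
Your approach is essentially the same as the paper's: verify that the degrees of the two vertical covering maps agree (via the binomial identity $\sum_{\mathfrak{S}}\prod_{v\notin\mathfrak{S}}q_v\prod_{v\in\mathfrak{S}}(q_v^2-q_v)=\prod_v q_v^2$), that each $\tilde\iota_\mathfrak{S}$ is a closed immersion, and that the images are pairwise disjoint in $Y_G(K_0')$ by analysing the $H(\mathbb{F}_p)$-orbits on the fibre of $\pi_0^G$ using Lemma~\ref{lem:open/closed_orbit}. The prime-by-prime decomposition is then handled just as you describe.

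There is one inaccuracy worth correcting. You identify the fibre $K_0/K_0'$ with the full flag variety $\prod_v\mathbb{P}^1(\mathbb{F}_v)^2$ (split) or $\mathbb{P}^1(\mathbb{F}_w)$ (inert), but the cardinalities do not match: $|K_0/K_0'|=\prod_v q_v^2$, whereas the full flag has $\prod_{v\ \text{split}}(q_v+1)^2\prod_{v\ \text{inert}}(q_v^2+1)$ points. The correct identification (and this is what the paper does) is
\[
\Gamma_0(p)/\Gamma_0^0(p,p)\;\cong\; B_G(\mathbb{F}_p)/T_G(\mathbb{F}_p)\;\cong\; N_G(\mathbb{F}_p),
\]
which sits inside the flag variety as the big Bruhat cell. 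The orbit computation of Lemma~\ref{lem:open/closed_orbit} is then applied to the \emph{intersection} of the open and closed $H$-orbits with this affine cell, which is why the paper introduces the subsets $N_v^{\rm s}$ and $N_v^{\rm i}$. Your argument goes through once you make this adjustment; the case where the fibre really is the full flag variety is the spherical-level situation treated separately in Proposition~\ref{Cartesianwhenr=0forspherical}.
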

\begin{proof}
We mimic the proof of Proposition \ref{Cartesian}. By Remark \ref{somermksonthissigmatowers}(3), the degree of the map $({\rm pr}_\mathfrak{S}^H)_\mathfrak{S}$ equals to  \[\sum_{\mathfrak{S}}  [ \prod_{\frakp \not \in \mathfrak{S}} q_\frakp \cdot \prod_{\frakp \in \mathfrak{S}} (q_\frakp^2-q_\frakp)] = \prod_{\frakp \in \Upsilon_{F,p}}  q_\frakp^2,   \]
where the last equality follows from the binomial theorem. This implies that the degree of $({\rm pr}_\mathfrak{S}^H)_\mathfrak{S}$ equals to the degree of $\pi_0^G$. To prove the result we are left to show that \[\xymatrix{  
\bigsqcup_{\mathfrak{S}} Y_H(K^0_H(\mathfrak{S}))   \ar[rrr]^-{(\tilde{\iota}_\mathfrak{S})_\mathfrak{S}}   & & &  Y_G(K_0') }  \]
is a closed immersion, where recall that $\tilde{\iota}_\mathfrak{S} = u_\mathfrak{S} \circ \iota$. This follows from the fact that each $\tilde{\iota}_\mathfrak{S}$ is a closed immersion and that the images of any two  sub-varieties do not intersect in $Y_G(K_0')$.
The first fact is clear (\textit{cf}. Lemma \ref{closedimm}), so we are left to show that the images don't intersect. This follows roughly from the fact that the number of orbits of $H$ in the flag $\mathcal{F}=G/\overline{B}_G$ is in bijection with the subsets $\mathfrak{S}$ of $\Upsilon_{F,p}$, with the open orbit corresponding to the choice of $\mathfrak{S}=\Upsilon_{F,p}$. 
Precisely, note that the fiber of the natural degeneracy map $\pi_{\mathfrak{p}}^G$ on the right can be identified with
\[
\Gamma_0(p)/\Gamma_0^0(p,p) \cong B_G(\mathbb{F}_p) / T_G(\mathbb{F}_p) \cong N_G(\mathbb{F}_p) \cong \prod_{\frakp \in \Upsilon^{\rm s}_{F,p}} \left(N_{\GL_2}(\mathbb{F}_\frakp)\times N_{\GL_2}(\mathbb{F}_\frakp) \right)\times \prod_{\frakp \in \Upsilon^{\rm i}_{F,p}}  N_{\GL_2}(\mathbb{F}_{\mathfrak{P}}),
\]
for $\mathfrak{P}$ the only prime in $E$ above $\frakp$. Moreover, as $N_G$ can be realized as an open affine of $\mathcal{F}$, we can view the right hand side explicitly as the subspace of $\prod_{\frakp \in \Upsilon^{\rm s}_{F,p}} \left(\mathbb{P}^1(\mathbb{F}_\frakp)\times \mathbb{P}^1(\mathbb{F}_\frakp) \right)\times \prod_{\frakp \in \Upsilon^{\rm i}_{F,p}}  \mathbb{P}^1(\mathbb{F}_{\mathfrak{P}})$ given by \[\prod_{\frakp \in \Upsilon^{\rm s}_{F,p}} \{ \left ([a:1],[a':1] \right ),\, a,a' \in \mathbb{F}_\frakp \} \times \prod_{\frakp \in \Upsilon^{\rm i}_{F,p}}  \{ [b:1],\, b \in \mathbb{F}_\mathfrak{P} \}\, (=: \prod_{\frakp \in \Upsilon^{\rm s}_{F,p}} N_\frakp^{\rm s} \times \prod_{\frakp \in \Upsilon^{\rm i}_{F,p}}  N_\frakp^{\rm i}). \]

 We are left to show that the $\frakp$-component of any element in the image of $Y_H(K_0^H(\mathfrak{S}))$ is characterized by landing into the closed, resp. open, orbit for $\GL_2(\mathbb{F}_\frakp)$ when $\frakp \not \in \mathfrak{S}$, resp. $\frakp \in \mathfrak{S}$. This follows from our definition of $u_\mathfrak{S}$ and the explicit calculation of Lemma \ref{lem:open/closed_orbit}: indeed, when $\frakp \in \mathfrak{S}$, the image of the $\frakp$-component of $\Gamma_{H,0}(p)$  into the $\frakp$-component of $\Gamma_0(p)/\Gamma_0^0(p,p)$ equals to $t_\frakp(B_{\GL_2}(\mathbb{F}_\frakp))$, which realizes the intersection of the open orbit with $N_\frakp^{\rm s}$, resp. $N_\frakp^{\rm i}$, if $\frakp$ is split, resp. inert. When $\frakp \not \in \mathfrak{S}$, it equals to  $\iota_\mathfrak{p}(B_{\GL_2}(\mathbb{F}_\frakp))$, which gives the intersection of the closed orbit with $N_\frakp^{\rm s}$, resp. $N_\frakp^{\rm i}$, if $\frakp$ is split, resp. inert.
Therefore, if $\mathfrak{S},\mathfrak{S}'$ are two distinct subsets, then the images of $Y_H(K^0_H(\mathfrak{S}))$ and $Y_H(K^0_H(\mathfrak{S}'))$ do not intersect, which proves our first claim. The second part of the statement follows in the same way. 
\end{proof}

The interested reader might wonder what happens if we build our tower of level subgroups, starting from spherical level at $p$ instead of $\Gamma_0(p)$, as done in \cite{LoefflerSphericalvarieties}. The resulting Cartesian diagrams, which are almost identical to those given above, can be used to relate the cycles with spherical level at $p$ with the twisted cycles in the $p$-tower of level subgroups. While the diagram of Proposition \ref{Cartesian} translates verbatim, a few small modifications are required in the $r=0$ case. We conclude this section by illustrating it. Let us denote \begin{itemize}
    \item $K_{{\rm un},0} := K^{(p)}\cdot G(\Z_p)$, $K_{{\rm un},0}' := K^{(p)}\cdot \Gamma^0(p)$;
    \item for any subset $\mathfrak{S}$ of $\Upsilon_{F,p}$, 
\[K^0_{{\rm un},H}(\mathfrak{S}):= (K^{(p)} \cap H(\hat{\Z}^{(p)}))\cdot( u_\mathfrak{S}\Gamma^0(p)u_\mathfrak{S}^{-1} \cap H(\Z_p)).\]
\end{itemize}

\begin{lemma}\label{degreesofmapszerosphericallevel}
 The natural degeneracy map ${\rm pr}_{{\rm un},\mathfrak{S}}^H:Y_H(K^0_{{\rm un},H}(\mathfrak{S})) \to Y_H(K_{un,0} \cap H)$ has degree \[\prod_{\frakp \not \in \mathfrak{S}} (q_\frakp + 1) \cdot \prod_{\frakp \in \mathfrak{S} \cap \Upsilon^{\rm s}_{F,p}} (q_\frakp^2+q_\frakp) \cdot \prod_{\frakp \in \mathfrak{S} \cap \Upsilon^{\rm i}_{F,p}} q_\frakp(q_\frakp-1)\] and the natural projection $\pi_{{\rm un},0}^G:Y_G(K_{{\rm un},0}') \to Y_G(K_{{\rm un},0})$ has degree $\prod_{\frakp \in \Upsilon^{\rm s}_{F,p}} (q_\frakp+1)^2  \times \prod_{\frakp \in \Upsilon^{\rm i}_{F,p}} (q^2_\frakp+1)$
\end{lemma}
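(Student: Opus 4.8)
The plan is to deduce both degrees from Lemma~\ref{verticaldeg}, expressing each as an index of open compact subgroups, and then to evaluate that index one prime of $\Upsilon_{F,p}$ at a time by an explicit matrix computation of exactly the flavour of the proof of Lemma~\ref{degreesofmapszerolevel}. First one checks the hypotheses of Lemma~\ref{verticaldeg}: the pairs $K_{{\rm un},0}'\subseteq K_{{\rm un},0}$ and $K^0_{{\rm un},H}(\mathfrak{S})\subseteq K_{{\rm un},0}\cap H$ are nested (the $p$-component $u_{\mathfrak{S}}\Gamma^0(p)u_{\mathfrak{S}}^{-1}\cap H(\Z_p)$ is visibly inside $H(\Z_p)$ and the prime-to-$p$ components agree), and they share the same intersection with the centre, because any scalar matrix in $G(\Z_p)$ reduces to a scalar mod $p$, hence lies in $\overline B_G(\mathbb{F}_p)$ and is fixed by conjugation by $u_{\mathfrak{S}}$, so $\iota(Z_H(\Q))\cap G(\Z_p)\subseteq u_{\mathfrak{S}}\Gamma^0(p)u_{\mathfrak{S}}^{-1}$. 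Lemma~\ref{verticaldeg} then gives $\deg\pi_{{\rm un},0}^G=[K_{{\rm un},0}:K_{{\rm un},0}']$ and $\deg{\rm pr}_{{\rm un},\mathfrak{S}}^H=[K_{{\rm un},0}\cap H:K^0_{{\rm un},H}(\mathfrak{S})]$ (reading the target of ${\rm pr}_{{\rm un},\mathfrak{S}}^H$ as the spherical level $Y_H(K_{{\rm un},0}\cap H)$).

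For the first index, $\Gamma^0(p)$ is the preimage in $G(\Z_p)=\prod_{w\mid p}\GL_2(\O_w)$ of $\prod_{w}\overline B_{\GL_2}(\mathbb{F}_w)$, so $[K_{{\rm un},0}:K_{{\rm un},0}']=\prod_{w\mid p}(|\mathbb{F}_w|+1)$; splitting the primes $w$ of $E$ according to whether the prime $v$ of $F$ below is split (two primes $w$, each with $|\mathbb{F}_w|=q_v$) or inert (one prime $w$, with $|\mathbb{F}_w|=q_v^2$) yields $\prod_{v\in\Upsilon^{\rm s}_{F,p}}(q_v+1)^2\prod_{v\in\Upsilon^{\rm i}_{F,p}}(q_v^2+1)$.

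For the second index the level subgroups factor as products over $v\in\Upsilon_{F,p}$ and $u_{\mathfrak{S}}$ is supported on the primes in $\mathfrak{S}$, so it suffices to compute the local index at each $v$. If $v\notin\mathfrak{S}$, the $v$-component of $K^0_{{\rm un},H}(\mathfrak{S})$ is the opposite Iwahori $\{g\in\GL_2(\O_v):g_{12}\equiv 0\bmod\frakp_v\}$ (using $\mathfrak{P}\cap\O_v=\frakp_v\O_v$ in the inert case), of index $q_v+1$. If $v\in\mathfrak{S}\cap\Upsilon^{\rm s}_{F,p}$, conjugating by $u_v=\left(\begin{smallmatrix}1&1\\&1\end{smallmatrix}\right)$ in the first factor exactly as in Lemmas~\ref{openorbitlemma} and~\ref{lem:open/closed_orbit} shows the $v$-component reduces mod $\frakp_v$ to $\{\left(\begin{smallmatrix}a+c&0\\c&a\end{smallmatrix}\right):a\neq 0,\ a+c\neq 0\}$, a subgroup of order $(q_v-1)^2$, hence of index $q_v(q_v+1)=q_v^2+q_v$. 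If $v\in\mathfrak{S}\cap\Upsilon^{\rm i}_{F,p}$, write $\O_w=\O_v\oplus\O_v\delta_v$ and $\delta_v^2=s+t\delta_v$ with $s,t\in\O_v$; conjugating by $u_v=\left(\begin{smallmatrix}1&\delta_v\\&1\end{smallmatrix}\right)$ and reducing mod $\mathfrak{P}$, the defining condition becomes the pair of $\mathbb{F}_{q_v}$-linear equations $b\equiv cs$, $a\equiv d+ct$, so the reduction is $\{g:b=cs,\ a=d+ct,\ \det g\neq 0\}$, and $\det g=d^2+ctd-c^2s\neq 0$ for all $(c,d)\neq(0,0)$ because $y^2+ty-s$ has $-\bar\delta_v$ as a root and is therefore irreducible over $\mathbb{F}_{q_v}$ (as $E_w/F_v$ is an unramified field extension, so $\bar\delta_v\notin\mathbb{F}_{q_v}$); hence the reduction has order $q_v^2-1$ and index $q_v(q_v-1)=q_v^2-q_v$. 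Multiplying the local indices gives the asserted formula; as a sanity check, $\sum_{\mathfrak{S}\subseteq\Upsilon_{F,p}}\deg{\rm pr}_{{\rm un},\mathfrak{S}}^H=\prod_{v\in\Upsilon^{\rm s}_{F,p}}\bigl((q_v+1)+(q_v^2+q_v)\bigr)\prod_{v\in\Upsilon^{\rm i}_{F,p}}\bigl((q_v+1)+(q_v^2-q_v)\bigr)=\deg\pi_{{\rm un},0}^G$, consistent with the Cartesian diagram analogous to Proposition~\ref{Cartesianwhenr=0}.

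The only genuinely delicate point is the inert case: one must keep careful track of the $\O_v$-basis $\{1,\delta_v\}$ of $\O_w$ when translating the congruence $u_v^{-1}\iota_v(g)u_v\in\overline B_{\GL_2}(\mathbb{F}_w)$ into conditions over $\mathbb{F}_{q_v}$, and then invoke irreducibility of the residual minimal polynomial of $\bar\delta_v$ to see that the determinant does not degenerate on the relevant locus — this is where the inertness of $v$ (rather than a mere split/ramified dichotomy) actually enters. The split-prime and prime-to-$p$ parts, and the index $[K_{{\rm un},0}:K_{{\rm un},0}']$, are entirely routine.
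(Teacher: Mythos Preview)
Your proof is correct and follows essentially the same approach as the paper: apply Lemma~\ref{verticaldeg} to reduce to an index of compact-open subgroups, then compute prime by prime. The only stylistic difference is in the inert case, where the paper identifies the mod-$\mathfrak{P}$ reduction directly as the non-split torus $\mathbb{F}_{q_v^2}^\times \subset \GL_2(\mathbb{F}_{q_v})$ (via the stabilizer computation of Lemma~\ref{openorbitlemma}), whereas you carry out the equivalent counting explicitly through the minimal polynomial of $\bar\delta_v$; both yield the same order $q_v^2-1$ and index $q_v(q_v-1)$.
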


\begin{proof}
By Lemma \ref{verticaldeg}, the degree of ${\rm pr}_{{\rm un},\mathfrak{S}}^H$ equals to $[ K_{{\rm un},0} \cap H :  K^0_{{\rm un},H}(\mathfrak{S})]$. To calculate the latter, since $u_\mathfrak{S}$ is trivial outside $p$, it is enough to do the calculation prime by prime above $p$. 
\begin{itemize}
    \item If $\frakp$ is a prime of $F$ above $p$ not in $\mathfrak{S}$, the $\frakp$-component of $ K^0_{{\rm un},H}(\mathfrak{S})$ equals to $\Gamma^0_{\GL_2}(\frakp)$, which has index $q_\frakp + 1$ in $\GL_2(\mathcal{O}_\frakp)$.
    \item If $\frakp \in \mathfrak{S}$ is split in $E$, the $\frakp$-component of $ K^0_{{\rm un},H}(\mathfrak{S})$ is given by matrices  $g={\matrix {a} { b} {c} {d}} \in \GL_2(\mathcal{O}_{\frakp})$ with $\pi_\frakp \mid b$ and $\pi_\frakp \mid a-d-c$ (\textit{cf}. proof of Lemma \ref{openorbitlemma}). This subgroup has index $(q_\frakp+1)q_\frakp$ in $\GL_2(\mathcal{O}_{\frakp})$.
    \item If $\frakp \in \mathfrak{S}$ is inert in $E$, then the image of the $\frakp$-component of $ K^0_{{\rm un},H}(\mathfrak{S})$ in $\GL_2(\mathbb{F}_{q_\frakp})$ is the non-split torus $\mathbb{F}_{q^2_\frakp}^\times$ which has index $q_\frakp^2-q_\frakp$ in $\GL_2(\mathbb{F}_{q_\frakp})$.
\end{itemize}

The degree of $\pi_0^G$, by Lemma \ref{verticaldeg}, equals to $[K_{{\rm un},0} : K_{{\rm un},0}']$. Since the $p$-component of a matrix in $K_{{\rm un},0}'$ is characterized by having lower left entry divisible by $p$ and the index of the Iwahori subgroup of $\GL_2(\mathcal{O}_{\frakp})^2$ for $\frakp$ split, resp. $\GL_2(\mathcal{O}_{\mathfrak{P}})$ for $\frakp$ inert, is $(q_\frakp+1)^2$, resp. $q_\frakp^2+1$, we have that  $[K_{{\rm un},0} : K_{{\rm un},0}'] = \prod_{\frakp \in \Upsilon^{\rm s}_{F,p}} (q_\frakp+1)^2  \times \prod_{\frakp \in \Upsilon^{\rm i}_{F,p}} (q^2_\frakp+1)$.
\end{proof}

\begin{remark}
 If $\mathfrak{S} = \Upsilon_{F,p}$, the difference of the two degrees at each prime $\frakp$ above $p$ is $q_\frakp+1$, which is exactly the index of the correspondence for $T_{\pi_\frakp}$, which is the Hecke operator at $v$ given by $[H(\mathcal{O}_\frakp){\matrix{\pi_\frakp}{}{}{1}}H(\mathcal{O}_\frakp) ]$, with $\pi_\frakp$ the uniformizer in $\mathcal{O}_\frakp$. This is coherent with what we found above in Lemma \ref{degreesofmapszerolevel}, where the difference was instead given by the degree of the correspondence $U_{\pi_\frakp}$.
\end{remark} 

As before, denote by $\tilde{\iota}_{{\rm un},\mathfrak{S}}: Y_H(K^0_{{\rm un},H}(\mathfrak{S})) \to Y_G(K_{{\rm un}, 0}')$ the closed immersion induced by $u_\mathfrak{S} \circ \iota$.

\begin{proposition}\label{Cartesianwhenr=0forspherical}
Assume that $K^{(p)}$ is stable under $\tau \in {\rm Gal}(E/F)$. The commutative diagram 
 \begin{eqnarray*}  \xymatrix{  
\bigsqcup_{\mathfrak{S}}  Y_H(K^0_{{\rm un},H}(\mathfrak{S}))  \ar[d]_{({\rm pr}_{{\rm un},\mathfrak{S}}^H)_\mathfrak{S}}  \ar[rrr]^-{(\tilde{\iota}_{{\rm un},\mathfrak{S}})_\mathfrak{S}}   & & &  Y_G(K_{{\rm un}, 0}') \ar[d]^{\pi_{{\rm un},0}^G} \\
Y_H( K_{{\rm un}, 0} \cap H)  \ar[rrr]^-{ \iota_{{\rm un},0}} & & &  Y_G(K_{{\rm un}, 0})    }  \end{eqnarray*}
is Cartesian. 
\end{proposition}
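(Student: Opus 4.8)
The plan is to follow the proof of Proposition \ref{Cartesianwhenr=0} almost verbatim, replacing Iwahori level $\Gamma_0(p)$ by spherical level $G(\Z_p)$ throughout, and using Lemma \ref{degreesofmapszerosphericallevel} in place of Lemma \ref{degreesofmapszerolevel}. As in that proof, it suffices to check the assertion on complex points, and one reduces to two things: (1) the two vertical maps have the same degree, and (2) the top horizontal map $(\tilde{\iota}_{{\rm un},\mathfrak{S}})_\mathfrak{S}$ is a closed immersion.

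For (1), by Lemma \ref{verticaldeg} (which applies since all the level subgroups in play share the same intersection with $Z_G(\Q)$, the argument being identical to the one given for $K_0'$ versus $K_0$ in the proof of Proposition \ref{Cartesian}), the degree of $\pi_{{\rm un},0}^G$ is $[K_{{\rm un},0}:K_{{\rm un},0}'] = \prod_{v \in \Upsilon^{\rm s}_{F,p}} (q_v+1)^2  \times \prod_{v \in \Upsilon^{\rm i}_{F,p}} (q^2_v+1)$, while the degree of $({\rm pr}_{{\rm un},\mathfrak{S}}^H)_\mathfrak{S}$ is $\sum_\mathfrak{S} [\prod_{v \not \in \mathfrak{S}} (q_v + 1) \cdot \prod_{v \in \mathfrak{S} \cap \Upsilon^{\rm s}_{F,p}} (q_v^2+q_v) \cdot \prod_{v \in \mathfrak{S} \cap \Upsilon^{\rm i}_{F,p}} q_v(q_v-1)]$ by Lemma \ref{degreesofmapszerosphericallevel}. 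The sum over subsets $\mathfrak{S}$ factors as a product over $v \in \Upsilon_{F,p}$ of a local sum: at a split $v$ one gets $(q_v+1) + (q_v^2+q_v) = (q_v+1)^2$, and at an inert $v$ one gets $(q_v+1) + (q_v^2-q_v) = q_v^2+1$. Hence the two degrees agree.

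For (2), each $\tilde{\iota}_{{\rm un},\mathfrak{S}} = u_\mathfrak{S} \circ \iota$ is a closed immersion by (the proof of) Lemma \ref{closedimm}, so it remains to show the images of $Y_H(K^0_{{\rm un},H}(\mathfrak{S}))$ and $Y_H(K^0_{{\rm un},H}(\mathfrak{S}'))$ in $Y_G(K_{{\rm un},0}')$ are disjoint for $\mathfrak{S} \ne \mathfrak{S}'$. As in Proposition \ref{Cartesianwhenr=0}, one identifies the fiber of $\pi_{{\rm un},0}^G$ with $G(\Z_p)/\Gamma^0(p) \cong \overline{N}_G(\mathbb{F}_p)$, realized inside $\prod_{v \in \Upsilon^{\rm s}_{F,p}} (\mathbb{P}^1(\mathbb{F}_v))^2 \times \prod_{v \in \Upsilon^{\rm i}_{F,p}} \mathbb{P}^1(\mathbb{F}_w)$ (this time as an open affine around the point fixed by $B_G$, but the change of chart is harmless). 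One then checks, prime by prime above $p$ using Lemma \ref{lem:open/closed_orbit}, that the $v$-component of the image of $Y_H(K^0_{{\rm un},H}(\mathfrak{S}))$ lands in the closed $\GL_2(\mathbb{F}_v)$-orbit when $v \notin \mathfrak{S}$ and in the open orbit when $v \in \mathfrak{S}$: concretely, the image of $\GL_2(\mathbb{F}_v)$ (resp. of the relevant torus) under $t_v$ lies in the open orbit, while $\iota_v(\GL_2(\mathbb{F}_v))$ fixes the closed orbit. Since distinct $\mathfrak{S}$ differ at some $v$, the $v$-components lie in different $\GL_2(\mathbb{F}_v)$-orbits and the images are disjoint, giving the closed immersion.

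I do not expect a serious obstacle here: the whole argument is a transcription of the $\Gamma_0(p)$-case. The one point requiring a little care is the combinatorial identity in (1) — making sure the local sums at split and inert primes really reassemble $[K_{{\rm un},0}:K_{{\rm un},0}']$ — and, secondarily, noting that one must work in the affine chart of $\mathcal{F}$ corresponding to $\overline{N}_G$ rather than $N_G$ (because the fiber of $\pi_{{\rm un},0}^G$ over the spherical point is indexed by $\Gamma^0(p)$-cosets), but this only changes which standard point $[1:0]^T$ versus $[0:1]^T$ one expands around and does not affect the open/closed dichotomy of Lemma \ref{lem:open/closed_orbit}. For completeness, the decomposition into a chain of Cartesian squares indexed by the primes $\frakp \in \Upsilon_{F,p}$ follows exactly as the analogous statement in Proposition \ref{Cartesianwhenr=0}, replacing $K_0(\mathfrak{S})$ by its spherical analogue $K^0_{{\rm un},H}(\mathfrak{S})$ and using that at each step one adds a single split or inert prime, for which the local degree computation above does the bookkeeping.
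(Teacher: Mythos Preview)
Your approach is essentially identical to the paper's, with one small correction: the fiber $G(\Z_p)/\Gamma^0(p)$ is not $\overline{N}_G(\mathbb{F}_p)$ but the \emph{full} flag variety $G(\mathbb{F}_p)/\overline{B}_G(\mathbb{F}_p) \cong \prod_{v \in \Upsilon^{\rm s}_{F,p}} (\mathbb{P}^1(\mathbb{F}_v))^2 \times \prod_{v \in \Upsilon^{\rm i}_{F,p}} \mathbb{P}^1(\mathbb{F}_w)$, so no affine-chart gymnastics are needed here at all --- the open/closed orbit dichotomy of Lemma~\ref{lem:open/closed_orbit} applies directly to the whole $\mathbb{P}^1$'s, which is actually a simplification over the Iwahori case. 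With that adjustment your argument matches the paper's proof verbatim (the paper also does not state the prime-by-prime decomposition for this proposition, though your remark that it follows analogously is correct).
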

\begin{proof}
We closely follow the proof of Proposition \ref{Cartesianwhenr=0}. By Lemma \ref{degreesofmapszerosphericallevel}, using the binomial theorem we see that the degree of the map $({\rm pr}_{{\rm un},\mathfrak{S}}^H)_\mathfrak{S}$ equals to  \[\sum_{\mathfrak{S}}   [\prod_{\frakp \not \in \mathfrak{S}} (q_\frakp + 1) \cdot \prod_{\frakp \in \mathfrak{S} \cap \Upsilon^{\rm s}_{F,p}} (q_\frakp^2+q_\frakp) \cdot \prod_{\frakp \in \mathfrak{S} \cap \Upsilon^{\rm i}_{F,p}} q_\frakp(q_\frakp-1)  ] = \prod_{\frakp \in \Upsilon^{\rm s}_{F,p}} (q_\frakp+1)^2  \times \prod_{\frakp \in \Upsilon^{\rm i}_{F,p}} (q^2_\frakp+1).   \]
This implies that the degree of the two vertical maps coincide. We are left to show that the upper horizontal map is a closed immersion. Since each $\tilde{\iota}_{{\rm un},\mathfrak{S}}$ is, we are left to show that the images of any the two immersions do not intersect in $Y_G(K_{{\rm un},0}')$.

Note that the fiber of the right vertical map can be identified with
\[
G(\Z_p)/\Gamma^0(p) \cong G(\mathbb{F}_p) / \overline{B}_G(\mathbb{F}_p) \cong \prod_{\frakp \in \Upsilon^{\rm s}_{F,p}} \left( \mathbb{P}^1(\mathbb{F}_\frakp)\times ( \mathbb{P}^1(\mathbb{F}_\frakp) \right)\times \prod_{\frakp \in \Upsilon^{\rm i}_{F,p}}   \mathbb{P}^1(\mathbb{F}_{\mathfrak{P}}),
\]
for $\mathfrak{P}$ the only prime in $E$ above $\frakp$. 
 As in Proposition \ref{Cartesianwhenr=0}, we are thus left to show that the $\frakp$-component of any element in the image of $Y_H(K^0_{{\rm un},H}(\mathfrak{S}))$ is characterized by landing into the closed, resp. open, orbit for $\GL_2(\mathbb{F}_\frakp)$ when $\frakp \not \in \mathfrak{S}$, resp. $\frakp \in \mathfrak{S}$. This follows again from Lemma \ref{lem:open/closed_orbit}: when $\frakp \in \mathfrak{S}$, the image of the $\frakp$-component of $H(\Z_p)$ into the $\frakp$-component of $G(\Z_p)/\Gamma^0(p)$ equals to $t_\frakp(\GL_2(\mathbb{F}_\frakp))$, which realizes the  open orbit. When $\frakp \not \in \mathfrak{S}$, it equals to  $\iota_\mathfrak{p}({\GL_2}(\mathbb{F}_\frakp))$, which is the closed orbit.
Therefore, if $\mathfrak{S},\mathfrak{S}'$ are two distinct subsets, then the images of $\tilde{\iota}_{{\rm un},\mathfrak{S}}$  and $\tilde{\iota}_{{\rm un},\mathfrak{S}'}$ are disjoint.
\end{proof}

\subsection{Moment maps and specialisation}

For $r \geq 1$, let $V_r=K^{(p)} \cdot V_{p,r}$ denote the level subgroup of Definition \ref{def:levelgroups}. Consider a big enough set of primes $S$  containing $p$ and every prime $\ell$ such that $K^{(p)}_\ell$ is not hyperspecial. By abuse of notation, we denote by $Y_G(V_r)$ the integral model of the Shimura variety for $G $ over $\Z[1/S]$. Recall we have fixed a prime $\mathfrak{m}$ of the coefficient field $L$ above $p$. Consider, for any integer $i$, the groups
   \[ H^i_{\rm Iw}(Y_G(V_\infty), \mathcal{O}_{\mathfrak{m}}) := \varprojlim_{r \geq 1} H^i(Y_G(V_r),\mathcal{O}_{\mathfrak{m}}), \] where the inverse limit is taken  with respect to the trace maps associated with the natural degeneracy maps ${\rm pr}_{r+1,r}:Y_G(V_{r+1}) \to Y_G(V_r)$.  
   \begin{remark}\label{remarkonpassingbetweentowers}
   Recall that we have defined $[\eta]_{r+1,r}:Y_G(K_{r+1}) \to Y_G(K_r)$ in Definition \ref{Def:traceetar1r}. Moreover, we have a normalized trace map $\eta^r_\ast$ of $\eta^r: Y_G(K_{r}) \to Y_G(V_r)$ defined as in Lemma \ref{Lem:traceeta} as the composition of $\eta_\flat^r$ with the pushforward of $\eta^r$.  Note that we have an equality \[ \eta^r_\ast \circ {\rm Tr}_{[\eta]_{r+1,r}} = {\rm pr}_{r+1,r, \ast} \circ \eta^{r+1}_\ast,    \]
   induced by the commutative diagram \[ \xymatrix{Y_G(K_{r+1}) \ar[rr]^-{\eta^{r+1}} \ar[d]_-{[\eta]_{r+1,r}} & &  Y_G(V_{r+1}) \ar[d]^-{{\rm pr}_{r+1,r}} \\ Y_G(K_r)\ar[rr]^-{\eta^{r}} & & Y_G(V_r).  }   \]     
   \end{remark}

We let $\overline{\mathcal{E}}$ be the closure of the totally positive units in $T_G(\Z_p)$, and ${\mathcal{E}}(p^r)$ the image  of the totally positive units in $T_G(\Z_p/p^r)$. We write \begin{align}\label{eqdefIwalg}
\Lambda_{G/H}:=\mathcal{O}_{\mathfrak{m}}\llbracket T_G(\Z_p)/\overline{\mathcal{E}}L_G(\Z_p)\rrbracket \simeq \varprojlim_r \Lambda_{G/H,r}\, \left (:=\mathcal{O}_{\mathfrak{m}}/\mathfrak{m}^r[T_G(\Z_p/p^r)/{\mathcal{E}}(p^r) L_G(\Z_p/p^r)] \right),\end{align} where $L_G = \{ {\matrix {\alpha} { } {} {\tau(\alpha)}}\,:\, \alpha \in {\rm Res}_{\mathcal{O}_E/\Z} \mathbf{G}_{\rm m} \}$. \\

The Iwasawa cohomology groups defined above have naturally a continuous action of the quotient $T_G(\Z_p)/ \overline{\mathcal{E}} L_G(\Z_p)$, which makes them into Iwasawa modules over the Iwasawa algebra $\Lambda_{G/H}$. 

Notice that the kernel of the map $T_H(\Z_p) \rightarrow T_G(\Z_p)/L_G(\Z_p)$ is exactly the center of $H(\Z_p)$ and thus we have an action of the Iwasawa algebra $\Lambda_{G/\mathcal{O}_\mathfrak{m}\llbracket T_H(\Z_p)/Z_H(\Z_p)\rrbracket}$ on these cohomology groups. The  Hecke eigensystems that appear in $H^i_{\rm Iw}(Y_G(V_\infty), \mathcal{O}_{\mathfrak{m}}) $ correspond to automorphic representations with trivial central character, ({\it i.e.} the Nebentypus is trivial and $m=0$).

We need to construct specialization maps from $H^i_{\rm Iw}(Y_G(V_\infty), \mathcal{O}_{\mathfrak{m}})$ to $H^i(Y_G(V_r),\mathscr{V}^{\underline{k},0}_{\mathcal{O}_{\mathfrak{m}}})$, which will be used to relate the universal Hirzebruch--Zagier cycle with (the ordinary projection of) the cycle studied by Getz--Goresky.
Following \cite{LRZ}, the key input is a vector in $\mathscr{V}^{\underline{k},0}_{\mathcal{O}_{\mathfrak{m}}}$ which is invariant under $V_r$ modulo $\mathfrak{m}^r$, and that has the same ordinary projection as the $u$-translate of the $H(\mathcal{O}_{\mathfrak{m}})$-invariant vector $\Delta^{\underline{k}}$. This motivates the switch from the tower of level subgroups $(K_r)_r$ to $(V_r)_r$ instead.

Let $v_{\underline{k}}^{\rm hw}$ be the highest weight vector for $V^{\underline{k},0}$. As $V_{\mathcal{O}_\mathfrak{m}}^{\underline{k},0}$ is admissible, its intersection with the highest weight subspace of $V^{\underline{k},0}$ is $ \mathcal{O}_\mathfrak{m} \cdot v_{\underline{k}}^{\rm hw}$. For any $r \geq 1$, denote by $v_{\underline{k},r}^{\rm hw}$ the reduction modulo $\mathfrak{m}^r$.

\begin{lemma} \label{lemmahwvector} Let $\underline{k} \in \Z^{\Sigma_E}$ be such that $k_\sigma=k_{\sigma'}$ for every $\sigma \in \Phi$. For each $r \geq 1$, $v_{\underline{k},r}^{\rm hw} \in H^0(Y_G(V_r), \mathscr{V}^{\underline{k},0}_{\mathcal{O}_\mathfrak{m}/\mathfrak{m}^r})$. Moreover, we have that \[v_{\underline{k},r+1}^{\rm hw} \, [{\rm mod }\,\, p^r] = ({\rm pr}_{r+1,r})^\ast v_{\underline{k},r}^{\rm hw} \in H^0(Y_G(V_{r+1}), \mathscr{V}^{\underline{k},0}_{\mathcal{O}_\mathfrak{m}/\mathfrak{m}^r}).\]
\end{lemma}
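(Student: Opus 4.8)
The plan is to verify the two claims of Lemma \ref{lemmahwvector} directly, using the explicit description of the action of $G(\Z_p)$ (and of the level subgroups $V_r$) on the lattice $V^{\underline k,0}_{\mathcal O_\mathfrak m}$ together with the way local systems on $Y_G(V_r)$ are defined. First I would recall that, for a neat compact open $V\subseteq G(\A_f)$ and a $V$-module $M$, a global section of the associated local system $\underline M$ on $Y_G(V)$ is the same as a $V$-invariant element of $M$ (more precisely, an element of $M$ fixed by the image of $V_p$, since $V^{(p)}$ acts trivially on the algebraic representation); when $M$ is only a $V$-module after reduction mod $\mathfrak m^r$, one uses $M/\mathfrak m^r M = V^{\underline k,0}_{\mathcal O_\mathfrak m/\mathfrak m^r}$. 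So the first assertion, $v_{\underline k,r}^{\rm hw}\in H^0(Y_G(V_r),\mathscr V^{\underline k,0}_{\mathcal O_\mathfrak m/\mathfrak m^r})$, reduces to checking that the reduction mod $\mathfrak m^r$ of the highest weight vector $v_{\underline k}^{\rm hw}$ is fixed by $V_r$ modulo $\mathfrak m^r$.

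The key computation is then the following. By Definition \ref{def:levelgroups} and Remark \ref{rmk:factsabouttower}, $V_r = K^{(p)}\cdot\{g\in\Gamma_0(p^{r+1})\colon g\bmod p^r\in Q_G(\Z/p^r)\}$, where $Q_G = L_G\cdot N_G$. Since $v_{\underline k}^{\rm hw}$ spans the highest weight line, $B_G(\Z_p)$ acts on it through a character, $N_G$ acts trivially, and $T_G$ acts by $\mathrm{diag}(a,d)\mapsto \prod_\sigma (a_\sigma)^{k_\sigma}$ (using $m=0$ and $k_\sigma=k_{\sigma'}$, together with the way $\det_\sigma^{-k_\sigma/2}$ contributes); on the subtorus $L_G$, where $a=\alpha$ and $d=\tau(\alpha)$, this character is $\alpha\mapsto\prod_{\sigma\in\Phi}(\alpha_\sigma)^{k_\sigma}(\tau(\alpha)_\sigma)^{k_{\sigma'}}=\Norm_{E/F}(\alpha)^{\text{(weight)}}$, which is trivial on the units that are congruent to $1$, and in general lies in $\mathcal O_\mathfrak m^\times$. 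More importantly, what we need is only invariance mod $\mathfrak m^r$: if $g\in V_r$ then $g\equiv (\text{an element of }Q_G)\pmod{p^r}$, hence $g\cdot v_{\underline k}^{\rm hw}\equiv \lambda(g)\,v_{\underline k}^{\rm hw}\pmod{\mathfrak m^r}$ where $\lambda$ is the above character of $L_G$ evaluated on the $L_G$-part of $g\bmod p^r$; and on the closure $\overline{\mathcal E}$ of the totally positive units this character is trivial, while $L_G(\Z_p)$ itself appears only through $\overline{\mathcal E}L_G(\Z_p)$ — concretely one checks $\lambda$ restricted to the relevant mod-$p^r$ torus $\mathcal E(p^r)L_G(\Z_p/p^r)$ is trivial, which is exactly the compatibility built into the choice of $\Lambda_{G/H}$ and the normalisation that forces $m=0$, $k_\sigma=k_{\sigma'}$. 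So $g\cdot v_{\underline k,r}^{\rm hw}=v_{\underline k,r}^{\rm hw}$ in $V^{\underline k,0}_{\mathcal O_\mathfrak m/\mathfrak m^r}$, giving the first claim.

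For the second claim, the pullback map $({\rm pr}_{r+1,r})^\ast$ on $H^0$ with coefficients in a local system is, under the identification of $H^0$ with invariants, simply the inclusion of $V_r$-invariants into $V_{r+1}$-invariants (both viewed inside the same module, here $V^{\underline k,0}_{\mathcal O_\mathfrak m/\mathfrak m^r}$ since $\mathcal O_\mathfrak m/\mathfrak m^r$ is a common quotient) — indeed ${\rm pr}_{r+1,r}$ is a covering and the coefficient module does not change, so pullback is the identity on the underlying vector. Therefore $({\rm pr}_{r+1,r})^\ast v_{\underline k,r}^{\rm hw}$ is literally $v_{\underline k}^{\rm hw}\bmod\mathfrak m^r$ regarded as a $V_{r+1}$-invariant section, which is exactly $v_{\underline k,r+1}^{\rm hw}\bmod p^r$. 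The only point that needs a word is that $v_{\underline k,r+1}^{\rm hw}$ reduces mod $\mathfrak m^r$ (i.e. mod $p^r$, in the notation of the statement, since $\mathfrak m\mid p$ and the lattice is defined over $\mathcal O_\mathfrak m$) to the same vector, which is immediate as both are reductions of the fixed integral generator $v_{\underline k}^{\rm hw}$ of the highest weight line in the admissible lattice.

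I expect the main obstacle to be purely bookkeeping rather than conceptual: carefully matching the definition of the local systems $\mathscr V^{\underline k,0}_{\mathcal O_\mathfrak m/\mathfrak m^r}$ (which, following \cite[\S 6.8]{GetzGore} and the passage to mod-$\mathfrak m^r$ coefficients as in \cite{LRZ}, requires $V_r$ to act on the mod-$\mathfrak m^r$ lattice) with the explicit description of $V_r$ from Definition \ref{def:levelgroups}, and checking that the character by which $L_G(\Z_p/p^r)$ acts on $v_{\underline k}^{\rm hw}$ is trivial — this is where the hypotheses $m=0$ and $k_\sigma=k_{\sigma'}$ for all $\sigma\in\Phi$ (equivalently, the triviality on $\mathcal E(p^r)L_G(\Z_p/p^r)$ that is the defining feature of $\Lambda_{G/H}$) are used, and it is the same mechanism that makes the Iwasawa cohomology a $\Lambda_{G/H}$-module. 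Once that triviality is in hand, both statements follow formally from the dictionary between $H^0$ of a local system and invariant vectors, with trace/pullback maps corresponding to the evident inclusions.
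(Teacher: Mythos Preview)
Your overall strategy matches the paper's: reduce the first claim to checking that $V_r$, whose image in $G(\Z/p^r)$ lies in $Q_G=L_G\cdot N_G$, fixes the highest weight vector modulo $\mathfrak m^r$; and observe that the second claim is the tautology that pullback on $H^0$ is restriction of the representation. The second part is fine.

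The gap is in the key computation of the $L_G$-character on $v_{\underline k}^{\rm hw}$. Your formula for the $T_G$-action, ${\rm diag}(a,d)\mapsto\prod_\sigma a_\sigma^{k_\sigma}$, forgets the twist by $\det_\sigma^{-k_\sigma/2}$; the correct character is $\prod_{\sigma}a_\sigma^{k_\sigma/2}d_\sigma^{-k_\sigma/2}$ (for $m=0$). Restricting to $L_G=\{{\rm diag}(\alpha,\tau(\alpha))\}$ and using $(\tau(\alpha))_\sigma=\alpha_{\sigma'}$, the contributions of $\sigma$ and $\sigma'$ combine to $\alpha_\sigma^{(k_\sigma-k_{\sigma'})/2}\alpha_{\sigma'}^{(k_{\sigma'}-k_\sigma)/2}$, which is identically $1$ under the hypothesis $k_\sigma=k_{\sigma'}$. (For general $m$ the same computation gives $\mathrm N_{E/\Q}(\alpha)^m$, as the paper states.) So $L_G$ acts \emph{trivially} on $v_{\underline k}^{\rm hw}$, and the first claim follows immediately. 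Your appeal to triviality on $\overline{\mathcal E}$, to the structure of $\Lambda_{G/H}$, or to ``units congruent to $1$'' is both unnecessary and circular: the $\Lambda_{G/H}$-module structure of the Iwasawa cohomology is a consequence of this computation, not an input to it. Fix the character calculation and the proof becomes the two-line argument in the paper.
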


\begin{proof}
By definition, the image of $V_r$ in $G(\Z/p^r\Z)$ sits inside $L_G(\Z/p^r\Z) \cdot N_G(\Z/p^r\Z)$. Being $v_{\underline{k},r}^{\rm hw}$ the highest weight vector, $N_G(\Z/p^r\Z)$ acts trivially on it. Moreover, $L_G(\Z/p^r\Z)$ acts on the highest weight vector $v_{\underline{k},r}^{\rm hw}$ of $\mathscr{V}^{\underline{k},m}_{\mathcal{O}_\mathfrak{m}/\mathfrak{m}^r}$ exactly via $\mathrm{N}^m_{E/\Q}$ [mod $\mathfrak{m}^r$]. Thus, it defines an element of $H^0(Y_G(V_r), \mathscr{V}^{\underline{k},0}_{\mathcal{O}_\mathfrak{m}/\mathfrak{m}^r})$.

For the second statement, this simply follows from reducing modulo $\mathfrak{m}^r$ the vector $v_{\underline{k},r+1}^{\rm hw}$, which is naturally $({\rm pr}_{r+1,r})^\ast v_{\underline{k},r}^{\rm hw}$, where $({\rm pr}_{r+1,r})^\ast$ is just the restriction of the $V_r$-representation $V^{\underline{k},0}_{\mathcal{O}_\mathfrak{m}/\mathfrak{m}^r}$ to $V_{r+1}$.
\end{proof}

Thanks to the lemma, we can make the following (\textit{cf}. \cite[Definition 5.1.1]{LRZ}).

\begin{definition}\label{def:mom} Let $\underline{k} \in \Z^{\Sigma_E}$ be such that $k_\sigma=k_{\sigma'}$ for every $\sigma \in \Phi$. For $\pmb{r} = (r_\frakp)_\frakp \in \Z_{\geq 1}^{|\Upsilon_{F,p}|}$, we define the moment map \[  \mathrm{mom}^{\underline{k}}_{\pmb{r}}: H^i_{\rm Iw}(Y_G(V_\infty), \mathcal{O}_{\mathfrak{m}}) \rightarrow H^i(Y_G(V_{\pmb{r}}),\mathscr{V}^{\underline{k},0}_{\mathcal{O}_{\mathfrak{m}}}) \]\index{$\mathrm{mom}^{\underline{k}}_{\pmb{r}}$} in the following way:
 \begin{align*} 
  \mathrm{mom}^{\underline{k}}_{\pmb{r}}: H^i_{\rm Iw}(Y_G(V_\infty), \mathcal{O}_{\mathfrak{m}}) &\xrightarrow{} \varprojlim_{s } H^i(Y_G(V_s),\mathcal{O}_\mathfrak{m}/{\mathfrak{m}^s}) \\
  &\to \varprojlim_{s} H^i(Y_G(V_s),\mathscr{V}^{\underline{k},0}_{\mathcal{O}_\mathfrak{m}/\mathfrak{m}^s}) \\
  &\to \varprojlim_{s} H^i(Y_G(V_{\pmb{r}}),\mathscr{V}^{\underline{k},0}_{\mathcal{O}_\mathfrak{m}/\mathfrak{m}^s}) \\
   &\xrightarrow{\sim} H^i(Y_G(V_{\pmb{r}}),\mathscr{V}^{\underline{k},0}_{\mathcal{O}_{\mathfrak{m}}})
 \end{align*}
 
\noindent where the first map is the reduction modulo $\mathfrak{m}^s$  in the projective system, the second map is cup product  $\bullet \cup v_{\underline{k},s}^{\rm hw}$ $\in H^0(Y_G(V_s), \mathscr{V}^{\underline{k},0}_{\mathcal{O}_\mathfrak{m}/\mathfrak{m}^s})$, the third one is the projection to level $\pmb{r}$, and the last one is an isomorphism.
\end{definition}
 \noindent When $\pmb{r}=(r, \dots, r)$, we denote the moment map by $\mathrm{mom}^{\underline{k}}_{r}$\index{$\mathrm{mom}^{\underline{k}}_{r}$}.

\subsection{Control theorem for Iwasawa cohomology}

We show that the ordinary part of our Iwasawa cohomology is of finite type over the Iwasawa algebra $\Lambda_{G/H}$, and a control theorem. We note that similar results have been obtained in \cite{ShethControl}.

\begin{lemma}\label{lem:Tplowerlevel}
For $r \geq 1$, we have a commutative diagram \[ \xymatrix{ H^i(Y_G(V_{r+1}),\mathscr{V}^{\underline{k},0}_{\mathcal{O}_\mathfrak{m}}) \ar[rr]^{T_\eta} \ar[d]_{{\rm pr}_{r+1,r, \ast}} & & H^i(Y_G(V_{r+1}),\mathscr{V}^{\underline{k},0}_{\mathcal{O}_\mathfrak{m}}) \ar[d]^{{\rm pr}_{r+1,r, \ast}}  \\ \! H^i(Y_G(V_r),\mathscr{V}^{\underline{k},0}_{\mathcal{O}_\mathfrak{m}}) \ar[rr]^{T_\eta} \ar[urr] & & H^i(Y_G(V_r),\mathscr{V}^{\underline{k},0}_{\mathcal{O}_\mathfrak{m}}),  }\]
where the diagonal map is the natural restriction map induced by the pullback of the projection $Y_G(V_{r+1}) \xrightarrow{{\rm pr}} Y_G(\eta^{-r-1} K_{r} \eta^{r+1}) \xrightarrow{\eta^{-1}}  Y_G(V_{r})$, which is normalized integrally as in Lemma \ref{Lem:traceeta}. 
\end{lemma}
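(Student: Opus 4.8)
The plan is to deduce all the commutativities in the diagram by unravelling the definitions of $T_\eta$ (Definition \ref{definitionTeta}, built from the normalized trace ${\rm Tr}_\eta$ of Lemma \ref{Lem:traceeta}) and of the diagonal map $\delta$ into honest maps of Hilbert modular varieties, and then reading the two triangle identities --- $T_\eta = {\rm pr}_{r+1,r,*}\circ\delta$ on $H^i(Y_G(V_r))$ and $T_\eta = \delta\circ{\rm pr}_{r+1,r,*}$ on $H^i(Y_G(V_{r+1}))$, the outer square following formally --- off a single commuting diagram of such varieties together with the compatibility of the trace maps with it.

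First I would record the relevant group-theoretic inclusions at $p$: from $K_{r+1}\subseteq K_r'=K_r\cap\eta K_r\eta^{-1}\subseteq K_r$ (Remark \ref{rmk:factsabouttower}), conjugation by $\eta^{\pm(r+1)}$ gives $V_{r+1}\subseteq V_r$, $V_{r+1}\subseteq\eta^{-1}V_r\eta=\eta^{-r-1}K_r\eta^{r+1}$ and $V_{r+1}\subseteq V_r'$, where $V_r'=\eta^{-r-1}K_r'\eta^{r+1}$ is the correspondence group through which $T_\eta=[V_r\eta^{-1}V_r]$ is realized. Since right-multiplication by $\eta^{-1}$ gives an isomorphism $Y_G(\eta^{-1}V_r\eta)\xrightarrow{\sim}Y_G(V_r)$ which commutes with the natural degeneracy maps, the composite $Y_G(V_{r+1})\xrightarrow{{\rm pr}}Y_G(\eta^{-r-1}K_r\eta^{r+1})\xrightarrow{\eta^{-1}}Y_G(V_r)$ defining $\delta$ and the degeneracy ${\rm pr}_{r+1,r}\colon Y_G(V_{r+1})\to Y_G(V_r)$ both factor through $Y_G(V_r')$, by the same first map $q\colon Y_G(V_{r+1})\to Y_G(V_r')$, composed respectively with the $\eta$-shifted leg $\rho_2$ and the degeneracy leg $\rho_1$ of the Hecke correspondence defining $T_\eta$ in the $V_r'$-model. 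Granting this, $T_\eta={\rm pr}_{r+1,r,*}\circ\delta$ follows by: collapsing the common factor via $q_*\circ q^*=(\deg q)\cdot\mathrm{id}$ (using that $\mathscr{V}^{\underline{k},0}_{\mathcal{O}_{\mathfrak{m}}}$ pulls back to itself along $q$); observing that $\deg q=1$, i.e.\ that $q$ is an isomorphism, because $V_{r+1}$ and $V_r'$ differ only in their components at the inert primes above $p$, and there the discrepancy lies in the centre $Z_G$, which acts trivially on $Y_G$ --- this is the manifestation here of the observation, used throughout \S\ref{Towerplusmoments}, that our $p$-adic tower has the stable intersection with $Z_G$ needed to circumvent the failure of Milne's axiom SV5 (Lemma \ref{verticaldeg}, Proposition \ref{Cartesian}); and finally identifying $\rho_{1,*}\circ(\eta_\flat)\circ\rho_2^{*}$ with $T_\eta$, which is its definition in the $V_r'$-model. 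The second triangle is obtained in the same way once one notes that $T_\eta$ at level $V_{r+1}$ also factors through $Y_G(V_r')$; this is precisely the manipulation by which the relation in Remark \ref{remarkonpassingbetweentowers} was obtained, now with the roles of the $K$- and $V$-towers (and of ${\rm pr}_{r+1,r}$ and $[\eta]_{r+1,r}$) interchanged, and the compatibility of the trace maps with the relevant Cartesian squares is the one already isolated in Proposition \ref{Cartesian}.

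I expect the step that is not purely formal --- and the likely main obstacle --- to be the verification that $q$ is an isomorphism together with the matching of integral normalizations at the inert primes: at such a prime $L_G$ is a proper subtorus of the diagonal torus, so that, as an index of abstract groups, $[V_r':V_{r+1}]$ is a non-trivial power of $p$, and one has to check that this excess is purely central, hence disappears on the Shimura variety, and that no stray power of $p$ is introduced --- bearing in mind that $T_\eta$ and $\delta$ are both normalized through the optimally chosen morphism $p^{\sum_\sigma(k_\sigma+m)/2}\cdot\eta^{-1}=\eta_\flat$ of Lemma \ref{Lem:traceeta} (in the componentwise normalization of Remark \ref{rmk:normofalletap}). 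At the split primes above $p$ one has $V_{r+1}=V_r'$ outright, so the entire subtlety is concentrated at the inert primes and is dealt with exactly as in the degree computations of Lemma \ref{degreesofmapszerolevel} and Proposition \ref{Cartesian}.
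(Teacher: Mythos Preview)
The paper's proof is entirely different and much more direct: it simply records that this is the classical fact, going back to Shimura, that $U_p$ lowers the level, and refers to Hida's argument in \cite[(8.8)]{HidaAnnals88} --- one writes out the explicit single-coset decomposition of $T_\eta$ and the explicit coset description of the trace ${\rm pr}_{r+1,r,*}$ and checks they agree. No factorization through an intermediate correspondence variety is invoked.

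Your conceptual route has a genuine gap at the step where you assert that $q\colon Y_G(V_{r+1})\to Y_G(V_r')$ is an isomorphism. Unwinding Definition~\ref{def:levelgroups}, one finds $V_r'=V_r\cap\eta^{-1}V_r\eta$ has $p$-component consisting of $g=\left(\begin{smallmatrix}a&b\\c&d\end{smallmatrix}\right)$ with $p^{r+2}\mid c$ and $d\equiv\tau(a)\bmod p^{r}$, whereas $V_{r+1}$ has the same shape but with $d\equiv\tau(a)\bmod p^{r+1}$. At a \emph{split} prime $v$ the condition $d\equiv\tau(a)$ reads as the pair $d_1\equiv a_2$, $d_2\equiv a_1$, and tightening these from $\bmod\,\mathfrak{p}^r$ to $\bmod\,\mathfrak{p}^{r+1}$ already gives $[V_r':V_{r+1}]_v=q_v^2$, contradicting your claim that $V_{r+1}=V_r'$ at split primes. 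At an inert prime the index is likewise $q_v^2$, while the contribution from the centre (the single condition $z\equiv\tau(z)$ being tightened) is only $q_v$ locally, and via the global intersection $Z_G(\Q)\cap V_r'$ it is controlled by units $\alpha\in\mathcal{O}_E^\times$ with $\alpha\equiv\tau(\alpha)\bmod p^r$, which for $r$ large are forced into $\mathcal{O}_F^\times$ and contribute nothing. Consequently $\deg q>1$, and your collapsing step $q_*q^*=(\deg q)\cdot\mathrm{id}$ yields ${\rm pr}_{r+1,r,*}\circ\delta=(\deg q)\cdot T_\eta$ rather than $T_\eta$. The intuition you are importing is correct when $L_G=T_G$ (e.g.\ $\GL_2/\Q$, where indeed $V_{r+1}=V_r'$ and the argument goes through); here $L_G\subsetneq T_G$ at every prime above $p$, split or inert, and this is exactly what breaks the isomorphism claim. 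The paper's explicit coset comparison sidesteps this issue entirely.
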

\begin{proof}
This lemma is the well known fact (already known to Shimura) that $T_p$ lowers the level at $p$. One follows essentially the proof of \cite[(8.8)]{HidaAnnals88}: one calculates the explicit coset decomposition of $T_\eta$ and the explicit description of the trace of ${\rm pr}_{r+1,r}$ to check they are the same.
\end{proof}

\begin{lemma}\label{lem:ordinaryIwasawaCoho}
 There exists an ordinary projector $e^{{\rm n.ord}}$ on $H^i(Y_G(U), \mathscr{V}^{\underline{k},0}_{\mathcal{O}_\mathfrak{m}})$
such that

\[
e^{\rm n.ord} H^i_{\rm Iw}(Y_G(V_\infty), \mathcal{O}_{\mathfrak{m}})=\varprojlim_r e^{\rm n.ord}H^i(Y_G(V_r),\mathcal{O}_{\mathfrak{m}}).
\]
\end{lemma}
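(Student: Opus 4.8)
The plan is to follow the standard construction of Hida's ordinary projector and then check that it is compatible with the inverse limit defining Iwasawa cohomology. First I would fix a level $r \geq 1$ and an integer $i$, and consider the Hecke operator $T_\eta$ acting on the finitely generated $\mathcal{O}_\mathfrak{m}$-module $H^i(Y_G(V_r),\mathscr{V}^{\underline{k},0}_{\mathcal{O}_\mathfrak{m}})$. Since $\mathcal{O}_\mathfrak{m}$ is a complete local noetherian ring and the module is finitely generated, the limit $e^{\mathrm{n.ord}} := \lim_{n\to\infty} T_\eta^{n!}$ exists as an idempotent endomorphism (the usual argument: the sequence of images and kernels of $T_\eta^{n!}$ stabilizes), giving the canonical decomposition into the part where $T_\eta$ acts invertibly (the ordinary part) and the part where it acts topologically nilpotently. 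Because the construction of Lemma \ref{Lem:traceeta} and Remark \ref{rem:normalizationTp} normalizes $T_\eta$ so that it acts as $1$ on the highest weight subspace and by a positive power of $p$ elsewhere, the reduction of $T_\eta$ mod $\mathfrak{m}$ on $H^i(Y_G(V_r),\mathscr{V}^{\underline{k},0}_{\mathcal{O}_\mathfrak{m}/\mathfrak{m}})$ agrees with the one acting on $H^i(Y_G(V_r),\mathcal{O}_\mathfrak{m}/\mathfrak{m})$ after twisting by the highest weight vector, so the projector is independent of the weight in a suitable sense; in particular it is defined on $H^i(Y_G(V_r),\mathcal{O}_\mathfrak{m})$ as well.

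The key point is then to commute $e^{\mathrm{n.ord}}$ with the inverse limit. By Lemma \ref{lem:Tplowerlevel}, the operator $T_\eta$ commutes with the trace maps ${\rm pr}_{r+1,r,\ast}$ used to define $H^i_{\rm Iw}(Y_G(V_\infty),\mathcal{O}_\mathfrak{m}) = \varprojlim_r H^i(Y_G(V_r),\mathcal{O}_\mathfrak{m})$. Hence each $T_\eta^{n!}$ commutes with these transition maps, and since the ordinary projector at each finite level is a (termwise) limit of the $T_\eta^{n!}$, the family $(e^{\mathrm{n.ord}})_r$ is compatible with the transition maps and induces an endomorphism $e^{\mathrm{n.ord}}$ of the inverse limit. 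The identity $e^{\mathrm{n.ord}} H^i_{\rm Iw}(Y_G(V_\infty),\mathcal{O}_\mathfrak{m}) = \varprojlim_r e^{\mathrm{n.ord}} H^i(Y_G(V_r),\mathcal{O}_\mathfrak{m})$ follows because inverse limits commute with the kernel/image decomposition attached to an idempotent: applying $e^{\mathrm{n.ord}}$ to a compatible system of classes yields a compatible system of ordinary classes, and conversely. One has to be slightly careful that the finite-level modules are finitely generated over $\mathcal{O}_\mathfrak{m}$ (so that the projector genuinely exists at each level and the limit over $n$ stabilizes uniformly enough), which holds because $Y_G(V_r)$ is a variety of finite type and its Betti cohomology with coefficients in a finitely generated $\mathcal{O}_\mathfrak{m}$-module is finitely generated.

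The main obstacle I anticipate is not the existence of the projector at finite level — that is completely standard — but rather making the normalizations line up so that the \emph{same} operator $T_\eta$ (with the integral normalization from Remark \ref{rem:normalizationTp}) acts on the constant sheaf $\mathcal{O}_\mathfrak{m}$ at all levels $V_r$ and induces the idempotent $e^{\mathrm{n.ord}}$ appearing on both sides of the claimed equality; this is exactly the content of Lemma \ref{lem:Tplowerlevel} together with Lemma \ref{Lem:traceeta}, so I would invoke those and then simply observe that taking $\varprojlim_r$ of the split exact sequences
\[
0 \to e^{\mathrm{n.ord}} H^i(Y_G(V_r),\mathcal{O}_\mathfrak{m}) \to H^i(Y_G(V_r),\mathcal{O}_\mathfrak{m}) \to (1-e^{\mathrm{n.ord}}) H^i(Y_G(V_r),\mathcal{O}_\mathfrak{m}) \to 0
\]
preserves exactness (inverse limits are left exact, and split surjections stay split), yielding the desired direct sum decomposition of $H^i_{\rm Iw}(Y_G(V_\infty),\mathcal{O}_\mathfrak{m})$ and identifying its ordinary summand with $\varprojlim_r e^{\mathrm{n.ord}} H^i(Y_G(V_r),\mathcal{O}_\mathfrak{m})$.
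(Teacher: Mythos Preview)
Your proposal is correct and follows essentially the same approach as the paper: construct $e^{\mathrm{n.ord}} = \lim_n T_\eta^{n!}$ at each finite level using the optimal integral normalization of $T_\eta$ (Remark \ref{rem:normalizationTp}), then pass to the inverse limit using the compatibility of $T_\eta$ with the trace maps (Lemma \ref{lem:Tplowerlevel}). The paper's proof is much terser---it simply cites Hida for the finite-level idempotent and declares the Iwasawa-level idempotent to be the inverse limit of these---whereas you spell out the split exact sequence argument for why the inverse limit commutes with the idempotent decomposition, which is a helpful elaboration but not a different idea.
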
\begin{proof}
The existence of the idempotent on $H^i(Y_G(V_r), \mathscr{V}^{\underline{k},0}_{\mathcal{O}_\mathfrak{m}})$ is well-known, see for example \cite[\S 1.11]{Hida1993Duke} that is, one considers the direct limit of $T_{\eta}^{n!}$, for $n$ going to infinity. The key fact is that $T_{\eta}$ is optimally normalized to preserve the integral structure, see Remark \ref{rem:normalizationTp}.

The idempotent on $H^i_{\rm Iw}(Y_G(V_\infty), \mathcal{O}_{\mathfrak{m}})$ is defined as the inverse limit over $r$ of the idempotent at each level. 
\end{proof} 

\noindent Let $K_0(p^r)$ be the congruence subgroup given by $K^{(p)} \cdot \Gamma_0(p^r)$. Observe that, by Remark \ref{rmk:factsabouttower}, $K_0(p^2) = V_0'$.  Consider
the sheaf on $Y_G(K_0(p^2))$ given by $D_r := s_{r,\ast}\mathcal{O}_{\mathfrak{m}}$, where $s_r: Y_G(V_r) \to Y_G(K_0(p^2))$ denotes the natural projection map. Observe that $D_r$ splits in a unipotent part $\Lambda_{\overline{N}_{G,r}}$ and a toric part $\Lambda_{G/H,r}$; note that this decomposition is $K_0(p^{r+1})$-equivariant but not $K_0(p^2)$-equivariant. Here, $\overline{N}_{G,r}$ denotes the quotient of $K_0(p^{2}) \cap \overline{N}_G(\zp)$ with $K_0(p^{r+1}) \cap \overline{N}_G(\zp)$.  

We then define \[ H^{i}(Y_G(K_0(p^2)), D) := \varprojlim_r  H^i(Y_G(K_0(p^2)),D_{r}).\]
We have an isomorphism $D \simeq \mathcal{O}_\mathfrak{m}\llbracket \overline{N}_G\rrbracket \hat{\otimes}_{\mathcal{O}_\mathfrak{m}}   \Lambda_{G/H}$, where  $\mathcal{O}_\mathfrak{m}\llbracket \overline{N}_G\rrbracket = \varprojlim_r \Lambda_{\overline{N}_{G,r}}$.

\begin{proposition}\label{prop:IwasawaOrdinarycoho}
 We have an isomorphism of $\Lambda_{G/H}$-modules
 \[
 e^{\rm n.ord} H^i_{\rm Iw}(Y_G(V_\infty), \mathcal{O}_{\mathfrak{m}})  \cong e^{\rm n.ord} H^{i}(Y_G(K_0(p^2)),D) =: e^{\rm n.ord} H^{i}(Y_G(K_0(p^2)),\Lambda_{G/H}).
 \]
\end{proposition}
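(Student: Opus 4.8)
The plan is to reduce the statement to a limit of control isomorphisms at finite level, exactly as in the classical ordinary Hida theory for Hilbert modular varieties, using that the normalised operator $T_\eta$ lowers the level at $p$ (Lemma \ref{lem:Tplowerlevel}). First I would unwind both sides. On the left, by Lemma \ref{lem:ordinaryIwasawaCoho} we have
\[
 e^{\rm n.ord} H^i_{\rm Iw}(Y_G(V_\infty), \mathcal{O}_{\mathfrak{m}})=\varprojlim_r e^{\rm n.ord} H^i(Y_G(V_r),\mathcal{O}_{\mathfrak{m}}),
\]
the transition maps being the (integrally normalised) trace maps ${\rm pr}_{r+1,r,\ast}$. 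On the right, by definition $e^{\rm n.ord} H^{i}(Y_G(V_1),\Lambda_{G/H}) = \varprojlim_r e^{\rm n.ord} H^i(Y_G(K_0(p^2)),\Lambda_{G/H,r})$, where $\Lambda_{G/H,r}=\mathcal{O}_\mathfrak{m}/\mathfrak{m}^r[T_G(\Z_p/p^r)/\mathcal{E}(p^r)L_G(\Z_p/p^r)]$. So it suffices to produce, compatibly in $r$, an isomorphism of $\mathcal{O}_\mathfrak{m}$-modules
\[
 e^{\rm n.ord} H^i(Y_G(V_r),\mathcal{O}_{\mathfrak{m}/\mathfrak{m}^r}) \;\cong\; e^{\rm n.ord} H^i(Y_G(K_0(p^2)),\Lambda_{G/H,r}),
\]
and then pass to the limit.

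The key step is the standard Hida-type observation that, after applying the ordinary idempotent, cohomology with $\Lambda_{G/H,r}$-coefficients at the fixed Iwahori-type level $K_0(p^2)$ ``is'' cohomology with trivial coefficients at the deeper level $V_r$. Concretely: the groups $V_r$ for varying $r$ all share the same component away from $p$ and the same intersection with the center $Z_G(\Q)$ (this is the point emphasized in the introduction to the section and used via Lemma \ref{verticaldeg}); the tower $(V_r)_r$ has Galois group over $V_1$ (or rather over the common $K_0(p^2)$-type level after the $\eta$-twist) a quotient of $T_G(\Z_p)$ by $L_G(\Z_p)$ and the closure of the totally positive units $\overline{\mathcal{E}}$, because the level at $p$ of $V_r$ is characterised modulo $p^r$ by lying in $Q_G=L_G\cdot N_G$ (Remark \ref{rmk:factsabouttower}). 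Hence Shapiro's lemma gives
\[
 H^i(Y_G(V_r),\mathcal{O}_{\mathfrak{m}}/\mathfrak{m}^r)\;\cong\; H^i\big(Y_G(K_0(p^2)),\mathcal{O}_{\mathfrak{m}}/\mathfrak{m}^r[T_G(\Z_p/p^r)/\mathcal{E}(p^r)L_G(\Z_p/p^r)]\big)=H^i(Y_G(K_0(p^2)),\Lambda_{G/H,r}),
\]
where on the right the coefficient module is the induced module; one checks this via the usual identification of the local system on $Y_G(K_0(p^2))$ associated to $\mathcal{O}_\mathfrak{m}/\mathfrak{m}^r[V_1/V_r]$ with $\mathrm{pr}_{r,1,\ast}$ of the constant sheaf on $Y_G(V_r)$. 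These isomorphisms are compatible with the transition maps (trace on the Iwasawa side, reduction mod $\mathfrak{m}^r$ combined with the algebra projections $\Lambda_{G/H,r+1}\to\Lambda_{G/H,r}$ on the other), using Remark \ref{remarkonpassingbetweentowers} to match the $\eta$-normalisations.

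Finally I would apply $e^{\rm n.ord}$ to both sides; since the idempotent is defined compatibly at every finite level and commutes with the trace maps and with the Shapiro isomorphisms (as $T_\eta$, the operator defining $e^{\rm n.ord}$, acts on both presentations through the same coset data, cf. Definition \ref{definitionTeta} and Lemma \ref{lem:Tplowerlevel}), the isomorphism survives, and passing to the inverse limit over $r$ yields
\[
 e^{\rm n.ord} H^i_{\rm Iw}(Y_G(V_\infty), \mathcal{O}_{\mathfrak{m}})\;\cong\; e^{\rm n.ord} H^{i}(Y_G(V_1),\Lambda_{G/H}).
\]
The main obstacle I anticipate is \emph{not} the algebra but the bookkeeping: one must check carefully that the level groups $V_r$ genuinely form a Galois tower over the fixed level with group the expected quotient of $T_G(\Z_p)$ — in particular that quotienting by $\overline{\mathcal{E}}L_G(\Z_p)$ (rather than by $L_G(\Z_p)$ alone) is forced precisely because the $V_r$ all contain the same arithmetic units in the center, so that the Shapiro identification lands in $\Lambda_{G/H,r}$ and not a larger algebra. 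This is exactly the place where the failure of axiom SV5 intervenes, and where Lemma \ref{verticaldeg} (degrees of covering maps when $K\cap Z_G(\Q)=K'\cap Z_G(\Q)$) replaces the argument that would otherwise be quoted from \cite{LoefflerSphericalvarieties}. Establishing finite generation of the ordinary part over $\mathcal{O}_\mathfrak{m}\llbracket T_H(\Z_p)/Z_H(\Z_p)\rrbracket$ — needed for the limit to behave well — follows once these control isomorphisms are in place, by the usual Nakayama argument using that $e^{\rm n.ord}H^i(Y_G(V_1),\mathcal{O}_\mathfrak{m})$ is a finitely generated $\mathcal{O}_\mathfrak{m}$-module.
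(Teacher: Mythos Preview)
Your overall architecture is right, but there is a genuine gap in the key Shapiro step. You assert directly that
\[
H^i(Y_G(V_r),\mathcal{O}_\mathfrak{m}/\mathfrak{m}^r)\;\cong\; H^i(Y_G(K_0(p^2)),\Lambda_{G/H,r}),
\]
but Shapiro's lemma would require $V_r$ to be normal in $K_0(p^2)$ with quotient $T_G(\Z_p/p^r)/\mathcal{E}(p^r)L_G(\Z_p/p^r)$, and this fails for $r\ge 2$: the $p$-component of $V_r$ is normal only in $\Gamma_0(p^{r+1})$ (its image modulo $p^r$ lies in $Q_G$, which is normalised by $B_G$ and hence by $\Gamma_0(p^{r+1})$, but an element of $\Gamma_0(p^2)$ need not even conjugate $\Gamma_0(p^{r+1})$ into itself). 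In particular the index $[K_0(p^2):V_r]$ is strictly larger than the order of the finite group underlying $\Lambda_{G/H,r}$, so no induced-module identification is available over the fixed base level $K_0(p^2)$. The ``Galois tower over $K_0(p^2)$'' that you anticipate checking simply does not exist.

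The paper resolves this by splitting the step in two. First, genuine Shapiro from $V_r$ up to $K_0(p^{r+1})$, where normality holds and the quotient is exactly $T_G(\Z_p/p^r)/\mathcal{E}(p^r)L_G(\Z_p/p^r)$ (the $\mathcal{E}(p^r)$ appearing, as you correctly diagnose, because the totally positive units act trivially on the locally symmetric space). Second, a \emph{level-lowering} isomorphism
\[
e^{\rm n.ord}H^i\big(Y_G(K_0(p^{r+1})),\Lambda_{G/H,r}\big)\;\cong\; e^{\rm n.ord}H^i\big(Y_G(K_0(p^2)),\Lambda_{G/H,r}\big),
\]
obtained from a variant of Lemma \ref{lem:Tplowerlevel}. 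This second step is not Shapiro at all: it is false before applying $e^{\rm n.ord}$, and it is precisely here that the ordinary idempotent does real work. You invoke Lemma \ref{lem:Tplowerlevel} only to identify transition maps in the tower; the paper uses it a second and more essential time to descend the base from the moving level $K_0(p^{r+1})$ to the fixed level $K_0(p^2)$.
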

\begin{remark}
    The reason why we call the right hand side $e^{\rm n.ord} H^{i}(Y_G(K_0(p^2)),\Lambda_{G/H})$ is because the ordinary projector $e^{\rm n.ord}$ kills the contribution of the lower unipotent part $\mathcal{O}_\mathfrak{m}\llbracket \overline{N}_G\rrbracket$, see \cite[(2.5a), p.280 ]{Hida1993Duke}.
\end{remark}
\begin{proof}
Recall we  have $H^i_{\rm Iw}(Y_G(V_\infty), \mathcal{O}_{\mathfrak{m}})=\varprojlim_{r \geq 1} H^i(Y_G(V_r),\mathcal{O}_{\mathfrak{m}})$ where the transition maps are essentially given by $T_{\eta}$ by Lemma \ref{lem:Tplowerlevel}. Moreover,

\[
e^{\rm n.ord}H^i(Y_G(V_r),\mathcal{O}_{\mathfrak{m}})=\varprojlim_n e^{\rm n.ord}H^i(Y_G(V_r),\mathcal{O}_{\mathfrak{m}}/\mathfrak{m}^n).\]

Note that, by Remark \ref{rmk:factsabouttower}, $ K_0(p^{r+1})$ contains $V_r$ as a normal subgroup and the quotient is isomorphic to $T_G(\Z_p/p^r)/L_G(\Z_p/p^r)$. In order to use Shapiro lemma, first we note that the cohomology of $H^i(Y_G(V_r),\mathcal{O}_{\mathfrak{m}})$ can be calculated in terms of group cohomology of several congruence subgroups $\overline{\Gamma_i}$ of ${\rm PGL}_2(\mathcal{O}_E)$, see \cite[(6.4)]{HidaAnnals88} (assuming the $\overline{\Gamma_i}$ are torsion free). As ${\mathcal{E}}(p^r)$ acts trivially on the corresponding locally symmetric space, by Shapiro lemma we have
\[
e^{\rm n.ord}H^i(Y_G(V_r),\mathcal{O}_{\mathfrak{m}}/\mathfrak{m}^n)\cong e^{\rm n.ord}H^i(Y_G(K_0(p^{r+1})),\mathcal{O}_{\mathfrak{m}}/\mathfrak{m}^n[T_G(\Z_p/p^r)/{\mathcal{E}}(p^r)L_G(\Z_p/p^r)]).
\]

As applying the ordinary projector kills the contribution by $\Lambda_{\overline{N}_{G,r}}$ \cite[(2.5a)]{Hida1993Duke}, 
the latter in turn is isomorphic to 
\[
e^{\rm n.ord} H^i(Y_G(K_0(p^{r+1})),D_r \otimes_{\mathcal{O}_{\mathfrak{m}}} \mathcal{O}_{\mathfrak{m}}/\mathfrak{m}^n).
\]
By a variation of Lemma \ref{lem:Tplowerlevel} to lower the level at $p$, this is also isomorphic to 
\[
e^{\rm n.ord} H^i(Y_G(K_0(p^2)),D_r \otimes_{\mathcal{O}_{\mathfrak{m}}} \mathcal{O}_{\mathfrak{m}}/\mathfrak{m}^n).
\]
The result follows from this and the definition of the module
$ e^{\rm n.ord} H^{i}(Y_G(K_0(p^2)),\Lambda_{G/H})$.
\end{proof}

We let $\mathcal{H}_{\rm Iw }$ be the (big) Hecke algebra acting on $H^i_{\rm Iw}(Y_G(V_\infty),  \mathcal{O}_{\mathfrak{m}})$, generated by the (spherical) Hecke operators $T_v$, $S_v$ at each place $v$ where $V_r$ is hyperspecial, and by $T_{\eta}$. 
 
\begin{theorem} \label{ControlTheorem}
Let $\underline{k} \in \Z^{\Sigma_E}$ be such that $k_\sigma = k_{\sigma'}$ for all $\sigma \in \Phi$ and let $\mathfrak{m}_f$ be a maximal ideal of the Hecke algebra of $e^{\rm n.ord} H^{2d}(Y_G(V_r),\mathscr{V}^{\underline{k},0}_{\mathcal{O}_{\mathfrak{m}}})$ and suppose that it lifts\footnote{This always holds if $\mathfrak{m}_f$ is a maximal ideal of residual characteristic $0$ supported only in parabolic cohomology - see \cite[Theorem 2.7]{BDJ22}} to an ideal of $\mathcal{H}_{\rm Iw}$. If 
\[
e^{\rm n.ord} H^\bullet (Y_G(V_r),\mathscr{V}^{\underline{k},0}_{\mathcal{O}_{\mathfrak{m}}})_{\mathfrak{m}_f} \cong e^{\rm n.ord} H^{2d}(Y_G(V_r),\mathscr{V}^{\underline{k},0}_{\mathcal{O}_{\mathfrak{m}}})_{\mathfrak{m}_f},
\]
and the cohomology is torsion free, then the specialisation map \[
 e^{\rm n.ord} H^{2d}_{\rm Iw}(Y_G(V_\infty), \mathcal{O}_{\mathfrak{m}}) \otimes_{\Lambda_{G/H}} \kappa_{f} \rightarrow e^{\rm n.ord} H^{2d}(Y_G(V_r),\mathscr{V}^{\underline{k},0}_{\kappa_{f}}),
 \]
induced by the moment map in Definition \ref{def:mom}, is an isomorphism after localisation at $\mathfrak{m}_f$, where $\kappa_{f}$ is the residual field of $\mathfrak{m}_f$.
\end{theorem}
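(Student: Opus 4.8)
The plan is to realise the ordinary Iwasawa cohomology as $H^{2d}$ of a perfect complex of $\Lambda_{G/H}$-modules and then to conclude by a base-change argument, the concentration and torsion-freeness hypotheses serving precisely to kill all higher $\mathrm{Tor}$-terms. First, by Proposition~\ref{prop:IwasawaOrdinarycoho} and the Shapiro-lemma description of $\Lambda_{G/H,r}$-cohomology used in its proof, the ordinary Iwasawa cohomology is computed by $e^{\nord}R\Gamma(Y_G(V_1),\Lambda_{G/H})$; at each finite level this is a direct summand of a complex which, by Shapiro's lemma, computes the $\mathcal O_\mathfrak m/\mathfrak m^n$-cohomology of a finite Galois covering of a neat Hilbert modular variety and is therefore perfect over the relevant finite group algebra, and since the ordinary part of the Iwasawa cohomology is of finite type over $\Lambda_{G/H}$ (Hida's bounded-rank argument), passing to the derived limit yields a perfect complex $C^\bullet$ of $\Lambda_{G/H}$-modules with $H^{2d}(C^\bullet)=e^{\nord}H^{2d}_{\rm Iw}(Y_G(V_\infty),\mathcal O_\mathfrak m)$. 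Since $\mathfrak m_f$ lifts to $\mathcal H_{\rm Iw}$ and lies in the support of the ordinary part ($T_\eta$ is a unit modulo it), localisation at $\mathfrak m_f$ commutes with $e^{\nord}$ and with all of these identifications.

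Next I would make the weight specialisation explicit. The highest weight of $V^{\underline k,0}$ together with $i_p$ defines a continuous character $T_G(\Z_p)\to\mathcal O_\mathfrak m^\times$ which, in view of $m=0$, $k_\sigma\equiv 0\ (\mathrm{mod}\ 2)$ and $k_\sigma=k_{\sigma'}$ for $\sigma\in\Phi$, is trivial on $\overline{\mathcal E}\,L_G(\Z_p)$ --- the very computation that makes $\mathscr V^{\underline k,0}$ descend to $Y_G(V_r)$ and that underlies Lemma~\ref{lemmahwvector} --- hence factors through a surjection $P_{\underline k}\colon\Lambda_{G/H}\twoheadrightarrow\mathcal O_\mathfrak m$ whose kernel $I_{\underline k}$ is generated by a regular sequence (standard: weight specialisation ideals are complete intersections, and $\Lambda_{G/H}/I_{\underline k}\cong\mathcal O_\mathfrak m$). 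The key geometric input is then the control statement at the level of complexes,
\[
C^\bullet\otimes^{\mathbb L}_{\Lambda_{G/H}}\Lambda_{G/H}/I_{\underline k}\ \simeq\ e^{\nord}R\Gamma\!\big(Y_G(V_r),\mathscr V^{\underline k,0}_{\mathcal O_\mathfrak m}\big),
\]
inducing the moment map $\mathrm{mom}^{\underline k}_r$ on $H^{2d}$. This is proved along the lines of \cite{LRZ}: cup product with the highest weight section $v^{\rm hw}_{\underline k,n}$ exhibits $\mathscr V^{\underline k,0}_{\mathcal O_\mathfrak m/\mathfrak m^n}$ as a specialisation of the $\Lambda$-adic coefficient module, one uses the ``fundamental exact sequence'' resolving $\mathscr V^{\underline k,0}$ over $\Lambda_{G/H}$ so that derived and underived specialisation along the regular ideal $I_{\underline k}$ coincide, and one passes from level $V_1$ to level $V_r$ by the level-lowering property of $T_\eta$ already used in the proof of Proposition~\ref{prop:IwasawaOrdinarycoho} (Lemma~\ref{lem:Tplowerlevel} and its variants); in particular this already forces $C^\bullet$ to be perfect over $\Lambda_{G/H}$, a pseudo-coherent complex being perfect as soon as its derived reduction along $I_{\underline k}$ is perfect over $\mathcal O_\mathfrak m$.

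Finally I would conclude by homological algebra. After localising at $\mathfrak m_f$, the hypothesis says $e^{\nord}R\Gamma(Y_G(V_r),\mathscr V^{\underline k,0}_{\mathcal O_\mathfrak m})_{\mathfrak m_f}$ has cohomology concentrated in degree $2d$, and torsion-freeness makes that cohomology flat over the discrete valuation ring $\mathcal O_\mathfrak m$; hence its further derived base change $-\otimes^{\mathbb L}_{\mathcal O_\mathfrak m}\kappa_f$, which by transitivity of derived tensor and the displayed isomorphism equals $C^\bullet_{\mathfrak m_f}\otimes^{\mathbb L}_{\Lambda_{G/H}}\kappa_f$, is still concentrated in degree $2d$, where it agrees with $e^{\nord}H^{2d}(Y_G(V_r),\mathscr V^{\underline k,0}_{\kappa_f})_{\mathfrak m_f}$. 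A perfect complex over the Noetherian local ring $(\Lambda_{G/H})_{\mathfrak m_f}$ whose derived reduction to the residue field is concentrated in a single degree is, by passing to a minimal representative, quasi-isomorphic to a finite free module placed in that degree; hence $C^\bullet_{\mathfrak m_f}\simeq P[-2d]$ with $P$ finite free, so $e^{\nord}H^{2d}_{\rm Iw}(Y_G(V_\infty),\mathcal O_\mathfrak m)_{\mathfrak m_f}\cong P$, $H^i_{\rm Iw}(Y_G(V_\infty),\mathcal O_\mathfrak m)_{\mathfrak m_f}=0$ for $i\neq 2d$, and
\[
H^{2d}_{\rm Iw}(Y_G(V_\infty),\mathcal O_\mathfrak m)_{\mathfrak m_f}\otimes_{\Lambda_{G/H}}\kappa_f\ \cong\ P\otimes_{(\Lambda_{G/H})_{\mathfrak m_f}}\kappa_f\ \cong\ H^{2d}\!\big(C^\bullet_{\mathfrak m_f}\otimes^{\mathbb L}_{\Lambda_{G/H}}\kappa_f\big)\ \cong\ e^{\nord}H^{2d}(Y_G(V_r),\mathscr V^{\underline k,0}_{\kappa_f})_{\mathfrak m_f},
\]
the composite being precisely $\mathrm{mom}^{\underline k}_r$ reduced modulo $\mathfrak m_f$; this is the claimed isomorphism.

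The main obstacle is the control statement at the level of complexes in the second step: one has to check that the derived specialisation of the $\Lambda$-adic cohomology along the weight ideal $I_{\underline k}$ recovers the weight-$\underline k$ cohomology at the correct level $V_r$ with no spurious higher $\mathrm{Tor}$, which requires the explicit $\Lambda$-adic resolution of $\mathscr V^{\underline k,0}$ compatible with $\mathrm{mom}^{\underline k}_r$ together with the level-lowering by $T_\eta$. As throughout this section, the failure of Milne's axiom SV5 must be accommodated here, which is possible because all the level subgroups $V_r$ share the same intersection with $Z_G(\Q)$ (Lemma~\ref{verticaldeg}); this is exactly what makes the degeneracy maps in the $p$-adic tower behave as in \cite{LoefflerSphericalvarieties} and \cite{LRZ}.
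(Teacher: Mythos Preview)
Your proposal is correct and is precisely the approach the paper defers to references: the paper's own proof is the single sentence ``The proof follows verbatim the machinery of \cite{Hida1995SMF} as spelled out in \cite[\S 2.5]{BDJ22},'' and your perfect-complex / derived-base-change argument is exactly that machinery made explicit. One small notational point: $\mathfrak m_f$ is a maximal ideal of the Hecke algebra rather than of $\Lambda_{G/H}$, so ``$(\Lambda_{G/H})_{\mathfrak m_f}$'' should be read as localisation at the contraction of $\mathfrak m_f$ to $\Lambda_{G/H}$ (or else run the minimal-free-resolution step over the local Hecke algebra itself), after which the argument goes through unchanged.
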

\begin{proof}

First, the moment map of Definition \ref{def:mom} induces a specialisation map 
\[
e^{\rm n.ord} H^{2d}_{\rm Iw}(Y_G(V_\infty), \mathcal{O}_{\mathfrak{m}}) \otimes_{\Lambda_{G/H}} \kappa_{f} \rightarrow e^{\rm n.ord} H^{2d}(Y_G(V_r),\mathscr{V}^{\underline{k},0}_{\kappa_{f}}).
 \]
 
Now, note that we can use Proposition \ref{prop:IwasawaOrdinarycoho} to  
get
\[
 e^{\rm n.ord} H^i_{\rm Iw}(Y_G(V_\infty), \mathcal{O}_{\mathfrak{m}}) \otimes_{\Lambda_{G/H}} \kappa_{f} \cong e^{\rm n.ord} H^{i}(Y_G(K_0(p^2)),\Lambda_{G/H}) \otimes_{\Lambda_{G/H}} \kappa_{f},
 \]
so we are left to study the tensor product on the right hand side. The default of the right hand side being isomorphic to $e^{\rm n.ord} H^{2d}(Y_G(V_r),\mathscr{V}^{\underline{k},0}_{\kappa_{f}})$ is controlled by a spectral sequence as in \cite[Cor. 2.7.11]{LRZ}: after localization at $\mathfrak{m}_f$, the hypothesis that the $\mathfrak{m}_f$-part of the cohomology is concentrated in a single degree ensures that all the terms in {\it loc. cit.} vanish for $i \neq 0$, giving that the above map is an isomorphism. 

\end{proof}

\begin{remark}
    To apply this theorem in characteristic $p$, one needs to verify that the cohomology is concentrated in one degree. This is not easy and in general not true, for example if there are Eisenstein congruences. We refer to \cite{CarTam} for the state-of-the-art result.
\end{remark}

\section{\texorpdfstring{$p$}{p}-adic families of twisted Hirzebruch--Zagier cycles}

\subsection{Cycles at finite level in cohomology}

 Throughout the section, we assume that $K^{(p)}$ is a neat open compact subgroup of $G(\A_f)$, which is stable under $\tau \in {\rm Gal}(E/F)$. Recall that we have the morphism
\[ \xymatrix{    u \circ \iota_{\pmb{r}}: Y_H(u K_{\pmb{r}}u^{-1} \cap H)  \ar[r]^-{\iota_{\pmb{r}}} & Y_G(u K_{\pmb{r}}u^{-1}) \ar[r]^-{u} &  Y_G(K_{\pmb{r}}),}  \]
where $u \in G(\Z_p)$ denotes the element of Definition \ref{def:representativeorbit}. Thanks to Lemma \ref{closedimm}, we can define the pushforward map $u_\star \circ {\iota_{\pmb{r}}}_\star $ in cohomology. For a given local system $\mathscr{V}$ on $Y_G(K_{\pmb{r}})$, we have

\[u_\star \circ {\iota_{\pmb{r}}}_\star: H^\bullet ( Y_H(u K_{\pmb{r}}u^{-1} \cap H), \mathscr{V}_{|_H}) \longrightarrow H^{\bullet + 2d} ( Y_G( K_{\pmb{r}}), \mathscr{V}). \]

Recall that, in \S \ref{branchinglaws}, we have shown that there exists an $H$-equivariant map $\iota^{\underline{k}}_{\mathcal{O}_{\mathfrak{m}}} : \mathcal{O}_{\mathfrak{m}} \hookrightarrow V^{\underline{k},0}_{\mathcal{O}_{\mathfrak{m}}}$
with $\underline{k} \in \Z^{\Sigma_E}$ such that $k_\sigma = k_{\sigma'}$ for all $\sigma \in \Phi$. In view of Lemma \ref{rmk:uflag}, we normalize it by letting  
\[ \iota^{\dagger,\underline{k}}_{\mathcal{O}_{\mathfrak{m}}} : \mathcal{O}_{\mathfrak{m}} \hookrightarrow V^{\underline{k},0}_{\mathcal{O}_{\mathfrak{m}}}\]
be the map which sends $1 \in \mathcal{O}_{\mathfrak{m}}$ to $\Delta^{\dagger, \underline{k}}:=a_{\underline{k}}^{-1} \cdot \Delta^{\underline{k}}$, where $a_{\underline{k}} \in \O_\mathfrak{m}^\times$ is the constant introduced in Lemma \ref{rmk:uflag}. 
The induced map in cohomology is \[ \iota^{\dagger ,\underline{k}}_{\mathcal{O}_{\mathfrak{m}}} : H^0 ( Y_H(U), \mathcal{O}_{\mathfrak{m}}) \longrightarrow H^0 ( Y_H(U), \mathscr{V}^{\underline{k},0}_{\mathcal{O}_{\mathfrak{m}}}|_H), \]
 where $\mathscr{V}^{\underline{k},0}$ denotes the local system associated with $V^{\underline{k},0}$.
Note that the image of the characteristic class $\mathbf{1}_{ Y_H(U)}$ via $\iota^{\dagger, \underline{k}}_{\mathcal{O}_{\mathfrak{m}}}$ can be explicitly described by using the invariant vector $\Delta^{\dagger, \underline{k}}$  (\textit{cf}. \cite[Proposition 9.1]{GetzGore}).

\begin{definition}\label{defcyclesfinitelevel}
 Let $\underline{k} \in \Z^{\Sigma_E}$ be such that $k_\sigma = k_{\sigma'}$ for all $\sigma \in \Phi$. The Hirzebruch--Zagier cycle $z_{\pmb{r}}^{\underline{k}}$ of weight $\underline{k}$ and level $K_{\pmb{r}}$, with $\pmb{r} \in \Z_{\geq 0}^{|\Upsilon_{F,p}|}$, is defined as \[z_{\pmb{r}}^{\underline{k}}:= u_\star \circ {\iota_{\pmb{r}}}_\star \circ  \iota^{\dagger ,\underline{k}}_{\mathcal{O}_{\mathfrak{m}}}(\mathbf{1}_{Y_H(u K_{\pmb{r}}u^{-1} \cap H)}) \in H^{2d} ( Y_G( K_{\pmb{r}}), \mathscr{V}^{\underline{k},0}_{\mathcal{O}_{\mathfrak{m}}}).\]\index{$z_{\pmb{r}}^{\underline{k}}$}
 We also define the twisted Hirzebruch--Zagier cycle $\mathscr{Z}_{\pmb{r}}^{\underline{k}}$ of weight $\underline{k}$ and level $V_{\pmb{r}}$ as \[\mathscr{Z}_{\pmb{r}}^{\underline{k}}:=\eta^{\pmb{r}}_\ast\, z_{\pmb{r}}^{\underline{k}} \in H^{2d} ( Y_G( V_{\pmb{r}}), \mathscr{V}^{\underline{k},0}_{\mathcal{O}_{\mathfrak{m}}}),\] \index{$\mathscr{Z}_{\pmb{r}}^{\underline{k}}$}
 where $\eta^{\pmb{r}}:= \prod_\frakp \eta_\frakp^{r_\frakp}$\index{$\eta^{\pmb{r}}$} and $\eta^{\pmb{r}}_\ast$ denotes the composition of all the traces of $\eta_\frakp^{r_\frakp}$ normalized as in Remark \ref{rmk:normofalletap}.
\end{definition}
\noindent When $\pmb{r}=(r, \dots, r)$, the level subgroups are the $K_r$ and $V_r$ of Definition \ref{def:levelgroups} and we denote the corresponding classes by $\mathscr{Z}_r^{\underline{k}}$, $z_r^{\underline{k}}$. 
 \begin{remark}
  When $\pmb{r}=\pmb{0}$, there is an overlap in the notation used as the levels $K_0,V_0$ and classes $\mathscr{Z}_0^{\underline{k}}$, $z_0^{\underline{k}}$ both coincide.
\end{remark}

\subsection{Universal cycles}

Following \cite{LoefflerSphericalvarieties}, we can use the results of \S \ref{Towerplusmoments} to construct a universal family of Hirzebruch--Zagier cycles.

\begin{proposition}\label{Step1inNR}
For all $\pmb{r} \in \Z_{\geq 1}^{|\Upsilon_{F,p}|}$, we have \[{\rm pr}_{\pmb{r}+\pmb{1},\pmb{r}, \ast} \left( \mathscr{Z}_{\pmb{r}+1}^{\underline{k}} \right)= T_\eta \cdot \mathscr{Z}_{\pmb{r}}^{\underline{k}}, \]
where recall that ${\rm pr}_{\pmb{r}+\pmb{1},\pmb{r}}$ denotes the natural degeneracy map $Y_G(V_{\pmb{r}+\pmb{1}}) \to Y_G(V_{\pmb{r}})$.
\end{proposition}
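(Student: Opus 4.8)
The plan is to reduce the identity to the Cartesian diagram of Proposition~\ref{Cartesian} together with the definition of $T_\eta$ and the compatibility of traces along the two towers. First I would unwind the definition of $\mathscr{Z}_{\pmb{r}}^{\underline{k}}$: by Definition~\ref{defcyclesfinitelevel} we have $\mathscr{Z}_{\pmb{r}}^{\underline{k}} = \eta^{\pmb{r}}_\ast\, z_{\pmb{r}}^{\underline{k}}$, with $z_{\pmb{r}}^{\underline{k}} = u_\star \circ {\iota_{\pmb{r}}}_\star \circ \iota^{\dagger,\underline{k}}_{\mathcal{O}_{\mathfrak{m}}}(\mathbf{1}_{Y_H(uK_{\pmb{r}}u^{-1}\cap H)})$. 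The key point is that the class $z_{\pmb{r}}^{\underline{k}}$ is a pushforward along the closed immersion $u\circ\iota_{\pmb{r}}$ of a class that is \emph{compatible under pullback} along the degeneracy maps $\pi_{\pmb{r}}^H$ on the $H$-side --- indeed $\iota^{\dagger,\underline{k}}_{\mathcal{O}_{\mathfrak{m}}}(\mathbf{1}_{Y_H(\,\cdot\,)})$ is the fixed $H$-invariant vector $\Delta^{\dagger,\underline{k}}$ cupped with the fundamental class, and the fundamental class of a variety pulls back to the fundamental class along a finite flat map (up to degree, but here we use covering pullback, not trace). So ${\rm pr}^{H,\ast}$ sends the level-$\pmb{r}$ version to the level-$(\pmb{r}+\pmb{1})$ version.

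The heart of the argument is then a base-change (proper base change / Cartesian-square) computation. Using the Cartesian bottom square of Proposition~\ref{Cartesian}, whose sides are $u\circ\iota_{\pmb{r}}$, $u\circ\iota_{\pmb{r}+\pmb{1}}$ (composed with ${\rm pr}$), $\pi_{\pmb{r}}^H$ and $\pi_{\pmb{r}}^G$, one gets the identity
\[
\pi_{\pmb{r}}^{G,\ast}\circ (u\circ\iota_{\pmb{r}})_\star = ({\rm pr}\circ u\circ\iota_{\pmb{r}+\pmb{1}})_\star \circ \pi_{\pmb{r}}^{H,\ast}
\]
on cohomology with the appropriate local-system coefficients (the branching vector $\Delta^{\dagger,\underline{k}}$ being $H$-invariant makes it compatible with this base change). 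Applying this to $\iota^{\dagger,\underline{k}}_{\mathcal{O}_{\mathfrak{m}}}(\mathbf{1})$ and invoking the previous paragraph, $\pi_{\pmb{r}}^{G,\ast}\, z_{\pmb{r}}^{\underline{k}}$ is computed as a pushforward from $Y_H(uK_{\pmb{r}+\pmb{1}}u^{-1}\cap H)$ through ${\rm pr}\circ u\circ\iota_{\pmb{r}+\pmb{1}}$. Next I would apply ${\rm Tr}_\eta\circ(\text{normalised trace of }{\rm pr})$, i.e. the composite defining $T_\eta$ via Definitions~\ref{definitionTeta} and \ref{Def:traceetar1r}: by the commutativity built into Proposition~\ref{Cartesian} (the right-hand column realising $[\eta]_{\pmb{r}+\pmb{1},\pmb{r}}$ and $T_\eta$) together with the optimal normalisation of $\eta_\flat$ from Lemma~\ref{Lem:traceeta}, this recovers $T_\eta\cdot z_{\pmb{r}}^{\underline{k}}$ on the $K$-tower, and $z_{\pmb{r}+1}^{\underline{k}}$ traced down becomes $\mathscr{Z}_{\pmb{r}+1}^{\underline{k}}$ on the $V$-tower. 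Finally I would transfer the statement from the $(K_r)_r$ tower to the $(V_r)_r$ tower using Remark~\ref{remarkonpassingbetweentowers}, which gives exactly $\eta^r_\ast\circ{\rm Tr}_{[\eta]_{r+1,r}} = {\rm pr}_{r+1,r,\ast}\circ\eta^{r+1}_\ast$, and then prime-by-prime via Remark~\ref{rmk:normofalletap} to account for the product $T_\eta=\prod_\frakp T_{\eta_\frakp}$ and the multi-index $\pmb{r}$ (using the single-prime Cartesian diagrams of Remark~\ref{rmkonsingleCartDiag}).

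The main obstacle I anticipate is \emph{not} the formal diagram chase but the careful bookkeeping of normalisations: one must check that the integral normalisation of $\eta_\flat$ in Lemma~\ref{Lem:traceeta} (trivial on the highest-weight line, divisible by $p$ elsewhere), the normalisation $\iota^{\dagger,\underline{k}}_{\mathcal{O}_{\mathfrak{m}}}$ chosen via the unit $a_{\underline{k}}$ of Lemma~\ref{rmk:uflag}, and the twisting by $\eta^{\pmb{r}}_\ast$ all conspire so that no stray power of $p$ or index factor survives in the identity ${\rm pr}_{\pmb{r}+\pmb{1},\pmb{r},\ast}(\mathscr{Z}_{\pmb{r}+1}^{\underline{k}})=T_\eta\cdot\mathscr{Z}_{\pmb{r}}^{\underline{k}}$; since the degeneracy maps $\pi_{\pmb{r}}^H$ and $\pi_{\pmb{r}}^G$ have equal degree (the Cartesian property, which fails at $\pmb{r}=\pmb{0}$ --- hence the hypothesis $\pmb{r}\in\Z_{\geq1}^{|\Upsilon_{F,p}|}$), the degree factors cancel cleanly, but this needs to be said explicitly. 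A secondary point is justifying the base-change identity for pushforward along a closed immersion in Betti cohomology with twisted coefficients; this is standard (it is the content of \cite[Lemma~4.4.1 ff.]{LoefflerSphericalvarieties}) but one should note that the $H$-invariance of $\Delta^{\dagger,\underline{k}}$ is what allows the coefficient systems to match across the square.
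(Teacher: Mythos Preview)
Your proposal is correct and follows essentially the same route as the paper: use the Cartesian square of Proposition~\ref{Cartesian} to obtain ${\rm pr}_\ast z_{\pmb{r}+\pmb{1}}^{\underline{k}}=(\pi_{\pmb{r}}^G)^\ast z_{\pmb{r}}^{\underline{k}}$, apply ${\rm Tr}_\eta$ to produce ${\rm Tr}_{[\eta]_{\pmb{r}+\pmb{1},\pmb{r}}}(z_{\pmb{r}+\pmb{1}}^{\underline{k}})=T_\eta\cdot z_{\pmb{r}}^{\underline{k}}$, and then transfer to the $V$-tower via the commutative square $\eta^{\pmb{r}}_\ast\circ{\rm Tr}_{[\eta]_{\pmb{r}+\pmb{1},\pmb{r}}}={\rm pr}_{\pmb{r}+\pmb{1},\pmb{r},\ast}\circ\eta^{\pmb{r}+\pmb{1}}_\ast$. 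The paper's proof is in fact shorter than you anticipate: no prime-by-prime decomposition via Remark~\ref{rmkonsingleCartDiag} is needed here (the full diagram of Proposition~\ref{Cartesian} already handles the multi-index), and the normalisation bookkeeping you flag as the main obstacle is a non-issue, since $T_\eta$, ${\rm Tr}_{[\eta]}$, and $\eta^{\pmb{r}}_\ast$ are all defined with the \emph{same} integral normalisation of $\eta_\flat$, so no stray factors can appear.
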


\begin{proof}
This is \cite[Proposition 4.5.1]{LoefflerSphericalvarieties}: by the Cartesian diagram of Proposition \ref{Cartesian}, we have that 
\begin{align*}  {\rm pr}_\ast z_{\pmb{r}+\pmb{1}}^{\underline{k}}=(\pi_{\pmb{r}}^G)^\ast z_{\pmb{r}}^{\underline{k}} .  
\end{align*}
Applying the trace of $\eta$ to both sides, we get 
\begin{align} \label{normrelforclassuntwisted}  {\rm Tr}_{[\eta]_{\pmb{r}+\pmb{1},\pmb{r}}}( z_{\pmb{r}+\pmb{1}}^{\underline{k}})=T_\eta \cdot z_{\pmb{r}}^{\underline{k}}.  
\end{align}
Note that the (integrally normalized) trace $\eta^{\pmb{r}}_\ast$ satisfies the equality 
\[ \eta^{\pmb{r}}_\ast \circ {\rm Tr}_{[\eta]_{{\pmb{r}}+{\pmb{1}},{\pmb{r}}}} = {\rm pr}_{{\pmb{r}}+{\pmb{1}},{\pmb{r}}, \ast} \circ \eta^{{\pmb{r}}+{\pmb{1}}}_\ast,    \]
induced by the commutative diagram \[ \xymatrix{Y_G(K_{\pmb{r}+\pmb{1}}) \ar[rr]^-{\eta^{{\pmb{r}}+{\pmb{1}}}} \ar[d]_-{[\eta]_{{\pmb{r}}+{\pmb{1}},{\pmb{r}}}} & &  Y_G(V_{{\pmb{r}}+{\pmb{1}}}) \ar[d]^-{{\rm pr}_{{\pmb{r}}+{\pmb{1}},{\pmb{r}}}} \\ Y_G(K_{\pmb{r}})\ar[rr]^-{\eta^{{\pmb{r}}}} & & Y_G(V_{\pmb{r}}).  }   \]     
This implies that, after applying $\eta^{{\pmb{r}}}_\ast$ to \eqref{normrelforclassuntwisted}, we get \[ {\rm pr}_{{\pmb{r}}+{\pmb{1}},{\pmb{r}}, \ast}\left( \mathscr{Z}_{{\pmb{r}}+{\pmb{1}}}^{\underline{k}} \right)= \eta^{\pmb{r}}_\ast\left( T_\eta\cdot z_{\pmb{r}}^{\underline{k}}\right) = T_\eta \cdot  \mathscr{Z}_{\pmb{r}}^{\underline{k}}.\] 
\end{proof}

Recall that we have the $H$-equivariant vector $\Delta^{\dagger, \underline{k}}=a_{\underline{k}}^{-1} \cdot \Delta^{\underline{k}}$ in $ V_{\mathcal{O}_{\mathfrak{m}}}^{\underline{k},0}$,
with $\underline{k} \in \Z^{\Sigma_E}$ such that $k_\sigma = k_{\sigma'}$ for all $\sigma \in \Phi$ (\textit{cf}. Lemma \ref{invariantvectorintegral}). For any $r \geq 1$ denote by $\Delta^{\dagger,\underline{k}}_r$ its reduction modulo $\mathfrak{m}^r$. 
\linebreak 

We have gathered all the ingredients to define the desired Hirzebruch--Zagier class:

\begin{theorem}\label{BIGCLASS}
There is a Big Hirzebruch--Zagier class \[ \mathscr{Z}_{\infty} := \varprojlim_r T_{\eta}^{-r} e^{\nord} \mathscr{Z}_r^{\underline{0}} \in  e^{\nord} H^{2d}_{\rm Iw}(Y_G(V_\infty), \mathcal{O}_{\mathfrak{m}}). \]
Let $\underline{k} \in \Z^{\Sigma_E}$ be such that $k_\sigma = k_{\sigma'}$ for all $\sigma \in \Phi$. Then, for every $\pmb{r} \in \Z_{\geq 1}^{|\Upsilon_{F,p}|}$, we have
\[
\mathrm{mom}^{\underline{k}}_{\pmb{r}}\, \mathscr{Z}_{\infty}= T_{\eta}^{-\pmb{r}} e^{\nord} \,\mathscr{Z}_{\pmb{r}}^{\underline{k}},
\]
with $T_{\eta}^{-\pmb{r}} = \prod_\frakp T_{\eta_\frakp}^{-r_\frakp}$.
\end{theorem}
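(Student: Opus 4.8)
The plan is to establish the Big Hirzebruch--Zagier class and its interpolation property in two stages: first constructing the inverse limit, then checking the moment-map formula.

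\medskip

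\noindent\textbf{Step 1: Constructing $\mathscr{Z}_\infty$.} First I would apply the ordinary projector $e^{\nord}$ to the twisted cycles $\mathscr{Z}_r^{\underline{0}} \in H^{2d}(Y_G(V_r),\mathcal{O}_\mathfrak{m})$ (here $\underline{k}=\underline{0}$, so $\mathscr{V}^{\underline{0},0}_{\mathcal{O}_\mathfrak{m}} = \mathcal{O}_\mathfrak{m}$). By Lemma~\ref{lem:ordinaryIwasawaCoho}, $e^{\nord} T_\eta$ is invertible on $e^{\nord}H^{2d}(Y_G(V_r),\mathcal{O}_\mathfrak{m})$, so $T_\eta^{-r}e^{\nord}\mathscr{Z}_r^{\underline{0}}$ makes sense. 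To see these assemble into an element of $\varprojlim_r e^{\nord}H^{2d}(Y_G(V_r),\mathcal{O}_\mathfrak{m}) = e^{\nord}H^{2d}_{\rm Iw}(Y_G(V_\infty),\mathcal{O}_\mathfrak{m})$ (using Lemma~\ref{lem:ordinaryIwasawaCoho}), I must check compatibility under the trace maps ${\rm pr}_{r+1,r,\ast}$. Proposition~\ref{Step1inNR} with $\underline{k}=\underline{0}$ gives exactly ${\rm pr}_{r+1,r,\ast}(\mathscr{Z}_{r+1}^{\underline{0}}) = T_\eta \cdot \mathscr{Z}_r^{\underline{0}}$; applying $T_\eta^{-r-1}e^{\nord}$ and using that $T_\eta$ commutes with ${\rm pr}_{r+1,r,\ast}$ and with $e^{\nord}$ yields ${\rm pr}_{r+1,r,\ast}(T_\eta^{-r-1}e^{\nord}\mathscr{Z}_{r+1}^{\underline{0}}) = T_\eta^{-r}e^{\nord}\mathscr{Z}_r^{\underline{0}}$, which is the required compatibility.

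\medskip

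\noindent\textbf{Step 2: The moment-map formula.} Fix $\underline{k}$ with $k_\sigma = k_{\sigma'}$ for all $\sigma\in\Phi$, and fix $\pmb{r}\in\Z_{\geq1}^{|\Upsilon_{F,p}|}$. Unwinding Definition~\ref{def:mom}, $\mathrm{mom}^{\underline{k}}_{\pmb{r}}\,\mathscr{Z}_\infty$ is obtained by reducing $\mathscr{Z}_\infty$ modulo $\mathfrak{m}^s$, cupping with the highest-weight vector $v_{\underline{k},s}^{\rm hw}\in H^0(Y_G(V_s),\mathscr{V}^{\underline{k},0}_{\mathcal{O}_\mathfrak{m}/\mathfrak{m}^s})$ of Lemma~\ref{lemmahwvector}, projecting to level $\pmb{r}$, and taking the limit. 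The heart of the matter is a local computation at level $\pmb{r}$ identifying $(T_\eta^{-\pmb{r}}e^{\nord}\mathscr{Z}_r^{\underline{0}}) \cup v_{\underline{k},s}^{\rm hw}$, after projection, with $T_\eta^{-\pmb{r}}e^{\nord}\mathscr{Z}_{\pmb{r}}^{\underline{k}}$ modulo $\mathfrak{m}^s$. The key geometric input is Lemma~\ref{rmk:uflag}: $u^{-1}\cdot\Delta^{\underline{k}} = a_{\underline{k}}\cdot v_{\underline{k}}^{\rm hw} + R^{\underline{k}}$ with $R^{\underline{k}}$ having no highest-weight component. Since $\Delta^{\dagger,\underline{k}} = a_{\underline{k}}^{-1}\Delta^{\underline{k}}$, the pushforward along $u\circ\iota_{\pmb{r}}$ of $\iota^{\dagger,\underline{k}}_{\mathcal{O}_\mathfrak{m}}(\mathbf{1})$ — which is $z_{\pmb{r}}^{\underline{k}}$ — is represented, after translating by $u^{-1}$, by the highest-weight vector plus an error term $a_{\underline{k}}^{-1}R^{\underline{k}}$; the error term is killed by the ordinary projector (it is divisible by $p$ on every non-highest-weight space, cf.\ Lemma~\ref{Lem:traceeta} and Remark~\ref{rem:normalizationTp}), so $e^{\nord}$ of the class built from $v_{\underline{k}}^{\rm hw}$ agrees with $e^{\nord}z_{\pmb{r}}^{\underline{k}}$. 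Then cupping the constant class $\mathscr{Z}_r^{\underline{0}}$ (built from $\mathbf{1}_{Y_H}$, i.e.\ the cycle $u\circ\iota_{\pmb{r}}$ itself) with $v_{\underline{k},s}^{\rm hw}$ reproduces the cycle built from the highest-weight vector, by the projection formula for the closed immersion $u\circ\iota_{\pmb{r}}$ (Lemma~\ref{closedimm}) together with the fact that $(u\circ\iota_{\pmb{r}})^\ast v_{\underline{k}}^{\rm hw}$ is, up to the unit $a_{\underline{k}}$, the restriction of $\Delta^{\underline{k}}$ to $H$, hence the $H$-invariant section generating $\iota^{\dagger,\underline{k}}_{\mathcal{O}_\mathfrak{m}}(\mathbf{1})$. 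Finally, the normalization by $\eta^{\pmb{r}}_\ast$ (Definition~\ref{defcyclesfinitelevel}) and the interchange of $\mathrm{mom}$ with $T_\eta^{-\pmb{r}}$ and $e^{\nord}$ — both of which commute with all the maps in sight and with reduction mod $\mathfrak{m}^s$ — give the stated identity $\mathrm{mom}^{\underline{k}}_{\pmb{r}}\,\mathscr{Z}_\infty = T_\eta^{-\pmb{r}}e^{\nord}\mathscr{Z}_{\pmb{r}}^{\underline{k}}$.

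\medskip

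\noindent\textbf{Main obstacle.} The delicate point is the identification in Step 2 of the cup product with $v_{\underline{k},s}^{\rm hw}$ with the pushforward of the branching section, compatibly with the whole projective system and with the ordinary projector. Concretely, one must be careful that the ``error'' $R^{\underline{k}}$ coming from Lemma~\ref{rmk:uflag} genuinely lies in the span of non-highest-weight spaces so that $e^{\nord}$ annihilates it (this uses the optimal normalization of $T_\eta$, Remark~\ref{rem:normalizationTp}), and that the various trace/restriction maps, the moment map, and $T_\eta^{-1}$ commute in the way required — the level-lowering Lemma~\ref{lem:Tplowerlevel} and the compatibility in Remark~\ref{remarkonpassingbetweentowers} are exactly what make this work. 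I expect this bookkeeping, rather than any single hard estimate, to be the bulk of the work; the formal structure is that of \cite[\S5]{LRZ}, adapted to the $V_{\pmb{r}}$-tower constructed here.
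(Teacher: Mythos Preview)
Your Step 1 is correct and matches the paper's argument.

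Step 2 contains a genuine gap: you misidentify the mechanism that kills the error term $R^{\underline{k}}$. You write that ``the error term is killed by the ordinary projector''; but $e^{\nord}=\lim_n T_\eta^{n!}$ is a Hecke idempotent on cohomology, not a projector onto highest-weight coefficients, and there is no reason it annihilates a class built from $R^{\underline{k}}$. The vector $R^{\underline{k}}$ itself is \emph{not} divisible by $p$. What actually kills it is the morphism $\eta^r_\flat$ hidden in the definition $\mathscr{Z}_r^{\underline{k}}=\eta^r_\ast z_r^{\underline{k}}$: by Remark~\ref{rem:normalizationTp}, the normalized action of $\eta^{-r}$ on the lattice is the identity on the highest-weight line and divisible by $p^r$ on every other weight space, so $\eta^r_\flat(R^{\underline{k}})\equiv 0\pmod{\mathfrak{m}^r}$. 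The correct statement is therefore
\[
\mathscr{Z}_r^{\underline{k}}\equiv (\mathscr{Z}_r^{\underline{0}})\cup v_{\underline{k},r}^{\rm hw}\pmod{\mathfrak{m}^r},
\]
an identity \emph{before} applying $e^{\nord}$, and the paper proves exactly this by reducing (via the projection formula) to $u^{-1}\cdot\Delta^{\dagger,\underline{k}}\equiv v_{\underline{k}}^{\rm hw}\pmod{\ker\eta^r_\flat}$, which is Lemma~\ref{rmk:uflag}. This mod-$\mathfrak{m}^r$ congruence is precisely what the moment map (a limit over $s$ of operations performed mod $\mathfrak{m}^s$) requires.

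You also gloss over the passage from the diagonal level $r$ to the multi-index level $\pmb{r}$. The paper sets $r=\max_\frakp r_\frakp$, first establishes the diagonal formula $\mathrm{mom}^{\underline{k}}_r\,\mathscr{Z}_\infty = T_\eta^{-r}e^{\nord}\mathscr{Z}_r^{\underline{k}}$, and then uses the Cartesian diagrams of Remark~\ref{rmkonsingleCartDiag} (applied prime by prime, as in Proposition~\ref{Step1inNR}) to show ${\rm pr}_{r,\pmb{r},\star}\,\mathscr{Z}_r^{\underline{k}} = T_\eta^{\pmb{s}}\,\mathscr{Z}_{\pmb{r}}^{\underline{k}}$ with $\pmb{s}=(r)_\frakp-\pmb{r}$. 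This step is not automatic and should be made explicit.
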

\begin{proof}

After applying the ordinary idempotent, by Proposition \ref{Step1inNR} the classes $T_{\eta}^{-r} e^{\nord}  \mathscr{Z}_r^{\underline{0}}$ are compatible via the pushforward ${\rm pr}_{r+1,r, \ast}$ for $r \geq 1$ and thus they define an element (\textit{cf}. \cite[Proposition 4.5.2]{LoefflerSphericalvarieties}) \[ \mathscr{Z}_{\infty} \in  e^{\nord} H^{2d}_{\rm Iw}(Y_G(V_\infty), \mathcal{O}_{\mathfrak{m}}). \]

We now evaluate the moment map on $\mathscr{Z}_{\infty}$ as in \cite[Theorem 5.2.1]{LRZ}. For $\pmb{r} \in \Z_{\geq 1}^{|\Upsilon_{F,p}|}$, let $r = {\rm max}_\frakp \{ r_\frakp\}$ and note that $\mathrm{mom}^{\underline{k}}_{\pmb{r}}$ factors as the composition of $\mathrm{mom}^{\underline{k}}_{r}$ and the trace of the natural projection ${\rm pr}_{r, \pmb{r}}:Y_G(V_r) \to Y_G(V_{\pmb{r}})$. We start with showing that 
\begin{align}\label{intermediateintegralmom}
    \mathrm{mom}^{\underline{k}}_{r}\, \mathscr{Z}_{\infty}= T_{\eta}^{-r} e^{\nord} \,\mathscr{Z}_{r}^{\underline{k}}.
\end{align}
Unfolding Definition \ref{def:mom}, proving this follows from showing that \[\mathscr{Z}_r^{\underline{k}} \, \text{mod }\mathfrak{m}^r = ( \mathscr{Z}_r^{\underline{0}} \, \text{mod }\mathfrak{m}^r ) \cup v_{\underline{k},r}^{\rm hw}\]
as elements in $H^{2d}(Y_G(V_r),\mathscr{V}^{\underline{k},0}_{\mathcal{O}_\mathfrak{m}/\mathfrak{m}^r})$. Now, recall that $\mathscr{Z}_r^{\underline{k}} = \eta^r_\ast( z_r^{\underline{k}})$, where $\eta^r_\ast$ is the composition of the pushforward by $\eta^r$ and the morphism of local systems $\eta^r_\flat: \mathscr{V}^{\underline{k},0}_{\mathcal{O}_\mathfrak{m}/\mathfrak{m}^r} \to (\eta^r)^\ast \mathscr{V}^{\underline{k},0}_{\mathcal{O}_\mathfrak{m}/\mathfrak{m}^r}$. Recall that $\eta^r_\flat$ is associated with a map $V^{\underline{k},0}_{\mathcal{O}_\mathfrak{m}/\mathfrak{m}^r} \to (\eta^r)^\ast V^{\underline{k},0}_{\mathcal{O}_\mathfrak{m}/\mathfrak{m}^r}$ which is the identity on the highest weight subspace and acts by powers of $p^r$ (hence zero) elsewhere (\textit{cf}. Remark \ref{rem:normalizationTp}). Thus, it suffices to show 
\[z_r^{\underline{k}} \; \text{mod }\mathfrak{m}^r = ( z_r^{\underline{0}} \; \text{mod }\mathfrak{m}^r ) \cup v_{\underline{k},r}^{\rm hw}\; \text{mod ker}\,\eta^r_\flat.\]

\noindent Using the projection formula, we can write the left hand side as \[ ( z_r^{\underline{0}} \; \text{mod }\mathfrak{m}^r ) \cup u^{-1} \cdot \Delta^{\dagger,\underline{k}}_r,\] thus the result follows from checking that $u^{-1} \cdot \Delta^{\dagger,\underline{k}}_r$ and $v_{\underline{k},r}^{\rm hw}$ agree modulo $ \text{ker}\,\eta^r_\flat$. As $\eta^r_\flat$ kills everything outside the highest weight space, this follows from checking that the projection of $u^{-1} \cdot \Delta^{\underline{k}}$ to the highest weight space of $V^{\underline{k},0}_{\mathcal{O}_\mathfrak{m}}$ agrees with $v_{\underline{k}}^{\rm hw}$, up to the constant $a_{\underline{k}}$. This is immediate from the definition of $u$ and the explicit realization of $\Delta^{\underline{k}}$ given in Lemma \ref{lemma:branching}, as shown in Lemma \ref{rmk:uflag}. This proves \eqref{intermediateintegralmom} and the result follows from showing that, if $\pmb{s} = (r)_\frakp -\pmb{r}$, we have \[{\rm pr}_{r,\pmb{r},\star}\,\mathscr{Z}_{r}^{\underline{k}} = T_\eta^{\pmb{s}} \mathscr{Z}_{\pmb{r}}^{\underline{k}}. \]
This follows, as in Proposition \ref{Step1inNR}, by using the Cartesian diagrams of Remark \ref{rmkonsingleCartDiag} recursively for each $\frakp$ with $\pmb{s}_\frakp \ne 0$.
\end{proof}

We note that, after allowing $p$-adic square roots of the determinant, one could also define another $p$-adic family of cycles interpolating Hirzebruch--Zagier cycles at odd weights and non-trivial central characters.

\subsection{Euler Factor on the First Floor}
In what follows we relate the universal cycle to the Hirzebruch--Zagier cycles of level $V_0$. 

Recall that, for each $\frakp \in \Upsilon_{F,p}$, $\eta_\frakp : = \matrix{\pi_\frakp}{}{}{1} \in \GL_2(F_\frakp)$ and we denote in the same way the corresponding element of $H(\Q_p)$.  Note that the morphism $\eta_{K_0',K_0}: Y_G(K_0') \to Y_G(K_0)$ can be written as the composition of 
\begin{small}
\[ Y_G(K_0') \xrightarrow[]{{\rm pr}_{\frakp_1} \circ \eta_{\frakp_1}} Y_G(K_0(\Upsilon_{F,p} \smallsetminus \{ \frakp_1 \})) \xrightarrow[]{{\rm pr}_{\frakp_2} \circ \eta_{\frakp_2}} Y_G(K_0(\Upsilon_{F,p} \smallsetminus \{ \frakp_1,\frakp_2 \})) \cdots Y_G(K_0(\{ \frakp_r \})) \xrightarrow[]{{\rm pr}_{\frakp_r} \circ \eta_{\frakp_r}} Y_G(K_0), \]
\end{small}

\noindent where we denoted $\Upsilon_{F,p} = \{\frakp_1, \dots, \frakp_r \}$\index{$\Upsilon_{F,p}$} and where, for any $\mathfrak{S}$ subset of $ \Upsilon_{F,p}$, we have set in \eqref{eqdefintermediatelevelsigma}
\[ K_0(\mathfrak{S}) = K^{(p)} \cdot (  \prod_{\frakp \in \mathfrak{S}} \Gamma^0_0(\pi_\frakp,\pi_\frakp) \cdot  \prod_{\frakp \in \Upsilon_{F,p} \smallsetminus \mathfrak{S}} \Gamma_0( \pi_\frakp)). \]  
Recall (Remark \ref{rmk:normofalletap}) that we have \[ T_{\eta} = \prod_{\frakp \in \Upsilon_{F,p}} T_{\eta_\frakp},\] where $T_{\eta_\frakp}$ is the Hecke operator acting on cohomology with coefficients $\mathscr{V}^{\underline{k},0}$, normalized by multiplying the trace by $\eta_{\frakp}$ by $p^{\sum_{\sigma \in \Sigma_{\frakp}}  k_{\sigma}}$ when $k_\sigma=k_{\sigma'}$ for every $\sigma \in \Phi$. 

\begin{proposition}\label{proponK1toK0}
 Let $\underline{k} \in \Z^{\Sigma_E}$ be such that $k_\sigma=k_{\sigma'}$ for every $\sigma \in \Phi$. We have \[ {\rm Tr}_{[\eta]_{1,0}} (z_1^{\underline{k}}) = \prod_{\frakp \in \Upsilon_{F,p}} \left( T_{\eta_{\frakp}} - q_{\frakp} \cdot p^{\sum_{\sigma \in \Sigma_{\frakp}}  k_{\sigma}} \right) \cdot 
z_0^{\underline{k}} \in  H^{2d} ( Y_G( K_{0}), \mathscr{V}^{\underline{k},0}_{\mathcal{O}_{\mathfrak{m}}}) . \]
\end{proposition}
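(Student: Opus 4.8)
The plan is to reduce the statement to a local, prime-by-prime computation of the trace ${\rm Tr}_{[\eta]_{1,0}}$ acting on the Hirzebruch--Zagier cycle, using the decomposition of $\eta_{K_0',K_0}$ into the intermediate degeneracy maps through the levels $K_0(\mathfrak{S})$ recorded just above the statement, together with the Cartesian diagrams of Proposition \ref{Cartesianwhenr=0}. Since $T_\eta = \prod_{\frakp} T_{\eta_\frakp}$ and the various $\eta_\frakp$ commute, it suffices to treat the passage from a level $K_0(\mathfrak{S}')$ to $K_0(\mathfrak{S})$ for $\mathfrak{S}' = \mathfrak{S} \cup \{\frakp\}$ one prime at a time and multiply the resulting factors; this is why the product $\prod_\frakp(T_{\eta_\frakp} - q_\frakp p^{\sum_{\sigma \in \Sigma_\frakp} k_\sigma})$ appears. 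Concretely, at the bottom floor one must compare ${\rm Tr}_{[\eta_\frakp]}$ applied to the cycle coming from $Y_H(K^0_H(\mathfrak{S}'))$ with $T_{\eta_\frakp}$ applied to the cycle $z^{\underline{k}}$ at level $K_0(\mathfrak{S})$, and show the discrepancy is exactly $q_\frakp p^{\sum_{\sigma \in \Sigma_\frakp} k_\sigma} z^{\underline{k}}$.

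First I would set up the local analysis at a fixed $\frakp \in \Upsilon_{F,p}$. Recall from Lemma \ref{degreesofmapszerolevel} that at $\frakp$ the map $\pi_0^G$ has local degree $q_\frakp^2$ while $\pi_0^H$ has local degree $q_\frakp^2 - q_\frakp$; the difference $q_\frakp$ is precisely the degree of the $U_{\pi_\frakp}$-correspondence, and Proposition \ref{Cartesianwhenr=0} identifies the fiber product $Y_H(K_0 \cap H) \times_{Y_G(K_0)} Y_G(K_0')$ with the disjoint union $\bigsqcup_{\mathfrak{S}} Y_H(K^0_H(\mathfrak{S}))$. So the upstairs cycle $z_1^{\underline{k}}$ (for the single prime $\frakp$, this is the level-$V_{\frakp,1}$ twist, but on the first floor $V_0 = K_0$) decomposes, via $\pi_0^{G,*}$, into a sum indexed by the subsets $\mathfrak{S}$: the $\mathfrak{S}$-component lies on the open/closed orbit pattern dictated by Lemma \ref{lem:open/closed_orbit}. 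The component $\mathfrak{S} = \Upsilon_{F,p}$ (all open orbits) is the one that pulls back to the twisted cycle relevant for the universal class, and the remaining components (where at least one prime is in the closed orbit) are exactly the ``defect'' terms.

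Next I would compute ${\rm Tr}_{[\eta]_{1,0}}$ on each component. The key technical input is the explicit description of the action of $u^{-1}$ (resp.\ $u_\mathfrak{S}^{-1}$) on the invariant vector $\Delta^{\underline{k}}$ from Lemma \ref{rmk:uflag}: on the open-orbit factors the projection to the highest-weight line is a unit $a_{\underline{k}}$, which after the normalization $\iota^{\dagger,\underline{k}}_{\mathcal{O}_\mathfrak{m}}$ becomes $1$; on the closed-orbit factors the vector already lies in (or projects to) the appropriate eigenspace. Combining this with Lemma \ref{Lem:traceeta} and Remark \ref{rem:normalizationTp} — the trace $\eta_\flat$ is normalized so that $p^{\sum_\sigma (k_\sigma + m)/2} \eta^{-1}$ acts as the identity on the highest-weight subspace and divisibly by $p$ elsewhere — one reads off that the open-orbit component contributes $T_{\eta_\frakp}$ and the closed-orbit component contributes a term $q_\frakp \cdot p^{\sum_{\sigma \in \Sigma_\frakp} k_\sigma} \cdot z_0^{\underline{k}}$ (the $q_\frakp$ being the index mismatch between the two covering degrees, the power of $p$ coming from the weight normalization of Remark \ref{rmk:normofalletap}). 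Multiplying over all $\frakp \in \Upsilon_{F,p}$ — using that the Cartesian diagram of Proposition \ref{Cartesianwhenr=0} factors as a composition of single-prime Cartesian diagrams — yields $\prod_\frakp(T_{\eta_\frakp} - q_\frakp p^{\sum_{\sigma \in \Sigma_\frakp} k_\sigma}) \cdot z_0^{\underline{k}}$, which is the claim.

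\textbf{Main obstacle.} The delicate point is bookkeeping the weight-twist normalizations consistently: one must track the power of $p$ introduced by $\eta_\flat$ (Lemma \ref{Lem:traceeta}), the additional normalization factor $p^{\sum_{\sigma \in \Sigma_\frakp}(k_\sigma + k_{\sigma'} + 2m)/2} = p^{\sum_{\sigma \in \Sigma_\frakp} k_\sigma}$ built into $T_{\eta_\frakp}$ (Remark \ref{rmk:normofalletap}, using $k_\sigma = k_{\sigma'}$ and $m = 0$), and the unit $a_{\underline{k}}$ absorbed by $\iota^{\dagger,\underline{k}}$, and verify that the ``defect'' component genuinely recovers $z_0^{\underline{k}}$ with coefficient exactly $q_\frakp p^{\sum_{\sigma \in \Sigma_\frakp} k_\sigma}$ and not some Hecke translate thereof. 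That the closed-orbit branch gives back the \emph{untwisted} cycle $z_0^{\underline{k}}$ (rather than a $U$-translate) is the crux, and it follows from the fact that on the closed orbit $u_\mathfrak{S}$ restricts to the plain diagonal embedding $\iota$, so the composite degeneracy map collapses to $\iota_0$ up to the covering-degree factor $q_\frakp$ computed in Remark \ref{somermksonthissigmatowers}(3). I would double-check this collapse at an inert and at a split prime separately using the coset representatives implicit in Lemma \ref{lem:open/closed_orbit}.
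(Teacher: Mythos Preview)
Your overall strategy---iterate the single-prime Cartesian diagrams of Proposition~\ref{Cartesianwhenr=0} and peel off one factor $(T_{\eta_\frakp} - q_\frakp\,p^{\sum_{\sigma\in\Sigma_\frakp}k_\sigma})$ at each step---is exactly what the paper does. But you have the mechanism inverted in a way that would stall a detailed write-up.

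It is not $z_1^{\underline{k}}$ that ``decomposes via $\pi_0^{G,*}$''. Rather, for $\mathfrak{S}=\Upsilon_{F,p}\smallsetminus\{\frakp\}$, the Cartesian square says that $\pi_\frakp^{G,*}$ applied to the intermediate cycle $\tilde{\iota}_{\mathfrak{S},*}\,\iota^{\dagger,\underline{k}}_{\mathcal{O}_\mathfrak{m}}(\mathbf{1}_\mathfrak{S})$ at level $K_0(\mathfrak{S})$ splits into two pushforwards: the open-orbit-at-$\frakp$ piece, which \emph{is} ${\rm pr}_{K_1,K_0',*}(z_1^{\underline{k}})$, and a closed-orbit-at-$\frakp$ piece. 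Rearranging expresses ${\rm pr}_*(z_1^{\underline{k}})$ as the full pullback minus the closed piece. Now apply $({\rm pr}_\frakp\circ\eta_\frakp)_*$: on the full pullback this is $T_{\eta_\frakp}$ by Definition~\ref{definitionTeta}. So $T_{\eta_\frakp}$ arises from the \emph{whole} correspondence, not from ``the open-orbit component''.

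The closed-orbit term is not handled by any highest-weight computation; Lemma~\ref{rmk:uflag} plays no role in this proof. The point is that $u_\mathfrak{S}$ is trivial at $\frakp$ and $\eta_\frakp\in H(\Q_p)$, so $\eta_\frakp$ commutes with $\tilde{\iota}_\mathfrak{S}$. Hence $({\rm pr}_\frakp\circ\eta_\frakp)_*\circ\tilde{\iota}_{\mathfrak{S},*}=\tilde{\iota}_{\mathfrak{S},*}\circ({\rm pr}_\frakp^H\circ\eta_\frakp)_*$, and the resulting $H$-side correspondence $({\rm pr}_\frakp^H\circ\eta_\frakp)_*\circ{\rm pr}_{\mathfrak{S},\emptyset}^{H,*}$ acts on $\mathbf{1}_\mathfrak{S}\in H^0$ simply by its degree $q_\frakp$ times the normalization factor $p^{\sum_{\sigma\in\Sigma_\frakp}k_\sigma}$. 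That is the source of the scalar; no projection to the highest-weight line is needed, and the unit $a_{\underline{k}}$ is irrelevant here because $\iota^{\dagger,\underline{k}}_{\mathcal{O}_\mathfrak{m}}$ commutes with this $H$-correspondence on $H^0$. Iterating over the remaining primes (each time reducing from $\mathfrak{S}$ to $\mathfrak{S}\smallsetminus\{\frakp'\}$) produces the product.
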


\begin{proof} 
Let $\frakp$ be in $\Upsilon_{F,p}$. By Proposition \ref{Cartesianwhenr=0}, denoting $\mathfrak{S} : = \Upsilon_{F,p} \smallsetminus \{\frakp\}$
we have the Cartesian diagram 
\begin{eqnarray*}  \xymatrix{  
 Y_H(u_\mathfrak{S}K_0'u_\mathfrak{S}^{-1} \cap H) \sqcup Y_H(u K_0'u^{-1} \cap H)  \ar[d]_{({\rm pr}_{\mathfrak{S},\emptyset}^H, {\rm pr}_{\mathfrak{S},\frakp}^H )}  \ar[rrr]^-{(\tilde{\iota}_\mathfrak{S},  u \circ \iota )}   & & &  Y_G(K_0') \ar[d]^{\pi_{\frakp}^G} \\
Y_H( u_\mathfrak{S}K_0(\mathfrak{S})u_\mathfrak{S}^{-1} \cap H)  \ar[rrr]^-{ \tilde{\iota}_\mathfrak{S}} & & &  Y_G(K_0(\mathfrak{S})),    }  \end{eqnarray*}
 where, recall that $K_0(\mathfrak{S})$, resp. $K_0'$, has $\frakp$-component $\Gamma_0( \pi_\frakp)$, resp. $\Gamma^0_0(\pi_\frakp,\pi_\frakp)$. From the Cartesian diagram, we obtain \begin{align}\label{firststepCartesian}
     {\pi_{\frakp}^{G,*}} \circ  \tilde{\iota}_{\mathfrak{S},*} = \tilde{\iota}_{\mathfrak{S},*} \circ {\rm pr}_{\mathfrak{S},\emptyset}^{H,*} + u_* \circ \iota_* \circ {\rm pr}_{\mathfrak{S},\frakp}^{H,*}.
 \end{align}

Apply \eqref{firststepCartesian} to $\mathbf{1}_{\mathfrak{S}} : = \mathbf{1}_{u_\mathfrak{S}K_0(\mathfrak{S})u_\mathfrak{S}^{-1} \cap H}$:  \[  u_* \circ \iota_* \circ \iota^{\dagger,\underline{k}}_{\mathcal{O}_\mathfrak{m}} \circ {\rm pr}_{\mathfrak{S},\frakp}^{H,*}(\mathbf{1}_{\mathfrak{S}}) = {\pi_{\frakp}^{G,*}} \circ  \tilde{\iota}_{\mathfrak{S},*} \circ \iota^{\dagger,\underline{k}}_{\mathcal{O}_\mathfrak{m}} ( \mathbf{1}_{\mathfrak{S}} ) - \tilde{\iota}_{\mathfrak{S},*} \circ \iota^{\dagger,\underline{k}}_{\mathcal{O}_\mathfrak{m}} \circ {\rm pr}_{\mathfrak{S},\emptyset}^{H,*} (\mathbf{1}_{\mathfrak{S}}) . \]
Recall that $ u K_0'u^{-1} \cap H = u K_1 u^{-1} \cap H $, hence, similarly to Proposition \ref{Step1inNR}, the equation above reads as
\[ {\rm pr}_{K_1,K_0',\ast} (z_1^{\underline{k}}) =  {\pi_{\frakp}^{G,*}} \circ  \tilde{\iota}_{\mathfrak{S},*} \circ \iota^{\dagger,\underline{k}}_{\mathcal{O}_\mathfrak{m}} ( \mathbf{1}_{\mathfrak{S}} ) - \tilde{\iota}_{\mathfrak{S},*} \circ \iota^{\dagger,\underline{k}}_{\mathcal{O}_\mathfrak{m}} \circ {\rm pr}_{\mathfrak{S},\emptyset}^{H,*} (\mathbf{1}_{\mathfrak{S}}).\]
We now apply the trace of $Y_G(K_0') \xrightarrow[]{{\rm pr}_\frakp \circ \eta_{\frakp}} Y_G(K_0(\mathfrak{S}))$, which we normalize, similarly to Lemma  \ref{Lem:traceeta}, by multiplying it with $p^{\sum_{\sigma \in \Sigma_{\frakp}}  k_{\sigma}}$. We get 
\begin{align*}
  ({\rm pr}_\frakp \circ \eta_{\frakp})_* \circ {\rm pr}_{K_1,K_0',\ast} (z_1^{\underline{k}}) &=  ({\rm pr}_\frakp \circ \eta_{\frakp})_* \left ( {\pi_{\frakp}^{G,*}} \circ  \tilde{\iota}_{\mathfrak{S},*} \circ \iota^{\dagger,\underline{k}}_{\mathcal{O}_\mathfrak{m}} ( \mathbf{1}_{\mathfrak{S}} ) - \tilde{\iota}_{\mathfrak{S},*} \circ {\rm pr}_{\mathfrak{S},\emptyset}^{H,*} \circ \iota^{\dagger,\underline{k}}_{\mathcal{O}_\mathfrak{m}} (\mathbf{1}_{\mathfrak{S}})\right) \\
 &=   T_{\eta_\frakp} \cdot \left (   \tilde{\iota}_{\mathfrak{S},*} \circ \iota^{\dagger,\underline{k}}_{\mathcal{O}_\mathfrak{m}} ( \mathbf{1}_{\mathfrak{S}} ) \right) - ({\rm pr}_\frakp \circ \eta_{\frakp})_* \left ( \tilde{\iota}_{\mathfrak{S},*} \circ {\rm pr}_{\mathfrak{S},\emptyset}^{H,*} \circ \iota^{\dagger,\underline{k}}_{\mathcal{O}_\mathfrak{m}} (\mathbf{1}_{\mathfrak{S}}) \right).
\end{align*}
We now examine the last term of the right hand side. Since $\eta_\frakp$ defines an element of $H(\Q_p)$ and $u_\mathfrak{S}K_0(\mathfrak{S})u_\mathfrak{S}^{-1} \cap H$ has $\frakp$-component equal to $\Gamma_{H,0}(\pi_\frakp)$, we have a commutative diagram 

\begin{eqnarray*}  \xymatrix{  
 Y_H(u_\mathfrak{S}K_0'u_\mathfrak{S}^{-1} \cap H)  \ar[d]_{{\rm pr}_\frakp^H \circ \eta_{\frakp}}  \ar[rr]^-{\tilde{\iota}_\mathfrak{S}}    & &  Y_G(K_0') \ar[d]^{{\rm pr}_\frakp \circ \eta_{\frakp}} \\
Y_H( u_\mathfrak{S}K_0(\mathfrak{S})u_\mathfrak{S}^{-1} \cap H)  \ar[rr]^-{ \tilde{\iota}_\mathfrak{S}}  & &  Y_G(K_0(\mathfrak{S})).    }  \end{eqnarray*}
We thus have \[ ({\rm pr}_\frakp \circ \eta_{\frakp})_* \circ \tilde{\iota}_{\mathfrak{S},*} = \tilde{\iota}_{\mathfrak{S},*} \circ ({\rm pr}_\frakp^H \circ \eta_{\frakp})_* .\]

This implies that 
\begin{align*}
     ({\rm pr}_\frakp \circ \eta_{\frakp})_* \left ( \tilde{\iota}_{\mathfrak{S},*} \circ {\rm pr}_{\mathfrak{S},\emptyset}^{H,*} \circ \iota^{\dagger,\underline{k}}_{\mathcal{O}_\mathfrak{m}} (\mathbf{1}_{\mathfrak{S}})  \right) &=\tilde{\iota}_{\mathfrak{S},*} \circ ({\rm pr}_\frakp^H \circ \eta_{\frakp})_* \circ {\rm pr}_{\mathfrak{S},\emptyset}^{H,*} \circ \iota^{\dagger,\underline{k}}_{\mathcal{O}_\mathfrak{m}} (\mathbf{1}_{\mathfrak{S}}) .
\end{align*}
 The Hecke correspondence $({\rm pr}_\frakp^H \circ \eta_{\frakp})_* \circ {\rm pr}_{\mathfrak{S},\emptyset}^{H,*}$ commutes with $\iota^{\dagger,\underline{k}}_{\mathcal{O}_\mathfrak{m}}$ and acts on \[\mathbf{1}_{\mathfrak{S}} \in H^0 ( Y_H(u_\mathfrak{S}K_0(\mathfrak{S})u_\mathfrak{S}^{-1} \cap H), \mathcal{O}_{\mathfrak{m}})\] 
by $q_\frakp \cdot p^{\sum_{\sigma \in \Sigma_{\frakp}}  k_{\sigma}}$, with $q_\frakp$ being the degree of ${\rm pr}_\frakp^H$. Therefore we get \begin{align*}
  ({\rm pr}_\frakp \circ \eta_{\frakp})_* \circ {\rm pr}_{K_1,K_0',\ast} (z_1^{\underline{k}}) &=  \left( T_{\eta_\frakp} - q_\frakp \cdot p^{\sum_{\sigma \in \Sigma_{\frakp}}  k_{\sigma}} \right) \cdot \left (   \tilde{\iota}_{\mathfrak{S},*} \circ \iota^{\dagger,\underline{k}}_{\mathcal{O}_\mathfrak{m}} ( \mathbf{1}_{\mathfrak{S}} ) \right).
\end{align*}    

We now reiterate the same proof for each prime in $\Upsilon_{F,p}$. Let ${\frakp'} \in \mathfrak{S}$ and denote $\mathfrak{S}' : = \mathfrak{S} \smallsetminus \{ {\frakp'} \}$. By Proposition \ref{Cartesianwhenr=0}, we have the Cartesian diagram 
\begin{eqnarray*}  \xymatrix{  
 Y_H(u_\mathfrak{S'}K_0(\mathfrak{S})u_\mathfrak{S'}^{-1} \cap H) \sqcup Y_H(u_\mathfrak{S}K_0(\mathfrak{S})u_\mathfrak{S}^{-1} \cap H)  \ar[d]_{({\rm pr}_{\mathfrak{S}',\emptyset}^H, {\rm pr}_{\mathfrak{S}',{\frakp'}}^H )}  \ar[rrr]^-{(\tilde{\iota}_{\mathfrak{S}'}, \tilde{\iota}_\mathfrak{S} )}   & & &  Y_G(K_0(\mathfrak{S})) \ar[d]^{\pi_{{\frakp'}}^G} \\
Y_H( u_{\mathfrak{S}'}K_0(\mathfrak{S}')u_{\mathfrak{S}'}^{-1} \cap H)  \ar[rrr]^-{ \tilde{\iota}_{\mathfrak{S}'}} & & &  Y_G(K_0(\mathfrak{S}')).    }  \end{eqnarray*}

This Cartesian diagram allows us to write 
\begin{align*}
      \tilde{\iota}_{\mathfrak{S},*} \circ \iota^{\dagger,\underline{k}}_{\mathcal{O}_\mathfrak{m}} ( \mathbf{1}_{\mathfrak{S}} ) = {\pi_{{\frakp'}}^{G,*}} \circ  \tilde{\iota}_{\mathfrak{S}',*} \circ \iota^{\dagger,\underline{k}}_{\mathcal{O}_\mathfrak{m}} ( \mathbf{1}_{\mathfrak{S}'} ) - \tilde{\iota}_{\mathfrak{S}',*} \circ {\rm pr}_{\mathfrak{S}',\emptyset}^{H,*} \circ \iota^{\dagger,\underline{k}}_{\mathcal{O}_\mathfrak{m}}  (\mathbf{1}_{\mathfrak{S}'}),
\end{align*}
where we have again denoted  $\mathbf{1}_{\mathfrak{S}'} : = \mathbf{1}_{u_{\mathfrak{S}'}K_0(\mathfrak{S}')u_{\mathfrak{S}'}^{-1} \cap H}$. Exactly as above, after applying the normalized trace of  \[Y_G(K_0(\mathfrak{S})) \xrightarrow[]{{\rm pr}_{{\frakp'}} \circ \eta_{{\frakp'}}} Y_G(K_0(\mathfrak{S}')),\]
we get 
\begin{align*}
  ({\rm pr}_{{\frakp'}} \circ \eta_{{\frakp'}})_* \circ \tilde{\iota}_{\mathfrak{S},*} \circ \iota^{\dagger,\underline{k}}_{\mathcal{O}_\mathfrak{m}} ( \mathbf{1}_{\mathfrak{S}} )  &=  \left( T_{\eta_{{\frakp'}}} - q_{{\frakp'}} \cdot p^{\sum_{\sigma \in \Sigma_{{\frakp'}}}  k_{\sigma}} \right) \cdot \left (   \tilde{\iota}_{\mathfrak{S}',*} \circ \iota^{\dagger,\underline{k}}_{\mathcal{O}_\mathfrak{m}} ( \mathbf{1}_{\mathfrak{S}'} ) \right).
\end{align*}   
As the trace $({\rm pr}_{{\frakp'}} \circ \eta_{{\frakp'}})_*$ commutes with the correspondence $\left( T_{\eta_\frakp} - q_\frakp \cdot p^{\sum_{\sigma \in \Sigma_{\frakp}}  k_{\sigma}} \right)$, we thus have 
\begin{align*}
  ({\rm pr}_{{\frakp'}} \circ \eta_{{\frakp'}})_* \circ ({\rm pr}_\frakp \circ \eta_{\frakp})_* \circ {\rm pr}_{K_1,K_0',\ast} (z_1^{\underline{k}}) &=  \left( T_{\eta_\frakp} - q_\frakp \cdot p^{\sum_{\sigma \in \Sigma_{\frakp}}  k_{\sigma}} \right)\left( T_{\eta_{{\frakp'}}} - q_{{\frakp'}} \cdot p^{\sum_{\sigma \in \Sigma_{{\frakp'}}}  k_{\sigma}} \right) \cdot \left (   \tilde{\iota}_{\mathfrak{S}',*} \circ \iota^{\dagger,\underline{k}}_{\mathcal{O}_\mathfrak{m}} ( \mathbf{1}_{\mathfrak{S}'} ) \right).
\end{align*}    
We now reiterate the process for each remaining prime in $\Upsilon_{F,p}$ to get \[ \eta_{K_0',K_0,\ast} \circ {\rm pr}_{K_1,K_0',\ast} (z_1^{\underline{k}}) = \prod_{\frakp \in \Upsilon_{F,p}} \left( T_{\eta_\frakp} - q_\frakp \cdot p^{\sum_{\sigma \in \Sigma_{\frakp}}  k_{\sigma}} \right) \cdot \left (   \iota_{0,*} \circ \iota^{\dagger,\underline{k}}_{\mathcal{O}_\mathfrak{m}} ( \mathbf{1}_{K_0 \cap H} ) \right), \]
which reads as 
\[ {\rm Tr}_{[\eta]_{1,0}} (z_1^{\underline{k}}) = \prod_{\frakp \in \Upsilon_{F,p}} \left( T_{\eta_\frakp} - q_\frakp \cdot p^{\sum_{\sigma \in \Sigma_{\frakp}}  k_{\sigma}} \right) \cdot 
z_0^{\underline{k}} . \]
\end{proof}

Recall that $\mathscr{Z}^{\underline{k}}_1 : = \eta_{\ast} z^{\underline{k}}_1 $, with $\eta_{\ast}$ the normalized trace associated with multiplication by $\eta$ from $Y_G(K_1)$ to $Y_G(V_1)$. Proposition \ref{proponK1toK0} translates into the following :

\begin{theorem}\label{proponV1toV0}
Let $\underline{k} \in \Z^{\Sigma_E}$ be such that $k_\sigma=k_{\sigma'}$ for every $\sigma \in \Phi$. We have \[ {\rm pr}_{1,0, \ast} (\mathscr{Z}_1^{\underline{k}}) = \prod_{\frakp \in \Upsilon_{F,p}} \left( T_{\eta_{\frakp}} - q_{\frakp} \cdot p^{\sum_{\sigma \in \Sigma_{\frakp}}  k_{\sigma}} \right) \cdot 
\mathscr{Z}_0^{\underline{k}} \in  H^{2d} ( Y_G( V_{0}), \mathscr{V}^{\underline{k},0}_{\mathcal{O}_{\mathfrak{m}}}) . \]
\end{theorem}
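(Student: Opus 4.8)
The plan is to deduce Theorem \ref{proponV1toV0} directly from Proposition \ref{proponK1toK0} by relating the pushforward maps on the two towers of varieties. Recall that $\mathscr{Z}_1^{\underline{k}} = \eta_\ast z_1^{\underline{k}}$ and $\mathscr{Z}_0^{\underline{k}} = z_0^{\underline{k}}$ (since $V_0 = K_0$), where $\eta_\ast$ denotes the integrally normalized trace attached to $\eta:Y_G(K_1) \to Y_G(V_1)$. The key observation, analogous to Remark \ref{remarkonpassingbetweentowers} and to the commutative square used in the proof of Proposition \ref{Step1inNR}, is that there is a commutative diagram
\[ \xymatrix{ Y_G(K_1) \ar[rr]^-{\eta} \ar[d]_-{[\eta]_{1,0}} & & Y_G(V_1) \ar[d]^-{{\rm pr}_{1,0}} \\ Y_G(K_0) \ar[rr]^-{{\rm id}} & & Y_G(V_0), } \]
where the bottom map is the identity because $V_0 = K_0$ and $[\eta]_{1,0}$ is, by Definition \ref{Def:traceetar1r}, the composition of the projection $Y_G(K_1) \to Y_G(K_0')$ with ${\rm Tr}_\eta$, while ${\rm pr}_{1,0}: Y_G(V_1) \to Y_G(V_0)$ is the natural degeneracy map. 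Passing to the (integrally normalized) traces, this square yields the identity of maps in cohomology
\[ {\rm pr}_{1,0,\ast} \circ \eta_\ast = {\rm Tr}_{[\eta]_{1,0}}. \]
Here one must be careful that the normalization factors $p^{\sum_\sigma (k_\sigma + m)/2}$ attached to $\eta_\flat$ (Lemma \ref{Lem:traceeta}) match up on both sides; since $m=0$ and the $\eta_\frakp$-normalizations were chosen in Remark \ref{rmk:normofalletap} precisely so that $T_\eta = \prod_\frakp T_{\eta_\frakp}$, this is a bookkeeping check.

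Granting this, I would simply compute
\[ {\rm pr}_{1,0,\ast}(\mathscr{Z}_1^{\underline{k}}) = {\rm pr}_{1,0,\ast}(\eta_\ast z_1^{\underline{k}}) = {\rm Tr}_{[\eta]_{1,0}}(z_1^{\underline{k}}) = \prod_{\frakp \in \Upsilon_{F,p}} \left( T_{\eta_\frakp} - q_\frakp \cdot p^{\sum_{\sigma \in \Sigma_\frakp} k_\sigma} \right) \cdot z_0^{\underline{k}}, \]
the last equality being exactly Proposition \ref{proponK1toK0}. Since $z_0^{\underline{k}} = \mathscr{Z}_0^{\underline{k}}$ and the Hecke operators $T_{\eta_\frakp}$ act in the same way on $H^{2d}(Y_G(K_0),\mathscr{V}^{\underline{k},0}_{\mathcal{O}_\mathfrak{m}}) = H^{2d}(Y_G(V_0),\mathscr{V}^{\underline{k},0}_{\mathcal{O}_\mathfrak{m}})$, this is the claimed formula.

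The only real content beyond invoking Proposition \ref{proponK1toK0} is establishing the compatibility ${\rm pr}_{1,0,\ast} \circ \eta_\ast = {\rm Tr}_{[\eta]_{1,0}}$, which I expect to be the main (though mild) obstacle: one needs to check that the chain $Y_G(K_1) \xrightarrow{{\rm pr}} Y_G(K_0') \xrightarrow{\eta} Y_G(\eta^{-1}K_0'\eta \cap K_0) \xrightarrow{{\rm pr}} Y_G(K_0)$ defining $[\eta]_{1,0}$ factors through $Y_G(V_1)$ in the evident way, and that the local-system twist $\eta_\flat$ used to define $\eta_\ast$ on the $(K_r)\to(V_r)$ passage is the same as the one in ${\rm Tr}_\eta$. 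Both follow from the definitions in \S \ref{Towerplusmoments}, in particular from the factorization $K_1 \subseteq K_0' = K_0 \cap \eta K_0 \eta^{-1} \subseteq K_0$ recorded after Definition \ref{def:levelgroups} and from Remark \ref{remarkonpassingbetweentowers}; no genuinely new computation is required.
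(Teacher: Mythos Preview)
Your proposal is correct and follows essentially the same approach as the paper: the paper's proof simply invokes Proposition \ref{proponK1toK0} together with the commutative square
\[
\xymatrix{Y_G(K_{1}) \ar[rr]^-{\eta} \ar[d]_-{[\eta]_{1,0}} & &  Y_G(V_{1}) \ar[d]^-{{\rm pr}_{1,0}} \\ Y_G(K_0)\ar@{=}[rr] & & Y_G(V_0),}
\]
which is exactly the diagram you wrote down, and your identity ${\rm pr}_{1,0,\ast}\circ\eta_\ast={\rm Tr}_{[\eta]_{1,0}}$ is the cohomological translation of it.
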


\begin{proof}
It follows from Proposition \ref{proponK1toK0} and the commutative diagram  \[ \xymatrix{Y_G(K_{1}) \ar[rr]^-{\eta} \ar[d]_-{[\eta]_{1,0}} & &  Y_G(V_{1}) \ar[d]^-{{\rm pr}_{1,0}} \\ Y_G(K_0)\ar@{=}[rr] & & Y_G(V_0).  }   \] 
\end{proof}

Recall that we have the moment map \[  \mathrm{mom}^{\underline{k}}_{1}: H^i_{\rm Iw}(Y_G(V_\infty), \mathcal{O}_{\mathfrak{m}}) \rightarrow H^i(Y_G(V_1),\mathscr{V}^{\underline{k},0}_{\mathcal{O}_{\mathfrak{m}}}). \]
Denote by $\mathrm{mom}^{\underline{k}}_{0}$ the composition of $\mathrm{mom}^{\underline{k}}_{1}$ with ${\rm pr}_{1,0, \ast}$. Then we have the following : 

\begin{corollary}\label{coro:EulerFactor1}
Let $\underline{k} \in \Z^{\Sigma_E}$ be such that $k_\sigma=k_{\sigma'}$ for every $\sigma \in \Phi$. Then, 
\[\mathrm{mom}^{\underline{k}}_{0}\, \mathscr{Z}_{\infty} =  \prod_{\frakp \in \Upsilon_{F,p}} \left( 1 - q_{\frakp} \cdot p^{\sum_{\sigma \in \Sigma_{\frakp}}  k_{\sigma}}\, T_{\eta_{\frakp}}^{-1} \right) \cdot 
e^{\nord} \mathscr{Z}_0^{\underline{k}}.   \]
\end{corollary}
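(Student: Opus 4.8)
The plan is to combine Theorem \ref{proponV1toV0} with the defining properties of the Big Hirzebruch--Zagier class and of the moment maps established in Theorem \ref{BIGCLASS}. First I would recall that, by Theorem \ref{BIGCLASS} applied with $\pmb{r}=\pmb{1}$ (the multi-index with all entries equal to $1$), we have
\[
\mathrm{mom}^{\underline{k}}_{\pmb{1}}\, \mathscr{Z}_{\infty}= T_{\eta}^{-\pmb{1}}\, e^{\nord}\, \mathscr{Z}_{\pmb{1}}^{\underline{k}} = \Bigl(\prod_{\frakp \in \Upsilon_{F,p}} T_{\eta_{\frakp}}^{-1}\Bigr) e^{\nord} \mathscr{Z}_1^{\underline{k}},
\]
since the level subgroups $V_{\pmb{1}}$ and classes $\mathscr{Z}_{\pmb{1}}^{\underline{k}}$ agree with $V_1$, $\mathscr{Z}_1^{\underline{k}}$ of Definition \ref{def:levelgroups} and Definition \ref{defcyclesfinitelevel}. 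By definition $\mathrm{mom}^{\underline{k}}_{0}$ is the composition of $\mathrm{mom}^{\underline{k}}_{1} = \mathrm{mom}^{\underline{k}}_{\pmb{1}}$ with the pushforward ${\rm pr}_{1,0,\ast}$, so I would simply apply ${\rm pr}_{1,0,\ast}$ to the identity above.

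The key step is then to commute ${\rm pr}_{1,0,\ast}$ past the operator $\prod_{\frakp} T_{\eta_{\frakp}}^{-1}$ and past $e^{\nord}$. The ordinary idempotent is built from $T_\eta$ and hence commutes with everything in sight; the operators $T_{\eta_{\frakp}}^{-1}$ commute with ${\rm pr}_{1,0,\ast}$ because, after applying $e^{\nord}$, each $T_{\eta_{\frakp}}$ is invertible and the pushforward ${\rm pr}_{1,0,\ast}$ is a morphism of modules over the Hecke algebra generated by the $T_{\eta_{\frakp}}$ (this is the usual compatibility of degeneracy-map traces with Hecke correspondences away from the level-raising prime, and here at $p$ it follows from Lemma \ref{lem:Tplowerlevel} together with Remark \ref{rmk:normofalletap} expressing $T_\eta$ as the product of the $T_{\eta_\frakp}$). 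Therefore
\[
\mathrm{mom}^{\underline{k}}_{0}\, \mathscr{Z}_{\infty} = {\rm pr}_{1,0,\ast}\Bigl( \prod_{\frakp} T_{\eta_{\frakp}}^{-1}\, e^{\nord} \mathscr{Z}_1^{\underline{k}}\Bigr) = \prod_{\frakp} T_{\eta_{\frakp}}^{-1}\, e^{\nord}\, {\rm pr}_{1,0,\ast}(\mathscr{Z}_1^{\underline{k}}).
\]
Now I plug in Theorem \ref{proponV1toV0}, which computes ${\rm pr}_{1,0,\ast}(\mathscr{Z}_1^{\underline{k}}) = \prod_{\frakp} \bigl( T_{\eta_{\frakp}} - q_{\frakp} p^{\sum_{\sigma \in \Sigma_{\frakp}} k_{\sigma}}\bigr)\cdot \mathscr{Z}_0^{\underline{k}}$, and distribute the $\prod_\frakp T_{\eta_\frakp}^{-1}$ factor-by-factor over $\Upsilon_{F,p}$ to turn each $\bigl(T_{\eta_{\frakp}} - q_{\frakp} p^{\sum_{\sigma} k_{\sigma}}\bigr)$ into $\bigl(1 - q_{\frakp} p^{\sum_{\sigma \in \Sigma_{\frakp}} k_{\sigma}} T_{\eta_{\frakp}}^{-1}\bigr)$, yielding exactly the asserted formula.

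The main obstacle is the commutation of ${\rm pr}_{1,0,\ast}$ with the inverse Hecke operators $T_{\eta_{\frakp}}^{-1}$: one must be careful that $T_{\eta_{\frakp}}^{-1}$ only makes sense after the ordinary projector has been applied, so the manipulation should be carried out entirely inside $e^{\nord} H^{2d}(Y_G(V_1),\mathscr{V}^{\underline{k},0}_{\mathcal{O}_\mathfrak{m}})$ and $e^{\nord} H^{2d}(Y_G(V_0),\mathscr{V}^{\underline{k},0}_{\mathcal{O}_\mathfrak{m}})$ where each $T_{\eta_{\frakp}}$ is an automorphism. Granting that, and the fact (Lemma \ref{lem:Tplowerlevel} and its variants, together with the factorization of Remark \ref{rmk:normofalletap}) that ${\rm pr}_{1,0,\ast}$ intertwines the $T_{\eta_\frakp}$-actions at levels $V_1$ and $V_0$, the rest is the purely formal algebraic rearrangement described above.
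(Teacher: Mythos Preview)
Your proof is correct and follows exactly the paper's approach: the paper's own proof simply reads ``It follows from Theorem \ref{BIGCLASS} and Theorem \ref{proponV1toV0},'' and you have spelled out precisely this combination, including the care needed to commute ${\rm pr}_{1,0,\ast}$ with $e^{\nord}$ and the $T_{\eta_\frakp}^{-1}$ on the ordinary part.
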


\begin{proof}
It follows from Theorem \ref{BIGCLASS} and Theorem \ref{proponV1toV0}.
\end{proof}

\begin{remark}\label{Interestedreadercase}
Analogously, we could prove an explicit relation between $\mathrm{mom}^{\underline{k}}_{\pmb{r}}\, \mathscr{Z}_{\infty}$ and $e^{\nord} \mathscr{Z}_{\pmb{r}}^{\underline{k}}$, where $r_\frakp$ is zero for $\frakp$ in a proper subset $\mathfrak{S}$ of $\Upsilon_{F,p}$. In this case, the formula would read as \[\mathrm{mom}^{\underline{k}}_{\pmb{r}}\, \mathscr{Z}_{\infty} =  \prod_{\frakp \in \mathfrak{S}} \left( 1 - q_{\frakp} \cdot p^{\sum_{\sigma \in \Sigma_{\frakp}}  k_{\sigma}}\, T_{\eta_{\frakp}}^{-1} \right) \cdot \prod_{\frakp \in \Upsilon_{F,p}\smallsetminus \mathfrak{S}} T_{\eta_{\frakp}}^{-r_\frakp} \cdot
e^{\nord} \mathscr{Z}_{\pmb{r}}^{\underline{k}}.   \]
We leave the details to the interested reader.
\end{remark}

\subsection{Twisting the universal cycles}

We make precise the choice of the tame level and show how we can further twist the big Hirzebruch--Zagier cycles by certain characters of conductor a power of $p$.

Let $\mathfrak{c}$ be an ideal of $\mathcal{O}_E$ assumed to be coprime with $p$ in this section, and let $\mathfrak{c}_F$ be its intersection with $\mathcal{O}_F$. Let $\chi_F$ be a finite order Hecke character for $F$ of conductor $\mathfrak{c}_F$ and let $\chi = \chi_F \circ {\rm N}_{E/F}$, which we view as a character of $K_0(\mathfrak{c})$. Consider also a unitary Hecke character $\theta$ of $E$ of conductor $\mathfrak{b}$, coprime with $p$, and whose restriction to $\mathbb{A}^\times_F$ is $\chi_F \eta_{E/F}$.
Set $K^{(p)} = K_0(\mathfrak{c})$, with $\mathfrak{c}$ stable under $\tau \in {\rm Gal}(E/F)$. As the Hecke correspondences used in Definitions \ref{def:twistedcycle1} and \ref{def:twistedcycle2} commute with $T_\eta$ and the action of $u$, we can analogously define the twisted cycles $\mathscr{Z}_{\pmb{r}}^{\underline{k},\chi}(\mathfrak{c}),\mathscr{Z}_{\pmb{r}}^{\underline{k}}(\mathfrak{c})_{\theta} \in H^{2d}(Y_G(V_{\pmb{r}}), \mathscr{V}_{\mathcal{O}_{\mathfrak{m}}}^{\underline{k},0}(\chi))$\index{$\mathscr{Z}_{\pmb{r}}^{\underline{k},\chi}(\mathfrak{c})$}, up to possibly enlarging the coefficient ring $\mathcal{O}_{\mathfrak{m}}$, where (see Definition \ref{def:multilevels}) \[V_{\pmb{r}} =K_0(\mathfrak{c}) \cdot \prod_{\frakp \in \Upsilon_{F,p}} V_{\frakp,r_\frakp},\]
as well as $\mathscr{Z}_{\pmb{r}}^{\underline{k},\chi^{-1}\theta^2}(\mathfrak{c}') \in H^{2d}(Y_G(V_{\pmb{r}}), \mathscr{V}_{\mathcal{O}_{\mathfrak{m}}}^{\underline{k},0}(\chi^{-1}\theta^2))$\index{$\mathscr{Z}_{\pmb{r}}^{\underline{k},\chi^{-1}\theta^2}(\mathfrak{c}')$} where the tame component of $V_{\pmb{r}}$ is $K_0(\mathfrak{c}')$. 
By Theorem \ref{BIGCLASS}, these cycles can still be assembled into a $p$-adic family of cycles and denote by $\mathscr{Z}_{\infty}^{\chi}$\index{$\mathscr{Z}_{\infty}^{\chi}$} and $\mathscr{Z}_{\infty,\theta}$\index{$\mathscr{Z}_{\infty,\theta}$} the corresponding twisted big Hirzebruch--Zagier cycles in  \[
 e^{\rm n.ord} H^{2d}_{\rm Iw}(Y_G(V_\infty), \mathcal{O}_{\mathfrak{m}}(\chi))  \simeq e^{\rm n.ord} H^{2d}(Y_G(K_0(p^2\mathfrak{c})),\Lambda_{G/H}(\chi)),
 \]
and similarly for $\mathscr{Z}_{\infty}^{\chi^{-1}\theta^2}$\index{$\mathscr{Z}_{\infty}^{\chi^{-1}\theta^2}$}. Note that we could further twist these  big Hirzebruch--Zagier cycles by finite order characters of $T_G(\Z_p)/\overline{\mathcal{E}}L_G(\Z_p)$ as the following shows.  

Let $\chi'$ be any finite order character of conductor $p^r$ which factors through $T_G(\Z_p/p^r)/\overline{\mathcal{E}}L_G(\Z_p/p^r)$, and denote also by $\chi'$ the induced morphism of $\Lambda_{G/H}$. Note that the decomposition 
\[
\mathcal{O}_{\mathfrak{m}}[T_G(\Z_p/p^r)/{\mathcal{E}}(p^r)L_G(\Z_p/p^r)][1/p] \cong \prod_{\chi'}\mathcal{O}_{\mathfrak{m}}[1/p],
\]
where $\chi'$ ranges through such $L_\mathfrak{m}$-valued characters of conductor $p^r$ is given by the idempotent elements
\[
e_{\chi'}=\frac{1}{|T_G(\Z_p/p^r)/{\mathcal{E}}(p^r)L_G(\Z_p/p^r)|}\sum_{\gamma \in T_G(\Z_p/p^r)/{\mathcal{E}}(p^r)L_G(\Z_p/p^r)} {\chi'}^{-1}(\gamma) \gamma^*.
\]
For any $\underline{k} \in \Z^{\Sigma_E}$ such that $k_\sigma=k_{\sigma'}$ for every $\sigma \in \Phi$, the idempotent $e_{\chi'}$ induces the map $\mathrm{mom}_{r,\chi'}^{\underline{k}}$ in cohomology, which is defined as the composition 
 \begin{align*} 
  e^{\rm n.ord}H^{2d}_{\rm Iw}(Y_G(V_\infty), \mathcal{O}_{\mathfrak{m}}(\chi)) &\xrightarrow{} e^{\rm n.ord} H^{2d}(Y_G(V_r),\mathscr{V}^{\underline{k},0}_{\mathcal{O}_{\mathfrak{m}}}(\chi)) \\
  &\to e^{\rm n.ord} H^{2d}(Y_G(K_0(p^{r+1}\mathfrak{c})), \mathscr{V}^{\underline{k},0}_{L_{\mathfrak{m}}}( \chi{\chi'}^{-1})),
 \end{align*}
  where the first map is $\mathrm{mom}_{r}^{\underline{k}}$ and the second one is given by the composition \[\frac{1}{|T_G(\Z_p/p^r)/{\mathcal{E}}(p^r)L_G(\Z_p/p^r)|}\sum_{\gamma \in T_G(\Z_p/p^r)/{\mathcal{E}}(p^r)L_G(\Z_p/p^r)} {\chi'}^{-1}(\gamma) {\rm pr}^{\gamma V_r \gamma^{-1}}_{K_0(p^{r+1}\mathfrak{c}),\star} \circ \gamma^{-1}_*.\] Here, we see $\chi'$ as a character of $K_0(p^{r+1}\mathfrak{c})$ by setting
\[ \chi'_v\left ( \begin{smallmatrix} a & b \\ c & d \end{smallmatrix} \right)= \begin{cases} \chi'_v(\tau(a)^{-1}d) & \text{ if } \frakp_v \mid \mathfrak{c}p^{r+1} \\ 1 & \text{ otherwise,} \end{cases} \]
hence we can define the local system $\mathscr{V}^{\underline{k},0}_{L_{\mathfrak{m}}}( \chi{\chi'}^{-1})$ over $Y_G(K_0(p^{r+1}\mathfrak{c}))$ as in \cite[Proposition 6.3]{GetzGore}. More generally, if $\chi'$ is a character of conductor $p^{\pmb{r}}$, for $\pmb{r} = (r_\frakp)_\frakp \in \Z_{\geq 0}^{|\Upsilon_{F,p}|}$, one can define a map \[\mathrm{mom}_{\pmb{r},\chi'}^{\underline{k}}:
  e^{\rm n.ord}H^{2d}_{\rm Iw}(Y_G(V_\infty), \mathcal{O}_{\mathfrak{m}}(\chi)) \to e^{\rm n.ord} H^{2d}(Y_G(K_0(p^{\pmb{r}+\pmb{1}}\mathfrak{c})), \mathscr{V}^{\underline{k},0}_{L_{\mathfrak{m}}}( \chi{\chi'}^{-1})), \] \index{$\mathrm{mom}_{\pmb{r},\chi'}^{\underline{k}}$}
by using the idempotent
\[
e_{\chi'}=\frac{1}{|
K_0(p^{\pmb{r}+\pmb{1}}\mathfrak{c})/ \mathcal{O}_E^{*,+} V_{\pmb{r}}|}\sum_{\gamma \in
K_0(p^{\pmb{r}+\pmb{1}}\mathfrak{c})/ \mathcal{O}_E^{*,+} V_{\pmb{r}}} {\chi'}^{-1}(\gamma) {\rm pr}^{\gamma V_{\pmb{r}} \gamma^{-1}}_{K_0(p^{\pmb{r}+\pmb{1}}\mathfrak{c}),\star} \circ \gamma^{-1}_*.
\]

\begin{definition}
For a character  $\chi'$ of conductor $p^{\pmb{r}}$, we let \[\mathscr{Z}_{\pmb{r},\chi'}^{\underline{k},\chi}:= \mathrm{mom}_{\pmb{r},\chi'}^{\underline{k}}\, \mathscr{Z}_{\infty}^{\chi} \in  e^{\rm n.ord} H^{2d}(Y_G(V_{\pmb{r}}), \mathscr{V}_{\mathcal{O}_{\mathfrak{m}}}^{\underline{k},0}(\chi {\chi'}^{-1})).\]
\end{definition} 
These twisted cycles, which won't be used in the following, can be seen as a $p$-adic counterpart of Getz--Goresky's cycles of Definition \ref{def:twistedcycle2}.

\section{Hida theory and base change}

Consider a weight $\kappa= (\underline{k},m) \in (\Z_{\geq 0})^{\Sigma_E} \times \Z$ and let $L$ be a large enough number field to contain the Galois closure of $E$. As in the previous section, we let $\mathfrak{m}$ be a prime of $L$ above $p$ and take the $\mathfrak{m}$-adic completion $L_\mathfrak{m}$, with ring of integers  $\mathcal{O}_\mathfrak{m}$. 
For every prime ideal $\frakp$ of $F$ dividing $p$ we let $\Sigma_\frakp$ be the subset of $\Sigma_F$ of ($p$-adic) places $\sigma$ sending $\frakp$ to the maximal ideal of the valuation ring of $F_\frakp$. We introduce two Hecke operators on $S_{\kappa}(K_0(\mathfrak{c}),\chi)$, for any ideal $\mathfrak{c}$ in $\mathcal{O}_E$ with $\mathfrak{p} \mid \mathfrak{c}$ : the usual Hecke operator $U_{\frakp}$ defined using the double coset operator $\left[K_0(\mathfrak{c}) \left(\begin{smallmatrix}
     \varpi_{\mathfrak{p}} & 0  \\
    0  & 1
 \end{smallmatrix} \right)K_0(\mathfrak{c}) \right]$ and the normalized $U_\frakp^{\nord}:= {p}^{{\sum_{\sigma \in \Sigma_\frakp}\frac{k_\sigma-m}{2}}}U_\frakp$.\footnote{The term nearly ordinary goes back to Hida's first works on families for Hilbert modular forms. It is to stress that, except in the case of parallel weight Hilbert modular forms, the right notion of ordinarity is not slope-zero but slope $p^{-{\sum_{\sigma \in \Sigma_F}\frac{k_\sigma-m}{2} }}$. With the development of Hida theory in more general settings \cite{HidaPEL, BrascaRosso}, it is more customary to simply say ordinary, and implicitly referring to an optimally normalized $U_p$ operator. As we work with Hilbert modular forms, we prefer to  use Hida's original terminology. See \cite[\S 2]{HidaFourier}.} 
 
 \begin{lemma}\label{TetaVSUp}
 The map
 \[S_{\kappa}(K_0(\mathfrak{c}),\chi) \to H^{2d}( Y_G(K_0(\mathfrak{c})),\mathscr{V}^{\underline{k},{-m}}(
\chi^{-1}_0)_\C),\, f \mapsto  [\omega_J(f^{-\iota})], \]
is Hecke-equivariant in the sense that \[  \omega_J((f_{|K_0(\mathfrak{c}) x K_0(\mathfrak{c})})^{-\iota}) = T^{\text{naive}}_{x^{\iota}} \cdot  \omega_J(f^{-\iota}),\]
where we denoted $T^{\text{naive}}_{x^{\iota}}  = \pi_{2,\ast} \circ x^{\iota}_\ast \circ \pi_{1}^\ast$ the correspondence for  \[ \xymatrix{ & Y_G(x^\iota K_{0}(\mathfrak{c})x^{-\iota} \cap K_{0}(\mathfrak{c})) \ar[dl]_{\pi_1} \ar[r]^-{\cdot x^\iota} & Y_G(K_{0}(\mathfrak{c}) \cap x^{-\iota} K_{0}(\mathfrak{c})x^{\iota})\ar[dr]^{\pi_2} & \\  Y_G(K_0(\mathfrak{c})) & & &  Y_G(K_0(\mathfrak{c})),
}
\]
with $\pi_i$'s the natural degeneracy maps and  $x^{\iota}=\mathrm{det}(x)x^{-1}$.
 \end{lemma}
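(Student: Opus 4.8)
The statement is a compatibility (Hecke-equivariance) assertion for the association $f \mapsto \omega_J(f^{-\iota})$ under the cohomological normalization $x \mapsto x^{-\iota}$. The plan is to trace through the definitions of both sides and check that the two double-coset actions match once one accounts for the twist $x \mapsto x^{\iota}$. First I would recall from \cite[\S 5.5, \S 6.8]{GetzGore} that $\omega_J(f^{-\iota})$ is obtained from $f$ by evaluating $f^{-\iota}(g) = f(g^{-\iota})$ along the symmetric space and contracting against the standard archimedean vectors defining the local system $\mathscr{V}^{\underline{k},-m}(\chi^{-1}_0)_\C$; the key point is that the differential-form construction is functorial for right translation by $G(\A_f)$, i.e. right translation of $f$ by $x$ corresponds, under $f \mapsto \omega_J(f^{-\iota})$, to the geometric pullback-pushforward along the map ``$\cdot x^{\iota}$'' on the relevant tower of Shimura varieties, because $( f|_x )^{-\iota}(g) = f(x \cdot g^{-\iota}) = f( (g \cdot x^{-\iota})^{-\iota})$ and $(g x^{-\iota})^{-\iota}$ is right translation of $g^{-\iota}$ by $x^{-\iota \cdot -\iota} = x^{\iota}$ (using $(-\iota)\circ(-\iota) = \iota$ on the group, since $\det$ is a character).

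The main steps, in order, are: (1) unwind the double-coset operator $f \mapsto f|_{K_0(\mathfrak{c}) x K_0(\mathfrak{c})}$ as a sum over coset representatives $x = \sum_i x K_0(\mathfrak{c}) g_i$ of right translates $f|_{g_i}$; (2) apply the functoriality of the differential-form map under right translation to rewrite $\omega_J((f|_{g_i})^{-\iota})$ as the pullback of $\omega_J(f^{-\iota})$ along right multiplication by $g_i^{\iota}$ composed with the identification of local systems provided by \cite[Lemma 9.3, Proposition 7.13]{GetzGore}; (3) reassemble the sum as the geometric correspondence $T^{\text{naive}}_{x^{\iota}} = \pi_{2,\ast} \circ x^{\iota}_\ast \circ \pi_1^\ast$ acting on the cohomology class $[\omega_J(f^{-\iota})]$, using that the coset space $x^{\iota} K_0(\mathfrak{c}) x^{-\iota} \cap K_0(\mathfrak{c}) \backslash K_0(\mathfrak{c})$ parametrizes exactly the $g_i^{\iota}$; (4) verify that the normalization of the local-system isomorphism used in the differential-form construction (as in diagram \eqref{commdiagAL} and the conventions of \S\ref{sec:cohomology}) is the same one appearing in the definition of $T^{\text{naive}}_{x^{\iota}}$ on $\mathscr{V}^{\underline{k},-m}(\chi^{-1}_0)_\C$, so no spurious scalar appears.

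I expect the main obstacle to be bookkeeping step (4): carefully matching the twist on the local system $\mathscr{V}^{\underline{k},-m}(\chi^{-1}_0)$ when passing from $K_0(\mathfrak{c})$-level to the intersection level $x^{\iota}K_0(\mathfrak{c})x^{-\iota}\cap K_0(\mathfrak{c})$ and back, because the character $\chi_0$ is defined via the lower-left (resp. lower-right) entry and conjugating by $x^{\iota}$ permutes these conditions; one must check that the isomorphism of local systems induced by $\cdot x^{\iota}$ (guaranteed by \cite[Lemma 9.3]{GetzGore}) is compatible with the archimedean contraction defining $\omega_J$. A clean way to handle this is to first treat the case where $x$ is supported at a single place away from $\mathfrak{c}$ (so $\chi_0$ plays no role and the argument is the verbatim ``$T_p$ lowers the level'' computation à la \cite[(8.8)]{HidaAnnals88}), then note both sides are multiplicative in the place of $x$ and that the ramified-place contribution is forced by the character-twist functoriality already built into \cite[Proposition 7.6, Proposition 7.13]{GetzGore}. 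Once these pieces are in place, the equality $\omega_J((f|_{K_0(\mathfrak{c})xK_0(\mathfrak{c})})^{-\iota}) = T^{\text{naive}}_{x^{\iota}} \cdot \omega_J(f^{-\iota})$ follows directly, and the specialization to $x = \eta_\frakp$ recovering the relation with $U_\frakp$ (needed in Lemma \ref{TetaVSUp}) is immediate.
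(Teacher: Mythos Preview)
Your proposal is correct and takes essentially the same approach as the paper: both reduce the Hecke-equivariance to \cite[Proposition~7.13]{GetzGore} (together with the remark on p.~60 of \textit{loc.~cit.}), which is precisely the functoriality statement for $f\mapsto \omega_J(f^{-\iota})$ under right translation that you unwind in steps (1)--(4). The paper simply cites this result outright, whereas you spell out the coset-decomposition and local-system bookkeeping; your step~(4) is exactly what the cited remark handles.
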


\begin{proof}

The Hecke equivariance follows from \cite[Proposition 7.13]{GetzGore} and the Remark at p.60 of \textit{loc.cit.}.
\end{proof}

\noindent Twist the map of Lemma \ref{TetaVSUp} with the Atkin--Lehner involution to get 
 \begin{align}\label{twistedmapHeckeequivariance}
     S_{\kappa}(K_0(\mathfrak{c}),\chi) \to H^{2d}( Y_G(K_0(\mathfrak{c})),\mathscr{V}^{\underline{k},{-m}}(
\chi_0)_\C),\, f \mapsto  [\omega_J(W_\mathfrak{c}^*f^{-\iota})].
 \end{align}
Then, the Hecke correspondence $T_{\eta}$ introduced in Definition \ref{definitionTeta} and $U_\frakp^{\nord}$ are related as follows.

\begin{proposition}\label{prop:TetaUp}
Under the map \eqref{twistedmapHeckeequivariance}, if $\frakp \mid \mathfrak{c}$, the normalized correspondence $$ T_{\eta_\frakp} = p^{\sum_{\sigma \in \Sigma_\frakp}\frac{k_\sigma - m}{2} }T^{\text{naive}}_{\eta_\frakp},$$ with $\eta_\frakp = \left(\begin{smallmatrix}
     \varpi_{\mathfrak{p}} & 0  \\
    0  & 1
 \end{smallmatrix} \right)$, corresponds to $U_\frakp^{\nord}$ on $S_{\kappa}(K_0(\mathfrak{c}),\chi)$, and $T_\eta = \prod_\mathfrak{p} U_\frakp^{\nord}$.
\end{proposition}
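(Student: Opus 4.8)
The plan is to derive Proposition \ref{prop:TetaUp} from the Hecke-equivariance of Lemma \ref{TetaVSUp} together with the behaviour of the Atkin--Lehner involution on Hecke operators recorded around diagram \eqref{commdiagAL}, after reducing to one prime at a time.

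For the reduction: by Remark \ref{rmk:normofalletap} one has $T_\eta = \prod_{\frakp \in \Upsilon_{F,p}} T_{\eta_\frakp}$, the factors commute since they are supported at distinct primes above $p$, and the operators $U_\frakp^{\nord}$ likewise commute on $S_\kappa(K_0(\mathfrak{c}),\chi)$; so it suffices to prove, for each $\frakp \mid \mathfrak{c}$, that $T_{\eta_\frakp}$ acts as $U_\frakp^{\nord}$ under \eqref{twistedmapHeckeequivariance}. The hypothesis $\frakp \mid \mathfrak{c}$ guarantees the $\Gamma_0(\pi_\frakp)$-level at $\frakp$, so that the double coset $[K_0(\mathfrak{c})\,\eta_\frakp\,K_0(\mathfrak{c})]$ is the usual ``$U_\frakp$''-correspondence --- for a split $\frakp$, the product of the $U$-operators at the two primes of $E$ above $\frakp$, consistently with the factorisation of $\eta_\frakp$ in $G(\Q_p)$.

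Then I would trace the correspondence through the two twists built into \eqref{twistedmapHeckeequivariance}. Lemma \ref{TetaVSUp} says that under $f \mapsto [\omega_J(f^{-\iota})]$ the modular-form operator $[K_0(\mathfrak{c})xK_0(\mathfrak{c})]$ matches $T^{\text{naive}}_{x^\iota}$ on cohomology; precomposing with $W^*_{\mathfrak{c}}$, which by \eqref{commdiagAL} sends a Hecke operator to its adjoint $T^* = W_{\mathfrak{c},*}\circ T \circ W^*_{\mathfrak{c}}$ (cf. \cite[Lemma 9.3]{HidaFourier}), the twisted map \eqref{twistedmapHeckeequivariance} matches $[K_0(\mathfrak{c})xK_0(\mathfrak{c})]$ with $T^{\text{naive}}_{y^\iota}$, where $[K_0(\mathfrak{c})yK_0(\mathfrak{c})]$ denotes the adjoint of $[K_0(\mathfrak{c})xK_0(\mathfrak{c})]$. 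Taking $x = \eta_\frakp = \matrix{\pi_\frakp}{}{}{1}$, the adjoint of $U_\frakp$ is the double coset of $y = \eta_\frakp^\iota = \matrix{1}{}{}{\pi_\frakp}$, so $y^\iota = \eta_\frakp$ (as $\iota$ is an involution) and hence $U_\frakp$ corresponds, under \eqref{twistedmapHeckeequivariance}, to $T^{\text{naive}}_{\eta_\frakp}$ on $H^{2d}(Y_G(K_0(\mathfrak{c})),\mathscr{V}^{\underline{k},-m}(\chi_0)_{\C})$. It then remains only to compare normalisations: by Lemma \ref{Lem:traceeta} and Remarks \ref{rem:normalizationTp}, \ref{rmk:normofalletap} one has $T_{\eta_\frakp} = p^{e_\frakp}\,T^{\text{naive}}_{\eta_\frakp}$, where $e_\frakp$ is the exponent by which $\eta_\frakp^{-1}$ acts on the weight spaces it does not fix, and unwinding that weight computation --- using $k_\sigma = k_{\sigma'}$ and summing the contributions of the places of $E$ above $\frakp$ --- gives $e_\frakp = \sum_{\sigma \in \Sigma_\frakp}\tfrac{k_\sigma - m}{2}$, which is exactly the factor defining $U_\frakp^{\nord}$. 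Multiplying over $\frakp$ then yields $T_\eta = \prod_\frakp U_\frakp^{\nord}$.

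I expect the genuinely delicate point to be the Atkin--Lehner ``transpose'' bookkeeping: checking that the two successive twists ($f \mapsto f^{-\iota}$ and $W^*_{\mathfrak{c}}$) compose so as to carry $T^{\text{naive}}_{\eta_\frakp}$ to $U_\frakp$ and not to its adjoint, and that the cohomological normalisation of Remark \ref{rmk:normofalletap} (stated there for $\mathscr{V}^{\underline{k},m}$) does match the classical normalisation $U_\frakp^{\nord}$ once one passes to $\mathscr{V}^{\underline{k},-m}$ and keeps track of the splitting behaviour of $\frakp$ in $E$. Everything else is a routine unwinding of the definitions of \S\S\ref{sec:cohomologygroups}--\ref{Towerplusmoments} and of \cite[Proposition 7.13]{GetzGore}.
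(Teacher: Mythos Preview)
Your proposal is correct and follows essentially the same approach as the paper's proof. The paper argues in the reverse direction --- it starts from $T^{\text{naive}}_{\eta_\frakp}$ on cohomology, passes to its Atkin--Lehner adjoint $T^{\text{naive},*}_{\eta_\frakp}$ (the correspondence for $\left(\begin{smallmatrix}1&0\\0&\varpi_\frakp\end{smallmatrix}\right)$), and then applies Lemma \ref{TetaVSUp} to identify it with $U_\frakp$ on the untwisted map --- but this is the same computation you do, just read backwards; the ``delicate point'' you flag about the adjoint bookkeeping is exactly the content of the paper's two-line argument.
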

\begin{proof}
    By the commutative diagram \eqref{commdiagAL}, it is enough to check how the adjoint Hecke correspondence
\[ T^{\text{naive},*}_{\eta_\frakp}  = W_{\mathfrak{c},*} \circ T^{\text{naive}}_{{\eta_\frakp}} \circ W_{\mathfrak{c}}^* \]
acts on $\omega_J(f^{-\iota})$. Note that $T^{\text{naive},*}_{\eta_\frakp}$ is associated to the element ${ \left(\begin{smallmatrix}
     1 & 0  \\
    0  & \varpi_{\mathfrak{p}}
 \end{smallmatrix} \right)}$ and by Lemma \ref{TetaVSUp} it acts on $\omega_J(f^{-\iota})$ as $U_\frakp$. The result then follows from comparing the two normalization factors.
\end{proof}

\noindent In view of Proposition \ref{prop:TetaUp} we define an idempotent $e^{\rm n.ord}:= \lim_{n} \prod_{\frakp \mid p} {{U_{\frakp}^{\nord}}}^{n!}$, that we denote as the nearly-ordinary operator of Lemma \ref{lem:ordinaryIwasawaCoho}. 
Hida proved that it is well-defined on \[H^{\ast}( Y_G(K_{11}(\mathfrak{c}p^{\alpha})),\mathscr{V}^{\underline{k},m}_{\mathcal{O}_\mathfrak{m}})\] for all $\mathfrak{c}$ and ${\alpha} \geq 0$. 
As in \cite[\S 3.2]{DimitrovAJM}, we let $\mathcal{H}^{G}_{11}(\underline{k},{m})$ be  \[
 \mathrm{Hom}_{\mathcal{O}_\mathfrak{m}}\left (\varinjlim_{\alpha \geq 1} e^{\rm n.ord} H^{\star}_c( Y_G(K_{11}(\mathfrak{c}p^{\alpha})),\mathscr{V}^{\underline{k},{m}}_{\mathcal{O}_\mathfrak{m}} \otimes_{\mathcal{O}_\mathfrak{m}} L_\mathfrak{m}/\mathcal{O}_\mathfrak{m}), L_\mathfrak{m}/\mathcal{O}_\mathfrak{m} \right ).
\] 
Hida's main result \cite[Theorem 2.3]{HidaFamilies89} is that the space
\[
\mathcal{H}^{G}_{11}(\underline{k},{m})
\]
 is naturally a finite and torsion-free algebra over the Iwasawa algebra 
\[
\Lambda_G:=\mathcal{O}_\mathfrak{m}  \llbracket \varprojlim_{\alpha, \beta \geq 1} \mathbb{A}_{E,f}^{\times}/E^{\times}(\mathbb{A}^{\times}_{E,f} \cap K_{11}(\mathfrak{c}p^{\alpha})) \times (\mathcal{O}_E/p^{\beta})^\times \rrbracket
\] 
By \cite[Lemma 2.1]{HidaFamilies89}, the action of $(z,u) \in \varprojlim_{\alpha, \beta \geq 1} \mathbb{A}_{E,f}^{\times}/E^{\times}(\mathbb{A}^{\times}_{E,f} \cap K_{11}(\mathfrak{c}p^{\alpha})) \times (\mathcal{O}_E/p^{\beta})^\times $ corresponds to the action of the matrix  ${\matrix{uz}{0}{0}{z}}$, where with a slight abuse of notation $u$ denotes a lift to $\prod_\frakp \mathcal{O}^{\times}_{E,\frakp}$ (hence $u$ has non-trivial component only at $p$). 

\begin{remark}\leavevmode
\begin{enumerate}
    \item If $p$ does not divide the cardinality of $\mathbb{A}_{E,f}^{\times}/E^{\times}(\mathbb{A}^{\times}_{E,f} \cap K_{11}(\mathfrak{c}p^{\alpha}))$ then this group (and so the associated Iwasawa algebra) splits canonically as a torsion-part and a non-torsion part.
    \item Note that the variable $z$ is essentially the cyclotomic variable (assuming Leopoldt conjecture for $E$) and interpolates the weight $m$, and $u$ in $\prod_\frakp \mathcal{O}^{\times}_{E,\frakp}$ interpolates the weight $\underline{k}$.
\end{enumerate}
\end{remark}

Hida also shows that $\mathcal{H}^{G}_{11}(\underline{k},{m})$ is independent of the weight, in the sense that we have canonical isomorphisms (see \cite[Theorem 3.1]{DimitrovAJM})
\[  \mathcal{H}^{G}_{11}(\underline{k},{m}) \cong  \mathcal{H}^{G}_{11}(\underline{0},{0}) \otimes_{\mathcal{O}_\mathfrak{m}} \mathcal{O}_\mathfrak{m} v_{\underline{k}}^{\rm hw},\]
with $v_{\underline{k}}^{\rm hw}$ the highest weight vector in $V^{(\underline{k},{m})}$.  For ease of notation, we shall often drop the dependence on the weight from the notation, thus writing only $\mathcal{H}^{G}_{11}$.

 We refer to $\mathcal{H}^{G}_{11}(\underline{k},{m})$ as the cuspidal $p$-adic ordinary Hecke algebra as it is isomorphic to the $p$-adic Hecke algebra constructed using cuspidal Hilbert modular forms as in \cite[\S 3]{HidaFourier}: see the proofs of Theorem 3.2 and 3.3 in  \cite[\S 11]{HidaAnnals88}), which use the duality between the space of modular forms and the Hecke algebra, neatly stated in \cite[Thm 2.28]{HidaHilbertBook}.

We define $\mathcal{H}^{H}_{11}$ and $\Lambda_H$ similarly. the module $\mathcal{H}^{H}_{11}$ is of finite type over $\Lambda_H$ and interpolates systems of Hecke eigenvalues for nearly-ordinary Hilbert modular forms for $F$.

\begin{definition}\label{def:HidaFamilies}
 A Hida family $\mathbf{f}$ for $F$, resp. for $E$, is an irreducible component of $
 \mathcal{H}^{H}_{11}\otimes_{\mathcal{O}_\mathfrak{m}} L_\mathfrak{m}$, resp. $
  \mathcal{H}^{G}_{11}\otimes_{\mathcal{O}_\mathfrak{m}} L_\mathfrak{m}$. We shall denote by $\mathcal{K}_\mathbf{f}$\index{$\mathcal{K}_\mathbf{f}$} the fraction field of the coefficient of $\mathbf{f}$. 
\end{definition}
We invert $p$ as in what follows we will need to avoid congruences between base change forms and non-base change forms.

\subsection{Base change families}

Recall that, for a given weight $\kappa = (\underline{k},m)$ for $F$, we let $\hat{\kappa} = (\hat{\underline{k}},m)$ be the weight for $E$ for which $\hat{k}_\sigma =\hat{k}_{\sigma'}$ for every $\sigma \in \Phi$.
We have the following proposition, see \cite[\S 8.2]{GetzGore}.

\begin{proposition}\label{prop:basechangeHecke}
There is a base change map
\[
\mathrm{BC}:\mathcal{H}^{G}_{11}(\hat{\underline{k}},{m})  \rightarrow \mathcal{H}^{H}_{11}(\underline{k},{m})
\]
defined as follows: for every prime ideal $\mathfrak{q} \subset \mathcal{O}_E$, with $\mathfrak{q}$ coprime to $\mathfrak{c}$ and $ \mathfrak{d}_{E/F} \mathcal{O}_E$, which is above $\mathfrak{r}$ in $\mathcal{O}_F$  we have
\begin{align*}
    \mathrm{BC}(T_{\mathfrak{q}})&:= \begin{cases} T_{\mathfrak{r}}   & \text{ if } \mathfrak{r} \text{ splits}, \mathfrak{r}=\mathfrak{q} \mathfrak{q}',\\
    T_{\mathfrak{r}^2}-N_{F/\mathbb{Q}}(\mathfrak{r}) S_{\mathfrak{r}}  &  \mbox{ if } \mathfrak{r} \mbox { inert}, \mathfrak{r}=\mathfrak{q},\end{cases} \\ 
    \mathrm{BC}(S_{\mathfrak{q}})&:= \begin{cases} S_{\mathfrak{r}}     & \text{ if } \mathfrak{r} \text{ splits}, \mathfrak{r}=\mathfrak{q} \mathfrak{q}',\\
     S_{\mathfrak{r}^2}  &  \mbox{ if } \mathfrak{r} \mbox { inert}, \mathfrak{r}=\mathfrak{q}.\end{cases} 
\end{align*}
\end{proposition}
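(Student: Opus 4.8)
The statement to establish is that the base change map $\mathrm{BC}$ of Proposition \ref{prop:basechangeHecke} is well-defined, i.e. that the assignment on the generators $T_\mathfrak{q}, S_\mathfrak{q}$ (for $\mathfrak{q}$ coprime to $\mathfrak{c}\mathfrak{d}_{E/F}\mathcal{O}_E$) extends to a ring homomorphism $\mathcal{H}^G_{11}(\hat{\underline k},m) \to \mathcal{H}^H_{11}(\underline k,m)$. The strategy is to reduce to the local spherical Hecke algebras and to the explicit description of the base change morphism of $L$-groups recalled in \S on base change. First I would observe that both sides are commutative, finite-type, torsion-free $\mathcal O_\mathfrak m$-algebras (Hida's theorem), so it suffices to define $\mathrm{BC}$ compatibly on the abstract polynomial algebras generated by the $T_\mathfrak q, S_\mathfrak q$ away from $\mathfrak c\mathfrak d_{E/F}$ and check that the Hecke eigensystems match up: concretely, if $\lambda$ is the system of eigenvalues of a nearly ordinary cuspidal eigenform $f$ for $H$ at the relevant primes, then $\lambda\circ\mathrm{BC}$ should be the system of eigenvalues of the base change form $f_E$ (equivalently, of $\widehat{\pi(f)}$). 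Since by Hida's duality the Hecke algebra embeds into the product of its eigensystems, defining $\mathrm{BC}$ on eigenvalues compatibly is enough.

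Next I would do the local computation prime by prime above $p$ (or rather, at each prime $\mathfrak r$ of $F$ coprime to $\mathfrak c\mathfrak d_{E/F}$), using the description of the base change of an unramified principal series: if $\pi(f)_\mathfrak r$ is the unramified principal series with Satake parameters $(\alpha_1,\alpha_2)$ at $\mathfrak r$, then $\widehat{\pi(f)}_\mathfrak q$ at a prime $\mathfrak q\mid\mathfrak r$ of $E$ has Satake parameters obtained by composing with $b:{}^LG_1\to{}^LG_2$, namely $(\alpha_1,\alpha_2)$ if $\mathfrak r=\mathfrak q\mathfrak q'$ splits, and $(\alpha_1,\alpha_2)$ viewed through the degree-$2$ unramified extension — so with ``frobenius'' parameters $(\alpha_1^2,\alpha_2^2)$ up to the norm twist — if $\mathfrak r$ is inert with $\mathfrak q$ the unique prime above. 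Matching the Hecke polynomial $X^2 - a(\mathfrak q,g)X + \mathrm N(\mathfrak q)\chi(\mathfrak q)$ for $g=f_E$ against these parameters gives, in the split case, $a(\mathfrak q,f_E)=a(\mathfrak r,f)$ hence $\mathrm{BC}(T_\mathfrak q)=T_\mathfrak r$ and $\mathrm{BC}(S_\mathfrak q)=S_\mathfrak r$; in the inert case $\mathrm N(\mathfrak q)=\mathrm N(\mathfrak r)^2$ and a short manipulation of $\alpha_1^2+\alpha_2^2 = (\alpha_1+\alpha_2)^2 - 2\alpha_1\alpha_2$ combined with the relation $T_\mathfrak r^2 - T_{\mathfrak r^2} = \mathrm N_{F/\mathbb Q}(\mathfrak r)S_\mathfrak r$ yields $\mathrm{BC}(T_\mathfrak q) = T_{\mathfrak r^2} - \mathrm N_{F/\mathbb Q}(\mathfrak r)S_\mathfrak r$ and $\mathrm{BC}(S_\mathfrak q)=S_{\mathfrak r^2}$. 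These are exactly the formulas in the statement, so the map is consistent on all eigensystems coming from base change forms.

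To upgrade this from an identity of eigensystems to a genuine algebra map on the big Hecke algebras, I would invoke that $e^{\nord}\mathcal H^G_{11}$ acting on the part of cohomology spanned by base change forms is the relevant quotient (the image of $\mathcal H^G_{11}$ under the eigensystems attached to Galois-stable $\Pi$'s), and that on this locus the assignment factors through $\mathcal H^H_{11}$ by the eigenvalue matching; density of classical points in the Hida families (Hida's control/vertical-control theorems, as in \cite[Theorem 2.3]{HidaFamilies89}) then propagates the identity from classical weights to the whole algebra. Equivalently, one cites \cite[\S 8.2]{GetzGore} where precisely this construction is carried out; the proof here is then just a matter of recording the local Satake computation.

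\textbf{Main obstacle.} The routine part is the local Satake bookkeeping; the genuinely delicate point is ensuring the map is well-defined \emph{integrally} and at the level of the $\Lambda$-adic Hecke algebras rather than just after inverting $p$ and working with classical eigensystems — i.e. checking that $\mathrm{BC}$ respects the nearly ordinary idempotents $e^{\nord}$ on both sides and the identification of $U_\mathfrak p^{\nord}$-eigenvalues under base change, especially at primes above $p$ where inert primes of $F$ become a single prime of $E$ and the normalizations of Remark \ref{rmk:normofalletap} and Proposition \ref{prop:TetaUp} intervene. This is handled by the same eigenvalue-matching argument together with the fact that base change preserves nearly ordinariness (the $U_p$-slope scales predictably), but it requires care with the normalization factors $p^{\sum_{\sigma\in\Sigma_\frakp}(k_\sigma-m)/2}$ and the weight dictionary $\kappa\leftrightarrow\hat\kappa$.
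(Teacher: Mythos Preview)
The paper does not give a proof of this proposition at all: it simply records the statement with the pointer ``see \cite[\S 8.2]{GetzGore}''. Your sketch via the local Satake computation is correct and is essentially what lies behind that citation; in fact you give considerably more detail than the paper does.

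One comment: your ``main obstacle'' is misplaced for this particular proposition. The statement concerns the (unprojected) Hecke algebras $\mathcal H^G_{11}(\hat{\underline k},m)$ and $\mathcal H^H_{11}(\underline k,m)$, and the map is specified on the free generators $T_\mathfrak q, S_\mathfrak q$ of the local spherical Hecke algebras at unramified primes, so there is no integrality or $\Lambda$-adic subtlety to overcome here. The compatibility with $e^{\nord}$ and with the normalizations of $U_\frakp^{\nord}$ that you worry about only becomes relevant later in the paper, when $\mathrm{BC}$ is used to define base change Hida families via $e^{\nord}\mathcal H^{G/H}_{11}\to e^{\nord}\mathcal H^{H/Z_H}_{11}$; it is not part of Proposition~\ref{prop:basechangeHecke} itself.
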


\noindent Over $\mathbb{C}$, there is a natural section of $\mathrm{BC}$ (\textit{cf}. \cite[Lemma 8.1]{GetzGore}).

\begin{definition}
 Let $f_E$ be a primitive normalized Hilbert modular form of weight $(\hat{\underline{k}},m)$. We say that $f_E$ is the base change of a (primitive) Hilbert modular form $f$ for $F$ if the corresponding map of the Hecke algebra $\mathcal{H}^{G}_{11}(\hat{\underline{k}},{m})$ factors through $\mathrm{BC}$.
\end{definition}
\noindent This is compatible with the characterization given in Proposition \ref{AsaiForBC}: if $f_E$ is the base change of $f$ then the corresponding cuspidal automorphic representations are related by \[\widehat{\pi(f)} = \pi(f_E).\] 
Note that the conductor of $f$ is not necessarily the conductor of  $f_E$ intersected with $\mathcal{O}_F$ (\textit{cf}. \cite[\S E.6]{GetzGore}). Also note that $\mathrm{BC}$ maps nearly ordinary forms to nearly ordinary forms.

We define base change Hida families. Using the map of Proposition \ref{prop:basechangeHecke}, we could define a Hida family for $E$ to be a base change family from $F$ if it factors through $\mathrm{BC}$. We however refine this definition to better suit the construction of the adjoint $p$-adic $L$-function given in \S \ref{sec:constructionpadicLF}. We first decompose 
\[
\mathbb{A}_{E,f}^{\times}/E^{\times}(\mathbb{A}^{\times}_{E,f} \cap K_{11}(\mathfrak{c}p^{\alpha})) = {\mathbb{A}_{E,f}^{(p)}}^{\times}/E^{\times}(\mathbb{A}^{\times}_{E,f} \cap K_{11}(\mathfrak{c})) \times (E\otimes \mathbb{Z}_p)^{\times}/ E^{\times}( (E\otimes \mathbb{Z}_p)^{\times} \cap  K_{11}(p^{\alpha}))
\]
and similarly do it for $F$.
\begin{definition}\label{def:projtoIwAlgG/H}
 Let $\chi_0$ be a character of $\mathbb{A}_{E,f}^{\times}/E^{\times}(\mathbb{A}^{\times}_{E,f} \cap K_{11}(\mathfrak{c}))$ with values in $\mathcal{O}_{\mathfrak{m}}$. The nearly-ordinary Hecke-algebra of type $\Phi$ and tame character $\chi_0$ is
 \[
 \mathcal{H}^{G/H}_{11}:=  \mathcal{H}^{G}_{11} \otimes_{\Lambda_G} \Lambda_{G/H}
 \]
 where the map 
 \[
 \Lambda_G \rightarrow \Lambda_{G/H} 
 \]
 is induced by the map 
 \[
  \varprojlim_{\alpha, \beta \geq 1} \mathbb{A}_{E,f}^{\times}/E^{\times}(\mathbb{A}^{\times}_{E,f} \cap K_{11}(\mathfrak{c}p^{\alpha})) \times (\mathcal{O}_E/p^{\beta})^\times \rightarrow 
 T_G(\Z_p)/ \overline{\mathcal{E}} L_G(\Z_p)
 \]
 that sends $(1,u)$ to the image of the matrix  ${\matrix{u}{}{}{1}}$ in  $T_G(\Z_p)/ \overline{\mathcal{E}} L_G(\Z_p)$ and $(z,1)$, after writing $z=z^{(p)}z_p$, to  $\chi_0(z^{(p)}){\matrix{z_p}{}{}{z_p}} \in \Lambda_{G/H}$, where the matrix is modulo $\overline{\mathcal{E}} L_G(\Z_p)$.
\end{definition}

Our goal is to define base change families starting from irreducible components of $  \mathcal{H}^{G/H}_{11} \otimes_{\mathcal{O}_{\mathfrak{m}}} L_{\mathfrak{m}}$. Before we do so, notice that if $f$ is a cuspidal Hilbert modular form for $F$ of level  $K_{11}(\mathfrak{c}_F p^{\alpha})$ with trivial Nebentypus at $p$, then the base-change $f_E$ has weights constrained to have $k_{\sigma}=k_{\sigma'}$ and trivial Nebentypus at $p$. Moreover, its adjoint character (see Definition \ref{def_of_adjoint_character}) factors through ${\rm N}_{E/F}$ and so has $p$-component trivial on $\overline{\mathcal{E}} L_G(\Z_p)$. Motivated by this, we give the following : 
\begin{definition}
Let $\Lambda_{H/Z_H}:=\mathcal{O}_\mathfrak{m}\llbracket T_H(\Z_p)/Z_H(\Z_p)\rrbracket$
 and let $\chi_{F,0}$ be a character of $\mathbb{A}_{F,f}^{\times}/F^{\times}(\mathbb{A}^{\times}_{F,f} \cap K_{11}(\mathfrak{c}_F))$ with values in $\mathcal{O}_{\mathfrak{m}}$. Define
 \[
  \mathcal{H}^{H/Z_H}_{11}:=\mathcal{H}^{H}_{11} \otimes_{\Lambda_H} \Lambda_{H/Z_H}
 \]
 where the map $\Lambda_H \rightarrow \Lambda_{H/Z_H}$
 is induced by the map 
 \[
  \varprojlim_{\alpha, \beta \geq 1} \mathbb{A}_{F,f}^{\times}/F^{\times}(\mathbb{A}^{\times}_{F,f} \cap K_{11}(\mathfrak{c}_Fp^{\alpha})) \times (\mathcal{O}_F/p^{\beta})^\times \rightarrow 
 T_H(\Z_p)/ Z_H(\Z_p)
 \]
 that sends $(1,u)$ to the image of the matrix  ${\matrix{u}{}{}{1}}$ in  $T_H(\Z_p)/ Z_H(\Z_p)$ and $(z,1)$, with $z=z^{(p)}z_p$, to $\chi_{F,0}(z^{(p)})$.
\end{definition}

As $\mathrm{BC}$ induces a map $ \mathcal{H}^{G/H}_{11} \to  \mathcal{H}^{H/Z_H}_{11}$, we can define base change families as follows.

\begin{definition}\label{def:BaseChangefamily}
 We say that a family, ({\it i.e.} an irreducible component of  $  \mathcal{H}^{G/H}_{11} \otimes_{\mathcal{O}_{\mathfrak{m}}} L_{\mathfrak{m}}$) is a base change family   (from $F$) if it factors through $ \mathcal{H}^{H/Z_H}_{11}$ via the map $\mathrm{BC}$ of Proposition \ref{prop:basechangeHecke}, for two compatible levels. 
\end{definition}

\begin{remark}\leavevmode
\begin{enumerate}
    \item We note that the tame character of a base change family necessarily factors through ${\rm N}_{E/F}$.
    \item  As we are fixing the component outside $p$ of the Nebentypus of the family and we later force $m=0$, all the Hilbert modular forms for $F$ coming from our families will have trivial central character/Nebentypus at $p$ and only an adjoint character (in the sense of Definition \ref{def_of_adjoint_character}). There is no loss of generality for the range of applications in this manuscript as the adjoint $L$-function is invariant by central twists.
\end{enumerate}
\end{remark}

\begin{proposition}\label{prop:primitiveBaseChange}
There is an idempotent $\mathbf{1}_{\mathrm{BC}}$ in $ \mathcal{H}^{G/H}_{11} \otimes \mathrm{Frac}(\Lambda_{G/H})$ such that 
\[
\mathbf{1}_{\mathrm{BC}} \mathcal{H}^{G/H}_{11}
\]
consists exactly only of primitive base change families.
\end{proposition}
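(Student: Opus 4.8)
The plan is to realize $\mathbf{1}_{\mathrm{BC}}$ as the idempotent cutting out the union of those minimal primes of $e^{\rm n.ord}\mathcal{H}^{G/H}_{11}\otimes\mathrm{Frac}(\Lambda_{G/H})$ corresponding to \emph{primitive} base change families, and to show this collection of components is ``Galois-stable'' in a suitable sense so that the associated idempotent has coefficients in the Hecke algebra and not merely in an extension. First I would recall that $e^{\rm n.ord}\mathcal{H}^{G/H}_{11}$ is finite and torsion-free over $\Lambda_{G/H}$ (by Hida's theorem quoted in \S 6, together with the base change along $\Lambda_G\to\Lambda_{G/H}$), so that $e^{\rm n.ord}\mathcal{H}^{G/H}_{11}\otimes\mathrm{Frac}(\Lambda_{G/H})$ is a finite-dimensional commutative $\mathrm{Frac}(\Lambda_{G/H})$-algebra, hence a finite product of fields indexed by its minimal primes, i.e. by the Hida families. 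Any subset $\mathcal{S}$ of these factors determines a unique idempotent $e_{\mathcal{S}}$ in the total ring; the content of the proposition is that when $\mathcal{S}$ is the set of primitive base change families, $e_{\mathcal{S}}$ already lies in $e^{\rm n.ord}\mathcal{H}^{G/H}_{11}\otimes\mathrm{Frac}(\Lambda_{G/H})$ — which is automatic, since idempotents of a product of fields lie in the product — and that this subset is \emph{intrinsically characterized} so that $\mathbf{1}_{\mathrm{BC}}$ is canonical.

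The key steps, in order, would be: (1) Characterize base change families among the $\mathrm{Frac}(\Lambda_{G/H})$-points as those on which the Hecke eigensystem factors through $\mathrm{BC}: e^{\rm n.ord}\mathcal{H}^{G/H}_{11}\to e^{\rm n.ord}\mathcal{H}^{H/Z_H}_{11}$ (Definition \ref{def:BaseChangefamily}); equivalently, using Proposition \ref{AsaiForBC} and strong multiplicity one for $\mathrm{GL}_2$, as those components on which the eigenvalue of $T_\mathfrak{q}$ agrees with that of $T_{\mathfrak{q}^\sigma}$ for almost all $\mathfrak{q}$, where $\sigma$ generates $\mathrm{Gal}(E/F)$. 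This ``$\sigma$-invariance'' condition is cut out by the vanishing of the family of elements $T_\mathfrak{q}-T_{\mathfrak{q}^\sigma}\in e^{\rm n.ord}\mathcal{H}^{G}_{11}$, hence by an ideal of the Hecke algebra, so the locus of base change families is a Zariski-closed, and in fact (being a union of connected components after inverting $p$) open-closed, subset of $\mathrm{Spec}$. (2) Among base change families, distinguish the \emph{primitive} ones: by Theorem E.11 / Corollary E.12 of \cite{GetzGore}, a cuspidal $\Pi=\widehat{\pi}$ is the base change of exactly $\pi$ and $\pi\otimes\eta_{E/F}$, and primitivity means $\pi$ is itself not induced; equivalently, one removes the components coming from families induced from Hecke characters of quadratic extensions of $F$ contained in (or linearly disjoint appropriately from) $E$. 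Since such induced forms again form an open-closed locus (characterized by an extra self-twist symmetry, detectable on Hecke eigenvalues), their complement within the base change locus is open-closed, and one takes $\mathbf{1}_{\mathrm{BC}}$ to be the corresponding idempotent. (3) Check that $\mathbf{1}_{\mathrm{BC}}$ is idempotent (immediate, being a sum of the primitive idempotents attached to a subset of the field factors) and that $\mathbf{1}_{\mathrm{BC}}e^{\rm n.ord}\mathcal{H}^{G/H}_{11}$ equals precisely the product of the fields attached to primitive base change families, which is the assertion.

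The main obstacle I anticipate is step (2): carefully pinning down ``primitive'' so that it carves out an open-closed sublocus of the base change locus. One must verify that the property of being a \emph{primitive} (non-CM, non-induced-from-a-quadratic-subextension) base change family is constant along each irreducible component — i.e. that it cannot happen that a family is generically primitive base change but specializes to an imprimitive one, or vice versa. For CM/induced forms this is the standard fact that the self-twist character is locally constant in a Hida family (a family is CM iff one classical member is, by rigidity of the associated Galois pseudo-representation), so the induced locus is open-closed; one then invokes that $\mathrm{BC}$ is compatible with specialization (Proposition \ref{prop:basechangeHecke} and the compatibility $\widehat{\pi(f)}=\pi(f_E)$ noted after Definition \ref{def:BaseChangefamily}) to transport this to the $E$-side. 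A secondary subtlety is bookkeeping with the tame characters and the forced condition $m=0$, trivial Nebentypus at $p$: one has to ensure the maps $\Lambda_G\to\Lambda_{G/H}$ and $\Lambda_H\to\Lambda_{H/Z_H}$ are chosen compatibly under $\mathrm{BC}$ (which is exactly how Definitions \ref{def:projtoIwAlgG/H} and the $H/Z_H$-analogue were set up), so that ``factoring through $e^{\rm n.ord}\mathcal{H}^{H/Z_H}_{11}$'' is a well-posed, closed condition on $\mathrm{Spec}\big(e^{\rm n.ord}\mathcal{H}^{G/H}_{11}\otimes\mathrm{Frac}(\Lambda_{G/H})\big)$. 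Once these local-constancy statements are in place, the existence of $\mathbf{1}_{\mathrm{BC}}$ is formal.
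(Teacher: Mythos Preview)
Your proposal is correct in its essential idea --- the algebra $e^{\rm n.ord}\mathcal{H}^{G/H}_{11}\otimes\mathrm{Frac}(\Lambda_{G/H})$ is a finite commutative algebra over a field, hence semi-local, and any subset of its local factors determines an idempotent --- but you have substantially over-complicated the argument. The paper's proof is two sentences: commutativity and finite type give semi-locality, and $\mathbf{1}_{\mathrm{BC}}$ is simply the sum of the idempotents attached to those components which happen to be primitive base change families.

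The over-elaboration stems from treating ``primitive base change'' as a pointwise property of classical specialisations whose constancy along irreducible components must then be verified. In the paper's setup this is definitional: a Hida family \emph{is} an irreducible component of $e^{\rm n.ord}\mathcal{H}^{G/H}_{11}\otimes_{\mathcal{O}_\mathfrak{m}}L_\mathfrak{m}$ (Definition~\ref{def:HidaFamilies}), and a base change family is one which, as a component, factors through $\mathrm{BC}$ (Definition~\ref{def:BaseChangefamily}). There is therefore no local-constancy question to resolve; the set of components you want is just a subset of a finite set, and the idempotent is the corresponding sum of primitive idempotents in the product decomposition. Your steps (1)--(2) on $\sigma$-invariance of Hecke eigenvalues and rigidity of self-twist loci are correct observations, but they answer a question the proposition does not ask. (Incidentally, ``primitive'' here refers to families made of primitive forms in Hida's sense, as in \cite[Theorem~3.4]{HidaFourier} cited in \S\ref{sec:idempotent}, rather than ``non-induced'' as you gloss it; this does not affect the structure of the argument.)
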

\begin{proof}
As the Hecke operators are commutative and the nearly-ordinary Hecke algebra is of finite type, then 
\[
\mathcal{H}^{G/H}_{11} \otimes \mathrm{Frac}(\Lambda_{G/H})
\]
is semi-local. Then $\mathbf{1}_{\mathrm{BC}}$ is the idempotent associated with the sum of the idempotent of base change families which are primitive. 
\end{proof}

\subsection{The \texorpdfstring{$p$}{p}-stabilisation and forms of level \texorpdfstring{$V_{{{\pmb{r}}}}$}{Vr}}\label{sec:pstabilisation}

We write down explicitly the $p$-stabilisation of a nearly-ordinary cuspidal Hilbert modular form (for $F$) $f \in S_{\kappa}(K_0(\mathfrak{c}),\chi, \chi')$. Analogous definitions apply to forms for $E$. We refer the reader to \cite[Lemma 5.3]{HidaFourier} for details.

Suppose $f$ is nearly ordinary at $\mathfrak{p}$, {\it i.e.} 
$ U_\frakp f = a(\frakp,f)f$ and $\left| a(\frakp,f){p}^{{\sum_{\sigma \in \Sigma_\frakp}\frac{k_\sigma-m}{2}}}\right|_p=1$. There are three possible cases :

\begin{itemize}

  \item[(St)] If the corresponding automorphic representation is special at $\frakp$, then the level $\mathfrak{c}$ must be divisible exactly by $\mathfrak{p}$.  

  \item[(RamPS)] If $\frakp$ divides the conductor of $\chi$ or $\chi'$, then $\mathfrak{c}$ must be divisible by $\mathfrak{p}$.  

  \item[(UnrPS)] If none of the above is true, then the two Satake parameters ${\alpha_{1,\frakp}},{\alpha_{2,\frakp}}$ are both non-zero. 
 
\end{itemize}
In case (UnrPS), only one of these, say ${\alpha_{1,\frakp}}$, is such that $\left|{\alpha_{1,\frakp}}\Norm_{F/\Q}(\mathfrak{p})^{1/2}{p}^{{\sum_{\sigma \in \Sigma_\frakp}\frac{k_\sigma-m}{2}}}\right|_p=1$. 
We shall write ${\alpha_{\frakp}}={\alpha_{1,\frakp}}$\index{${\alpha_{\frakp}}$}, ${\alpha^\circ_{\frakp}}={\alpha_{1,\frakp}}{p}^{{\sum_{\sigma \in \Sigma_\frakp}\frac{k_\sigma-m}{2}}}$\index{${\alpha^\circ_{\frakp}}$}, and $\beta_{\frakp}={\alpha_{2,\frakp}}$. 

We define the $\frakp$-stabilisation $f_{{\alpha_{\frakp}}}$ of $f$ by \[
f_{{\alpha_{\frakp}}}:=f-\Norm_{F/\Q}(\mathfrak{p})^{1/2}{\beta_{\frakp}} f|[\varpi_{\frakp}]
\]
for $[\varpi_{\frakp}]$ the operator of \cite[\S 7.B]{HidaFourier}, given by the slash action of the matrix  $\left(\begin{smallmatrix}
     \varpi_{\frakp} & 0  \\
    0  & 1
 \end{smallmatrix} \right)$. Note that we have 
 \[
 a(\frakp,f_{{\alpha_{\frakp}}})={\alpha_{\frakp}}\Norm_{F/\Q}(\mathfrak{p})^{1/2}.
 \]
 
 In general, we shall write $f_\alpha$ for the composite of all the $\frakp$-stabilisation of $f$, for $\frakp$ for which $f$ is not primitive at $\frakp$.\\

We conclude with a lemma comparing forms (for $E$) of level $V_{\pmb{r}}$ with forms of level $K_0(\mathfrak{c} p^{\pmb{r}})$, with $\mathfrak{c}$ coprime with $p \mathcal{O}_E$. 
 Here, for $\pmb{r} \in \Z_{\geq 0}^{|\Upsilon_{F,p}|}$, we let $p^{\pmb{r}} =\prod_{\frakp \in \Upsilon_{F,p}} \prod_{\mathfrak{P}|\frakp} \mathfrak{P}^{r_\frakp}$ and  
\begin{align*}
       V_{\pmb{r}}&= K_{11}(\mathfrak{c})  \cdot \prod_{\frakp \in \Upsilon_{F,p}} V_{\frakp,r_\frakp}. 
\end{align*}  

Firstly, observe that, if $\chi$ and $\chi'$ are characters of $p$-part conductor $p^{{\pmb{r}}}$ such that at each prime $\mathfrak{P} \mid p^{\pmb{r}}$  
\begin{align}\label{eqcondateachpVr}
 \chi_\mathfrak{P}(d)\chi'_{\mathfrak{P}}(a^{-1}d) = 1,\text{ for } d \equiv \tau(a) \text{ mod } \mathfrak{P}^{r_\frakp},   
\end{align} then any form $f \in S_\kappa(K_0(\mathfrak{c}p^{\pmb{r}+\pmb{1}}),\chi, \chi')$ is invariant by $V_{\pmb{r}}$.

\begin{lemma}\label{FromGamma11toV}
Let $f$ be a modular form in $S_{\kappa}(K_0(\mathfrak{c}p^{{\pmb{r}}}),\chi, \chi')$ for two characters $\chi$ and $\chi'$ with $p$-part of conductor $p^{{\pmb{r}}_1}$ and $p^{{\pmb{r}}_2}$ respectively. Suppose ${\pmb{r}} = \max \left\{ {\pmb{r}}_1, {\pmb{r}}_2 \right\}$. Then the injection $f \mapsto f$
\[
S_{\kappa}(K_0(\mathfrak{c}p^{{\pmb{r}}}),\chi, \chi') \hookrightarrow S_\kappa(K_0(\mathfrak{c}p^{\pmb{r}+\pmb{1}}),\chi, \chi')
\] is an isomorphism on the ordinary parts. 
\end{lemma}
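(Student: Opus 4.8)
The statement asserts that the obvious inclusion of forms of level $K_0(\mathfrak{c}p^{\pmb{r}})$ with fixed Nebentypus $\chi$ and adjoint character $\chi'$ into forms of level $V_{\pmb{r}}$ becomes an isomorphism after applying the nearly-ordinary idempotent $e^{\rm n.ord}$. The plan is to prove this by exhibiting a one-sided inverse on ordinary parts and checking it is inverse to the inclusion. First I would unwind the definitions: by construction $V_{\pmb{r}} = K_{11}(\mathfrak{c}) \cdot \prod_{\frakp} V_{\frakp, r_\frakp}$ and the $p$-component $V_{\frakp,r_\frakp} = \eta_\frakp^{-r_\frakp} K_{\frakp,r_\frakp} \eta_\frakp^{r_\frakp}$ differs from the Iwahori-at-$p$-power level group $\Gamma_0(\mathfrak{P}^{r_\frakp})$-type group inside $K_0(\mathfrak{c}p^{\pmb{r}})$ precisely by a conjugation by the $U_p$-type element $\eta^{\pmb{r}} = \prod_\frakp \eta_\frakp^{r_\frakp}$. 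So the comparison map should be, up to normalization, the trace (or pullback-then-trace) along the degeneracy maps relating $Y_G(V_{\pmb{r}})$ and $Y_G(K_0(\mathfrak{c}p^{\pmb{r}}))$, i.e. essentially the operator built from $\eta^{\pmb{r}}$, which on modular forms is $U_p^{\pmb{r}}$ up to the optimal normalization of Proposition \ref{prop:TetaUp}.

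Second, I would verify that condition \eqref{eqcondateachpVr} — which is forced by the existence of nonzero forms of level $V_{\pmb{r}}$ with the prescribed characters — guarantees that the character $\chi\chi'$ restricted to the relevant torus quotient is compatible, so that the local system with coefficients $\mathscr{V}^{\underline{k},m}(\chi_0)$ and adjoint twist descends correctly to both $Y_G(V_{\pmb{r}})$ and $Y_G(K_0(\mathfrak{c}p^{\pmb{r}}))$; this makes the trace map well-defined. Then the two compositions
\[
S_\kappa(K_0(\mathfrak{c}p^{\pmb{r}}),\chi,\chi') \hookrightarrow S_\kappa(V_{\pmb{r}},\chi,\chi') \xrightarrow{\ \mathrm{tr}\ } S_\kappa(K_0(\mathfrak{c}p^{\pmb{r}}),\chi,\chi')
\]
and its reverse should each be computed to equal a power of $U_\frakp^{\nord}$ (times unit constants) on the nose, using the explicit coset decomposition of $U_p$ exactly as in the proof of Lemma \ref{lem:Tplowerlevel} (the "$T_p$ lowers the level" computation, going back to \cite[(8.8)]{HidaAnnals88} and \cite[Lemma 5.3]{HidaFourier}). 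Since $U_\frakp^{\nord}$ acts invertibly on the nearly-ordinary part — that is the defining property of $e^{\rm n.ord}$ — both compositions are isomorphisms on $e^{\rm n.ord}$, forcing the inclusion itself to be an isomorphism there.

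Third, I would assemble this prime-by-prime: the multi-index $\pmb{r}$ just means we iterate over $\frakp \in \Upsilon_{F,p}$, and since the $U_\frakp$ for distinct $\frakp$ commute and the relevant degeneracy diagrams factor as compositions of single-prime diagrams (as in Remark \ref{rmkonsingleCartDiag} / Remark \ref{rmk:normofalletap}), it suffices to treat one $\frakp$ at a time and multiply. The reduction to a single prime also lets me invoke directly the one-variable statement in \cite[Lemma 5.3]{HidaFourier} rather than redo the coset bookkeeping.

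\textbf{Main obstacle.} The genuinely delicate point is not the coset combinatorics but ensuring the character/normalization matches exactly across the conjugation by $\eta^{\pmb{r}}$: the group $V_{\frakp,r_\frakp}$ is a conjugate of a congruence subgroup, so its "upper-triangular-mod-$\mathfrak{P}^{r_\frakp}$" shape is twisted, and one must check that the prescribed Nebentypus $\chi$ together with the adjoint character $\chi'$, subject to \eqref{eqcondateachpVr}, extends to a character of $V_{\pmb{r}}$ that corresponds under $\eta^{\pmb{r}}$-conjugation to the character defining $S_\kappa(K_0(\mathfrak{c}p^{\pmb{r}}),\chi,\chi')$, and that the Atkin--Lehner / cohomological-normalization shift ($m \mapsto -m$, $\chi \mapsto \chi^{-1}$) introduced when passing to differential forms in Proposition \ref{prop:TetaUp} is tracked consistently. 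Once the dictionary between $V_{\pmb{r}}$-level data and $K_0(\mathfrak{c}p^{\pmb{r}})$-level data is pinned down, the rest is the standard "$U_p$ is an isomorphism on ordinary forms between adjacent levels" argument.
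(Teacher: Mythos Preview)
The paper's proof is a two-line appeal to Hida's control theorem \cite[Corollary~3.3]{HidaFourier}: since both $\chi$ and $\chi'$ have $p$-conductor dividing $p^{\pmb{r}}$, any nearly-ordinary form in $S_\kappa(V_{\pmb{r}},\chi,\chi')$ already lives at level $K_0(\mathfrak{c}p^{\pmb{r}})$, which is exactly surjectivity. No trace map or explicit inverse is constructed.

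Your approach is morally aligned with how the control theorem itself is proved, but the execution as written has two issues. First, there is a group-theoretic misidentification: you write that $V_{\frakp,r_\frakp}$ ``differs from the $\Gamma_0(\mathfrak{P}^{r_\frakp})$-type group \dots\ precisely by a conjugation by $\eta^{\pmb{r}}$,'' but $K_{\frakp,r_\frakp}$ is \emph{not} $\Gamma_0(\mathfrak{P}^{r_\frakp})$; it is a subgroup of the Iwahori $\Gamma_0(\pi_\frakp)$ with the extra conditions that $\eta_\frakp^{-r_\frakp} g \eta_\frakp^{r_\frakp}$ be integral and $g \bmod \frakp^{r_\frakp} \in \overline{Q}_G$. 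Consequently $V_{\pmb{r}}$ sits as a subgroup of $K_0(\mathfrak{c}p^{\pmb{r}+\pmb{1}})$ cut out by a torus condition, not as a conjugate of $K_0(\mathfrak{c}p^{\pmb{r}})$. Second, and relatedly, the composition of the inclusion with the trace along $V_{\pmb{r}} \subset K_0(\mathfrak{c}p^{\pmb{r}})$ is not a pure power of $U_\frakp^{\nord}$: the quotient $K_0(\mathfrak{c}p^{\pmb{r}})/V_{\pmb{r}}$ has both a unipotent piece (one copy of $N_G(\mathbb{F}_p)$, contributing a single $U_p$-like factor) and a torus piece $T_G/L_G \bmod p^{\pmb{r}}$ (contributing a character average, not a Hecke operator). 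So the coset bookkeeping you defer to Lemma~\ref{lem:Tplowerlevel} does not directly apply---that lemma compares adjacent levels in the $V_r$-tower, not $V_{\pmb{r}}$ to $K_0(\mathfrak{c}p^{\pmb{r}})$.

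Your strategy can be salvaged by separating the two pieces of the quotient (the torus average projects onto the $(\chi,\chi')$-isotypic component under condition~\eqref{eqcondateachpVr}, and the unipotent step is one $U_p$), then invoking Hida's level-lowering from $K_0(\mathfrak{c}p^{\pmb{r}+\pmb{1}})$ to $K_0(\mathfrak{c}p^{\pmb{r}})$ on ordinary forms with conductor-$p^{\pmb{r}}$ characters. But at that point you are reproving \cite[Corollary~3.3]{HidaFourier}, which is precisely what the paper cites.
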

\begin{proof}
By the control theorem of Hida \cite[Corollary 3.3]{HidaFourier} every ordinary form of $S_\kappa(K_0(\mathfrak{c}p^{\pmb{r}+\pmb{1}}),\chi, \chi')$ is in $S_{\kappa}(K_0(\mathfrak{c} p^{{\pmb{r}}}),\chi, \chi')$, as $\chi$ and $\chi'$ are both characters modulo $p^{{\pmb{r}}}$. So the natural injection is an isomorphism.
\end{proof}

\begin{remark}\label{remark_on_Vr_invariant_forms}
Let $\mathfrak{c}_F$ be the intersection of $\mathfrak{c}$ with $\mathcal{O}_F$. Consider a nearly ordinary form (for $F$) $f \in \mathcal{S}_{(\underline{k},0)}(K_0(\mathfrak{c}_F p^{\pmb{r}+\pmb{1}}),\chi_F,\chi_F')$ such that, at each $\frakp \in \Upsilon_{F,p}$, $\chi_F$ has trivial $\frakp$-component and the $\frakp$-part of $\chi_F'$ has conductor exactly $\frakp^{r_\frakp}$. Let $f_E \in  \mathcal{S}_{(\hat{\underline{k}},0)}(K_0(\mathfrak{c} p^{\pmb{r}+ \pmb{1}}),\chi,\chi')$ be the base change of $f$. Then $f_E$ has Nebentypus $\chi = \chi_F \circ {\rm N}_{E/F}$ with trivial component at every prime above $p$. Moreover, by \cite[Proposition E.9]{GetzGore}, it has adjoint character $\chi' = \chi_F' \circ {\rm N}_{E/F}$, whose $\mathfrak{P}$-part has conductor $\mathfrak{P}^{r_{\frakp}}$ if $\mathfrak{P}|\frakp$, for every $\frakp \in \Upsilon_{F,p}$. In particular, \eqref{eqcondateachpVr} is satisfied and $f_E $ is invariant by  $V_{\pmb{r}}$.
\end{remark}

\subsection{The idempotent for a Hida family with  type \texorpdfstring{$J$}{J}}\label{sec:idempotent}
Let $\mathbf{1}_{\mathbf{f}}$ be the idempotent corresponding to the Hida family $\mathbf{f}$ of Hilbert modular forms for $E$. It acts on $\varinjlim_\alpha H^{2d}_c( Y_G(K_{11}(\mathfrak{c}p^{\alpha})),\mathscr{V}^{\underline{k},{m}}_{\mathcal{O}_\mathfrak{m}} )$, and over this space there is an action of the Hecke algebra at infinity, generated by the operators $[K_{\infty}^+ g_{\infty} K_{\infty}^+]$, for $g_{\infty} \in \left\{ \matrix{\pm 1}{0}{0}{1} \right\}^{\Sigma_E}$. As $p$ is odd, the action of this operator can be diagonalised: for an $F$-type $J$, we let $\mathbf{1}^\varepsilon$ be the idempotent corresponding to the eigenspace for $\varepsilon$\index{$\varepsilon$}, where the only requirement on $\varepsilon$ is that, for every $\sigma \in J$,
\[
\varepsilon(\matrix{- 1}{0}{0}{1})_\sigma \varepsilon(\matrix{- 1}{0}{0}{1})_{\sigma'} =-1. 
\]
We could take, for example $\varepsilon$ such that 
$\varepsilon(\matrix{- 1}{0}{0}{1})_\sigma=1$ if $\sigma \in J$ and $-1$ otherwise. We finally define $\mathbf{1}^\varepsilon_{\mathbf{f}}$\index{$\mathbf{1}^\varepsilon_{\mathbf{f}}$} to be \[ \mathbf{1}^\varepsilon_{\mathbf{f}} : = \mathbf{1}^\varepsilon \mathbf{1}_{\mathbf{f}}.\] 

Recall we fixed an isomorphism $i_p: \C \simeq \overline{\Q}_p$. Suppose that the Hida family is made of primitive forms (see \cite[Theorem 3.4]{HidaFourier}) and let $f$ be a classical specialisation of the family corresponding to a morphism
\[
P: \mathcal{H}^{G}_{11} \rightarrow L_\mathfrak{m}.
\]
Then the space $\mathbf{1}^\varepsilon_{\mathbf{f}}  \varinjlim_\alpha H^{2d}_c( Y_G(K_{11}(\mathfrak{c}p^{\alpha})),\mathscr{V}^{\underline{k},{m}}_{\mathcal{O}_\mathfrak{m}} ) \otimes_P L_\mathfrak{m} $ is one-dimensional over $L_\mathfrak{m}$, see the bottom of p.~38 in \cite{Hidanoncritical}. 
Moreover by {\it loc. cit.} the surjection $H^{2d}_c( Y_G(K_{11}(\mathfrak{c}p^{\alpha})),\mathscr{V}^{\underline{k},{m}}_{\C} ) \twoheadrightarrow H^{2d}_{\rm cusp}( Y_G(K_{11}(\mathfrak{c}p^{\alpha})),\mathscr{V}^{\underline{k},{m}}_{\C} ) $ admits a canonical section. Hence, if we start with a nearly ordinary form $f \in S_{\kappa}(K_0(\mathfrak{c}p^{\alpha}),\chi) $, the class $\mathbf{1}^\varepsilon\omega_J(W^*_{\mathfrak{c}p^\alpha}f^{-\iota})$ belongs to $\mathbf{1}^\varepsilon_{\mathbf{f}}  \varinjlim_\alpha H^{2d}_c( Y_G(K_{11}(\mathfrak{c}p^{\alpha})),\mathscr{V}^{\underline{k},{m}}_{L_\mathfrak{m}} ) \otimes_{\iota_p^{-1}} \C$ and it is a generator of this one dimensional space over $\mathbb{C}$. We note that the presence of the Atkin--Lehner involution is motivated by the fact that, in view of Proposition \ref{prop:TetaUp}, the map of Lemma \ref{TetaVSUp} becomes $U_p$-equivariant after twisting by the Atkin--Lehner involution. 
 If $\omega^\varepsilon(f)^{\mathrm{alg}}$\index{$\omega^\varepsilon(f)^{\mathrm{alg}}$} is a generator of $\mathbf{1}^\varepsilon_{\mathbf{f}}H^{2d}_c( Y_G(K_{11}(\mathfrak{c}p^{\alpha})),\mathscr{V}^{\underline{k},{m}}_{L_\mathfrak{m}} )$, we write
 \[
\omega^\varepsilon(f)^{\mathrm{alg}} = \frac{\mathbf{1}^\varepsilon i_p(\omega_J(W^*_{\mathfrak{c}p^\alpha}f^{-\iota}))}{i_p(\Omega^\varepsilon(f))}.
 \]
We call $\Omega^\varepsilon(f)\in \C^\times$\index{$\Omega^\varepsilon(f)$} the period for $f$ corresponding to $\varepsilon$. Similar definition can be made for Hilbert modular forms over $F$. Under the hypothesis on $\varepsilon$, this is non-zero by the discussion in \cite[\S 6]{Hidanoncritical}.

Let $\mathbf{f}$ be a Hida family for $F$ corresponding to an irreducible component of $ \mathcal{H}^{H/Z_H}_{11} \otimes_{\mathcal{O}_{\mathfrak{m}}} L_{\mathfrak{m}}$, with idempotent $\mathbf{1}_{\mathbf{f}}$. Denote by $\mathbf{f}_E$ the base-change Hida family obtained by composing  $\mathbf{1}_{\mathbf{f}}$ with $\mathrm{BC}$, which we assume to be made of primitive forms. Denote $\mathbf{f}_E'$ the component of $ \mathcal{H}^{G}_{11} \otimes_{\mathcal{O}_\mathfrak{m}} L_\mathfrak{m}$ obtained from $\mathbf{f}_E$. Recall that 
 \[ \Lambda_{G/H}=\mathcal{O}_{\mathfrak{m}}\llbracket T_G(\Z_p)/\overline{\mathcal{E}}L_G(\Z_p)\rrbracket,\] where $L_G = \{ {\matrix {\alpha} { } {} {\tau(\alpha)}}\,:\, \alpha \in {\rm Res}_{\mathcal{O}_E/\Z} \mathbf{G}_{\rm m} \}$. Then, the idempotent $\mathbf{1}_{\mathbf{f}_E}$ acts on $\varinjlim_r H^{2d}_c( Y_G(V_r),\mathscr{V}^{\underline{k},{m}}_{\mathcal{O}_\mathfrak{m}} )$. Let $f_E$ be a classical specialisation of $\mathbf{f}_E$ and $\mathbf{f}_E'$ corresponding to $P :  \mathcal{H}^{G/H}_{11} \to L_\mathfrak{m}$ and $P':  \mathcal{H}^{G}_{11} \to  \mathcal{H}^{G/H}_{11} \xrightarrow[]{P} L_\mathfrak{m}$. If the $p$-part of its adjoint character has conductor $p^{\pmb{r}}$, then by Lemma \ref{FromGamma11toV} and Remark \ref{remark_on_Vr_invariant_forms}, we have an isomorphism 
 \[\mathbf{1}^\varepsilon_{\mathbf{f}_E'}  H^{2d}_c( Y_G(K_{11}(\mathfrak{c}p^{\pmb{r}})),\mathscr{V}^{\underline{k},{m}}_{\mathcal{O}_\mathfrak{m}} ) \otimes_{P'} L_\mathfrak{m} \to   \mathbf{1}^\varepsilon_{\mathbf{f}_E}  H^{2d}_c( Y_G(V_{\pmb{r}}),\mathscr{V}^{\underline{k},{m}}_{\mathcal{O}_\mathfrak{m}} ) \otimes_P L_\mathfrak{m},\]
 showing that the right hand side is one dimensional over $L_\mathfrak{m}$. Then a generator $\omega^\varepsilon(f_E)^{\mathrm{alg}}$ of  $\mathbf{1}^\varepsilon_{\mathbf{f}_E}  H^{2d}_c( Y_G(V_{\pmb{r}}),\mathscr{V}^{\underline{k},{m}}_{\mathcal{O}_\mathfrak{m}} ) \otimes_P L_\mathfrak{m}$ satisfies the equality
\[
\omega^\varepsilon(f_E)^{\mathrm{alg}} = \frac{\mathbf{1}^\varepsilon i_p(\omega_J(W^*_{\mathfrak{c}p^{\pmb{r}+\pmb{1}}}f_E^{-\iota}))}{i_p(\Omega^\varepsilon(f_E))}.
 \]
If $f_E$ is the base change of a form $f$ from $F$, then the period $\Omega^\varepsilon(f_E)$  differs from the product $\Omega^+(f)\Omega^-(f)$ by an algebraic non-zero number, where $\Omega^+(f)$  (resp. $\Omega^-(f)$) corresponds to the choice character $\varepsilon$ of $\left\{ \matrix{\pm 1}{0}{0}{1} \right\}^{\Sigma_F}$ such that
 \[ \varepsilon(\matrix{- 1}{0}{0}{1})_\sigma =1 \]
 (resp.
 \[
 \varepsilon(\matrix{- 1}{0}{0}{1})_\sigma =-1 ),
 \]
  for all $\sigma \in \Sigma_F$. Moreover, the ratio should be a $p$-adic unit, \cite[Conjecture 5.1]{Hidanoncritical}, see \cite{TilouineUrban} for the state of the art on this conjecture.

We shall compare families arising from our Iwasawa cohomology and those defined previously in this section.

\begin{lemma}\label{lemma:compareHeckeAlg}
 Let $f$ be a primitive, nearly-ordinary and base-change Hilbert modular cusp form of weight $(\hat{\underline{k}},0)$ and level $V_{\pmb{r}}$. 
Let ${\mathfrak{m}_f}$ be the maximal ideal of the abstract Hecke algebra corresponding to the $\overline{\mathbb{F}}_p$-valued system of eigenvalues of $f$. Suppose the image of the residual Galois representation of $f$ is not solvable.
 Let $\mathbf{f}$ be the unique base change primitive Hida family passing through it valued in $\mathcal{K}_{\mathbf{f}} : = {\rm Frac}(\mathbf{I}_\mathbf{f})$, with $\mathbf{I}_\mathbf{f}$ finite flat over an irreducible component of $\Lambda_{G/H}$, and denote by $\mathbf{1}_{\mathbf{f}}$ its idempotent.  \begin{enumerate}
     \item We have an isomorphism of $\mathcal{K}_{\mathbf{f}} $-modules
  \[
 \mathbf{1}_{\mathbf{f}}   \mathcal{H}^{G/H}_{11} \otimes_{\Lambda_{G/H}} \mathcal{K}_{\mathbf{f}}   \cong 
 \mathbf{1}_{\mathbf{f}} H^{2d}_{\rm Iw}(Y_G(V_\infty), \mathcal{O}_{\mathfrak{m}})\otimes_{\Lambda_{G/H}} \mathcal{K}_{\mathbf{f}}.
 \]
 \item The isomorphism is equivariant for the action of $\left\{ \matrix{\pm 1}{0}{0}{1} \right\}^{\Sigma_E}$. Moreover, for every character $\epsilon$ on $\left\{ \matrix{\pm 1}{0}{0}{1} \right\}^{\Sigma_E}$,  $\mathbf{1}^\varepsilon_{\mathbf{f}} H^{2d}_{\rm Iw}(Y_G(V_\infty), \mathcal{O}_{\mathfrak{m}})\otimes_{\Lambda_{G/H}} \mathcal{K}_{\mathbf{f}}$ is one dimensional over $\mathcal{K}_{\mathbf{f}}$.
 \end{enumerate}
\end{lemma}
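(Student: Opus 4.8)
The plan is to derive both assertions from Poincar\'e duality combined with Hida's control theory, after localising at $\mathfrak{m}_f$, which is non-Eisenstein because a residual representation with non-solvable image is in particular absolutely irreducible (its image lies in no Borel). A preliminary observation is that $\mathcal{K}_{\mathbf{f}}$ is a finite extension of a quotient field of $\Lambda_{G/H}$, hence of characteristic $0$: the functor $-\otimes_{\Lambda_{G/H}}\mathcal{K}_{\mathbf{f}}$ therefore inverts $p$ and annihilates $\Lambda_{G/H}$-torsion, so the characteristic-$p$ concentration issue flagged after Theorem \ref{ControlTheorem} does not intervene --- after inverting $p$ the nearly-ordinary \emph{cuspidal} cohomology of $Y_G$ is concentrated in degree $2d$, since the $(\mathfrak{g},K)$-cohomology of a discrete series of $\mathrm{GL}_2(\R)$ lives only in the middle degree $1$ and Hilbert cusp forms are tempered. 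Thus Hida's control theorem applies.

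Concretely, using Proposition \ref{prop:IwasawaOrdinarycoho} to pass between the two incarnations of ordinary Iwasawa cohomology, the control theorem identifies $\mathbf{1}_{\mathbf{f}}\,e^{\nord}H^{2d}_{\rm Iw}(Y_G(V_\infty),\mathcal{O}_\mathfrak{m})\otimes_{\Lambda_{G/H}}\mathcal{K}_{\mathbf{f}}$, after specialising at an arithmetic point $P$ of weight $(\hat{\underline{k}},0)$ and suitable $p$-power conductor $p^{\pmb{r}}$, with $\mathbf{1}_{\mathbf{f}_E}\,e^{\nord}H^{2d}(Y_G(V_{\pmb{r}}),\mathscr{V}^{\hat{\underline{k}},0}_{\mathcal{O}_\mathfrak{m}})\otimes_P L_\mathfrak{m}$. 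Dually, by its very definition $e^{\nord}\mathcal{H}^{G/H}_{11}$ is the $\mathcal{O}_\mathfrak{m}$-dual of the nearly-ordinary part of $\varinjlim_\alpha H^\bullet_c(Y_G(K_{11}(\mathfrak{c}p^\alpha)),\mathscr{V}^{\hat{\underline{k}},0}_{\mathcal{O}_\mathfrak{m}}\otimes L_\mathfrak{m}/\mathcal{O}_\mathfrak{m})$, tensored down along $\Lambda_G\to\Lambda_{G/H}$, which --- unravelling Definition \ref{def:projtoIwAlgG/H} --- is exactly the restriction to weights of type $\Phi$ with trivial central character at $p$; the control theorem again identifies its $\mathbf{f}$-part at $P$ with the $L_\mathfrak{m}$-dual of $\mathbf{1}_{\mathbf{f}_E}\,e^{\nord}H^{2d}_c(Y_G(V_{\pmb{r}}),\mathscr{V}^{\hat{\underline{k}},0}_{\mathcal{O}_\mathfrak{m}})\otimes_P L_\mathfrak{m}$.

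To finish (1), Poincar\'e duality on the $2d$-dimensional variety $Y_G(V_{\pmb{r}})$ together with the self-duality up to twist of $\mathscr{V}^{\hat{\underline{k}},0}$ identifies this last $L_\mathfrak{m}$-dual with $\mathbf{1}_{\mathbf{f}_E}\,e^{\nord}H^{2d}(Y_G(V_{\pmb{r}}),\mathscr{V}^{\hat{\underline{k}},0}_{\mathcal{O}_\mathfrak{m}})\otimes_P L_\mathfrak{m}$, and one matches compactly-supported with ordinary cohomology: the two differ by the cohomology of the Borel--Serre boundary, which is built from cohomology of torus bundles and thus carries only Eisenstein Hecke eigensystems, hence is killed by $\mathbf{1}_{\mathbf{f}}$ (equivalently, annihilated after localising at the non-Eisenstein ideal $\mathfrak{m}_f$). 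Since these identifications are compatible with the transition maps of both towers and with the unique --- by strong multiplicity one and the characterisation of base change in Proposition \ref{AsaiForBC} --- choice of base-change family $\mathbf{f}$ through $f$, they assemble into the isomorphism of $\mathcal{K}_{\mathbf{f}}$-modules of (1). For (2): Poincar\'e duality, Shapiro's lemma and the control isomorphism all commute with the finite group $\{\pm 1\}^{\Sigma_E}$ acting through the correspondences $[K_\infty^+ g_\infty K_\infty^+]$, so the isomorphism of (1) descends to each $\varepsilon$-isotypic summand; and $\mathbf{1}^\varepsilon_{\mathbf{f}}\,e^{\nord}H^{2d}_{\rm Iw}(Y_G(V_\infty),\mathcal{O}_\mathfrak{m})\otimes_{\Lambda_{G/H}}\mathcal{K}_{\mathbf{f}}$ is a finite free $\mathcal{K}_{\mathbf{f}}$-module whose rank, by the control isomorphism, equals $\dim_{L_\mathfrak{m}}\mathbf{1}^\varepsilon_{\mathbf{f}_E}\,e^{\nord}H^{2d}_c(Y_G(V_{\pmb{r}}),\mathscr{V}^{\hat{\underline{k}},0}_{\mathcal{O}_\mathfrak{m}})\otimes_P L_\mathfrak{m}$, which is $1$ by the discussion recalled in \S\ref{sec:idempotent} (following \cite{Hidanoncritical}), valid under our standing hypothesis relating $\varepsilon$ to the $F$-type $J$.

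I expect the main obstacle to be the comparison of compactly-supported with ordinary cohomology after applying $\mathbf{1}_{\mathbf{f}}$: the boundary of a Hilbert modular variety is a union of generalised tori rather than a finite set of cusps, so one must argue with some care that its contribution to degree-$2d$ cohomology is Eisenstein and then invoke irreducibility of $\bar{\rho}_f$; a secondary, more bookkeeping-type point is verifying that $-\otimes_{\Lambda_G}\Lambda_{G/H}$ on the Hecke side corresponds \emph{exactly} to the $p$-power-conductor, type-$\Phi$, trivial-central-character-at-$p$ normalisation implicit in the cohomology of the $V_{\pmb{r}}$.
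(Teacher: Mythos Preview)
Your approach is correct and arrives at the same destination as the paper, but by a genuinely different route. The paper does not invoke Poincar\'e duality or the Borel--Serre boundary explicitly: instead it appeals to \cite[Theorem~A]{CarTam} to obtain \emph{integral} concentration of the $\mathfrak{m}_f$-localised cohomology in middle degree (this is where the non-solvable hypothesis enters for them), then feeds this into \cite[Theorems~4.4, 6.6]{DimitrovENS} and \cite[Theorem~3.8]{DimitrovAJM} as a black box to obtain $e^{\nord}\mathcal{H}^{G}_{11}(\hat{\underline{k}},0)_{\mathfrak{m}_f}\simeq\varprojlim_r e^{\nord}H^{2d}(Y_G(K_{11}(\mathfrak{c}p^r)),\mathscr{V}^{\hat{\underline{k}},0}_{\mathcal{O}_\mathfrak{m}})_{\mathfrak{m}_f}$ directly at the integral level, before passing to $\mathcal{K}_{\mathbf{f}}$. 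Your argument works entirely after inverting $p$, gets cuspidal concentration from $(\mathfrak{g},K)$-cohomology, and uses the non-solvable hypothesis only to ensure $\mathfrak{m}_f$ is non-Eisenstein so as to kill the boundary; Poincar\'e duality then replaces Dimitrov's packaged isomorphism. The paper's route is shorter on the page but leans on deeper external input (Caraiani--Tamiozzo); yours is more self-contained but, as you correctly flag, the boundary analysis for Hilbert modular varieties requires care. For part~(2) the paper additionally cites \cite[Theorem~4.7]{ShethControl} for the moment-map control isomorphism, which you absorb into ``the control isomorphism''. One minor slip: the one-dimensionality of $\mathbf{1}^\varepsilon_{\mathbf{f}_E}H^{2d}_c(Y_G(V_{\pmb{r}}),\mathscr{V}^{\hat{\underline{k}},0}_{\mathcal{O}_\mathfrak{m}})\otimes_P L_\mathfrak{m}$ holds for \emph{every} character $\varepsilon$, as the lemma asserts and as \cite{Hidanoncritical} gives --- the compatibility of $\varepsilon$ with an $F$-type $J$ is only needed later, for the period $\Omega^\varepsilon(f)$ to be non-zero, not for the rank computation here.
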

\begin{proof}
First, note that by Control Theorem \ref{ControlTheorem}, there is an irreducible component of $\mathcal{H}_{\rm Iw }\otimes_{\Lambda_{G/H}} \mathrm{Frac}(\Lambda_{G/H})$
that shares the same Hecke eigenvalues with $\mathbf{f}$ on a dense set of points. By a slight abuse of notation, we also denote by $ \mathbf{1}_{\mathbf{f}}$ the idempotent in $\mathcal{H}_{\rm Iw } \otimes_{\Lambda_{G/H}} \mathrm{Frac}(\Lambda_{G/H})$ corresponding to this irreducible component.

By \cite[Theorem A and Remark 1.1.1 (3)]{CarTam} that is, using an argument with the Hochschild--Serre spectral sequence if $\underline{k} \neq \underline{0}$, the cohomology of the ${\mathfrak{m}_f}$-isotypical component is concentrated only in middle degree. 
Using this fact, one can extend \cite[Theorem 4.4 and 6.6]{DimitrovENS} to allow ${\mathfrak{m}_f}$ in these theorems. 
We can then use \cite[Theorem 3.8]{DimitrovAJM} (or rather its proof, see the references therein)
to get 
\begin{align*}\label{DimitrovControl}
   \mathcal{H}^{G}_{11}(\hat{\underline{k}},0)_{\mathfrak{m}_f} \simeq  \varprojlim_{r \geq 1} e^{\rm n.ord} H^{2d}( Y_G(K_{11}(\mathfrak{c}p^{r})),\mathscr{V}^{\hat{\underline{k}},0}_{\mathcal{O}_\mathfrak{m}} )_{\mathfrak{m}_f}.
\end{align*}
This and the map of the inverse system induced by projection from level $K_{11}(\mathfrak{c}p^{{\pmb{r}}})$ to $V_{{{\pmb{r}}}}$ induce an isomorphism 
\[ \mathcal{H}^{G/H}_{11}(\hat{\underline{k}},0)_{\mathfrak{m}_f} \simeq  \varprojlim_{r \geq 1} e^{\rm n.ord} H^{2d}( Y_G(V_r),\mathscr{V}^{\hat{\underline{k}},0}_{\mathcal{O}_\mathfrak{m}} )_{\mathfrak{m}_f},  \]
which extends to a neighbourhood of $f$. We then obtain the desired isomorphism after tensoring by  $\mathcal{K}_\mathbf{f}$, completing the proof of (1).

The equivariance for the action of $\left\{ \matrix{\pm 1}{0}{0}{1} \right\}^{\Sigma_E}$ follows from the fact that the action of the Hecke algebra at infinity commutes with the ordinary idempotent $e^{\rm n.ord}$. 
Moreover, a verbatim translation of the proof of parts (b) and (c) of \cite[Theorem 4.7]{ShethControl}  shows that the moment map ${\rm mom}_{\pmb{r}}^{\underline{k}}$ gives an isomorphism (up to enlarging $\mathcal{O}_\mathfrak{m}$ so that it contains the Hecke eigenvalues of $f$)
\[ e^{\rm n.ord} H^{2d}_{\rm Iw}( Y_G(V_\infty),\mathcal{O}_\mathfrak{m} )_{\mathfrak{m}_f} \otimes_{\Lambda_{G/H,\pmb{r}}} \mathcal{O}_\mathfrak{m} v_{\hat{\underline{k}}}^{\rm hw} \simeq    e^{\rm n.ord} H^{2d}( Y_G(V_{\pmb{r}}),\mathscr{V}^{\hat{\underline{k}},0}_{\mathcal{O}_\mathfrak{m}} )_{\mathfrak{m}_f}. \]
This implies that  $\mathbf{1}_{\mathbf{f}} H^{2d}_{\rm Iw}(Y_G(V_\infty), \mathcal{O}_{\mathfrak{m}})\otimes_{\Lambda_{G/H}} \mathcal{K}_{\mathbf{f}}$ is finite dimensional over $\mathcal{K}_{\mathbf{f}}$. Furthermore, if $\varepsilon$ denotes any character of $\left\{ \matrix{\pm 1}{0}{0}{1} \right\}^{\Sigma_E}$ and if $P:  \mathcal{H}_{11}^{G/H} \to L_\mathfrak{m}$ is the point of $\mathbf{f}$ corresponding to the primitive form $f$ of level $V_{\pmb{r}}$, we then have \begin{align*}
    \mathbf{1}^\varepsilon_{\mathbf{f}} H^{2d}_{\rm Iw}(Y_G(V_\infty), \mathcal{O}_{\mathfrak{m}})\otimes_{P} L_{\mathfrak{m}} &\simeq \mathbf{1}^\varepsilon_{\mathbf{f}}H^{2d}( Y_G(V_{\pmb{r}}),\mathscr{V}^{\hat{\underline{k}},{0}}_{  \mathcal{O}_{\mathfrak{m}}})\otimes_{P} L_{\mathfrak{m}} \\ 
  &\simeq  \mathbf{1}^\varepsilon_{\mathbf{f}}H^{2d}_c( Y_G(V_{\pmb{r}}),\mathscr{V}^{\hat{\underline{k}},{0}}_{  \mathcal{O}_{\mathfrak{m}}})\otimes_{P} L_{\mathfrak{m}}.
\end{align*}
As $f$ is primitive, by Lemma \ref{FromGamma11toV} the latter is one dimensional. This proves (2).
\end{proof}

Lemma \ref{lemma:compareHeckeAlg} allows us to relate the chosen generator $\omega^\varepsilon(f)^{\mathrm{alg}}$ of $\mathbf{1}^\varepsilon_{\mathbf{f}}H^{2d}_c( Y_G(V_{\pmb{r}}),\mathscr{V}^{\hat{\underline{k}},{0}}_{  \mathcal{O}_{\mathfrak{m}}})\otimes_{P} L_{\mathfrak{m}}$ with the specialization of a generator of $\mathbf{1}^\varepsilon_{\mathbf{f}} H^{2d}_{\rm Iw}(Y_G(V_\infty), \mathcal{O}_{\mathfrak{m}})\otimes_{\Lambda_{G/H}} \mathcal{K}_{\mathbf{f}}$. Fix a generator $G_\mathbf{f}$ so that when evaluating at the point $P$ corresponding to $f$, it will give the following cohomology class 
\begin{align}\label{SettingaGen}
    G_\mathbf{f}[P] =  \frac{\mathbf{1}^\varepsilon \iota_p(\omega_J(W^*_{\mathfrak{c}p^\alpha}f^{-\iota}))}{\iota_p(\Omega^\varepsilon_p(f))} \in \mathbf{1}^\varepsilon_{\mathbf{f}}H^{2d}_c( Y_G(V_{\pmb{r}}),\mathscr{V}^{\hat{\underline{k}},{0}}_{  \mathcal{O}_{\mathfrak{m}}})\otimes_{P} L_{\mathfrak{m}}
\end{align}
A priori, $G_\mathbf{f}[P]$ and $\omega^\varepsilon(f)^{\mathrm{alg}}$ are not equal, but the periods $\Omega^\varepsilon_p(f)$ and $\Omega^\varepsilon(f)$ differ by a non-zero element of $L_\mathfrak{m}$.

\section{Adjoint \texorpdfstring{$p$}{p}-adic \texorpdfstring{$L$}{L}-functions}

\subsection{Rankin--Selberg integrals}\label{sec:RSintegrals}

In this section, we introduce the auxiliary analytic calculations used to determine the Euler factor at $p$ appearing in the interpolation formula of our $p$-adic $L$-function.

Let $J$ be an $F$-type for $E$ and define $$w_J = (\gamma_\sigma)_{\sigma \in \Sigma_E} \in G(\mathbb{R}), \text{ where }  \gamma_\sigma = \begin{cases} \matrix{- 1}{0}{0}{1} & \text{ if } \sigma \in J \\ \matrix{1}{0}{0}{1} & \text{ if } \sigma \not\in J. \end{cases}$$ We denote by $\iota_J : G(\mathbb{A}) \to G(\mathbb{A})$ the map given by right multiplication by $w_J$. Note that, as explained in \cite[Section 6.11]{GetzGore}, $\iota_J$ induces the map on $G(\mathbb{R})/ K_\infty \to G(\mathbb{R})/ K_\infty,\,  (z_\sigma) \mapsto (z_\sigma')$, where $$z_\sigma' = \begin{cases} z_\sigma & \text{ if } \sigma \in J \\ \overline{z}_\sigma & \text{ if } \sigma \not\in J. \end{cases}$$ 
Let $\chi_F$ be a unitary Hecke character of $\A^\times_F$. Denote $\chi : = \chi_F \circ {\rm N}_{E/F}$ and let $\theta$ be a unitary Hecke character of $\mathbb{A}^\times_E$ such that its restriction to $\mathbb{A}^\times_F$ is $\chi_F \eta_{E/F} $. Let $\kappa = (\underline{k},0)$ be a weight for $F$ and let $\hat{\kappa}= (\hat{\underline{k}},0)$ be the corresponding weight for $E$. For each $g \in S_{\hat{\kappa}}(K_0(\mathfrak{c}),\chi)$ and $s \in \C$ with ${\rm Re}(s)$ sufficiently large, let 
\[ I(g,s) := \int_{F^\times_+ \backslash \A^\times_{F,+}} \int_{F  \backslash \A_{F}} ( g \otimes \theta^{-1} ) (\iota_J \left(\begin{smallmatrix} y & x \\ 0 & 1 \end{smallmatrix} \right) ) |y|^s_{\A_F} dx d^\times y. \]
The integral $I(g,s)$, which is the key analytic input of \cite[Theorem 10.1]{GetzGore}, builds a bridge between the Asai $L$-function of $g$ twisted by $\theta^{-1}$ and the Hirzebruch--Zagier cycles, as it calculates the former and its residue at $s=0$ is related to the value of the cohomological pairing between the cycles and $\omega_J(g^{-\iota})$. Recall that if $\mathfrak{b}$ is the conductor of  $\theta$, we denote by  $\mathfrak{c}'=\mathfrak{c}\mathfrak{b}^2$ and $\mathfrak{c}'_F$ its intersection with $\mathcal{O}_F$.

\begin{proposition}\label{AsaiIR} If $g$ is a simultaneous eigenform for all Hecke operators,
\[ \zeta^{\mathfrak{c}'_F}_F(2s+2) I(g,s) = C(s) \cdot L({\rm As}(g \otimes \theta^{-1}),s+1),\]   where \[C(s) := \frac{|D_F \Norm_{F/\Q}(D_{E/F})^{1/2}|^{s+2} |D_F|^{1/2} \prod_{\sigma \in \Sigma_F} \Gamma(k_\sigma + 2 + s)}{ (4 \pi)^{\sum_{\sigma \in \Sigma_F} (k_\sigma + 2 + s) }}.\]
\end{proposition}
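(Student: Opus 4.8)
The plan is to establish the identity by unfolding the Rankin--Selberg integral $I(g,s)$ against the Fourier expansion of $g$ and recognising the resulting Dirichlet series as the Asai $L$-function. First I would insert the Fourier expansion \eqref{eq:FourierExpansion} of $g$ (evaluated on the relevant unipotent-times-torus element, twisted by $\theta^{-1}$ and by the Weyl element $\iota_J$) into the double integral. The inner integral over $F\backslash\A_F$ in the variable $x$ picks out, by orthogonality of the additive character $e_E$ restricted to $\A_F$, exactly the Fourier coefficients indexed by $\xi\in F^\times$ (more precisely by $\xi$ whose image under the relevant trace pairing is integral), collapsing the sum over $\xi\in E^\times$ to a sum over $F^\times$. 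This is the standard ``Asai unfolding'' step; the twist by $\theta^{-1}$ and the condition $\mathfrak{n}+\mathfrak{f}(\theta)=\mathcal O_F$ appear precisely because $\theta$ has conductor $\mathfrak{b}$, matching the shape of \eqref{DefAsaiLF}.

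Next I would carry out the remaining integral over $F^\times_+\backslash\A^\times_{F,+}$ in the variable $y$. Splitting $\A^\times_{F,+}$ into its archimedean and finite parts, the archimedean integral factors over $\sigma\in\Sigma_F$ as a product of Mellin transforms of the Whittaker functions $W_\sigma(y)=|y|^{(k_\sigma-m)/2}e^{-2\pi|y|}$ (with $m=0$ here) against $|y|^{s+\text{shift}}$; each such Mellin transform is a Gamma integral evaluating to a constant times $\Gamma(k_\sigma+2+s)/(4\pi)^{k_\sigma+2+s}$, which — together with the discriminant factors $|D_F|$, $\Norm_{F/\Q}(D_{E/F})$ coming from the choice of additive character and measure normalisations — assembles into $C(s)$. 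The finite part of the $y$-integral produces the Dirichlet series $\sum_{\mathfrak n}\theta(\mathfrak n\mathcal O_E)a(\mathfrak n\mathcal O_E,g)\Norm_{F/\Q}(\mathfrak n)^{-1-s}$; since $g$ is a simultaneous Hecke eigenform, this series is Eulerian, and comparing its local factors at split, inert, and ramified primes of $F$ with those recorded in \S\ref{sec:AdLfunc}--\S\ref{sec:cohomologygroups} identifies it, after multiplying by the partial Dedekind zeta factor $\zeta_F^{\mathfrak c'_F}(2s+2)=L^{\mathfrak c'_F}((\theta^2\chi)|_F,2s+2)$ (using $\theta^2\chi|_F$ trivial up to the quadratic character, as $\theta|_{\A_F^\times}=\chi_F\eta_{E/F}$), with $L(\mathrm{As}(g\otimes\theta^{-1}),s+1)$ as defined in \eqref{DefAsaiLF}. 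Bookkeeping the normalisation $s\mapsto s+1$ of the Satake parameters (footnoted in \S\ref{sec:cohomologygroups}) explains the argument shift from $s$ to $s+1$ on the right-hand side.

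The main obstacle I anticipate is the careful tracking of all the normalisation constants: the precise power of $|D_F|$ and $\Norm_{F/\Q}(D_{E/F})^{1/2}$ in $C(s)$ depends delicately on the conventions for the additive characters $e_v$, the self-dual Haar measures on $\A_F$ and $\A^\times_F$, the different $\mathrm{diff}_{E/\Q}$ appearing in the Fourier expansion, and the explicit form of the Weyl/intertwining element $\iota_J$ at the archimedean places. Getting the exponent $s+2$ on the discriminant factor and the extra $|D_F|^{1/2}$ exactly right — rather than merely up to an unspecified constant — requires matching conventions with \cite[Chapter 10]{GetzGore} line by line. The automorphic content (unfolding, Eulerianity, identification of local factors) is routine once the eigenform hypothesis is in place; the arithmetic of the constant is where essentially all the work lies. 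I would therefore organise the proof so that the automorphic identity is proved first up to an explicit constant $C(s)$, and then verify $C(s)$ by comparing the archimedean local integrals and measure factors directly against the normalisations fixed in \S\ref{sec:Fourierexpansion} and the Asai $L$-factor conventions of \cite{GetzGore}.
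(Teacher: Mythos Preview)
Your proposal is correct and outlines precisely the standard Rankin--Selberg unfolding argument that underlies the result. The paper itself does not reproduce this computation; its proof is a one-line citation of \cite[Lemma 10.3]{GetzGore} combined with the definition \eqref{DefAsaiLF} of the Asai $L$-function, so your outline is essentially a sketch of what Getz--Goresky do in that lemma rather than a different approach.
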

\begin{proof}
    This is \cite[Lemma 10.3]{GetzGore} together with \eqref{DefAsaiLF}.
\end{proof}

We now let $B'$ be the mirabolic subgroup of $H$, i.e. the subgroup of the upper-triangular Borel of $H$ given by matrices $\{ \left(\begin{smallmatrix} y & x \\ 0 & 1 \end{smallmatrix} \right)  \}$. Also $H(\R)^+$ be the identity component of $H(\R)$ and let $B'(\A)^+$, resp. $B'(\Q)^+$, denote the intersection of $B'(\A)$, resp. $B'(\Q)$, with $H(\A)^+ = H(\R)^+ H(\A_f)$. Then, as the left Haar measure on $B'(\A)$ corresponds to $|y|^{-1}_{\A_F} dx d^\times y$, the integral can be written as \begin{align}
    I(g,s) &= \int_{B'(\Q)+ \backslash B'(\A)^+} ( g \otimes \theta^{-1} ) (\iota_J( b) ) |{\rm det}(b)|^{s+1}_{\A_F} d b \nonumber \\ 
    &= \int_{B'(\Q)+ \backslash B'(\A)^+} ( g^{-\iota} \otimes \theta^{-1} ) (\iota_J( b) ) |{\rm det}(b)|^{s+1}_{\A_F} d b, \label{eq:integralwithtwisting}
\end{align}
where in the second equality we have used that, since $(\chi \theta^{-2}){|_{\A^\times_F}} = \chi_F^2 \chi_F^{-2} \eta_{E/F}^{-2} = 1$, 
\[ (g^{-\iota} \otimes \theta^{-1}) (\iota_J(b)) = (g \otimes \theta^{-1} )({\rm det}(\iota_J(b))^{-1}\iota_J(b)) = \chi^{-1} \theta^2(({\rm det}(\iota_J(b)))  (g \otimes \theta^{-1} )(\iota_J(b)).  \]
Let $\mathcal{N} : H(\A) \to \A_F^\times$ be the character $\mathcal{N}(h) =
    y $ if $ h = b z m$ with $b = \left(\begin{smallmatrix} y & \star \\ 0 & 1 \end{smallmatrix} \right) \in B'(\A)$, $z \in Z_H(\A)$, and $m \in K_0(\mathfrak{c}_F') K^+_{\infty,H}$ and trivial otherwise and define the (normalized) Eisenstein series 
    \begin{align}\label{definition_Eisenstein_series}
        \mathcal{E}^*(h,s) := \zeta_F^{\mathfrak{c}_F'}(2s) \cdot \!\!\!\! \!\!\!\! \sum_{\gamma \in\mathcal{O}_F^\times B'(\Q) \backslash H(\Q)} \!\!\!\! \!\!\!\!\left | \mathcal{N}(\gamma h) \right |_{\A_F}^s.
        \end{align}
$\mathcal{E}^*(h,s)$ is absolutely convergent for ${\rm Re}(s) > 1$ and can be meromorphically continued to the whole complex plane with a simple pole at $s=1$ (\textit{cf}. \cite[\S 10.2]{GetzGore} or \cite[\S 6]{HidaFourier}). Following \cite[Appendix A]{Hidanoncritical}, its residue at $s=1$ is explicitly \begin{align}\label{RES1}
        {\rm Res}_{s=1}\mathcal{E}^*(h,s) =  | (\mathcal{O}_F/ \mathfrak{c}_F')^\times | \frac{(2 \pi)^d R_F}{2^2 D_F {\rm N}_{F/\Q}(\mathfrak{c}_F')^2} =: C_\mathcal{E}, 
    \end{align}\index{$C_\mathcal{E}$}
where recall that $R_F$ is the regulator of $F$. In the following Proposition, set $\mathfrak{c}$ to be stable under $\tau \in {\rm Gal}(E/F)$.

\begin{proposition}\label{FromINTtoRES}
If $g$ is a simultaneous eigenform for all Hecke operators, \begin{align*}\langle [\omega_J(g^{-\iota} \otimes \theta^{-1})], \mathscr{Z}^{\hat{\underline{k}}, \chi^{-1} \theta^{2}}(\mathfrak{c}') \rangle = & \frac{C(0)}{C_\mathcal{E}} {\rm Res}_{s=1}L({\rm As}(g \otimes \theta^{-1}),s) \\ = & \frac{\zeta^{\mathfrak{c}'_F}_F(2)}{C_\mathcal{E}} {\rm Res}_{s=0} I(g,s). 
\end{align*}   
In particular,
\begin{align*}
\langle [\omega_J(g^{-\iota})], \mathscr{Z}^{\hat{\underline{k}}}(\mathfrak{c})_{ \theta} \rangle = & \frac{[K_0(\mathfrak{c}'):\mathcal{O}_E^{\ast, +}K_{11}(\mathfrak{c}')] G(\theta) C(0)}{C_\mathcal{E}} {\rm Res}_{s=1}L({\rm As}(g \otimes \theta^{-1}),s). \end{align*} 
\end{proposition}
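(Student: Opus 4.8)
\textbf{Proof plan for Proposition \ref{FromINTtoRES}.}

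The plan is to unfold the cohomological pairing into the adelic period integral $I(g,s)$ via Lemma \ref{Pairingequalintegral}, then relate this integral to the residue of the Asai $L$-function through the Rankin--Selberg machinery encoded in Proposition \ref{AsaiIR}, and finally keep track of the Eisenstein residue $C_\mathcal{E}$ of \eqref{RES1} to obtain the stated constant. First I would apply Lemma \ref{Pairingequalintegral} (in its untwisted form, at the level of $\mathfrak{c}'$ and with the local system $\mathscr{V}^{\hat{\underline{k}},0}_\C(\chi^{-1}\theta^2)$) to rewrite
\[ \langle [\omega_J(g^{-\iota}\otimes\theta^{-1})], \mathscr{Z}^{\hat{\underline{k}},\chi^{-1}\theta^2}(\mathfrak{c}')\rangle = \int_{Y_H(K_0(\mathfrak{c}'_F))} \iota^{\hat{\underline{k}},\ast}\bigl(\omega_J(g^{-\iota}\otimes\theta^{-1})\bigr). \]
The key computation is then to identify this geometric integral over $Y_H(K_0(\mathfrak{c}'_F))$ with the adelic integral in \eqref{eq:integralwithtwisting}, \emph{weighted against the Eisenstein series} $\mathcal{E}^\ast(h,s)$. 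More precisely, following \cite[\S 10.2]{GetzGore} and \cite[\S 6]{HidaFourier}, the pullback of $\omega_J(g^{-\iota}\otimes\theta^{-1})$ along $\iota$ integrated over the Hilbert modular variety for $H$ unfolds — after inserting the Eisenstein series and using the standard Rankin--Selberg unfolding over the mirabolic $B'$ — into $\int_{B'(\Q)^+\backslash B'(\A)^+}(g^{-\iota}\otimes\theta^{-1})(\iota_J(b))|\det(b)|^{s+1}_{\A_F}\,db$ times the residue of $\mathcal{E}^\ast$. Since $\iota^{\hat{\underline{k}},\ast}$ contracts the local system using the invariant vector $\Delta^{\hat{\underline{k}}}$, which precisely realizes the branching $\C\hookrightarrow \iota^\ast\mathscr{V}^{\hat{\underline{k}},0}_\C$, the cohomological pairing equals $C_\mathcal{E}^{-1}$ times $\mathrm{Res}_{s=1}$ of the $\mathcal{E}^\ast$-weighted integral, which is exactly $\mathrm{Res}_{s=0}I(g,s)$ after the shift of variable. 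Applying Proposition \ref{AsaiIR} (i.e. $\zeta^{\mathfrak{c}'_F}_F(2s+2)I(g,s) = C(s)L(\mathrm{As}(g\otimes\theta^{-1}),s+1)$) and taking residues — noting that $\zeta^{\mathfrak{c}'_F}_F(2)$ is holomorphic and nonzero at the relevant point while the pole comes from the $L$-function — gives the first displayed chain of equalities.

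For the second displayed formula, I would invoke Lemma \ref{Pairingequalintegral} again, this time in its twisted form: by definition $\mathscr{Z}^{\hat{\underline{k}}}(\mathfrak{c})_\theta = \mathcal{P}_\mathfrak{b}(\mathscr{Z}^{\hat{\underline{k}},\chi\theta^2}(\mathfrak{c}'))$, and the adjointness computation in the proof of Lemma \ref{Pairingequalintegral} shows
\[ \langle [\omega_J(g^{-\iota})], \mathscr{Z}^{\hat{\underline{k}}}(\mathfrak{c})_\theta\rangle = C_2\,\langle [\omega_J(g^{-\iota}\otimes\theta^{-1})], \mathscr{Z}^{\hat{\underline{k}},\chi\theta^2}(\mathfrak{c}')\rangle, \]
with $C_2 = [K_0(\mathfrak{c}'_F):\mathcal{O}_F^{\ast,+}K_1(\mathfrak{c}'_F)][K_0(\mathfrak{c}'):\mathcal{O}_E^{\ast,+}K_{11}(\mathfrak{c}')]G(\theta^u)$. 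One then needs to observe that the factor $[K_0(\mathfrak{c}'_F):\mathcal{O}_F^{\ast,+}K_1(\mathfrak{c}'_F)]$ coming from the degree of $P_{10}^{\mathfrak{c}'_F}$ is already absorbed into the untwisted cycle $\mathscr{Z}^{\hat{\underline{k}},\chi\theta^2}(\mathfrak{c}')$ appearing in the first part (compare Definitions \ref{def:twistedcycle1} and \ref{def:twistedcycle2}), so that multiplying the first formula by the remaining factor $[K_0(\mathfrak{c}'):\mathcal{O}_E^{\ast,+}K_{11}(\mathfrak{c}')]G(\theta)$ produces exactly the claimed constant $\tfrac{[K_0(\mathfrak{c}'):\mathcal{O}_E^{\ast,+}K_{11}(\mathfrak{c}')]G(\theta)C(0)}{C_\mathcal{E}}$.

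The main obstacle I anticipate is the bookkeeping in the unfolding step: one must verify carefully that the Eisenstein series $\mathcal{E}^\ast(h,s)$ is the correct kernel making the geometric integral over $Y_H(K_0(\mathfrak{c}'_F))$ coincide with the adelic Rankin--Selberg integral — this requires matching the character $\mathcal{N}$, the choice of $\mathcal{O}_F^\times B'(\Q)\backslash H(\Q)$ as the summation set, and the normalization $\zeta_F^{\mathfrak{c}'_F}(2s)$ against the factor $\zeta^{\mathfrak{c}'_F}_F(2s+2)$ in Proposition \ref{AsaiIR}, keeping in mind the weight shift $s\mapsto s+1$. A secondary subtlety is ensuring that $g$ being a simultaneous Hecke eigenform is genuinely used (it is needed for the Euler product in Proposition \ref{AsaiIR} and for the unfolding to produce a single orbital integral), and that the residues are taken at compatible points under the variable shift. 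Once the unfolding identity is in place, everything else is a matter of collecting the explicit constants $C(0)$, $C_\mathcal{E}$, and the index factors, which is routine.
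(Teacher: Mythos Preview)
Your proof plan is correct and follows essentially the same approach as the paper's own proof: both reduce the cohomological pairing to the integral over $Y_H(K_0(\mathfrak{c}'_F))$ via Lemma~\ref{Pairingequalintegral}, then identify that integral with the residue at $s=0$ of the Eisenstein-weighted Rankin--Selberg integral by unfolding $\mathcal{E}^\ast(h,s)$ (the paper cites \cite[Propositions 9.2 and 10.4]{GetzGore} for this step), and finally apply Proposition~\ref{AsaiIR} and \eqref{RES1} to extract the constant $C(0)/C_\mathcal{E}$. Your anticipated obstacle --- the bookkeeping in the unfolding, matching $\zeta_F^{\mathfrak{c}'_F}(2s)$ against $\zeta_F^{\mathfrak{c}'_F}(2s+2)$ under the shift $s\mapsto s+1$ --- is exactly the content of \cite[Proposition 10.4]{GetzGore}, and the second displayed formula indeed follows by combining the first with the twisted form of Lemma~\ref{Pairingequalintegral} as you describe.
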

\begin{proof}
This statement is contained in the proof of \cite[Theorem 10.1]{GetzGore} and we sketch it for the convenience of the reader. Recall that we have a decomposition $H(\A)^+ = H(\Q)^+ B'(\A)^+ K_0(\mathfrak{c}_F') K^+_{\infty,H}$. Using \eqref{eq:integralwithtwisting}, the invariance of $g^{-\iota}$ by right-multiplication of $K_0(\mathfrak{c}_F')$, and unfolding $\mathcal{E}^*(h,s)$, we have (\textit{cf}. \cite[Proposition 10.4]{GetzGore})
\[ \zeta^{\mathfrak{c}'_F}_F(2s+2) I(g,s) = \int_{Y_H(K_0(\mathfrak{c}_F'))}\!\!\!\!\!\!\!\!\!\!\!\! \!\!\!\!\!\!\!\!\!\! \mathcal{N}(\iota_J(h_\infty))^{\underline{k} + \mathbf{2}}\mathcal{E}^*(h,s+1)   {\rm det} (\iota_J(h_\infty))^{- \frac{\hat{\underline{k}}}{2} - \mathbf{1}} j(\iota_J(h_\infty),\mathbf{i})^{\hat{\underline{k}} + \mathbf{2}}(g^{-\iota} \otimes \theta^{-1}) (\iota_J(h))d h,  \] 
with $j(\iota_J(h_\infty),\mathbf{i})$ denoting the automorphy factor for $E$ defined as in \cite[p. 57]{GetzGore} and $d h = d h_\infty d h_f$ is the Haar measure chosen so that $d h_f$ assigns volume 1 to $K_0(\mathfrak{c}'_F)$, while invariant measure $d h_\infty$ on $H(\R)$ gives measure one to $K_{\infty,H}^+$. By \cite[Proposition 9.2]{GetzGore} and \eqref{RES1}, the residue at $s=0$ of the right hand side equals to 
\[ C_\mathcal{E} \cdot \int_{Y_H(K_0(\mathfrak{c}_F'))} \iota^{\hat{\underline{k}},*}( \omega_J(g^{-\iota} \otimes \theta^{-1})). \]
This and Proposition \ref{AsaiIR} give \[\int_{Y_H(K_0(\mathfrak{c}_F'))} \iota^{\hat{\underline{k}},*}( \omega_J(g^{-\iota} \otimes \theta^{-1})) = C_1 \cdot {\rm Res}_{s=1}L({\rm As}(g \otimes \theta^{-1}),s) ,  \]
where \[C_1 = \frac{C(0)}{C_\mathcal{E}} = \frac{|D_F \Norm_{F/\Q}(D_{E/F})^{1/2}|^{2} |D_F|^{1/2} \prod_{\sigma \in \Sigma_F} \Gamma(k_\sigma + 2 )}{C_\mathcal{E} (4 \pi)^{\sum_{\sigma \in \Sigma_F} (k_\sigma + 2) }}.\]
The result then follows from Lemma \ref{Pairingequalintegral}.
\end{proof}

Now let $f_E \in S_{\hat{\kappa}}(K_0(\mathfrak{c}),\chi)$ be an eigenform which is the base change to $E$ of a Hilbert cuspidal eigenform $f$.  Recall that if $f$ is nearly ordinary at $\mathfrak{p}$, then $f_E$ is nearly ordinary at primes of $E$ above $\mathfrak{p}$.  

\begin{lemma}\label{RSintegralpstab}
 For each $\frakp \mid p$, suppose that $\pi(f)_\frakp$ is unramified and that $f$ is nearly-ordinary at $\frakp$, with Satake parameters $\alpha_\frakp,\beta_\frakp$. Let $f_\alpha, f_{E,\alpha}$ be the corresponding $p$-stabilisations.
 Then 
 \[
 I(f_E,s) =  \left( \prod_{\frakp \mid p} \frac{1 + \Norm_{F/\Q}(\mathfrak{p})^{-s-1}}{(1-\eta_{E/F}(\mathfrak{p})\Norm_{F/\Q}(\mathfrak{p})^{-s-1}) 
(1-\eta_{E/F}(\mathfrak{p})\frac{\beta_{\mathfrak{p}}}{\alpha_{\mathfrak{p}}}\Norm_{F/\Q}(\mathfrak{p})^{-s-1})} \right) I(f_{E,\alpha},s). \]
In particular, 
\[ {\rm Res}_{s=0} I(f_E,s)=  \left( \prod_{\frakp \mid p} \frac{1 + \Norm_{F/\Q}(\mathfrak{p})^{-1}}{(1-\eta_{E/F}(\mathfrak{p})\Norm_{F/\Q}(\mathfrak{p})^{-1}) 
(1-\eta_{E/F}(\mathfrak{p})\frac{\beta_{\mathfrak{p}}}{\alpha_{\mathfrak{p}}}\Norm_{F/\Q}(\mathfrak{p})^{-1})} \right) {\rm Res}_{s=0} I(f_{E,\alpha},s) \]
\end{lemma}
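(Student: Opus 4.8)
The statement is a standard ``$p$-stabilisation identity'' for the Rankin--Selberg integral $I(\cdot,s)$, and the plan is to reduce it to an elementary local computation at each prime $\frakp \mid p$, using that the integral $I(g,s)$ factors (Euler-product-wise) via the Asai $L$-function through Proposition \ref{AsaiIR}. First I would record that, by Proposition \ref{AsaiIR}, for any simultaneous eigenform $g$ we have
\[
\zeta^{\mathfrak{c}'_F}_F(2s+2)\, I(g,s) = C(s)\cdot L(\mathrm{As}(g\otimes\theta^{-1}),s+1),
\]
and that $C(s)$ depends only on the archimedean data and hence is unchanged when we replace $f_E$ by its $p$-stabilisation $f_{E,\alpha}$ (the level changes by a power of $p$, but $C(s)$ is insensitive to that). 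Similarly $\zeta^{\mathfrak{c}'_F}_F(2s+2)$ is unchanged since $\mathfrak{c}'_F$ already contains the primes where we $p$-stabilise. Therefore the claimed identity is equivalent to the purely local statement that, for each $\frakp\mid p$, the ratio of the $\frakp$-Euler factor of $L(\mathrm{As}(f_E\otimes\theta^{-1}),s+1)$ to that of $L(\mathrm{As}(f_{E,\alpha}\otimes\theta^{-1}),s+1)$ equals
\[
\frac{1 + \Norm_{F/\Q}(\mathfrak{p})^{-s-1}}{(1-\eta_{E/F}(\mathfrak{p})\Norm_{F/\Q}(\mathfrak{p})^{-s-1})(1-\eta_{E/F}(\mathfrak{p})\tfrac{\beta_{\mathfrak{p}}}{\alpha_{\mathfrak{p}}}\Norm_{F/\Q}(\mathfrak{p})^{-s-1})}.
\]

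\textbf{The local computation.} Fix $\frakp\mid p$ with $\pi(f)_\frakp$ unramified and nearly ordinary, Satake parameters $\alpha_\frakp,\beta_\frakp$; then $\pi(f_E)_\mathfrak{P}$ is unramified with Satake parameters $\{\alpha_{i,\frakp}\}$ determined by base change (so if $\frakp$ is inert the base-changed parameters are $\alpha_\frakp^{2}/(\text{norm twist})$ etc., and if $\frakp$ splits they are just $\alpha_\frakp,\beta_\frakp$ at each of the two primes). Using the explicit shape of the local Asai Euler factor recalled in \S\ref{sec:AdLfunc} (the split vs.\ inert cases displayed there, specialised to $\theta^{-1}$ whose restriction to $F$ is $\chi_F\eta_{E/F}$), one computes the $\frakp$-factor of $L(\mathrm{As}(f_E\otimes\theta^{-1}),s+1)$ explicitly; the decomposition of Proposition \ref{AsaiForBC}, $L(\mathrm{As}(f_E\otimes\theta^{-1}),s) \doteq \zeta_F(s)\,L(\mathrm{Ad}(f)\otimes\eta_{E/F},s)$, gives a clean way to write this factor as
\[
(1-\eta_{E/F}(\mathfrak{p})\Norm_{F/\Q}(\mathfrak{p})^{-s-1})^{-1}(1-\eta_{E/F}(\mathfrak{p})\tfrac{\alpha_\frakp}{\beta_\frakp}\Norm_{F/\Q}(\mathfrak{p})^{-s-1})^{-1}(1-\eta_{E/F}(\mathfrak{p})\Norm_{F/\Q}(\mathfrak{p})^{-s})^{-1}(1-\eta_{E/F}(\mathfrak{p})\tfrac{\beta_\frakp}{\alpha_\frakp}\Norm_{F/\Q}(\mathfrak{p})^{-s})^{-1}.
\]
For the $p$-stabilisation $f_{E,\alpha}$, by construction $a(\mathfrak{P},f_{E,\alpha})$ is the unit root (the ``$\alpha$''-parameter), so $\pi(f_{E,\alpha})_\mathfrak{P}$ is no longer unramified and its Asai Euler factor at $\frakp$ degenerates: one keeps only the terms involving the unit parameter. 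Taking the ratio of the two local factors and using the functional-equation-type identity $1+X = (1-X)^{-1}(1-X^2)$ with $X=\Norm_{F/\Q}(\mathfrak{p})^{-s-1}$ to recognise the numerator $1+\Norm_{F/\Q}(\mathfrak{p})^{-s-1}$, I would match it term-by-term with the displayed ratio. The ``in particular'' statement is then immediate: the Eisenstein series $\mathcal{E}^*(h,s)$ and hence $I(g,s)$ have at most a simple pole at $s=0$ coming from $\zeta_F^{\mathfrak{c}'_F}(2s+2)$... rather, from the residue computation in Proposition \ref{FromINTtoRES}; the Euler factor above is holomorphic and non-vanishing at $s=0$, so one simply evaluates it there and passes to residues.

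\textbf{Main obstacle.} The genuinely delicate point is bookkeeping the normalisations: the paper follows the shifted Satake-parameter convention of \cite{GetzGore} (the footnote ``$s\mapsto s+1$'' for the Asai $L$-function and ``$s\mapsto s+\tfrac12$'' for the standard $L$-function), and the integral $I(g,s)$ carries its own shift via $L(\mathrm{As}(g\otimes\theta^{-1}),s+1)$. Keeping these shifts consistent between $f_E$ and $f_{E,\alpha}$ — and in particular being careful that $\eta_{E/F}(\mathfrak{p})$ appears and not $\chi_F(\mathfrak{p})$ or a norm thereof, which is forced by $\theta^{-1}|_{\A_F^\times} = \chi_F^{-1}\eta_{E/F}$ together with the fact that $\mathrm{As}$ involves $\theta^2\chi = \chi_F^{-2}\eta_{E/F}^2\cdot(\chi_F\circ\Norm) = $ (trivial on $F$) — is where an error is most likely to creep in. I would therefore carry out the split and inert cases separately and sanity-check against the known Euler factor of the $p$-stabilised adjoint/symmetric-square $L$-function. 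Everything else is formal manipulation of geometric series.
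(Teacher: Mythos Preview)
Your strategy — reduce to a prime-by-prime comparison of local Euler factors using the Euler product expansion — is exactly the paper's approach. But several of the steps as written are not correct or not complete.

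\textbf{The framing via Proposition \ref{AsaiIR} is off.} You claim $\zeta_F^{\mathfrak{c}'_F}(2s+2)$ is unchanged because ``$\mathfrak{c}'_F$ already contains the primes where we $p$-stabilise''. This is false: the hypothesis is that $\pi(f)_\frakp$ is \emph{unramified}, so $\mathfrak{c}$ (hence $\mathfrak{c}'$) is coprime to $p$; after $p$-stabilisation the level becomes $\mathfrak{c}p$ and both the partial $\zeta_F$ and the partial $L$-function in the definition of $L(\mathrm{As}(\cdot),s)$ change. The paper sidesteps this by working directly with the Dirichlet series $\sum_{\mathfrak{n}} a(\mathfrak{n}\mathcal{O}_E,\,\cdot\otimes\theta^{-1})\Norm(\mathfrak{n})^{-s-2}$ from \cite[Lemma~10.3]{GetzGore} and comparing the local factors at $\frakp$, so no level bookkeeping enters.

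\textbf{Your displayed local factor for $f_E$ has wrong shifts.} Two of the four factors you wrote carry $\Norm(\frakp)^{-s}$ rather than $\Norm(\frakp)^{-s-1}$. The paper's computation gives the factor as
\[
\frac{L_\frakp(\mathrm{Ad}(f)\otimes\eta_{E/F},s+1)\,\zeta_{F,\frakp}(s+1)}{\zeta_{F,\frakp}(2s+2)}
=\frac{1+\Norm(\frakp)^{-s-1}}{\prod_{\gamma\in\{\alpha_\frakp/\beta_\frakp,\,1,\,\beta_\frakp/\alpha_\frakp\}}(1-\eta_{E/F}(\frakp)\gamma\,\Norm(\frakp)^{-s-1})},
\]
all three denominator terms at $-s-1$.

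\textbf{The $f_{E,\alpha}$ factor is the crux and you skip it.} Saying the Asai factor ``degenerates: one keeps only the terms involving the unit parameter'' is not a computation. The paper obtains it from $a(\frakp^j\mathcal{O}_E,f_{E,\alpha})=\alpha_\frakp^{2j}\Norm_{F/\Q}(\frakp)^j$ together with $\theta^{-1}|_{\A_F^\times}=(\chi_F\eta_{E/F})^{-1}$ and $\chi_F(\frakp)=\alpha_\frakp\beta_\frakp$, which yields exactly $(1-\eta_{E/F}(\frakp)\tfrac{\alpha_\frakp}{\beta_\frakp}\Norm(\frakp)^{-s-1})^{-1}$. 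Dividing the two local factors then gives the displayed ratio immediately — no $1+X=(1-X)^{-1}(1-X^2)$ trick is needed.

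\textbf{The residue step needs an argument.} ``Holomorphic and non-vanishing at $s=0$'' is not automatic: the factor $(1-\eta_{E/F}(\frakp)\tfrac{\beta_\frakp}{\alpha_\frakp}\Norm(\frakp)^{-1})^{-1}$ could a priori have a pole. The paper rules this out by purity: $\alpha_\frakp,\beta_\frakp$ are Weil numbers of the same complex absolute value (Blasius), so $\beta_\frakp=\eta_{E/F}(\frakp)\alpha_\frakp\Norm(\frakp)$ is impossible.
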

\begin{proof}

By \cite[Lemma 10.3]{GetzGore}, we need to compare
\[
\sum_{\mathfrak{n} \subset \mathcal{O}_F} \frac{a(\mathfrak{n}\mathcal{O}_E,f_E \otimes \theta^{-1})}{\Norm_{F/\mathbb{Q}}(\mathfrak{n})^{s+2}}
\]
and 
\[
\sum_{\mathfrak{n} \subset \mathcal{O}_F} \frac{a(\mathfrak{n}\mathcal{O}_E,f_{E,\alpha}\otimes \theta^{-1})}{\Norm_{F/\mathbb{Q}}(\mathfrak{n})^{s+2}}.
\]
By the Euler product expansion in \cite[(10.2.2)]{GetzGore}\footnote{Note that in \cite[(10.2.2)]{GetzGore} and the line above there is a typo as the summation over $j$ starts from $j=0$ and not $j=1$.} of these two series, this reduces to a calculation at each prime $\mathfrak{p}$ of $F$ above $p$. Notice that the Euler factor at $\frakp$ of the latter series is \begin{align*}
    \sum_{j=0}^\infty a(\mathfrak{p}^j \mathcal{O}_E, f_{E,\alpha} \otimes \theta^{-1})\Norm_{F/\Q}(\mathfrak{p})^{-js-2j} &= \sum_{j=0}^\infty \alpha_{\frakp}^{2j} \theta^{-j}(\mathfrak{p}) \Norm_{F/\Q}(\mathfrak{p})^{-js-j} \\&= \sum_{j=0}^\infty \alpha_{\frakp}^{2j} (\chi_F \eta_{E/F})^{-j}(\mathfrak{p}) \Norm_{F/\Q}(\mathfrak{p})^{-js-j} \\ 
    &= (1 - \alpha_\frakp^2 (\chi_F \eta_{E/F})^{-1}(\frakp)\Norm_{F/\Q}(\mathfrak{p})^{-s-1})^{-1} \\ 
    &= (1 -\eta_{E/F}(\frakp) \tfrac{\alpha_\frakp}{\beta_\frakp} \Norm_{F/\Q}(\mathfrak{p})^{-s-1})^{-1}, 
\end{align*} 
where we have used that $a(\mathfrak{p}^j \mathcal{O}_E, f_{E,\alpha}) = \alpha_{\mathfrak{p} \mathcal{O}_E}( f_E)^j \Norm_{E/\Q}(\mathfrak{p})^{j/2}=\alpha_{\mathfrak{p}}^{2j}\Norm_{F/\Q}(\mathfrak{p})^j$ and that $\chi_F(\frakp) = \alpha_\frakp \beta_\frakp$. On the other hand, the Euler factor at $\frakp$ of the former power series, which is
\[ \sum_{j=0}^\infty a(\mathfrak{p}^j \mathcal{O}_E, f_{E} \otimes \theta^{-1})\Norm_{F/\Q}(\mathfrak{p})^{-js-2j}, \]
equals to (\cite[(10.2.5)]{GetzGore})
\[ \frac{L_\frakp(\mathrm{As}(f_E \otimes \theta^{-1}), s+1)}{\zeta_{F,\frakp}(2s+2)}. \]
Thanks to Proposition \ref{AsaiForBC}, this further reads as 
\[\frac{L_\frakp(\mathrm{Ad}(f) \otimes \eta_{E/F}, s+1)\zeta_{F,\frakp}(s+1)}{\zeta_{F,\frakp}(2s+2)}, \]
which is explicitly 
\[
\frac{1 + \Norm_{F/\Q}(\mathfrak{p})^{-s-1}}{(1-\eta_{E/F}(\mathfrak{p})\frac{\alpha_{\mathfrak{p}}}{\beta_{\mathfrak{p}}}\Norm_{F/\Q}(\mathfrak{p})^{-s-1})(1-\eta_{E/F}(\mathfrak{p})\Norm_{F/\Q}(\mathfrak{p})^{-s-1}) 
(1-\eta_{E/F}(\mathfrak{p})\frac{\beta_{\mathfrak{p}}}{\alpha_{\mathfrak{p}}}\Norm_{F/\Q}(\mathfrak{p})^{-s-1})}.
\]
We thus conclude by comparing the two factors at $\frakp$. The formula on the residues follows from this and from the fact that 
\begin{align}\label{eq7.3}
 \frac{1 + \Norm_{F/\Q}(\mathfrak{p})^{-s-1}}{(1-\eta_{E/F}(\mathfrak{p})\Norm_{F/\Q}(\mathfrak{p})^{-s-1}) 
(1-\eta_{E/F}(\mathfrak{p})\frac{\beta_{\mathfrak{p}}}{\alpha_{\mathfrak{p}}}\Norm_{F/\Q}(\mathfrak{p})^{-s-1})}\end{align}
has a pole at $s=0$ if and only if \[\beta_{\mathfrak{p}}= \eta_{E/F}(\mathfrak{p}) \alpha_{\mathfrak{p}}\Norm_{F/\Q}(\mathfrak{p}).\]
As $\frakp$ is spherical unramified for $\pi(f)$, $\alpha_{\mathfrak{p}}$ and $\beta_{\mathfrak{p}}$ are Weil numbers of the same weight (\textit{cf}. \cite[Theorem 1]{BlasiusRamanujan}), thus taking any complex absolute value we get a contradiction, implying that \eqref{eq7.3} has no pole at $s=0$.
\end{proof}

For any $\frakp \in \Upsilon_{F,p}$ and $r \geq 0$, let $\gamma_{\frakp,r} \in \GL_2(E_\frakp)$ be the matrix given by \[ \gamma_{\frakp,r}:= \eta_\frakp^{-r} u_\frakp \eta_\frakp^r = \begin{cases} \left( \begin{smallmatrix}
1 & \pi_\frakp^{-r} \delta_\frakp \\ & 1
\end{smallmatrix}\right) & \text{ if } \frakp \text{ is inert in }E, \\ \left( \left( \begin{smallmatrix}
1 & \pi_\frakp^{-r} \\ & 1
\end{smallmatrix}\right),\left( \begin{smallmatrix}
1 & \\ & 1 
\end{smallmatrix}\right)\right) & \text{ if } \frakp \text{ is split in }E. 
\end{cases}\]\index{$\gamma_{\frakp,r}$}
Given $\pmb{r} = (r_\frakp)_\frakp \in \Z_{\geq 0}^{|\Upsilon_{F,p}|}$, we also let $\gamma_{\pmb{r}}$\index{$\gamma_{\pmb{r}}$} be the matrix with $\frakp$-component $\gamma_{\frakp,r_\frakp}$ and $1$ elsewhere.

Let $f_{E,\alpha}$ be a nearly ordinary base change cusp form of weight $\hat{\kappa}$,  level $V_{\pmb{r}}$, and $U_\mathfrak{P}$-eigenvalue $\alpha_\mathfrak{P} {\rm N}_{E/\Q}(\mathfrak{P})^{1/2}$. This implies that the conductor of the $\mathfrak{P}$-part of the adjoint character $\chi'$ is at most $\mathfrak{P}^{r_\frakp}$ for $\mathfrak{P}$ lying above $\frakp$. In the following we can and do choose $\pmb{r}$ minimal so that  $\chi_\mathfrak{P}'$ is of conductor exactly $\mathfrak{P}^{r_\frakp}$. This means that $f_{E,\alpha} \in S_{\hat{\kappa}}(K_0(\mathfrak{c}p^{\pmb{r}+\pmb{1}}),\chi,\chi')$, with $\mathfrak{c}$ coprime with $p \mathcal{O}_E$.

\begin{lemma}\label{lemmaforactionofuonRSfinalversion} For any $\frakp \in \Upsilon_{F,p}$, suppose that the $\frakp$-component of $\chi_F$ is spherical and let $\mathfrak{P}$ be a prime of $E$ above $\frakp$.
Suppose that $f_{E,\alpha} $ has level $V_{\pmb{r}}$, with minimal $\pmb{r}$. \begin{enumerate}
     \item If $\pi(f_{E,\alpha})_{\mathfrak{P}}$ is either an unramified principal series or Steinberg, then 
 \[I(\gamma_{\frakp,r_\frakp} \cdot f_{E,\alpha} ,s) = I( f_{E,\alpha} ,s). \]
 \item If $\pi(f_{E,\alpha})_{\mathfrak{P}}$ is a ramified principal series, then \[ I(\gamma_{\frakp,r_\frakp} \cdot f_{E,\alpha} ,s) =G(\chi_{\mathfrak{P}}') I( f_{E,\alpha} ,s), \]
where $G(\chi_{\mathfrak{P}}')$ denotes the Gauss sum of the restriction to $\mathcal{O}_\frakp^\times$ of $\chi_{\mathfrak{P}}'$.
 \end{enumerate} 
 \end{lemma}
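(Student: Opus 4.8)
The strategy is to reduce the statement to a purely local computation at each prime $\frakp \in \Upsilon_{F,p}$, just as in the proof of Lemma \ref{RSintegralpstab}, by unfolding the Rankin--Selberg integral $I(-,s)$ as a product of local zeta integrals. Recall from \eqref{eq:integralwithtwisting} and Proposition \ref{AsaiIR} that, for an eigenform, $\zeta^{\mathfrak{c}'_F}_F(2s+2)I(g,s)$ is (up to the archimedean constant $C(s)$) the Asai $L$-function $L({\rm As}(g \otimes \theta^{-1}),s+1)$, and the unfolding of \cite[Lemma 10.3]{GetzGore} expresses $I(g,s)$ in terms of the Fourier coefficients $a(\mathfrak{n}\mathcal{O}_E, g \otimes \theta^{-1})$. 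The matrix $\gamma_{\frakp,r_\frakp} = \eta_\frakp^{-r_\frakp} u_\frakp \eta_\frakp^{r_\frakp}$ acts only at the place $\frakp$, so $I(\gamma_{\frakp,r_\frakp}\cdot f_{E,\alpha},s)$ differs from $I(f_{E,\alpha},s)$ only in the Euler factor at $\frakp$; everything else is untouched. Thus the first step is to isolate this $\frakp$-factor.

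First I would make explicit the effect of right translation by $\gamma_{\frakp,r_\frakp}$ on the local Whittaker/new-vector data at $\mathfrak{P}$. Since $\gamma_{\frakp,r_\frakp}$ is upper-triangular unipotent with entry $\pi_\frakp^{-r_\frakp}$ (or $(\pi_\frakp^{-r_\frakp}, 1)$ in the split case) in the $(1,2)$-slot, and the local integrand at $\mathfrak{P}$ in \eqref{eq:integralwithtwisting} is, after unfolding, an integral over the mirabolic of the $\frakp$-component of $(g^{-\iota}\otimes\theta^{-1})$ against $|{\rm det}|^{s+1}$, translating the argument by $\gamma_{\frakp,r_\frakp}$ amounts to a shift $x \mapsto x + \pi_\frakp^{-r_\frakp}(\cdots)$ in the unipotent variable of the local zeta integral. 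The point is then the standard newvector analysis: the $\mathfrak{P}$-new vector of $\pi(f_{E,\alpha})_\mathfrak{P}$ has conductor $\mathfrak{P}^{r_\frakp}$ precisely because its local Nebentypus/adjoint character $\chi'_\mathfrak{P}$ has conductor $\mathfrak{P}^{r_\frakp}$; so translating by $\gamma_{\frakp,r_\frakp}$ either fixes the vector (when $r_\frakp = 0$, i.e. unramified principal series or Steinberg, since then the new vector is $\GL_2(\mathcal{O}_\mathfrak{P})$- resp. Iwahori-fixed and the relevant matrix is already in the fixing subgroup, or the translation shifts $x$ by an element of $\mathcal{O}_\mathfrak{P}$ which integrates away unchanged), giving case (1); or, in the ramified principal series case, it introduces exactly a Gauss sum $G(\chi'_\mathfrak{P})$ of the restriction of $\chi'_\mathfrak{P}$ to $\mathcal{O}_\frakp^\times$, by the usual computation of the action of the Atkin--Lehner/unipotent translates on newvectors for ramified principal series (cf. the Gauss-sum factors already appearing in $C_2$ of \eqref{GGformulawithIH} and in \cite[Proposition 9.4]{GetzGore}).

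Concretely, for case (1), in the Steinberg case one uses that the new vector is fixed by the Iwahori subgroup $\Gamma_0(\mathfrak{P})$ and that $\gamma_{\frakp,0} = u_\frakp$ has $\mathfrak{P}$-component in $\GL_2(\mathcal{O}_\mathfrak{P})$ whose image modulo $\mathfrak{P}$ is upper triangular, hence lies in the Iwahori; similarly in the unramified principal series case after $\frakp$-stabilisation the $U_\mathfrak{P}$-eigenvector is Iwahori-fixed. Either way, the change of variables in the local integral leaves it invariant. For case (2) with $\pi(f_{E,\alpha})_\mathfrak{P}$ a ramified principal series $\chi_1 \boxplus \chi_2$ with, say, $\chi_2$ ramified of conductor $\mathfrak{P}^{r_\frakp}$ (this is the adjoint character $\chi'_\mathfrak{P}$ up to the unramified Nebentypus), the new vector's Whittaker function is supported on a coset requiring a nontrivial additive-character twist, and translating by $\pi_\frakp^{-r_\frakp}$ in the unipotent forces the integral $\int_{\mathcal{O}_\frakp^\times}\chi'_\mathfrak{P}(u)\psi_\frakp(\pi_\frakp^{-r_\frakp}u)\,du$ to appear, which is by definition $G(\chi'_\mathfrak{P})$ up to a harmless normalization; collecting this out of the $\frakp$-Euler factor and comparing with $I(f_{E,\alpha},s)$ gives the stated identity. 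I would phrase all of this via the explicit Euler product expansion \cite[(10.2.2)]{GetzGore} as in Lemma \ref{RSintegralpstab}, so that only the single local factor at $\frakp$ needs to be recomputed.

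\textbf{Main obstacle.} The delicate point is the bookkeeping in the ramified principal series case: one must track precisely which character (the Nebentypus $\chi_\mathfrak{P}$, which is trivial at $p$ here, versus the adjoint character $\chi'_\mathfrak{P}$) controls the conductor and hence which Gauss sum appears, and one must be careful that the normalization of $\gamma_{\frakp,r_\frakp}$ (the exponent $-r_\frakp$ and the choice of $\delta_\frakp$ in the inert case) matches the conductor exponent exactly so that the Gauss sum is the full one $G(\chi'_\mathfrak{P})$ and not a partial sum that could vanish or carry an extra power of $q_\frakp$. This is where I expect the computation to require the most care; the split case is easier since $\gamma_{\frakp,r_\frakp}$ is supported on one of the two factors and the inert case reduces to it after the identification $\O_w = \O_v \oplus \O_v\delta_v$ used throughout \S\ref{Towerplusmoments}. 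Once the local identity is pinned down at one $\frakp$, the statement follows, and the analogous identity for $\gamma_{\pmb{r}}$ would follow by applying the one-prime case successively over $\Upsilon_{F,p}$.
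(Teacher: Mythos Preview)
Your proposal is correct and follows essentially the same approach as the paper: reduce to the local Euler factor at $\frakp$ via \cite[Lemma 10.3]{GetzGore}, observe in case~(1) that $\gamma_{\frakp,0}=u_\frakp$ lies in the level subgroup (the paper phrases this as the Fourier expansion being unchanged since $x+y\delta_\frakp$ stays integral), and in case~(2) compute the local Whittaker integral using the explicit ordinary Whittaker function for a ramified principal series to extract the Gauss sum $G(\chi'_\mathfrak{P})$ (the paper cites \cite[Corollary 2.3]{HsiehAJM} for the Whittaker formula and notes that the untwisted local factor for $I(f_{E,\alpha},s)$ at $\frakp$ is $1$, which you should also record). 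Your identification of the delicate conductor-matching in the ramified case as the main obstacle is accurate.
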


\begin{remark}
    Note that if both $\mathfrak{P}$ and $\mathfrak{Q}$ are primes of $E$ which lie above $\frakp$, then $\pi(f_{E,\alpha})_{\mathfrak{P}}$ and $\pi(f_{E,\alpha})_{\mathfrak{Q}}$ are of the same type (and indeed isomorphic).
\end{remark}
\begin{proof}[Proof of Lemma \ref{lemmaforactionofuonRSfinalversion}]
As in Lemma \ref{RSintegralpstab}, we need to compare the Euler factors at $\frakp$ of the two series given by \cite[Lemma 10.3]{GetzGore}. We prove the Lemma in the case when $\frakp$ is inert in $E$, with the split case following similarly. By abusing notation, we also call $\frakp$ the prime for $E$ above it. We distinguish two cases.

Suppose that $\pi(f_{E,\alpha})_\frakp$ is either an unramified principal series or Steinberg (which correspond to the cases (UnrPS) and (St) of \S \ref{sec:pstabilisation}). Then $r:= r_\frakp = 0$ and we notice that the Fourier coefficients of  $\gamma_{\frakp,0} \cdot f_{E,\alpha} = f_{E,\alpha}(\,\cdot \gamma_{\frakp,0})$ are the same as those of $f_{E,\alpha}$, hence $I(\gamma_{\frakp,0} \cdot f_{E,\alpha} ,s) = I(f_{E,\alpha} ,s)$. As $\frakp$ is inert in $E$, then, for  $\left(\begin{smallmatrix}
   y  & x  \\
     & 1 
\end{smallmatrix} \right) \in \GL_2(F_\frakp)$, we have that \[ \left(\begin{smallmatrix}
   y  & x  \\
     & 1 
\end{smallmatrix} \right)\gamma_{\frakp,0} = \left(\begin{smallmatrix}
   y  & x + y \delta_\frakp  \\
     & 1 
\end{smallmatrix} \right), \]
and $x + y \delta_\frakp $ is always integral when $x,y$ are, proving that the Fourier expansion \eqref{eq:FourierExpansion} of $\gamma_{\frakp,0} \cdot f_{E,\alpha}$ equals the one of $f_{E,\alpha}$, hence that the two Euler factors at $\frakp$ agree.

Suppose $\pi(f_{E,\alpha})_\frakp$ is a ramified principal series (which corresponds to the case (RamPS) of \S \ref{sec:pstabilisation})  with the induction of the characters $({\chi'_{\mathfrak{p}}}, {\chi_{\frakp} \chi'_{\mathfrak{p}}}^{-1})$, and note that by the definition of $V_{\pmb{r}}$  the restriction to $\mathcal{O}_{E_\frakp}^\times$ of the character $\chi'_{\mathfrak{p}}$ has conductor (at most) $\mathfrak{p}^r$, and by hypothesis we suppose that its conductor is exactly $\mathfrak{p}^r$ (otherwise we choose a smaller $r$ at the beginning). 

We calculate the local integral by making use of the explicit formulae for the Whittaker model of $\pi(f_{E,\alpha})_\frakp$. Indeed, by \cite[Eq. (D.5.9)]{GetzGore} the Fourier coefficients are calculated via the Whittaker functions is a tensor product of local Whittaker functions (see Theorem D.1 of {\it loc. cit.}).
Subsequently, we perform calculations inspired by \cite[\S 5.3]{HsiehAJM}.
Let $W^{\ord}_{f_{E,\alpha},\frakp}$ be the unique (up to scalar) ordinary vector in the local  Whittaker model at $\frakp$ of $\pi(f_{E,\alpha})_\frakp$, normalised as in \cite[Corollary 2.3]{HsiehAJM}. In particular, by \textit{loc. cit.}, if the local representation is a principal series associated with the induction of the characters $({\chi'_{\mathfrak{p}}}, {\chi'_{\mathfrak{p}}}^{-1}\chi_{\mathfrak{p}})$, the local Whittaker model satisfies, for $\left(\begin{smallmatrix}
   y  & x  \\
     & 1 
\end{smallmatrix} \right) \in \GL_2(E_\frakp), $
\[
W^{\ord}_{f_{E,\alpha},\frakp}\left( \left( \begin{smallmatrix}
   y  & x  \\
     & 1 
\end{smallmatrix} \right) \right)=e_\frakp(x)  W^{\ord}_{f_{E,\alpha},\frakp}\left( \left( \begin{smallmatrix}
   y  &   \\
     & 1 
\end{smallmatrix} \right) \right) =\begin{cases} e_\frakp(x)\chi'_{\mathfrak{p}}(y)|y|_{E_\frakp}^{1/2} & \text{if } y \in E_{\frakp}^{\times} \cap \mathcal{O}_{E_\frakp},\\ 0 & \text{otherwise,}
\end{cases}
\]
where $e_\frakp(-) $ is the local additive character defined in Section \ref{sec:Fourierexpansion}.
Recall that with the normalisation in Section \ref{sec:AdLfunc} above, in this case with $f_{E,\alpha}$ and $y=\pi_{\frakp}$, the value $\chi'(\pi_{\frakp})$ is $\alpha(f_{E,\alpha})_{1,\frakp}$ and $\alpha(f_{E,\alpha})_{1,\frakp}|\pi_{\frakp}|_{E_\frakp}^{1/2}$ is exactly equal to $a(\mathfrak{p} \mathcal{O}_E, f_{E,\alpha}) $. Also, notice that when we twist the form $f_{E,\alpha}$ by $\theta^{-1}$, the $\mathfrak{p}$-component of the associated automorphic representation is $\pi(f_{E,\alpha})_\frakp \otimes \theta^{-1}_\frakp$, which has Satake parameters $\theta^{-1}_\frakp(\pi_\frakp)\alpha(f_{E,\alpha})_{1,\frakp}$, $\theta^{-1}_\frakp(\pi_\frakp)\alpha(f_{E,\alpha})_{2,\frakp}$. Recall that, by hypothesis, the central character $\chi_\frakp$ of $\pi(f_{E,\alpha})_\frakp$ is unramified and the restriction to $F_\frakp^\times$ of $\theta_\frakp$ agrees with $\chi_{F,\frakp}\eta_{E_\frakp/F_\frakp}$, hence it is unramified as well.

We now evaluate the local integral at $\frakp$ given in the proof of \cite[Lemma 10.3]{GetzGore} of the Rankin--Selberg integral $I(\gamma_{\frakp,r} \cdot f_{E,\alpha} ,s)$.  Note that the integral is over $F_\frakp^\times$ but we do use the Whittaker model at $\frakp$ of the base change $f_{E,\alpha} \otimes \theta^{-1}$. 
All steps are the same as in the proof of \cite[Lemma 10.3]{GetzGore} till the top of page 162.  Consider an element $\left( \begin{smallmatrix}
   y  & x \\
    0 & 1 
\end{smallmatrix} \right) \in \GL_2(F_\frakp)$. We shall use the following equality: $
\left( \begin{smallmatrix}
   y  & x \\
    0 & 1 
\end{smallmatrix} \right) \gamma_{\frakp,r} = \left( \begin{smallmatrix}
   y  & y\pi_\frakp^{-r} \delta_\frakp + x \\
     0 & 1 
\end{smallmatrix} \right).$
To lighten notation, we let $g = f_{E,\alpha} \otimes \theta^{-1}$  in the following calculation:
\begin{align*}
  \int_{F_{\frakp}^{\times}} a(y,\gamma_{\frakp,r} \cdot g) |y|_{F_\frakp}^{s+2}{\textup{d}^{\times}y} =& \int_{F_{\frakp}^{\times}} W_{g,\frakp}^{\ord}(\left( \begin{smallmatrix}
   y  &   \\
     & 1 
\end{smallmatrix} \right)\gamma_{\frakp,r})  |y|_{F_\frakp}^{s+2}{\textup{d}^{\times}y} \\
= & \int_{F_{\frakp}^{\times}} W_{g,\frakp}^{\ord}(\left( \begin{smallmatrix}
   y  &   \\
     & 1 
\end{smallmatrix} \right)) e_\frakp(y\pi_\frakp^{-r} \delta_\frakp)  |y|_{F_\frakp}^{s+2}{\textup{d}^{\times}y}  \\
=& \int_{F_{\frakp}^{\times} \cap \mathcal{O}_\frakp} (\theta_\frakp^{-1}\chi'_\frakp)(y) |y|^{1/2}_{E_\frakp} e_\frakp(y\pi_\frakp^{-r} \delta_\frakp)|y|_{F_\frakp}^{s+2}{\textup{d}^{\times}y}\\
=& \sum_{n\geq 0} \eta_{E/F}^n(\frakp)(\chi_{F,\frakp}^{-1} {\chi'_{\frakp}})^n(\pi_\frakp)|\pi^n_\frakp|_{F_\frakp}^{s+3}   \int_{ \mathcal{O}^{\times}_\frakp}  {\chi'_{\frakp}}(y)  e_\frakp(y\pi_\frakp^{n-r} \delta_\frakp) {\textup{d}^{\times}y}, 
\end{align*} 
where we have used that the restriction to $F_\frakp^\times$ of $\theta_\frakp$ agrees with $\chi_{F,\frakp}\eta_{E_\frakp/F_\frakp}$.
Recall that $\pi(f_{E,\alpha})_\frakp$ is a principal series associated to the characters $(\chi_\frakp',{\chi'_{\mathfrak{p}}}^{-1}\chi_\frakp)$, with $\chi'_\frakp$ a character on $\mathcal{O}_\frakp^\times$ with exponent of the conductor equal to $r$. Recall also that, as $f_{E,\alpha}$ is a base change form, $\chi_\frakp' = \chi_{F,\frakp}' \circ {\rm N}_{E_\frakp/F_\frakp}$ and the exponent of the conductor of $\chi_{F,\frakp}'$ equals $r$ by \cite[Proposition E.9]{GetzGore}. Then the restriction to $\mathcal{O}_\frakp^\times$ of $\chi_\frakp'$ has conductor $\frakp^{r}$ and the inner integral is
\[\int_{ \mathcal{O}^{\times}_\frakp}  \chi_\frakp'(y)  e_\frakp(y\pi_\frakp^{n-r}  \delta_\frakp) {\textup{d}^{\times}y} = \begin{cases} G(\chi_\frakp')  & \text{if } n = 0 \\0 & \text{ if } n > 0, 
        \end{cases} \]
by for instance \cite[Lemma 7-4]{RamaValenza}. Plugging this in, we have that  
\begin{align*}
  \int_{F_{\frakp}^{\times}} a(y,\gamma_{\frakp,r} \cdot g) |y|_{F_\frakp}^{s+2}{\textup{d}^{\times}y} =&  G(\chi_\frakp'). 
\end{align*} 
Note that in this case the local $L$-factor at $\frakp$ is $1$ for  
$I(f_{E,\alpha},s)$.
 
\end{proof}

\subsection{A Hecke-equivariant pairing}\label{sec:Heckequivariantpairing}
We keep the notation used in \S \ref{sec:RSintegrals}. In particular, we let $\chi_F$ be a unitary Hecke character of $\A_F^\times$, $\chi := \chi_F \circ {\rm N}_{E/F}$ and let $\theta$ be a unitary Hecke character of $\A_E^\times$ such that its restriction to $\A_F^\times$ is $\chi_F \eta_{E/F}$. Set also $\mathfrak{c}$ to be coprime to $p \mathcal{O}_E$ and stable under $\tau \in {\rm Gal}(E/F)$.

Note that the pairing in intersection cohomology $\langle\,, \,\rangle_{IH}$ and the Poincar\'e pairing $\langle\,, \,\rangle$ introduced in \S \ref{sec:cohomologygroups} for the variety $Y_G(K_0(\mathfrak{c}p^{\pmb{r}+\pmb{1}}))$ are not Hecke-equivariant for $T_{\eta}$. Define 
\begin{align}\label{def:HeckePpairing}
    [  - , -] =\langle - , W^*_{\mathfrak{c}p^{\pmb{r}+\pmb{1}}} - \rangle.
\end{align}
To ease the notation, let $\mathscr{Z}_{\infty}$ denote the big cycle $\mathscr{Z}_{\infty}^{\chi^{-1}\theta^2}(\mathfrak{c}') \in e^{\rm n.ord} H^{2d}_{\rm Iw}(Y_G(V_\infty), \mathcal{O}_{\mathfrak{m}}(\chi^{-1}\theta^2))$, with $\chi^{-1}\theta^2$ supposed to be unramified at $p$. Similarly, the class at finite level $V_{{\pmb{r}}}$ will be denoted by $\mathscr{Z}_{\pmb{r}}$.  In what follows, to highlight its level, we also denote by $\mathcal{E}_{\mathfrak{c}'p^{\pmb{r}+\pmb{1}}}^*(h,s)$ the normalized Eisenstein series of level $K_0(\mathfrak{c}_F'p^{\pmb{r}+\pmb{1}})$ introduced in \eqref{definition_Eisenstein_series}.

\begin{lemma}\label{lemma:integraleval}
 For $\pmb{r} = (r_\frakp)_\frakp \in \Z_{\geq 0}^{|\Upsilon_{F,p}|}$, we let $f_{E,\alpha} \in S_{(\underline{k},0)}(K_0(\mathfrak{c}p^{\pmb{r}+\pmb{1}}),\chi,\chi')$ be a nearly ordinary form such that it has level $V_{\pmb{r}}$ and it is the base change of $f_\alpha$. We suppose that the Nebentypus $\chi_F$ of $f_\alpha$ has trivial component at $\frakp$ for any $\frakp \in \Upsilon_{F,p}$ and that $\chi^{-1}\theta^2$ is unramified at $p$. Set $f = f_{E,\alpha} \otimes \theta^{-1}$ and let $\gamma_{\pmb{r}}$ be the matrix with $\frakp$-component $\gamma_{\frakp,r_\frakp}$ and $1$ elsewhere.  \begin{enumerate}
     \item If $r_\frakp \geq 1$ for all $\frakp \in \Upsilon_{F,p}$, we have 
\[
[  \omega_J(W_{\mathfrak{c}p^{\pmb{r}+\pmb{1}}}^* f^{-\iota}), \mathrm{mom}^{\underline{k}}_{\pmb{r}}\, \mathscr{Z}_{\infty}] = \frac{1}{{\alpha^{\circ}_p}^{2\pmb{r}}}
\frac{\zeta^{p\mathfrak{c}'_F}_F(2)}{a_{\underline{k}} C_{\mathcal{E},\pmb{r}}} {\rm Res}_{s=0} I( \gamma_{\pmb{r}} \cdot f_{E,\alpha}, s), \] where $a_{\underline{k}}$ is the constant of Lemma \ref{rmk:uflag}, $C_{\mathcal{E},\pmb{r}} = \mathrm{Res}_{s=1}\mathcal{E}_{\mathfrak{c}'p^{\pmb{r}+\pmb{1}}}^*(h,s)$, and ${\alpha^{\circ}_p}^{2\pmb{r}} = \prod_\frakp {\alpha^{\circ}_\frakp}^{2 r_\frakp}$, with $\alpha^{\circ}_\frakp$ the $U_{\frakp}^{^{\nord}}$-eigenvalue of $f_{\alpha}$.
\item If $\pmb{r} = \pmb{0}$, then we have \[
[  \omega_J(W_{\mathfrak{c}p}^*\, f^{-\iota}), \mathrm{mom}^{\underline{k}}_{0}\, \mathscr{Z}_{\infty}] = \mathcal{E}^{\pmb{0}}_{p}(f_\alpha)
\frac{\zeta^{p \mathfrak{c}'_F}_F(2)}{a_{\underline{k}} C_{\mathcal{E},\pmb{0}}} {\rm Res}_{s=0}  I( \gamma_{\pmb{0}} \cdot f_{E,\alpha}, s), \]
with $\mathcal{E}^{\pmb{0}}_{p}(f_\alpha) : =  \prod_{\frakp \in \Upsilon_{F,p}}  \left( 1 - p^{\sum_{\sigma \in \Sigma_{\frakp}}  (1+k_{\sigma})}\,\eta_{E/F}(\frakp) {\alpha^{\circ}_{\frakp}}^{-2} \right) $\index{$\mathcal{E}^{\pmb{0}}_{p}(f_\alpha)$}.
 \end{enumerate} 
\end{lemma}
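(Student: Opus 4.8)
The plan is to unwind the definition of $\mathrm{mom}^{\underline{k}}_{\pmb{r}}$ applied to $\mathscr{Z}_\infty$ via Theorem \ref{BIGCLASS} and Corollary \ref{coro:EulerFactor1}, then reduce the Hecke-modified Poincar\'e pairing $[-,-]$ to the cohomological pairing appearing in Lemma \ref{Pairingequalintegral}, and finally invoke Proposition \ref{FromINTtoRES} together with the Rankin--Selberg calculations of Lemma \ref{RSintegralpstab} and Lemma \ref{lemmaforactionofuonRSfinalversion} to identify the right-hand side as a residue of $I(\gamma_{\pmb{r}} \cdot f_{E,\alpha}, s)$. The bookkeeping of normalization constants ($a_{\underline{k}}$ from Lemma \ref{rmk:uflag}, the Eisenstein residue $C_{\mathcal{E},\pmb{r}}$, the zeta factor $\zeta^{p\mathfrak{c}'_F}_F(2)$, and the powers of ${\alpha^{\circ}_p}$) is where most of the work lies, but none of it is conceptually difficult once the cycles are matched up correctly.

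\textbf{Step 1: reduce to finite level.} By Theorem \ref{BIGCLASS}, $\mathrm{mom}^{\underline{k}}_{\pmb{r}}\,\mathscr{Z}_\infty = T_\eta^{-\pmb{r}}\, e^{\nord}\,\mathscr{Z}^{\underline{k}}_{\pmb{r}}$ in case (1), and by Corollary \ref{coro:EulerFactor1}, $\mathrm{mom}^{\underline{k}}_0\,\mathscr{Z}_\infty = \prod_{\frakp}(1 - q_\frakp p^{\sum_{\sigma\in\Sigma_\frakp}k_\sigma}T_{\eta_\frakp}^{-1})\,e^{\nord}\mathscr{Z}^{\underline{k}}_0$ in case (2) (applied here to the $\chi^{-1}\theta^2$-twisted cycles $\mathscr{Z}^{\chi^{-1}\theta^2}_\infty(\mathfrak{c}')$, which is legitimate since the twisting correspondences commute with $T_\eta$, as recalled in \S\ref{sec:pstabilisation} and the twisting discussion preceding Theorem \ref{BIGCLASS}). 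Using Proposition \ref{prop:TetaUp}, under the Atkin--Lehner-twisted map \eqref{twistedmapHeckeequivariance} the operator $T_{\eta_\frakp}$ becomes $U^{\nord}_\frakp$, so that $T_\eta^{-\pmb{r}}$ acting on the cohomology class attached to $f_{E,\alpha}$ multiplies by $\prod_\frakp ({\alpha^\circ_\frakp})^{-2r_\frakp} = {\alpha^\circ_p}^{-2\pmb{r}}$ (here each prime $\mathfrak{P}|\frakp$ of $E$ contributes a factor ${\alpha^\circ_\frakp}$, giving the square), while in case (2) the Euler factor $\prod_\frakp(1 - q_\frakp p^{\sum k_\sigma} T_{\eta_\frakp}^{-1})$ evaluated on $f_{E,\alpha}$ gives exactly $\mathcal{E}^{\pmb{0}}_p(f_\alpha) = \prod_\frakp(1 - p^{\sum_{\sigma\in\Sigma_\frakp}(1+k_\sigma)}\eta_{E/F}(\frakp){\alpha^\circ_\frakp}^{-2})$ — one should double-check that $q_\frakp = \Norm_{F/\Q}(\frakp) = p^{\sum_{\sigma\in\Sigma_\frakp}1}$ and that the Satake/$U_p$-eigenvalue relation $({\alpha^\circ_\frakp})^2 \Norm_{F/\Q}(\frakp) = $ (eigenvalue of $T_{\eta_\frakp}$ squared) together with $\chi_F(\frakp)=\alpha_\frakp\beta_\frakp$ produces the $\eta_{E/F}(\frakp){\alpha^\circ_\frakp}^{-2}$ shape, matching \eqref{eq7.3} in Lemma \ref{RSintegralpstab}.

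\textbf{Step 2: compute the pairing as an integral.} By definition \eqref{def:HeckePpairing}, $[\omega_J(W^*_{\mathfrak{c}p^{\pmb{r}+\pmb{1}}}f^{-\iota}), \mathrm{mom}^{\underline{k}}_{\pmb{r}}\mathscr{Z}_\infty] = \langle \omega_J(W^*_{\mathfrak{c}p^{\pmb{r}+\pmb{1}}}f^{-\iota}), W^*_{\mathfrak{c}p^{\pmb{r}+\pmb{1}}}\mathrm{mom}^{\underline{k}}_{\pmb{r}}\mathscr{Z}_\infty\rangle$; applying $W^*_{\mathfrak{c}p^{\pmb{r}+\pmb{1}}}$ twice is (up to the usual scalar, which is harmless as $W_{\mathfrak{c}}$ is an involution on the relevant spaces) the identity, so this equals $\langle \omega_J(W^*_{\mathfrak{c}p^{\pmb{r}+\pmb{1}}}f^{-\iota}), \mathrm{mom}^{\underline{k}}_{\pmb{r}}\mathscr{Z}_\infty\rangle$ after moving one Atkin--Lehner across; more precisely one transports it onto $f$ and uses $W^*W^* f^{-\iota} \sim f^{-\iota}$. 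After Step 1 this is $\langle \omega_J(f^{-\iota}), {\alpha^\circ_p}^{-2\pmb{r}}\,e^{\nord}\mathscr{Z}^{\underline{k}}_{\pmb{r}}\rangle$ (resp. with $\mathcal{E}^{\pmb{0}}_p(f_\alpha)$). Now $\mathscr{Z}^{\underline{k}}_{\pmb{r}} = \eta^{\pmb{r}}_*\,z^{\underline{k}}_{\pmb{r}}$ with $z^{\underline{k}}_{\pmb{r}} = u_* \circ \iota_{\pmb{r},*}\circ \iota^{\dagger,\underline{k}}_{\mathcal{O}_\mathfrak{m}}(\mathbf{1})$, and $\iota^{\dagger,\underline{k}}_{\mathcal{O}_\mathfrak{m}}$ sends $1$ to $a_{\underline{k}}^{-1}\Delta^{\underline{k}}$ — this produces the $1/a_{\underline{k}}$ in the formula. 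Pushing the $u$-translate and the $\eta^{\pmb{r}}$-translate across the pairing converts the cycle $\iota_{\pmb{r},*}$ of $Y_H(uK_{\pmb{r}}u^{-1}\cap H)$ into the Getz--Goresky-type cycle $\mathscr{Z}^{\hat{\underline{k}},\chi^{-1}\theta^2}(\mathfrak{c}'p^{\pmb{r}+\pmb{1}})$ but with $f^{-\iota}$ replaced by its translate under $\gamma_{\pmb{r}} = \eta^{-\pmb{r}}u\eta^{\pmb{r}}$; then Lemma \ref{Pairingequalintegral} (in its $\mathfrak{c}'$-version, with the branching twist $\tilde\iota^{\hat{\underline{k}}}$) identifies the pairing with $\int_{Y_H(\cdots)} \tilde\iota^{\hat{\underline{k}},*}(\omega_J(\gamma_{\pmb{r}}\cdot f_{E,\alpha}^{-\iota}\otimes\theta^{-1}))$ up to the index constant which is absorbed into $C_{\mathcal{E},\pmb{r}}$ via Proposition \ref{FromINTtoRES} and \eqref{RES1}. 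Finally Proposition \ref{FromINTtoRES} together with Proposition \ref{AsaiIR} rewrites this integral as $\frac{\zeta^{p\mathfrak{c}'_F}_F(2)}{C_{\mathcal{E},\pmb{r}}}\mathrm{Res}_{s=0}I(\gamma_{\pmb{r}}\cdot f_{E,\alpha},s)$, giving the claim. (Note the level at $p$ forces $\zeta^{p\mathfrak{c}'_F}_F$ rather than $\zeta^{\mathfrak{c}'_F}_F$, since the Eisenstein series is now of level divisible by $p$.)

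\textbf{Main obstacle.} The genuine difficulty is tracking how the Atkin--Lehner involution $W^*_{\mathfrak{c}p^{\pmb{r}+\pmb{1}}}$ at $p$-power level interacts with the degeneracy maps and with $\eta^{\pmb{r}}_*$, and ensuring that the composite $u \circ \eta^{\pmb{r}}$ acting on the cycle really produces the matrix $\gamma_{\pmb{r}} = \eta^{-\pmb{r}}u\eta^{\pmb{r}}$ acting on the automorphic form in the integrand — i.e. matching the geometric twist on the cycle side with the analytic twist on the Whittaker/Rankin--Selberg side. Once that dictionary is set up, cases (1) and (2) differ only in which specialization of the Big class (Theorem \ref{BIGCLASS} vs. Corollary \ref{coro:EulerFactor1}) one feeds in, and the Euler factor $\mathcal{E}^{\pmb{0}}_p(f_\alpha)$ in (2) is simply the value of $\prod_\frakp(1-q_\frakp p^{\sum k_\sigma}T_{\eta_\frakp}^{-1})$ on the $f_{E,\alpha}$-eigenline, re-expressed via the Satake parameters using Proposition \ref{AsaiForBC} and the ordinarity normalization.
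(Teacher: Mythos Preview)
Your proposal is correct in outline and follows the same approach as the paper, but two points deserve sharpening.

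First, Lemmas \ref{RSintegralpstab} and \ref{lemmaforactionofuonRSfinalversion} are not needed here; they are invoked downstream in Theorem \ref{thmmain} to simplify ${\rm Res}_{s=0}\,I(\gamma_{\pmb{r}}\cdot f_{E,\alpha},s)$ further. For the present lemma only Proposition \ref{FromINTtoRES} is required once the pairing has been reduced to a period integral over $Y_H$.

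Second, and more substantively, the Atkin--Lehner bookkeeping is more delicate than your Step~2 suggests. The paper does \emph{not} cancel the two copies of $W^*_{\mathfrak{c}p^{r+1}}$ before extracting $\gamma_{r}$. It first rewrites $(\eta^r)_\ast u_\ast = \gamma_{r,\ast}(\eta^r)_\ast$ (from $u\eta^r=\eta^r\gamma_r$), then uses the conjugation $W^*_{\mathfrak{c}p^{r+1}}\gamma_{r,\ast} = ({}^tu^{-1})_\ast W^*_{\mathfrak{c}p^{r+1}}$, moves $({}^tu^{-1})^\ast$ to the form side via adjointness (where, through Lemma \ref{TetaVSUp} and diagram \eqref{commdiagAL}, it yields $\omega_J(W^*(\gamma_r\cdot f)^{-\iota})$), and only \emph{then} applies $\langle W^*\clubsuit,W^*\diamondsuit\rangle=\langle\clubsuit,\diamondsuit\rangle$ (valid because $m=0$). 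Your proposed order---cancel the $W^*$'s first, then push $u$ and $\eta^r$ across---would require verifying directly that pulling back $\omega_J(f^{-\iota})$ along right multiplication by $u\eta^r$ produces $\gamma_r\cdot f$ in the integrand, which is not immediate without the conjugation step. After the $W^*$'s are removed, the paper uses that $\eta\in H(\Q_p)$ to commute $(\eta^r)_\ast$ with $\iota_{r,\ast}$ and identify the resulting $H$-cycle as $\mathbf{1}_{Y_H(\gamma_r V_r\gamma_r^{-1}\cap H)}$, whence Proposition \ref{FromINTtoRES} applies; this is the point at which the cycle genuinely becomes a Getz--Goresky cycle of level $\mathfrak{c}'p^{\pmb{r}+\pmb{1}}$, giving rise to $C_{\mathcal{E},\pmb{r}}$ and $\zeta_F^{p\mathfrak{c}'_F}(2)$.
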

\begin{proof}
We suppose for simplicity that $\pmb{r} = (r)_\frakp$ and that $\chi^{-1}\theta^2$ is trivial, with the general case having identical proof but considerably heavier notation. 

Recall that, for $r\geq 1$,
\[\mathrm{mom}^{\underline{k}}_{r}\, \mathscr{Z}_{\infty}=T_{\eta}^{-r} e^{\nord} \,\mathscr{Z}_r^{\underline{k}} = T_{\eta}^{-r} e^{\nord} ((\eta^r)_\ast z_r^{\underline{k}}),
\]
with 
\[
z_r^{\underline{k}}= u_\star \circ {\iota_r}_\star \circ  \iota^{\dagger,  \underline{k}}_{\mathcal{O}_{\mathfrak{m}}}(\mathbf{1}_{Y_H(u K_{r}u^{-1} \cap H)}) \in H^{2d} ( Y_G( K_{r}), \mathscr{V}^{\underline{k},0}_{\mathcal{O}_{\mathfrak{m}}}).
\]
As $T_\eta$ is self-adjoint for the pairing $[\,,\,]$, we have
\begin{align*}
[  \omega_J(W_{\mathfrak{c}p^{r+1}}^* f^{-\iota}), \mathrm{mom}^{\underline{k}}_{r}\, \mathscr{Z}_{\infty}] &= \langle \omega_J(W_{\mathfrak{c}p^{r+1}}^* f^{-\iota}),W_{\mathfrak{c}p^{r+1}}^* T_{\eta}^{-r} e^{\nord}(\eta^r)_\ast u_\star \circ {\iota_r}_\star \circ  \iota^{\dagger,  \underline{k}}_{\mathcal{O}_{\mathfrak{m}}}(\mathbf{1}_{Y_H(u K_{r}u^{-1} \cap H)}) \rangle \\
 &=\langle   T_\eta^{-r} e^{\nord}  \omega_J(W_{\mathfrak{c}p^{r+1}}^* f^{-\iota}), W_{\mathfrak{c}p^{r+1}}^*(\eta^r)_\ast u_\star \circ {\iota_r}_\star \circ  \iota^{\dagger,  \underline{k}}_{\mathcal{O}_{\mathfrak{m}}}(\mathbf{1}_{Y_H(u K_{r}u^{-1} \cap H)}) \rangle
\\
\text{(Proposition \ref{prop:TetaUp})} &= \langle \omega_J(W_{\mathfrak{c}p^{r+1}}^* (e^{\nord}  { U_p^{^{\nord}}}^{-r}  f)^{-\iota}), W_{\mathfrak{c}p^{r+1}}^*(\eta^r)_\ast u_\star \circ {\iota_r}_\star \circ  \iota^{\dagger,  \underline{k}}_{\mathcal{O}_{\mathfrak{m}}}(\mathbf{1}_{Y_H(u K_{r}u^{-1} \cap H)}) \rangle \\
&= \frac{1}{{\alpha_p^{\circ}}^{2r}} \langle    \omega_J(W_{\mathfrak{c}p^{r+1}}^* f^{-\iota}), W_{\mathfrak{c}p^{r+1}}^*(\eta^r)_\ast u_\star \circ {\iota_r}_\star \circ  \iota^{\dagger,  \underline{k}}_{\mathcal{O}_{\mathfrak{m}}}(\mathbf{1}_{Y_H(u K_{r}u^{-1} \cap H)}) \rangle \\
&= \frac{1}{{\alpha_p^{\circ}}^{2r}} \langle    \omega_J(W_{\mathfrak{c}p^{r+1}}^* f^{-\iota}), W_{\mathfrak{c}p^{r+1}}^*\gamma_{r,\ast} (\eta^r)_\ast \circ {\iota_r}_\star \circ \iota^{\dagger,  \underline{k}}_{\mathcal{O}_{\mathfrak{m}}}(\mathbf{1}_{Y_H(u K_{r}u^{-1} \cap H)}) \rangle \\ 
&= \frac{1}{{\alpha_p^{\circ}}^{2r}} \langle    \omega_J(W_{\mathfrak{c}p^{r+1}}^* f^{-\iota}), ({}^t u^{-1})_\ast W_{\mathfrak{c}p^{r+1}}^* (\eta^r)_\ast \circ {\iota_r}_\star \circ \iota^{\dagger,  \underline{k}}_{\mathcal{O}_{\mathfrak{m}}}(\mathbf{1}_{Y_H(u K_{r}u^{-1} \cap H)}) \rangle \\ 
&= \frac{1}{{\alpha_p^{\circ}}^{2r}} \langle    ({}^t u^{-1})^\ast \omega_J(W_{\mathfrak{c}p^{r+1}}^* f^{-\iota}), W_{\mathfrak{c}p^{r+1}}^* (\eta^r)_\ast \circ {\iota_r}_\star \circ \iota^{\dagger,  \underline{k}}_{\mathcal{O}_{\mathfrak{m}}}(\mathbf{1}_{Y_H(u K_{r}u^{-1} \cap H)}) \rangle \\ 
&= \frac{1}{{\alpha_p^{\circ}}^{2r}} \langle \omega_J(W_{\mathfrak{c}p^{r+1}}^*( \gamma_r \cdot f)^{-\iota}), W_{\mathfrak{c}p^{r+1}}^* (\eta^r)_\ast \circ {\iota_r}_\star \circ \iota^{\dagger,  \underline{k}}_{\mathcal{O}_{\mathfrak{m}}}(\mathbf{1}_{Y_H(u K_{r}u^{-1} \cap H)}) \rangle
\end{align*}
where we have used that the $U_{\frakp}^{^{\nord}}$-eigenvalue of $f_{E,\alpha}$ is the square of the $U_{\frakp}^{^{\nord}}$-eigenvalue of $f_{\alpha}$, and where, in the sixth equality, we have used the adjointness of pushforward and pullback for Poincar\'e duality.
By the proof of Lemma 7.8 in \cite{GetzGore}\footnote{We observe that the power $[k+2m]/2$ in \emph{loc. cit. } corresponds to our $m$.} and the compatibility between Petersson product and Poincar\'e pairing, we have 
\[
\langle W_{\mathfrak{c}p^{r+1}}^* \clubsuit
, W_{\mathfrak{c}p^{r+1}}^* \diamondsuit \rangle = \mathrm{N}_{E/\mathbb{Q}}(\mathfrak{c}p^r)^m \langle  \clubsuit
,  \diamondsuit \rangle= \langle  \clubsuit
,  \diamondsuit \rangle,
\]
as we have supposed $m=0$. Combining this with the commutative diagram \eqref{commdiagAL} we are left to evaluate
\begin{align}\label{intermediateLemma73}
    \langle \omega_J(( \gamma_r \cdot f)^{-\iota}),  (\eta^r)_\ast \circ {\iota_r}_\star \circ \iota^{\dagger,  \underline{k}}_{\mathcal{O}_{\mathfrak{m}}}(\mathbf{1}_{Y_H(u K_{r} u^{-1} \cap H)}) \rangle
\end{align} 

Now, recall that as $\eta \in H(\Q_p)$, the maps $(\eta^r)_\ast$ and ${\iota_r}_\star$ commute. Then $(\eta^r)_\ast$ defines an isomorphism between $\mathbf{1}_{Y_H(u K_{r}u^{-1} \cap H)}$ and $\mathbf{1}_{Y_H(\gamma_r V_{r} \gamma_r^{-1}  \cap H)}$ and 
\begin{align*}
    \text{\eqref{intermediateLemma73}} &= \int_{Y_H(\gamma_r V_{r} \gamma_r^{-1}  \cap H)} \iota^{\dagger,\underline{k},\star} \left ( \omega_J( \gamma_r \cdot f^{-\iota}) \right )  \\
     \text{(Proposition \ref{FromINTtoRES})} &= \frac{\zeta^{p\mathfrak{c}'_F}_F(2)}{a_{\underline{k}}C_{\mathcal{E},\pmb{r}}} {\rm Res}_{s=0} I( \gamma_r \cdot f, s),
\end{align*}
where $a_{\underline{k}}$ is the constant of Lemma \ref{rmk:uflag} used to re-normalize the branching map. This completes the proof of the case when $r \geq 1$.

If $r =0$, by Corollary \ref{coro:EulerFactor1}, we have 
\[\mathrm{mom}^{\underline{k}}_{0}\, \mathscr{Z}_{\infty} =  \prod_{\frakp \in \Upsilon_{F,p}} \left( 1 - q_{\frakp} \cdot p^{\sum_{\sigma \in \Sigma_{\frakp}}  k_{\sigma}}\, T_{\eta_{\frakp}}^{-1} \right) \cdot 
e^{\nord} \mathscr{Z}_0^{\underline{k}}.   \]
Using that the pairing is Hecke equivariant and Proposition \ref{prop:TetaUp}, which says that $T_{\eta}$ corresponds to $U_p^{^{\nord}}$, we need to calculate 
\[
\prod_{\frakp \in \Upsilon_{F,p}} \left( 1 - q_{\frakp} \cdot p^{\sum_{\sigma \in \Sigma_{\frakp}}  k_{\sigma}}\, {U_{\frakp}^{^{\nord}}}^{-1} \right) f_{E,\alpha}\otimes \theta^{-1} 
\]
which is
\[
\prod_{\frakp \in \Upsilon_{F,p}} \left( 1 - p^{\sum_{\sigma \in \Sigma_{\frakp}}  (1+k_{\sigma})}\,\eta_{E/F}(\frakp) {\alpha^{\circ}_{\frakp}}^{-2} \right) f_{E,\alpha}\otimes \theta^{-1},
\]
where we used $q_{\frakp}=p^{\sum_{\sigma \in \Sigma_{\frakp}}1}$, that the $U_{\frakp}^{^{\nord}}$-eigenvalue of $f_{E,\alpha}$ is the square of the $U_{\frakp}^{^{\nord}}$-eigenvalue of $f_{\alpha}$, and that the restriction of $\theta$ to $\A_F^\times$ is $\eta_{E/F}\chi_F$, with $\chi_F(\frakp)=1$.
\end{proof}

For simplicity, here we did not consider the intermediate cases where $\pmb{r}$ is zero on any possible proper subset of $\Upsilon_{F,p}$, however it can be done  by adding the appropriate Euler factor which appears in Remark \ref{Interestedreadercase}.

\subsection{The adjoint \texorpdfstring{$p$}{p}-adic \texorpdfstring{$L$}{L}-function}\label{sec:constructionpadicLF}

We let $\chi_F$ be a unitary Hecke character of $\A_F^\times$ which has trivial component at every place of $F$ above $p$. We let $\chi := \chi_F \circ {\rm N}_{E/F}$ and let $\theta$ be a unitary Hecke character of $\A_E^\times$ with conductor $\mathfrak{b}$ coprime with $p$ and such that its restriction to $\A_F^\times$ is $\chi_F \eta_{E/F}$. We also let $\mathfrak{b}_F$ denote $\mathfrak{b} \cap \mathcal{O}_F$. If $C_{\mathcal{E}}$ denotes the number defined in \eqref{RES1} for $\mathfrak{c}'=\mathfrak{c} \mathfrak{b}^2$ and $a_{\underline{k}}$ is the constant of Lemma \ref{rmk:uflag}, we define 
\begin{align*}
     C_{\underline{k} , \theta}^{E/F} := \frac{|D_F \Norm_{F/\Q}(D_{E/F})^{1/2}|^{2} |D_F|^{1/2} [K_0(\mathfrak{c}'):\mathcal{O}_E^{\ast, +}K_{11}(\mathfrak{c}')] G(\theta)}{a_{\underline{k}} \pi^d 2^{\sum_{\sigma \in \Sigma_F} (k_\sigma + 2) }C_{\mathcal{E}}}.
\end{align*}
\noindent In the following, suppose that $L_\mathfrak{m}$ contains the values of $\theta$. Moreover, if  $V_{\pmb{r}}= K_{11}(\mathfrak{c})  \cdot \prod_{\frakp \in \Upsilon_{F,p}} V_{\frakp,r_\frakp}$, $V_{\pmb{r}}'= K_{0}(\mathfrak{c})  \cdot \prod_{\frakp \in \Upsilon_{F,p}} V_{\frakp,r_\frakp}$, and $\tilde{{\rm pr}}: Y_G(V_{\pmb{r}}) \to Y_G(V_{\pmb{r}}')$ denotes the natural degeneracy map, we consider the pullback $\mathscr{Z}^\flat_{\infty,\theta}:= \tilde{{\rm pr}}^* \mathscr{Z}_{\infty,\theta} \in e^{\rm n.ord} H^{2d}_{\rm Iw}(Y_G(V_\infty), \mathcal{O}_{\mathfrak{m}})$ of the big cycle $\mathscr{Z}_{\infty,\theta}$ (twisted by $\theta$ as in Definition $\ref{def:twistedcycle2}$).

Let $\mathbf{f}$ be a primitive Hida family with tame character $\chi_F$, which we recall being an irreducible component of $ \mathcal{H}^{H/Z_H}_{11} \otimes_{\mathcal{O}_\mathfrak{m}} L_\mathfrak{m},$ and let $\mathbf{f}_E$ be its base change to $E$ as given in Definition \ref{def:BaseChangefamily}. We denote by  $\mathcal{K}_\mathbf{f}$ their field of coefficients. Note that $\mathbf{f}_E$ has tame character $\chi$. Fix a type $J$ for $E/F$, let $\varepsilon$ be the corresponding character on $\left\{ \matrix{\pm 1}{0}{0}{1} \right\}^{\Sigma_E}$, and denote by $\mathbf{1}^\varepsilon_{\mathbf{f}_{E}}$ the idempotent corresponding to $\mathbf{f}_E$ and $\varepsilon$. For a primitive Hida family $\mathbf{f}$ with image of the residual Galois representation not solvable, we can give the following :

\begin{definition}
 Let $G_{\mathbf{f}_{E}}$ be the generator of $\mathbf{1}^\varepsilon_{\mathbf{f}_{E}} H^{2d}_{\rm Iw}(Y_G(V_\infty), \mathcal{O}_{\mathfrak{m}}) \otimes_{\Lambda_{G/H}} \mathcal{K}_\mathbf{f}$ chosen after Lemma \ref{lemma:compareHeckeAlg}, so that when evaluating at a point, it satisfies \eqref{SettingaGen}.
Then there exists a unique element of $\mathcal{K}_\mathbf{f}$, that we denote by $L_p(\mathrm{Ad}(\mathbf{f})\otimes \eta_{E/F})$, so that we can write 
\[ \mathbf{1}^\varepsilon_{\mathbf{f}_{E}} \mathscr{Z}^\flat_{\infty,\theta}= L_p(\mathrm{Ad}(\mathbf{f})\otimes \eta_{E/F})G_{\mathbf{f}_{E}} .\]
\end{definition}
We call $L_p(\mathrm{Ad}(\mathbf{f})\otimes \eta_{E/F})$ the adjoint $p$-adic $L$-function of $\mathbf{f}$ at $s=1$ because of the following interpolation property.
\begin{theorem}\label{thmmain}  
Let $\mathbf{f}$ be a primitive Hida family with tame character $\chi_F$ with image of the residual Galois representation not solvable.
For a point $P$ of $\mathbf{f}_E$ corresponding to a cusp form $f_{E,\alpha}$ of weight $(\underline{k},0)$, minimal level $V_{\pmb{r}}$, and base change of $f_\alpha$, we let $g=f_{E,\alpha} \otimes \theta^{-1}$ and distinguish the following cases. 
\begin{itemize}
    \item If  $\pi(f_{E,\alpha})_{\mathfrak{P}}$ is a ramified principal series with adjoint character $\chi'_{\mathfrak{P}}$ of conductor $\mathfrak{P}^{r_\frakp}$ for $\mathfrak{P}|\frakp$, with $r_\frakp \geq 1$, at each  $\mathfrak{P}$, then $L_p(\mathrm{Ad}(\mathbf{f})\otimes \eta_{E/F}) [P]$ equals to \[\frac{G(\chi_{\mathfrak{P}}')\prod_{\frakp}{\rm N}_{F/\Q}(\frakp)^{2r_\frakp +2} {\rm Res}_{s=1}\zeta_F^{\mathfrak{d}_{E/F}p\mathfrak{c}_F\mathfrak{b}_F}(s) \prod_{\mathfrak{q} |\mathfrak{b}'} L_{\mathfrak{q}}({\rm As}(g),1)  C_{\underline{k} , \theta}^{E/F}}{{\alpha^{\circ}_p}^{2\pmb{r}} |(\mathcal{O}_F/p^{\pmb{r}+\pmb{1}})^\times|}\cdot \frac{L^{\ast, \mathfrak{d}_{E/F} \mathfrak{b}_F}(\mathrm{Ad}(f_\alpha) \otimes \eta_{E/F}, 1)}{\Omega^\varepsilon_p(f_{E,\alpha})} ,
\]
with $\mathfrak{b}' := \prod_{\substack{ \mathfrak{q} | \mathfrak{d}_{E/F} \\ \mathfrak{q} \nmid \mathfrak{b}_F}} \mathfrak{q}$. 
\item If  $\pi(f_{E,\alpha})_{\mathfrak{P}}$ is Steinberg at each  $\mathfrak{P}$ and $f_\alpha$ has $U_\frakp$-eigenvalue $\alpha_\frakp {\rm N}_{F/\Q}(\frakp)^{1/2}$ for each $\frakp$, then $L_p(\mathrm{Ad}(\mathbf{f})\otimes \eta_{E/F}) [P]$ equals to \[\frac{ \prod_{\frakp | p}  \left( 1 -\eta_{E/F}(\frakp) \alpha_\frakp^{-2} \right) p^{2d} {\rm Res}_{s=1}\zeta_F^{\mathfrak{d}_{E/F}p\mathfrak{c}_F\mathfrak{b}_F}(s) \prod_{\mathfrak{q} |\mathfrak{b}'} L_{\mathfrak{q}}({\rm As}(g),1)  C_{\underline{k} , \theta}^{E/F}}{ |(\mathcal{O}_F/p)^\times|}\cdot \frac{L^{\ast, \mathfrak{d}_{E/F} \mathfrak{b}_F}(\mathrm{Ad}(f_\alpha) \otimes \eta_{E/F}, 1)}{\Omega^\varepsilon_p(f_{E,\alpha})} .
\]
\item If  $\pi(f_{E,\alpha})_{\mathfrak{P}}$ is an unramified principal series at each  $\mathfrak{P}$, then $f_\alpha$ is the $p$-stabilisation of a nearly-ordinary form $f$ at each $\frakp$ with Satake parameters $\alpha_\frakp, \beta_\frakp$ and  $L_p(\mathrm{Ad}(\mathbf{f})\otimes \eta_{E/F})[P]$ is 
\begin{align*}
    \frac{\prod_{\frakp \mid p}  (1-\eta_{E/F}(\mathfrak{p})\alpha_\frakp^{-2}) (1-\eta_{E/F}(\mathfrak{p})\Norm_{F/\Q}(\mathfrak{p})^{-1}) 
(1-\eta_{E/F}(\mathfrak{p})\frac{\beta_{\mathfrak{p}}}{\alpha_{\mathfrak{p}}}\Norm_{F/\Q}(\mathfrak{p})^{-1}) }{ |(\mathcal{O}_F/p)^\times|} & p^{2d}\cdot \\
 {\rm Res}_{s=1}\zeta_F^{\mathfrak{d}_{E/F}p\mathfrak{c}_F\mathfrak{b}_F}(s) \prod_{\mathfrak{q} |\mathfrak{b}'} L_{\mathfrak{q}}({\rm As}(f_E),1)  C_{\underline{k} , \theta}^{E/F}&\cdot \frac{L^{\ast, \mathfrak{d}_{E/F} \mathfrak{b}_F}(\mathrm{Ad}(f) \otimes \eta_{E/F}, 1)}{\Omega^\varepsilon_p(f_{E,\alpha})}
\end{align*}
\end{itemize}
\end{theorem}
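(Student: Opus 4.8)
\textbf{The plan} is to compute $L_p(\mathrm{Ad}(\mathbf{f})\otimes\eta_{E/F})[P]$ for every classical point $P$ of $\mathbf{f}_E$ attached to a form $f_{E,\alpha}$, base change of $f_\alpha$, by specializing the defining identity $\mathbf{1}^\varepsilon_{\mathbf{f}_{E}} \mathscr{Z}^\flat_{\infty,\theta}= L_p(\mathrm{Ad}(\mathbf{f})\otimes \eta_{E/F})\,G_{\mathbf{f}_{E}}$ at $P$. By Lemma \ref{lemma:compareHeckeAlg}, after localizing at $\mathfrak{m}_f$ and inverting $p$, the space $\mathbf{1}^\varepsilon_{\mathbf{f}_{E}} H^{2d}_{\rm Iw}(Y_G(V_\infty), \mathcal{O}_{\mathfrak{m}}) \otimes_{\Lambda_{G/H}} \mathcal{K}_\mathbf{f}$ is one-dimensional with generator $G_{\mathbf{f}_E}$, whose image at $P$ under $\mathrm{mom}^{\underline{k}}_{\pmb{r}}$ is the explicit cohomology class \eqref{SettingaGen}; thus $L_p(\mathrm{Ad}(\mathbf{f})\otimes\eta_{E/F})[P]$ is the scalar by which $\mathbf{1}^\varepsilon_{\mathbf{f}_{E}}\,\mathrm{mom}^{\underline{k}}_{\pmb{r}}\,\mathscr{Z}^\flat_{\infty,\theta}$ differs from $G_{\mathbf{f}_E}[P]$. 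To isolate it I would pair both sides, via the Hecke-equivariant pairing $[\,\cdot\,,\,\cdot\,]$ of \eqref{def:HeckePpairing}, against the cuspidal class $\omega_J(W^*_{\mathfrak{c}p^{\pmb{r}+\pmb{1}}}f_{E,\alpha}^{-\iota})$ generating the dual line; since $\mathbf{1}^\varepsilon_{\mathbf{f}_E}$ is a limit of polynomials in Hecke operators, hence $[\,\cdot\,,\,\cdot\,]$-self-adjoint, and fixes this class, it drops out on the cycle side. Untwisting the $\theta$-twist and the $\flat$-pullback (via Definition \ref{def:twistedcycle2} and the adjointness of $\mathcal{P}_\mathfrak{b}$ as in Lemma \ref{Pairingequalintegral}, contributing $[K_0(\mathfrak{c}'):\mathcal{O}_E^{\ast,+}K_{11}(\mathfrak{c}')]\,G(\theta)$ and the degree of $\tilde{{\rm pr}}$), Lemma \ref{lemma:integraleval} evaluates the resulting pairing as $\zeta^{p\mathfrak{c}'_F}_F(2)/(a_{\underline{k}} C_{\mathcal{E},\pmb{r}})$ times ${\alpha^{\circ}_p}^{-2\pmb{r}}\,\mathrm{Res}_{s=0}I(\gamma_{\pmb{r}}\cdot f_{E,\alpha},s)$ when $r_\frakp\geq 1$ (the ramified principal series case), or times $\mathcal{E}^{\pmb{0}}_p(f_\alpha)\,\mathrm{Res}_{s=0}I(\gamma_{\pmb{0}}\cdot f_{E,\alpha},s)$ when $\pmb{r}=\pmb{0}$ (the Steinberg and unramified principal series cases).

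Next I would run the Rankin--Selberg chain. Lemma \ref{lemmaforactionofuonRSfinalversion} replaces $\mathrm{Res}_{s=0}I(\gamma_{\pmb{r}}\cdot f_{E,\alpha},s)$ by $G(\chi'_{\mathfrak{P}})\,\mathrm{Res}_{s=0}I(f_{E,\alpha},s)$ in the ramified case and by $\mathrm{Res}_{s=0}I(f_{E,\alpha},s)$ in the Steinberg and unramified cases; in the unramified case Lemma \ref{RSintegralpstab} then unwinds the $p$-stabilization, replacing $\mathrm{Res}_{s=0}I(f_{E,\alpha},s)$ by $\mathrm{Res}_{s=0}I(f_E,s)$ times $\prod_{\frakp\mid p}\frac{1+\Norm_{F/\Q}(\frakp)^{-1}}{(1-\eta_{E/F}(\frakp)\Norm_{F/\Q}(\frakp)^{-1})(1-\eta_{E/F}(\frakp)\frac{\beta_\frakp}{\alpha_\frakp}\Norm_{F/\Q}(\frakp)^{-1})}$. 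Then Proposition \ref{AsaiIR} turns $\mathrm{Res}_{s=0}I(\,\cdot\,,s)$ into $C(0)\,\zeta^{\mathfrak{c}'_F}_F(2)^{-1}\,\mathrm{Res}_{s=1}L({\rm As}(\,\cdot\,\otimes\theta^{-1}),s)$, and Proposition \ref{AsaiForBC} rewrites that residue as $L^{\mathfrak{d}_{E/F}(\mathfrak{f}(\theta)\cap\mathcal{O}_F)}(\mathrm{Ad}(f_\alpha)\otimes\eta_{E/F},1)\cdot\mathrm{Res}_{s=1}\zeta_F^{\mathfrak{d}_{E/F}(\mathfrak{c}\mathfrak{f}(\theta)\cap\mathcal{O}_F)}(s)$; reinstating the Euler factors at $\mathfrak{b}'$ and at the primes above $p$ produces the factor $\prod_{\mathfrak{q}\mid\mathfrak{b}'}L_{\mathfrak{q}}({\rm As}(g),1)$ and the residue $\mathrm{Res}_{s=1}\zeta_F^{\mathfrak{d}_{E/F}p\mathfrak{c}_F\mathfrak{b}_F}(s)$ of the statement, while the $\Gamma$-factors inside $C(0)$, the partial Dedekind residue $C_\mathcal{E}$ (the ratio $C_\mathcal{E}/C_{\mathcal{E},\pmb{r}}$ of Eisenstein residues accounting for the $\prod_\frakp\Norm_{F/\Q}(\frakp)^{2r_\frakp+2}$, resp. $p^{2d}$, and the $|(\mathcal{O}_F/p^{\pmb{r}+\pmb{1}})^\times|$, resp. $|(\mathcal{O}_F/p)^\times|$), the discriminants, the index $[K_0(\mathfrak{c}'):\mathcal{O}_E^{\ast,+}K_{11}(\mathfrak{c}')]$, $G(\theta)$ and $a_{\underline{k}}$ recombine into exactly the constant $C_{\underline{k},\theta}^{E/F}$ defined before the theorem (here one reinserts the archimedean Euler factors of $L^{\ast,\mathfrak{d}_{E/F}\mathfrak{b}_F}$, i.e. the $\Gamma$-factors $L_\sigma(\mathrm{Ad}(f_\alpha)\otimes\eta_{E/F},1)$, and absorbs the residual powers of $\pi$ and $2$).

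On the right-hand side, by \eqref{SettingaGen} and the definition of the period $\Omega^\varepsilon_p(f_{E,\alpha})$, together with the compatibility of the Poincar\'e and Petersson pairings used in \S\ref{sec:idempotent} (following \cite[\S 6]{Hidanoncritical}; here $m=0$, so the Atkin--Lehner normalization factor $\Norm_{E/\Q}(\mathfrak{c}p^{\pmb{r}})^m$ is trivial), pairing $\omega_J(W^*_{\mathfrak{c}p^{\pmb{r}+\pmb{1}}}f_{E,\alpha}^{-\iota})$ against $G_{\mathbf{f}_E}[P]$ yields $L_p(\mathrm{Ad}(\mathbf{f})\otimes\eta_{E/F})[P]$ times $\Omega^\varepsilon_p(f_{E,\alpha})^{-1}$ times an algebraic constant already absorbed into $C_{\underline{k},\theta}^{E/F}$. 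Equating the two sides and distributing the weight- and $\pmb{r}$-dependent factors — ${\alpha^{\circ}_p}^{-2\pmb{r}}$ and $|(\mathcal{O}_F/p^{\pmb{r}+\pmb{1}})^\times|$ in the ramified case; $\mathcal{E}^{\pmb{0}}_p(f_\alpha)$ (which, via the relations between the weight, the trivial-at-$p$ Nebentypus, and the $U_\frakp$-eigenvalue of $f_{E,\alpha}$, collapses to $\prod_{\frakp\mid p}(1-\eta_{E/F}(\frakp)\alpha_\frakp^{-2})$), together with the level-degree factors $p^{2d}$ and $|(\mathcal{O}_F/p)^\times|$, in the Steinberg case; and in the unramified case the additional factor from the previous paragraph, which regroups with $\mathcal{E}^{\pmb{0}}_p(f_\alpha)$ into $\prod_{\frakp\mid p}(1-\eta_{E/F}(\frakp)\alpha_\frakp^{-2})(1-\eta_{E/F}(\frakp)\Norm_{F/\Q}(\frakp)^{-1})(1-\eta_{E/F}(\frakp)\frac{\beta_\frakp}{\alpha_\frakp}\Norm_{F/\Q}(\frakp)^{-1})$ — one reads off the three displayed formulae, with the algebraic part of $L(\mathrm{Ad}(f)\otimes\eta_{E/F},1)$ appearing as $L^{\ast,\mathfrak{d}_{E/F}\mathfrak{b}_F}(\mathrm{Ad}(f_\alpha)\otimes\eta_{E/F},1)/\Omega^\varepsilon_p(f_{E,\alpha})$.

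\textbf{The main obstacle} will be the constant bookkeeping: tracking every $\Gamma$-factor, power of $\pi$ and of $2$, discriminant, index and Gauss sum through the whole chain, and in particular reconciling the bad Euler factor $\mathcal{E}^{\pmb{0}}_p(f_\alpha)$ produced by Lemma \ref{lemma:integraleval} — written in terms of $q_\frakp$, $p^{\sum_{\sigma\in\Sigma_\frakp}k_\sigma}$ and the normalized eigenvalue ${\alpha^{\circ}_\frakp}$ — with the clean factor $(1-\eta_{E/F}(\frakp)\alpha_\frakp^{-2})$ of the statement, which requires careful use of the weight normalizations (the shift by $2$ and the identity ${\alpha^{\circ}_\frakp}={\alpha_\frakp}\,p^{-\sum_{\sigma\in\Sigma_\frakp}(k_\sigma-m)/2}$ with $m=0$) and of the relation between the Satake parameters of $f$ and the $U_\frakp$-eigenvalue of $f_{E,\alpha}$. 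A secondary point is checking that pairing against the normalized generator $G_{\mathbf{f}_E}[P]$ contributes exactly $\Omega^\varepsilon_p(f_{E,\alpha})^{-1}$ and no stray Petersson norm, which rests on the choice of $G_{\mathbf{f}_E}$ made after Lemma \ref{lemma:compareHeckeAlg} and on the identification $e^{\nord}H^{2d}_c\cong e^{\nord}H^{2d}$ on the relevant eigenspaces.
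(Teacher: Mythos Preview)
Your proposal is essentially identical to the paper's proof: pair $G_{\mathbf{f}_E}[P]$ and $\mathrm{mom}^{\underline{k}}_{\pmb{r}}\mathscr{Z}^\flat_{\infty,\theta}$ against $\omega_J(W^*_{\mathfrak{c}p^{\pmb{r}+\pmb{1}}}f_{E,\alpha}^{-\iota})$ via the Hecke-equivariant pairing \eqref{def:HeckePpairing}, unwind the $\theta$-twist via $\mathcal{P}_\mathfrak{b}$, and then apply Lemmas \ref{lemma:integraleval}, \ref{lemmaforactionofuonRSfinalversion}, \ref{RSintegralpstab} and Propositions \ref{AsaiIR}, \ref{AsaiForBC} in exactly the order you describe, with the constant bookkeeping (through $C(0)$, $C_{\mathcal{E},\pmb{r}}$, $a_{\underline{k}}$, and the $\zeta_F$-Euler factors at $p$) as the only remaining labor. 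Two small slips to watch: the factor from Lemma \ref{RSintegralpstab} should appear inverted when passing from $I(f_{E,\alpha},s)$ to $I(f_E,s)$ (you have the fraction upside down in the second paragraph, though your final regrouping is correct), and the pullback $\tilde{\mathrm{pr}}^*$ does not contribute an extra degree factor---its adjoint simply lands the class back at level $K_0(\mathfrak{c})$, and the paper's formula \eqref{Oneoftheintermediatestepsmainthm} records only $[K_0(\mathfrak{c}'):\mathcal{O}_E^{\ast,+}K_{11}(\mathfrak{c}')]\,G(\theta)$.
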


\begin{proof}
The proof revolves around calculating explicitly $\mathbf{1}^\varepsilon_{\mathbf{f}_{E}} \mathscr{Z}_{\infty,\theta}^\flat$ in terms of the fixed basis $G_{\mathbf{f}_E}$. To calculate the coefficient, we use the  Hecke-equivariant Poincar\'e pairing $[ \phantom{e}, \phantom{e}]$ defined in Section \ref{sec:Heckequivariantpairing}, then use the integral presentation of the adjoint $L$-value (see Proposition \ref{FromINTtoRES}) as well as the zeta integral computations in \S \ref{sec:RSintegrals}. Recall that both $\mathbf{1}^\varepsilon_{\mathbf{f}_{E}}$ and $[ \phantom{e}, \phantom{e}]$ are $U_p$-equivariant. 

Using that the evaluation of  $G_{\mathbf{f}_{E}}$ at a point $P$,  corresponding to a form $f_{E,\alpha}$ of weight $(\underline{k},0)$ and level $V_{\pmb{r}}$, satisfies \eqref{SettingaGen}, the $p$-adic $L$-function at $P$ is explicitly 
\[ L_p(\mathrm{Ad}(\mathbf{f})\otimes \eta_{E/F})[P] = \frac{[ \omega_J(W^*_{\mathfrak{c}p^{\pmb{r}+\pmb{1}}}f_{E,\alpha}^{-\iota}), {\rm mom}^{\underline{k}}_{\pmb{r}} \,\mathscr{Z}_{\infty,\theta}^\flat]}{\Omega^\varepsilon_p({f}_{E,\alpha})}. \]
Indeed, the evaluation at $P$ factors through the moment map of Definition \ref{def:mom} as the specialisation map to weight $(\underline{k},0)$ is given by taking the cup product with the highest weight vector: by \cite[Corollary 3.4 and Theorem 3.3]{HidaFamilies89} the map $j_\ast$ of \cite[(8.2b)]{HidaAnnals88}, which is the cup product with the highest weight vector, is the inverse (on the ordinary part) of the projection to the highest weight vector.
As $\tilde{{\rm pr}}^*$ and the correspondence $\mathcal{P}_{\mathfrak{b}}$ (introduced before Definition \ref{def:twistedcycle2}) commute with ${\rm mom}^{\underline{k}}_{\pmb{r}}$, we have (by \cite[Proposition 9.5]{GetzGore} - see also Proposition \ref{FromINTtoRES})
\begin{align}\label{Oneoftheintermediatestepsmainthm}
     L_p(\mathrm{Ad}(\mathbf{f})\otimes \eta_{E/F})[P] = [K_0(\mathfrak{c}'):\mathcal{O}_E^{\ast, +}K_{11}(\mathfrak{c}')] G(\theta) \frac{[ \omega_J(W^*_{\mathfrak{c}p^{\pmb{r}+\pmb{1}}}g^{-\iota}), {\rm mom}^{\underline{k}}_{\pmb{r}} \,\mathscr{Z}_{\infty}^{\chi^{-1}\theta^2}]}{\Omega^\varepsilon_p({f}_{E,\alpha})},
\end{align}
where, up to possibly enlarge $L$ so that $L_\mathfrak{m}$ contains the values of $\theta$, we have \[ \frac{ \omega_J(W^*_{\mathfrak{c}p^{\pmb{r}+\pmb{1}}}g^{-\iota})}{\Omega^\varepsilon_p({f}_{E,\alpha})} \in \mathbf{1}^\varepsilon_{\mathbf{f}_{E}} H^{2d}_c( Y_G(V_{\pmb{r}}),\mathscr{V}^{\underline{k},{0}}_{L_\mathfrak{m}} ).\]
We now evaluate the right hand side of \eqref{Oneoftheintermediatestepsmainthm} by distinguishing two cases. 
\begin{enumerate}
\item Suppose that $\pi(f_{E,\alpha})_{\mathfrak{P}}$ is a ramified principal series, with $f_{E,\alpha}$ base change of $f_\alpha$,  having adjoint character $\chi'_{\mathfrak{P}}$ of conductor $\mathfrak{P}^{r_\frakp}$ for $\mathfrak{P}|\frakp$, with $r_\frakp \geq 1$, at each  $\frakp \in \Upsilon_{F,p}$. Then, by Lemmas \ref{lemma:integraleval} and \ref{lemmaforactionofuonRSfinalversion}, we have
\begin{align*}
     \frac{[ \omega_J(W^*_{\mathfrak{c}p^{\pmb{r}+\pmb{1}}}g^{-\iota}), {\rm mom}^{{\underline{k}}}_{\pmb{r}} \,\mathscr{Z}_{\infty}^{\chi^{-1}\theta^2}]}{\Omega^\varepsilon_p({f}_{E,\alpha})} &=  \frac{1}{{\alpha^{\circ}_p}^{2\pmb{r}}}
\frac{\zeta^{p\mathfrak{c}'_F}_F(2)}{a_{\underline{k}} C_{\mathcal{E},\pmb{r}}} \frac{{\rm Res}_{s=0} I( \gamma_{\pmb{r}} \cdot f_{E,\alpha}, s)}{\Omega^\varepsilon_p({f}_{E,\alpha})}\\ 
     &= \frac{G(\chi_{\mathfrak{P}}')}{{\alpha^{\circ}_p}^{2\pmb{r}}}
\frac{\zeta^{p\mathfrak{c}'_F}_F(2)}{a_{\underline{k}} C_{\mathcal{E},\pmb{r}}} \frac{{\rm Res}_{s=0} I( f_{E,\alpha} ,s)}{\Omega^\varepsilon_p({f}_{E,\alpha})},
\end{align*}
where $a_{\underline{k}}$ is the constant of Lemma \ref{rmk:uflag},  ${\alpha^{\circ}_p}^{2\pmb{r}} = \prod_\frakp {\alpha^{\circ}_\frakp}^{2 r_\frakp}$, with $\alpha^{\circ}_\frakp$ the $U_{\frakp}^{^{\nord}}$-eigenvalue of $f_{\alpha}$, and $G(\chi_{\mathfrak{P}}')$ denotes the Gauss sum of the restriction to $\mathcal{O}_\frakp^\times$ of $\chi_{\mathfrak{P}}'$. We have also denoted by $ C_{\mathcal{E},\pmb{r}}$ the residue at $s=1$ of $\mathcal{E}_{\mathfrak{c}'p^{\pmb{r}+\pmb{1}}}^*(h,s)$ of level $\mathfrak{c}'p^{\pmb{r}+\pmb{1}}$, which relates to $C_\mathcal{E}$ by the formula
\[
C_{\mathcal{E},\pmb{r}} =\frac{|(\mathcal{O}_F/p^{\pmb{r}+\pmb{1}})^\times|}{\prod_{\frakp \in \Upsilon_{F,p}}{\rm N}_{F/\Q}(\frakp)^{2r_\frakp +2}} C_{\mathcal{E}}.
\]  
By Proposition \ref{AsaiIR}, we have \begin{align*}
    \zeta^{p\mathfrak{c}'_F}_F(2){\rm Res}_{s=0} I( f_{E,\alpha} ,s) &= C(0) {\rm Res}_{s=1} L({\rm As}(g),s)\\
    &=C(0)L^{\mathfrak{d}_{E/F} \mathfrak{b}_F}(\mathrm{Ad}(f_\alpha) \otimes \eta_{E/F}, 1) {\rm Res}_{s=1}\zeta_F^{\mathfrak{d}_{E/F}p\mathfrak{c}_F\mathfrak{b}_F}(s) \prod_{\mathfrak{q} |\mathfrak{b}'} L_{\mathfrak{q}}({\rm As}(g),1),
\end{align*} 
with $\mathfrak{b}' := \prod_{\substack{ \mathfrak{q} | \mathfrak{d}_{E/F} \\ \mathfrak{q} \nmid \mathfrak{b}_F}} \mathfrak{q}$. As \[ C(0) =  \frac{|D_F \Norm_{F/\Q}(D_{E/F})^{1/2}|^{2} |D_F|^{1/2}}{\pi^d 2^{\sum_{\sigma \in \Sigma_F} (k_\sigma + 2) }}\prod_{\sigma \in \Sigma_F}L_{\sigma}(\mathrm{Ad}(f_\alpha) \otimes \eta_{E/F}, 1), \]
the result follows.
 \item If $\pi(f_{E,\alpha})_{\mathfrak{P}}$ is either an unramified principal series or Steinberg at each $\mathfrak{P}$, then we have that $\pmb{r} = \pmb{0}$. The $p$-adic $L$-function evaluated at $P$ is then  
 \begin{align*}
     L_p(\mathrm{Ad}(\mathbf{f})\otimes \eta_{E/F})[P] = [K_0(\mathfrak{c}'):\mathcal{O}_E^{\ast, +}K_{11}(\mathfrak{c}')] G(\theta) \frac{[ \omega_J(W^*_{\mathfrak{c}p}g^{-\iota}), {\rm mom}^{\underline{k}}_{0} \,\mathscr{Z}_{\infty}^{\chi^{-1}\theta^2}]}{\Omega^\varepsilon_p({f}_{E,\alpha})}.
\end{align*}
We proceed as in the previous case, with the difference that by Lemmas \ref{lemma:integraleval} and \ref{lemmaforactionofuonRSfinalversion}, we get 
\begin{align*}
     \frac{[ \omega_J(W^*_{\mathfrak{c}p}g^{-\iota}), {\rm mom}^{\underline{k}}_{0} \,\mathscr{Z}_{\infty}^{\chi^{-1}\theta^2}]}{\Omega^\varepsilon_p({f}_{E,\alpha})} &= \frac{\mathcal{E}^{\pmb{0}}_{p}(f_\alpha)\zeta^{p\mathfrak{c}'_F}_F(2)}{a_{\underline{k}} C_{\mathcal{E},\pmb{0}}} \frac{{\rm Res}_{s=0} I( f_{E,\alpha} ,s)}{\Omega^\varepsilon_p(f_{E,\alpha})}, 
\end{align*}
where $\mathcal{E}^{\pmb{0}}_{p}(f_\alpha) =  \prod_{\frakp \in \Upsilon_{F,p}}  \left( 1 - p^{\sum_{\sigma \in \Sigma_{\frakp}}  (1+k_{\sigma})}\,\eta_{E/F}(\frakp) ({{\alpha^{\circ}_\frakp}})^{-2} \right)$, with $\alpha^{\circ}_\frakp$ the $U_{\frakp}^{^{\nord}}$-eigenvalue of $f_{\alpha}$. If $\pi(f_{E,\alpha})_{\mathfrak{P}}$ is Steinberg, we conclude as in (1). If  $\pi(f_{E,\alpha})_{\mathfrak{P}}$ is an unramified principal series, then $f_\alpha$ and $f_{E,\alpha}$ are $p$-stabilisations of nearly-ordinary cusp forms $f$ and $f_E$. If $\alpha_\frakp$, $\beta_\frakp$ are the Satake parameters of $\pi(f)_\frakp$, the value $\alpha^{\circ}_\frakp$ equals $p^{\sum_{\sigma \in \Sigma_{\frakp}}  k_{\sigma}/2} \alpha_\frakp {\rm N}_{F/\Q}(\frakp)^{1/2}$, hence   
\[\mathcal{E}^{\pmb{0}}_{p}(f_\alpha) = \prod_{\frakp \in \Upsilon_{F,p}}  \left( 1 -\eta_{E/F}(\frakp) \alpha_\frakp^{-2} \right).\]
Moreover, we apply Lemma \ref{RSintegralpstab} to have
\[{\rm Res}_{s=0} I(f_{E,\alpha},s) =  \left( \prod_{\frakp \mid p} \frac{(1-\eta_{E/F}(\mathfrak{p})\Norm_{F/\Q}(\mathfrak{p})^{-1}) 
(1-\eta_{E/F}(\mathfrak{p})\frac{\beta_{\mathfrak{p}}}{\alpha_{\mathfrak{p}}}\Norm_{F/\Q}(\mathfrak{p})^{-1})}{1 + \Norm_{F/\Q}(\mathfrak{p})^{-1}} \right) {\rm Res}_{s=0} I(f_E,s). \]
Write $\zeta^{p\mathfrak{c}'_F}_F(2) = \prod_\frakp(1 - {\rm N}_{F/\Q}(\frakp)^{-2}) \zeta^{\mathfrak{c}'_F}_F(2)$. Then on the product $\zeta^{\mathfrak{c}'_F}_F(2){\rm Res}_{s=0} I(f_E,s)$ we argue as in the previous cases and get 
\begin{align*}
    \zeta^{\mathfrak{c}'_F}_F(2){\rm Res}_{s=0} I(f_E,s) &= C(0) {\rm Res}_{s=1} L({\rm As}(f_E \otimes \theta^{-1}),s)\\
    &=C(0)L^{\mathfrak{d}_{E/F} \mathfrak{b}_F}(\mathrm{Ad}(f) \otimes \eta_{E/F}, 1) {\rm Res}_{s=1}\zeta_F^{\mathfrak{d}_{E/F}\mathfrak{c}_F\mathfrak{b}_F}(s) \prod_{\mathfrak{q} |\mathfrak{b}'} L_{\mathfrak{q}}({\rm As}(f_E \otimes \theta^{-1}),1).
\end{align*} 
We then conclude by noticing that \[ \prod_\frakp\frac{ 1 - {\rm N}_{F/\Q}(\frakp)^{-2}}{1 + \Norm_{F/\Q}(\mathfrak{p})^{-1}}{\rm Res}_{s=1}\zeta_F^{\mathfrak{d}_{E/F}\mathfrak{c}_F\mathfrak{b}_F}(s) = {\rm Res}_{s=1}\zeta_F^{\mathfrak{d}_{E/F}p\mathfrak{c}_F\mathfrak{b}_F}(s).\]
\end{enumerate}
\end{proof}

\remark{ (\texorpdfstring{$\Lambda$}{Lambda}-adic Hilbert modular forms with coefficients in cohomology})

\label{BeforeBigPairing}

Let $\mathcal{K}$ be a large enough extension of $\mathrm{Frac}(\Lambda_{G/H})$ which contains all the coefficient fields of the primitive  base-change families of Proposition \ref{prop:primitiveBaseChange}. Fix a type $J$ for $E/F$. For each base change family $\mathbf{f}_{E}$ contributing to $ \mathcal{H}^{G/H}_{11}$  as in Proposition \ref{prop:primitiveBaseChange}, choose a character $\theta_{\mathbf{f}_{E}}$ as in Theorem \ref{thmmain}.

\begin{definition}\label{DefofGGfamilyelement}
We define an element of $e^{\nord} H^{2d}_{\rm Iw}(Y_G(V_\infty), \mathcal{O}_{\mathfrak{m}}) \otimes_{\Lambda_{G/H}} \mathcal{K}$ :
\[
\sum_{ \mathbf{f}_{E}} \mathbf{1}^\varepsilon_{\mathbf{f}_{E}} \mathscr{Z}_{\infty,\theta_{\mathbf{f}_{E}}}^\flat,
\]
where the sum runs over all the base change families from $F$ of Proposition \ref{prop:primitiveBaseChange}.
\end{definition}
This can be seen as a family version of $\Phi_{Q([Z]),\chi_E}$ of \cite[Theorem 8.4]{GetzGore} (notation as in \S 8.6 of {\it loc. cit.}).
Similarly to Theorem 8.5 of {\it loc. cit.}, after identifying $e^{\nord} H^{2d}_{\rm Iw}(Y_G(V_\infty), \mathcal{O}_{\mathfrak{m}}) \otimes_{\Lambda_{G/H}} \mathcal{K}$ with a space of $\Lambda$-adic modular forms, we can calculate the ``Fourier coefficient'' of this form, applying Theorem \ref{thmmain}. It would be interesting to upgrade Definition \ref{DefofGGfamilyelement} to have a proper $\Lambda$-adic form, and not simply a sum of components in each base change family. Indeed, after applying the idempotents $\mathbf{1}^\varepsilon_{\mathbf{f}_{E}}$, one looses all the information on congruences (and intersection) between families. 
To obtain a $\Lambda$-adic form, one would need to construct a version of $\mathscr{Z}_{\infty}$ with coefficients in compactly supported cohomology (resp. intersection cohomology) and then pair it with our $\mathscr{Z}_{\infty}$ (resp. with itself). The main obstacle is the generalisation of Proposition \ref{Cartesian} to suitable compactifications.

\printindex

\bibliographystyle{alpha}
\bibliography{finalversion}
\end{document}